\documentclass[12pt]{caltech_thesis}
\usepackage[hyphens]{url}
\usepackage{lipsum,amsthm,mathtools,amssymb,amsmath}
\usepackage{graphicx}

\usepackage{todonotes}

\usepackage[utf8]{inputenc}
\usepackage[T1]{fontenc}
\usepackage{newtxtext,newtxmath}

\usepackage[
    backend=biber,natbib,
    style=numeric
]{biblatex}

\newtheorem{theorem}{Theorem}[section]
\newtheorem{lemma}[theorem]{Lemma}

\newtheorem{corollary}[theorem]{Corollary}
\newtheorem{proposition}[theorem]{Proposition}
\newtheorem{definition}[theorem]{Definition}

\newtheorem{remark}[theorem]{Remark}
\numberwithin{equation}{section}
\setsecnumdepth{subsubsection}
\begin{document}

% Do remember to remove the square bracket!
\title{Singularity Formation: Synergy in Theoretical, Numerical and Machine Learning Approaches.}
\author{Yixuan Wang}

\degreeaward{Doctor of Philosophy}                 % Degree to be awarded
\university{California Institute of Technology}    % Institution name
\address{Pasadena, California}                     % Institution address
\unilogo{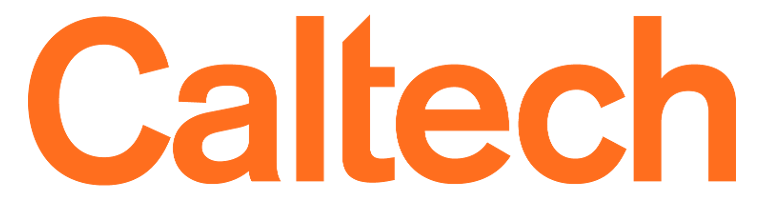}                                 % Institution logo
\copyyear{2026}  % Year (of graduation) on diploma
\defenddate{April 16th}          % Date of defense

\orcid{0000-0001-7305-5422}

%% IMPORTANT: Select ONE of the rights statement below.
\rightsstatement{All rights reserved}
% \rightsstatement{All rights reserved except where otherwise noted}
% \rightsstatement{Some rights reserved. This thesis is distributed under a [name license, e.g., ``Creative Commons Attribution-NonCommercial-ShareAlike License'']}

%%  If you'd like to remove the Caltech logo from your title page, simply remove the "[logo]" text from the maketitle command
\maketitle[logo]
%\maketitle

\begin{acknowledgements} 	
I am finishing up my thesis with extreme emotions, humility, and gratitude. What a journey! I spent some of my prime years towards my PhD, and it has been extremely rewarding: tons of ups and downs, but what is important is that it has been fun all along.

First and foremost, I express my sincere gratitude to the thesis and candidacy committee, along with other faculty at Caltech. Prof. Tom Hou, my supervisor, with great admiration, I grew and thrived under his training. Tom is a giant scholar, but even before that, he is a wonderful person: patient with the whole group, caring and devoting tons of his time, working extremely hard by himself, always there for us. It's through Tom that I learned how to devote myself to research and be a good person. I extend my thanks to the extended Tom's group, including his kind wife Dr. Chang, all the PhD students I have interacted with, and Mr. Choi. Prof. Anima Anandkumar, I have collaborated with her at Caltech most extensively besides Tom. Her vision and passion ignite me and guide me through my early stages with machine learning. Prof. Houman Owhadi, my chair and the very first person I knew at Caltech, has kindly provided guidance and interactions with his research group. Prof. Franca Hoffman, inspires me greatly through her outreach and caring for the community, where I have built the SIAM student chapter at Caltech myself. Prof. Kewen Wu, who will join online, my undergrad cohort, every conversation with him has been stimulating and insightful as a wonderful junior researcher and motivates me to stay eager and humble for learning. Prof. Andrew Stuart, who I had the pleasure to collaborate with, will unfortunately be away for my defense, has helped throughout my PhD.
   The help of CMS staff makes my stay in the department even nicer. I am forever in debt to Diana and Sydney, who have left. Diana really made Steele House her proper house and cared about each individual; everything about her is emotional. Then I would also like to thank Joy, Minzhi, and Jolene for their help.

I have to mention my amazing collaborators, many of whom are also close and dear friends. Prof. Ziming Liu, of course, what KAN I say but so many words: it is just amazing being the best friend all along, and thanks for being a role model. Prof. Zongyi Li, your humility and kindness always physically inform me. Prof. Van Tien Nguyen, thanks for being so encouraging and guiding me through the blowup world. Changhe Yang, the uniquely devoted collaborator, your dedication pays off greatly, and it was extremely rewarding on board the grand project you carried. Prof. Yifan Chen, we have known each other for so long and quite deeply; our relationship has always been multiscale, but thanks for watching out for me. Prof. Jiajie Chen, you are simply the strongest PDE person, period; thanks for being patient as a big brother. Tao Zhou, it was so much fun meeting you at a conference, becoming good friends, and working on good things together, like what academia should be. Prof. Jonathan Siegel, your insights and kindness help make our paper happen.
Dr. Valentin Duruisseaux, you are a dedicated and extremely talented collaborator, but you are also such a good friend and tennis buddy. I would also like to thank all the kind people writing me academic reference letters, although I decided to pursue math with AI at scale in the industry instead: Prof. Tom Hou, Prof. Anima Anandkumar, Prof. Andrew Stuart, Prof. Javier Serrano-Gomez, Prof. Hatem Zaag, Prof. Tristan Buckmaster, Prof. Arthur Jacot, Prof. Alexandru Ionescu, and Prof. Philippe Rigollet. The majority of math professors are just so nice to junior folks like me.

Caltech is great only because of its people. I would first like to thank my best friends in the department, without whom I would say, without exaggeration, that I would not have pulled through my PhD. Pau, my very first friend, we just have to pat each other on the shoulder to know that we are there. Matthieu, we have so much in common: the movie nights, concert nights, national parks, squash sessions, deep talks, and academic discussions; we are everything. Peicong, despite being a younger one in the group, you helped me so much emotionally. We experienced a lot of things together and had so much fun, and I really appreciated it.
Han, thanks for playing tennis, hanging out at concerts and others, but most importantly, for bringing in deep thoughts about life, about the journey. It is just a shame that despite many math discussions, I did not end up having a single paper with my best friends at Caltech. Then, I would say that my life beyond research also provided me with extreme joy and emotional support. I would like to thank the team, busy beaver, and my tennis buddies: Xijin, Haowen, Aaron, Daniel, Blake, Analiese, Sam, Steven, Sean, Max, Frid,  Edo, and Robert. I was often not the best player, to put it mildly, but playing and being on the court is the most important thing. Concert is a big part of life that makes me energetic, and I appreciate the company of my good friends: Tianshu, Berthy, Alex, and Jingyuan. Movies extend my and everybody's life, and my foodie friends, including Mansour and Steven, take it to another level. Cardio dance, especially Mariam, empowers me and gives me a little community, including the kind lady at the postal office who kindly let me mail my tax just before closing. It is just so easy to get scared and lost without a community. I cherish in particular my Chinese heritage, with my international horizon, just like the favorite dish I cook, Avocado Mapo Tofu, sometimes with a Parmesan twist. I would like to thank the Chinese community here, including Xinyi, Yulu, Ding, Hongkai, Ray, Bohan, Qiren, Chris, Xiaozhou, and Jim. 

I am extremely thankful to my friends all over the world: Nina, Luna, Roger, Anita, Vincent, Shawn, and Yihan. But especially 
friends from my college, Peking, the greatest place for an undergrad due to its comprehensive vision and greatest people, including the ones in the math department: Weiyuan, Zexuan, Ran, Jiasen, Jiayi, Jialei,  Shangqin,  Mengxi, and Yibo. I wanted to be emotional and write a paragraph to each one of you folks, but I will do (and have done) more on a postcard. You all have shaped me and witnessed me. Of course, I was grateful for the training and teachers there as well, including my undergrad advisor Prof. Ruo Li.

Finally, I thank my dearest family. I am extremely proud, but may not have expressed it due to the East Asian shyness, to have the best parents in the world. My dad Tao Wang is a silent protector for me, and my mom Fang Liu, oh my mom, is just so caring and kind. They are always there for me, as if I am the center of their universe. They are never too busy, and nothing about me is too difficult for them. I regretted that in my adolescence, we had some fights, but overall, we have a very lovely and healthy relationship. I want to look up to them so that in the future, I could be just as marginally good a parent.
 I am dedicating this thesis to my grandmom, Xia Lan. As I grew up with the four of us in a family, she is the closest person in my childhood. Ms. Lan, she grew up in troubled times in China, so she is not lucky to experience the world as I am fortunate to now. Her vision, open-mindedness, and insights are nonetheless profound and shaped me.

Moving forward, as I have always been, I will embark on and enjoy a love-hate, hopefully love on the upper side, relationship with Math, at DeepMind. It is a creative time, so do creative things. I am looking forward to the new paradigm of AI+math (PDEs): embracing the scale of things and hopefully witnessing grokking and phase-transition therein. I express my sincere gratitude to the senior people who helped me take this giant leap of faith, Dr. Yongji Wang, Prof. Chingyao Lai, Prof. Javier Serrano-Gomez,  Prof. Tristan Buckmaster, and my team leader Dr. Ray Jiang.
\end{acknowledgements}
\begin{abstract}
  This thesis develops numerical and theoretical approaches for understanding and analyzing singularity formation in Partial Differential Equations (PDEs). The singularity formation in the Navier-Stokes Equation (NSE) is famously challenging as one of the seven Clay Prize problems. Unlike simpler equations such as the Nonlinear Heat (NLH) or Keller-Segel (KS) equations, where formal asymptotics near blowup are better understood, the intrinsic complexity of NSE makes quantitative analytical treatment difficult, if not impossible, without numerical guidance. 

  Building on numerical insights, Chapter \ref{chap:2} introduces a robust analytical framework to simplify and systematize pen-and-paper proofs for simpler singular PDEs.  We present a novel approach based on enforcing vanishing modulation conditions for perturbations around approximate blowup profiles, complemented by singularly weighted energy estimates. Blowups are proven with a clear notion of stability, with rates automatically inferred,  without the need to know the asymptotics a priori or explicit spectral information of the linearized operator. We demonstrate the efficacy of our method on PDEs with complicated asymptotics, such as NLH and the Complex Ginzburg-Landau (CGL) equation, and address the open problem of singularity formation in the 3D KS equation with logistic damping. We also provide a roadmap for extending our techniques to singularities involving multiple scales.

  In Chapter \ref{chap:3}, we develop and refine numerical approaches that facilitate deeper insights into singularity formation. We demonstrate that machine learning methods significantly enhance our capability to identify and characterize potential blowup solutions with high precision. We improve on existing Physics-Informed Neural Network (PINN) and Neural Operator (NO)  frameworks. Moreover, we present a novel machine learning paradigm, the Kolmogorov-Arnold Network (KAN) architecture, whose interpretability and excellent scaling properties are achieved through learnable nonlinearities inspired by the Kolmogorov-Arnold representation theorem.
  
  %Chapter \ref{chap:3} discusses the transition from numerical insights to rigorous analytical proofs of singular behaviors, particularly in fluid dynamics. We settle an important open problem regarding the nonuniqueness of Leray-Hopf solutions of the 3D incompressible NSE through a rigorous, computer-assisted verification framework. This involves employing interval arithmetic and a fixed-point argument to rigorously verify the existence of solutions and the unstable eigenvalues of associated linearized operators around numerical candidates. The method critically relies on decomposing the linearized operator into coercive and compact components, facilitating numerical verification on a finite-dimensional approximation. 

  Chapter \ref{chap:4} introduces Exponential Multiscale Finite Element Method (ExpMsFEM), developed to efficiently solve challenging multiscale PDEs beyond elliptic problems, such as the Helmholtz equation.  We construct adaptive local bases, proving exponential convergence theoretically and demonstrating superior computational performance in practice. Like KAN, ExpMsFEM exemplifies how insights from theory can guide the design of high-performance solvers with theoretical guarantees.
\end{abstract}

%% Uncomment the `iknowhattodo' option to dismiss the instruction in the PDF.
\begin{publishedcontent}[iknowwhattodo]
% List your publications and contributions here.
\nocite{hou2024blowup,hou20242,chen2024stability,liu2025finite,kan1,liu2024kan,kanbias,chen2024exponentially,chen2023exponentially,chen2021exponential,liu2022second,maust2022fourier,li2024scale,wang2025high,hou2025nonuniqueness,rigas2025initialization,lee2025kano}
\end{publishedcontent}

\tableofcontents
\listoffigures
\printnomenclature

\mainmatter

\chapter{Introduction}
Much of this thesis is motivated by the challenge of understanding singular behaviors in nonlinear partial differential equations, exemplified by the 3D incompressible Navier–Stokes equations (NSE):
\begin{equation}\label{NSE}
\mathbf{u}_{t}+\mathbf{u} \cdot \nabla \mathbf{u}=-\nabla \mathbf{p}+\nu \Delta \mathbf{u}, \quad \nabla \cdot \mathbf{u}=0\,.
\end{equation}
The 3D NSE is a cornerstone of mathematical physics and fluid dynamics, yet foundational questions—such as whether smooth initial data can develop singularities in finite time—remain open. Singular behaviors like blowup and turbulence not only raise deep theoretical issues but also complicate numerical simulation and physical interpretation.

This thesis is guided by a central principle: that rigorous analysis and numerical insight are most powerful when developed in tandem. Our work illustrates how numerical methods not only provide guidance and experimental evidence for conjectures but also serve as a foundation for rigorous proofs and inspire new analytical frameworks. In turn, the theoretical insights yield more robust, interpretable, and generalizable numerical methods.

More generally, we consider finite-time singularities in evolution equations of the form \begin{equation}
    \label{intro:blowup}
    a_t=F(a),\quad \limsup_{t \rightarrow T^{-}}\|g(a(t))\|_{L^{\infty}}=\infty, \quad T<+\infty\,,
\end{equation}
where $g(a)$ is the quantity of interest, such as $a$ itself or its gradient $\nabla a$. 

A canonical mechanism for finite-time blowup is via self-similar solutions, which take the form \begin{equation}
    \label{intro:ss ansatz}
    a(x,t)=(T-t)^\alpha \tilde{U}(y)\,,\quad y=x(T-t)^\beta\,.
\end{equation} This ansatz exploits the scaling symmetry of the equation, reducing the original PDE to a time-independent profile equation for 
$\tilde{U}$. Typically, $\tilde{U}$
  is smooth and bounded, making both analysis and computation more tractable. Substituting into the original equation and equating powers of 
$T-t$, one obtains a steady-state equation of the form
\begin{equation}
    \label{intro:profile eqn}
    \mathcal{F}(\tilde{\lambda},\tilde{U})=0\,,
\end{equation}
where the scalar $\tilde{\lambda}$ related to $\alpha$ and $\beta$ governs the blowup rate, and the operator $\mathcal{F}$ consists of a scaling operator plus the original operator $F$. Specific forms of $\mathcal{F}$ will be illustrated in the examples that follow.

The study of blowup in the self-similar regime thus reduces to analyzing the existence and stability of solutions to \eqref{intro:profile eqn}. Since the profile is now a `nice' function, one naturally asks: can numerical methods help identify such profiles, and more importantly, can they inform or even enable rigorous proofs? 

A common strategy is to solve \eqref{intro:profile eqn} directly by casting it as an optimization problem for $\tilde{U}$ and the associated scaling parameter $\tilde{\lambda}$, where neural networks become relevant due to their expressive power and seamless integration to modern computations at scale. However, one must build on techniques tailored to these specifically challenging problems. In particular, we build on \textbf{operator learning} and introduce Scale-Informed Neural Operator, which encodes scaling symmetries of PDEs, and  Fourier Continuation for (Physics-Informed) Neural Operators for PDEs on the whole space. We also build on \textbf{high-precision training of Physics-Informed Neural
Networks (PINNs)}, where exact enforcement of asymptotics and hard constraints becomes crucial for upgrading numerical candidates to rigorous proofs. Motivated by the goal of symbolic profile recovery, we propose the \textbf{Kolmogorov-Arnold Network (KAN)} architecture, which offers greater interpretability and compositional expressivity compared to conventional neural networks.

Alternatively, one can introduce artificial time marching and solve the equation forward in time with appropriate initial data,  with the hope of converging to the steady profile. This dynamical approach, however, requires precise control over unstable directions, often arising from continuous symmetries such as scaling or translation. Ruling out these instabilities forms a key aspect of both our theoretical framework and our numerical design.

To upgrade from numerical candidates with residue to a rigorous proof, perturbative argument and stability analysis are required. One can either use a fixed-point argument to establish solutions to the nonlinear equation \eqref{intro:profile eqn} of the perturbation around the approximate profile, or analyze the convergence to the steady state of its time-dependent counterpart. 

Specifically, we consider a perturbative ansatz of the form $\tilde{\lambda}=\bar{\lambda}+\lambda,\tilde{U}=\bar{U}+U,$ and substitute it into  \eqref{intro:profile eqn}. This yields a perturbed equation of the form \begin{equation}
    \label{intro:perturb eqn}
    0=\mathcal{L}(\lambda,U)+\mathcal{N}(\lambda,U)+\mathcal{E}\,,
\end{equation}
where $\mathcal{L}$ is the linearized operator associated with $\mathcal{F}$ at $(\bar{\lambda},\bar{U})$, $\mathcal{N}$ is the nonlinear part, and $\mathcal{E}$ is the residue from the approximate profile, which can be made arbitrarily small via high-precision computation and verified with computer-assisted proofs.

A clear characterization of stability is therefore crucial in either approach. A common strategy is to introduce a carefully chosen energy norm $E$, typically induced by an inner product on a Hilbert space, under which the linear, nonlinear, and residual terms satisfy: \begin{equation}
    \label{intro:framework}(\mathcal{L},U)_E\leq -c_1\|U\|^2_{E}\,,\quad(\mathcal{N},U)_E\leq c_2\|U\|^3_{E}\,,\quad\|\mathcal{E}\|_{E}\leq c_3\,,
\end{equation}
with positive constants. Here, $c_3$ is very small with $c_1^2>4c_2c_3$. Then we can construct an energy ball in either approach and demonstrate the existence and stability of a solution to \eqref{intro:perturb eqn}. Of course, the construction of the norm is extremely challenging and requires a case-by-case analysis. Nonetheless, we provide a general recipe for a range of equations using \textbf{singular weights}, capturing localized behaviors of the solution.

In practice, one needs to deal with potentially unstable directions. Some of those are induced by scaling symmetries of the equation, and one can enforce \textbf{local orthogonality constraints} to those directions via the introduction of extra scaling parameters in the artificial time formulation, perturbing the symmetries, which constitutes one of our key theoretical and numerical contributions. Full stability and exact asymptotics can be inferred or precisely computed without prior knowledge, amenable to the case of implicit profiles and computer-assisted proofs. In other cases where only finite codimensional stability is expected, we couple the argument with a \textbf{topological argument} handling the unstable modes. For even more challenging scenarios where damping is hard to extract directly for the linearized operator $\mathcal{L}$, one can \textbf{decompose it into a coercive one plus a compact perturbation}. Applying finite-rank approximations to the compact part, one can formally regard it as a nonlinear component and only needs to verify boundedness inverting on the finite-dimensional space, where computer-assisted proofs can again be applied. We rigorously established the open problem of nonuniqueness of Leray solutions in the 3D incompressible Navier-Stokes equation \cite{hou2025nonuniqueness}. Intuitively, the onset of nonuniqueness occurs after the singularity time, and building upon previous works, we reduce the problem to constructing a forward self-similar profile whose associated linearized operator admits a positive eigenvalue. We numerically compute the approximate profile and the eigenpair first, before rigorously establishing the existence of the exact solutions by a perturbative argument.  

The remainder of this chapter is organized around the central themes introduced above. We highlight our contributions in developing both theoretical frameworks and numerical tools, including ideas that extend beyond self-similarity. Beyond singularity formation, we also point to broader connections with scientific computing, AI for science, and machine learning, where the methodologies developed here may provide useful perspectives and technical ingredients for related problems.
\section{Numerics Inspire Proofs: Blowup via Local Modulations}\label{sec:num1}
We will elaborate on the general theoretical framework for stability analysis around an approximate profile, based on the perturbative formulation in \eqref{intro:perturb eqn} and the damping/nonlinearity/residue bounds in \eqref{intro:framework}.
As mentioned before, the linear damping estimate constitutes the most crucial part. We built upon the series of seminal works developed by Chen-Hou-Huang \cite{chen2021finite,chen2019finite2,chen2020singularity,chen2022stable} to introduce singular weights in $L^2$ estimates and extract damping. As an illustrative example, consider the following rescaled Riccati-type equation:
\begin{equation}
\label{intro:ric}
\hat{u}_\tau=-\hat{u}-\frac{1}{2}z\hat{u}_z+\hat{u}^2\,,
\end{equation}
where it can be interpreted as the Riccati ODE $a_t=a^2$ in the self-similar ansatz $$a(x,t)=(T-t)^{-1}\hat{u}(x(T-t)^{-1/2},-\log(T-t))\,.$$
Equation \eqref{intro:ric} admits a family of explicit steady states $\bar{u}=(1+cz^2)^{-1}$. These profiles are known to be stable under even perturbations that vanish to fourth order at the origin.

 Towards rigorously proving it, we can establish linear dynamical stability for even perturbations of $O(z^4)$ at the origin by a $L^2$ estimate with singular weight $|z|^{-\alpha}$ capturing the vanishing order at the origin. To be specific,  we compute the damping estimate by an integration by parts near the origin as $$(\mathcal{L}u,u\rho)\approx(-1+\frac{(\rho z)_z}{4\rho}+2\bar{u})(u,u\rho)\approx\frac{5-\alpha}{4}(u,u\rho)\,.$$
For $\alpha>5$, which corresponds to the vanishing order of $u$ greater than $2$, we thus have damping near the origin. Notice that in this simple case, the initial vanishing order of the perturbation is preserved throughout the dynamics. To control the nonlinear terms, we complement the singularly-weighted $L^2$ estimate with a high-order $H^k$ estimate for sufficiently large $k$.

Building on this idea, we established the finite time singularity of a modified Hou-Li model \cite{hou2024blowup}. The Hou–Li model was introduced in \cite{hou2008dynamic} as a 1D reduction of the 3D axisymmetric Navier–Stokes equations along the symmetry axis. To be specific, the axisymmetric NSE (in cylindrical coordinates) takes the form \begin{eqnarray*}
&&u_{1, t}+u^{r} u_{1, r}+u^{z} u_{1, z} =2 u_{1} \psi_{1, z}+\nu\Delta u_1\,, \\
&&\omega_{1, t}+u^{r} \omega_{1, r}+u^{z} \omega_{1, z} =\left(u_{1}^{2}\right)_{z}+\nu\Delta \omega_1\,, \\
&&-\left[\partial_{r}^{2}+(3 / r) \partial_{r}+\partial_{z}^{2}\right] \psi_{1} =\omega_{1}\,,
%&& u^{r} =-r \psi_{1, z}\,, \quad u^{z}=2 \psi_{1}+r \psi_{1, r}\,.
\end{eqnarray*}
where $u_1 = u^\theta/r,\; \omega_1 = \omega^\theta,\; \psi_1 = \psi^\theta/r, $ and $u^\theta$, $\omega^\theta$, and $\psi^\theta$ are the angular velocity, angular vorticity, and angular stream function, respectively. The Hou-Li model, of the following form:
\begin{equation*}
\begin{aligned}
u_{1, t}+2 \psi_1 u_{1, z} &=2 \psi_{1, z} u_1 +\nu u_{1,zz}\,, \\
\omega_{1, t}+2 \psi_1 \omega_{1, z} &=\left(u_1^{2}\right)_{z}+\nu \omega_{1,zz}\,, \\
- \psi_{1,zz} &=\omega_1\,,
\end{aligned}
\end{equation*}
is a reduction in the sense that if ($\omega_1$, $u_1$, $\psi_1$) is an exact solution of the 1D model, we can obtain an exact solution of the 3D Navier-Stokes equations by using a constant extension in $r$. The model became particularly relevant in the investigation of potential interior singularities for NSE, and the authors in \cite{hou2008dynamic} established its well-posedness in $C^m$.

Since the discovery and proof of the Hou-Luo scenario of boundary singularity in the Euler equations \cite{luo2013potentially-2,luo2014potentially,chen2022stable}, Hou discovered new numerical evidence that the 3D axisymmetric Euler and Navier-Stokes equations develop potential singular solutions at the origin \cite{hou2022potential, hou2022potentially}. Hou’s numerical simulations revealed that the axial velocity  $u^{z}=2 \psi_{1}+r \psi_{1, r}$ near the peak of $u_1$ is significantly weaker than $2\psi_1$. This weakening is attributed to the shift between the maxima of  $\psi_1$ and $u_1$: $\psi_1$ peaks at $r=r_\psi$, while $u_1$ peaks at $r=r_u$ with $r_\psi < r_u$, implying $\psi_{1, r}<0$ near the maximal of $u_1$. Thus the axial velocity $u^z$ is actually weaker than  its value at ${r=0}$, highlighting that the original Hou–Li model—being confined to the symmetry axis—fails to capture this subtle, yet critical, three-dimensional effect. To better understand this phenomenon and its relation to singularity formation, we introduce the following 1D weak advection model.
\begin{equation}
\label{1dwp1}
\begin{aligned}
u_{ t}+2 a\psi u_{ z} &=2 u \psi_{ z}+\nu u_{zz}\,, \\
\omega_{ t}+2 a\psi\omega_{ z} &=\left(u^{2}\right)_{z}+\nu \omega_{zz}\,, \\
- \psi_{zz} &=\omega\,,
\end{aligned}
\end{equation}
where $a$ is a parameter that measures the relative strength of advection in the Hou-Li model. We established the following theorems in \cite{hou2024blowup}:
\begin{theorem}
For the weak advection model \eqref{1dwp1} in the inviscid case $\nu=0$, there exists a constant $\delta>0$ such that for $a\in(1-\delta,1)$, the weak advection model \eqref{1dwp} develops a finite time singularity for some $C^{\infty}$ initial data. Moreover, there exists a self-similar profile $({\omega}_{\infty},{\omega}_{\infty},{\omega}_{\infty})$ corresponding to a blowup that is neither expanding nor focusing. More precisely, the blowup solution to \eqref{1dwp} has the form
$$\omega(x,t)=\frac{1}{1+c_{u,\infty}t}{\omega}_{\infty}\,,u(x,t)=\frac{1}{1+c_{u,\infty}t}u_{\infty}\,,\psi(x,t)=\frac{1}{1+c_{u,\infty}t}{\psi}_{\infty}\,,$$
for some negative constant $c_{u,\infty}$ with a blowup time given by $T= \frac{-1}{c_{u,\infty}}$.
\end{theorem}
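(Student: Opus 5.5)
Our plan is to prove the theorem by the perturbative-stability framework \eqref{intro:perturb eqn}--\eqref{intro:framework}, taking the Hou--Li model ($a=1$) as the base case.

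\textbf{Dynamic rescaling.} Set
\[
\omega(x,t)=C_\omega(\tau)\,\tilde\omega(x,\tau),\quad u=C_u(\tau)\,\tilde u,\quad \psi=C_\psi(\tau)\,\tilde\psi,
\]
with no spatial rescaling, since the claimed blowup is neither expanding nor focusing. With $c_u=\dot C_u/C_u$, the rescaled system reads
\[
\tilde u_\tau=c_u\tilde u-2a\tilde\psi\tilde u_z+2\tilde u\tilde\psi_z,\qquad
\tilde\omega_\tau=c_\omega\tilde\omega-2a\tilde\psi\tilde\omega_z+(\tilde u^2)_z,
\]
where the remaining scaling constants are fixed by $-\tilde\psi_{zz}=\tilde\omega$. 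The normalization parameter $c_u(\tau)$ is chosen by a vanishing modulation condition, for instance freezing $\tilde u_z(0)$ or the value of $\tilde u$ at the origin. This removes the unstable direction generated by the scaling symmetry $(u,\omega,\psi)\mapsto\lambda(u,\omega,\psi)$ together with time rescaling.

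\textbf{Approximate profile.} For $a=1$, the Hou--Li dynamics admit a steady state of the rescaled system. Both the $\psi u_z$ and $u\psi_z$ structures are present, and the $a=1$ case has a Lagrangian reformulation giving an explicit or semi-explicit solution. We take this $a=1$ steady state $(\bar u,\bar\omega,\bar\psi,\bar c_u)$ as the approximate profile for $a$ near $1$. The residual $\mathcal E$ is then $2(1-a)$ times the advection terms, so $\|\mathcal E\|_E\lesssim 1-a$, and this smallness plays the role of $c_3$ in \eqref{intro:framework}.

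\textbf{Energy estimates.} We linearize around the profile and build a weighted energy $E$. It consists of a singularly weighted $L^2$ norm with weight $\rho\sim |z|^{-\alpha}$ near the origin, tuned to the vanishing order that the modulation condition enforces on odd perturbations, and a decaying weight at infinity. We supplement this with $H^k$-type weighted norms to control the nonlocal $\psi$ and the nonlinearity. We then aim to prove the three bounds in \eqref{intro:framework}:
\begin{itemize}
\item[(i)] linear damping $(\mathcal L U,U)_E\le -c_1\|U\|_E^2$, uniformly for $a$ near $1$;
\item[(ii)] the nonlinear bound $(\mathcal N U,U)_E\le c_2\|U\|_E^3$, via weighted Hardy inequalities for $\psi$ in terms of $\omega$ and Sobolev embedding;
\item[(iii)] the residual bound above.
\end{itemize}
A bootstrap then keeps $U$ in an $O(1-a)$ energy ball for all $\tau$. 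The rescaled solution converges to a steady profile $(u_\infty,\omega_\infty,\psi_\infty)$ and $c_u(\tau)\to c_{u,\infty}<0$, negative by continuity from $\bar c_u<0$. Undoing the rescaling gives $C(t)=1/(1+c_{u,\infty}t)$ and blowup at $T=-1/c_{u,\infty}$. Smoothness of the initial data follows by taking the profile plus a smooth, compactly supported, suitably vanishing perturbation.

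\textbf{Main obstacle.} The hard step is the linear damping estimate (i). The stretching term $2u\psi_z$ and the coupling $(u^2)_z$ are non-sign-definite cross terms, and $\psi$ depends nonlocally on $\omega$. We would first compute the local damping near the origin by integration by parts, $(-c_u+\tfrac{(a\bar\psi\rho)_z}{\rho}+\dots)$, and choose $\alpha$ so that this factor is negative. We would then absorb the cross terms using weighted Cauchy--Schwarz with a relative weight between the $u$- and $\omega$-energies, and bound the nonlocal $\psi$ terms via sharp weighted Hardy inequalities. If a clean bound is not achievable by hand, the fallback is to split $\mathcal L$ into a coercive part plus a finite-rank, compact remainder and verify the remainder numerically.
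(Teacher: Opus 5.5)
\textbf{The gap: the sign of $c_{u,\infty}$.} You derive the step that actually produces blowup from a false premise. You argue that $c_{u,\infty}<0$ ``by continuity from $\bar c_u<0$'', where $\bar c_u$ is the rate of an $a=1$ rescaled steady state. A smooth $a=1$ steady state with $\bar c_u<0$ would be an exact blowup solution $\frac{1}{1+\bar c_u t}\bar u$ of the Hou--Li model. That contradicts its global well-posedness for $C^1$ data \cite{hou2008dynamic}. The relevant profile is simply $\bar u=\bar\omega=\bar\psi=\sin x$, a genuine steady state with rate $0$; no Lagrangian reformulation is needed.

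So the blowup is a first-order effect in $1-a$, and its sign must be extracted at that order. The tool is the modulation you already mention. For odd solutions with total rate $c$, differentiating the rescaled $u$-equation at the origin gives $\partial_\tau\tilde u_x(0)=\big(c+2(1-a)\tilde\psi_x(0)\big)\tilde u_x(0)$. Freezing $\tilde u_x(0)$ therefore forces $c=2(a-1)\tilde\psi_x(0)$. You must then put $\bar c_u=2(a-1)\bar\psi_x(0)=2(a-1)$ into the approximate profile, and let the perturbation carry $c_u=2(a-1)\psi_x(0)$. The bootstrap $E\le C(1-a)$, together with $|\psi_x(0)|\lesssim E$, gives $\bar c_u+c_u=2(a-1)+O((1-a)^2)<0$. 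The sign is protected by the $O(1-a)$ size of the perturbation, not by continuity.

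\textbf{The residual bound fails for the same reason.} With the rate frozen at its $a=1$ value, the residual in the $u$-equation is $2(1-a)\bar\psi\bar u_x=2(1-a)\sin x\cos x$. This is of order $x$ at the origin, so it does not vanish to the order the modulation enforces on $u$, namely $u=O(x^3)$. Your weight is tuned to that vanishing order, as is the paper's energy $\frac12\big((u_x^2,\rho)+(\omega^2,\rho)\big)$ with $\rho=1/(2\pi(1-\cos x))\sim 1/(\pi x^2)$. Under any such weight this residual is not integrable, so $\|\mathcal E\|_E\lesssim 1-a$ is false as stated. With $\bar c_u=2(a-1)$ the residuals become $F_1=F_2=2(a-1)\sin x(1-\cos x)=O(x^3)$. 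These lie in the space with norm $O(1-a)$.

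\textbf{Smaller points.} The problem is $2\pi$-periodic, so there is no weight at infinity.

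For linear damping, a Cauchy--Schwarz/Hardy treatment of the nonlocal cross terms is unlikely to close. The local terms give damping $-1$, via $(\sin x f_x,f\rho)=\frac12(f^2,\rho)$ and the cancellation of $\sin x\,\psi_{xx}$ in $(L_1)_x$ against $\sin x\,u_x$ in $L_2$. But the final margin in the paper is only $0.16$. The paper instead expands in the orthonormal basis $\sin kx-\sin(k-1)x$, $\cos kx-\cos(k+1)x$ of $L^2(\rho)$ and reduces the estimate to a quadratic form. It then verifies that form by an interval-arithmetic eigenvalue bound on a $200\times200$ truncation plus a tail estimate, which is essentially your fallback.

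Finally, convergence to $(u_\infty,\omega_\infty,\psi_\infty)$ does not follow from the bootstrap alone. The paper adds a higher-order norm built with $D_x=\sin x\,\partial_x$ and a weighted energy for the $\tau$-derivatives, and uses these to obtain exponential convergence.
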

\begin{theorem}
Consider the Hou-Li model \eqref{1d} in the inviscid case $\nu=0$. For any $\alpha<1$, \eqref{1d} develops a finite time singularity for some $C^{\alpha}$ initial data. Moreover, there exists a $C^{\alpha}$ self-similar profile corresponding to a blowup that is neither expanding nor focusing.
\end{theorem}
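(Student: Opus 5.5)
The strategy is to treat the Hou--Li model \eqref{1d} with $\nu=0$ as the endpoint $a=1$ of the weak advection model \eqref{1dwp1}. We then build the $C^\alpha$ profile through the same dynamic rescaling framework, in which the profile is a steady state of a rescaled system. We introduce rescaled variables
\[
\omega(x,t)=C_\omega(\tau)\,\Omega(x/C_l(\tau),\tau),\qquad u(x,t)=C_u(\tau)\,U(x/C_l(\tau),\tau),
\]
with $\tau$ the rescaled time. This yields
\[
\Omega_\tau + (c_l z+2\Psi)\Omega_z = c_\omega \Omega + (U^2)_z,\qquad U_\tau+(c_l z+2\Psi)U_z = c_u U+2\Psi_z U,
\]
where $-\Psi_{zz}=\Omega$. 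For a blowup that is ``neither expanding nor focusing'' we fix $c_l=0$. The remaining scaling parameters $c_u,c_\omega$ are chosen as modulation parameters through local normalization conditions at the origin, for instance $U_z(0,\tau)=\bar U_z(0)$ and $\Omega_z(0,\tau)=\bar\Omega_z(0)$ for odd $\Omega$ and even $U$. These conditions remove the unstable directions coming from the scaling symmetries. The reason the profile is only $C^\alpha$ is the local behaviour at $z=0$: a steady state with $c_l=0$ forces a local power law. Near the origin the advection coefficient is $2\Psi\approx 2\Psi_z(0)z$, so the profile equation reduces to an ODE of Euler type. Its solutions behave like $|z|^{\gamma}$ with $\gamma$ determined by $c_u$ and $\Psi_z(0)$. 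We will therefore construct an approximate profile $(\bar\Omega,\bar U)$ whose leading behaviour at $0$ is $|z|^\alpha$ (times the appropriate sign for the odd component). The free parameter $\alpha<1$ is matched by tuning the normalization.

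The key steps, in order, are as follows.

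\textbf{Step 1: approximate profile.} We construct $\bar U,\bar\Omega$ explicitly or semi-explicitly. One option is to solve the ODE system near $0$ and glue it to a far-field decay. The goal is that the residual $\mathcal E$ has small weighted norm, consistent with the condition $c_1^2>4c_2c_3$ in \eqref{intro:framework}. A useful observation is that for $a=1$ the transport $2\psi\partial_z$ combined with the stretching $2\psi_z$ yields the conserved-like quantity $(u\cdot\text{flow map derivative})$. This may allow an almost explicit profile via Lagrangian variables. We would try this first, because an exact steady state would make $\mathcal E=0$.

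\textbf{Step 2: linear damping.} We linearize around $(\bar\Omega,\bar U)$ and perform an $L^2$ estimate with singular weights $\rho\sim |z|^{-\beta}$ near $0$ and polynomial weights at infinity. This is modeled on the Riccati computation above, where the term $(\rho\,2\bar\Psi)_z/\rho$ produces a coefficient that can be made negative by choosing $\beta$ large relative to $\alpha$. The coupling term $(2\bar U U)_z$ is handled by a weighted Cauchy--Schwarz with a relative weight constant. The nonlocal term $\Psi$ is controlled through Hardy-type inequalities, since the vanishing conditions imposed by the normalization make $\Psi,\Psi_z$ vanish to sufficient order.

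\textbf{Step 3: nonlinear closure.} We complement the estimate with a weighted $H^k$ (or $C^\alpha$-compatible weighted Hölder) estimate so that the nonlinear terms obey $(\mathcal N,U)_E\le c_2\|U\|_E^3$. A bootstrap in the energy ball then shows convergence to a steady state as $\tau\to\infty$, with $c_u(\tau)\to c_{u,\infty}<0$. Undoing the rescaling with $c_l=0$ gives $C_u(t)=(1+c_{u,\infty}t)^{-1}$ and blowup at $T=-1/c_{u,\infty}$. Initial data equal to the profile plus a small weighted perturbation are $C^\alpha$.

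The main obstacle is Step 2 at $a=1$. In the weak advection theorem the smallness $1-a>0$ provides a smooth profile with a margin of damping. At $a=1$ the advection exactly balances the stretching near the origin, and smooth profiles cease to exist. The damping must then come entirely from the singular weight, and it is consistent only with the reduced regularity $C^\alpha$. Showing a uniform damping constant in the weighted norm compatible with $|z|^\alpha$ behaviour, while keeping $\Psi$ controlled by Hardy inequalities, is the delicate part. If that fails, we would fall back to a limiting argument as $a\to1^-$ using uniform $C^\alpha$ bounds.
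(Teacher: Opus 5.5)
Your plan has no small parameter, and that is the central gap. The paper never builds a non-focusing $C^{\alpha}$ profile for an arbitrary $\alpha<1$. It works in the periodic setting with $u,\omega,\psi$ all odd, and proves blowup only for $\alpha\in(1-\delta_0,1)$. It does this by perturbing the explicit smooth steady state $\bar\omega=\bar u=\bar\psi=\sin x$ of the $a=1$ model. The approximate profile is $\bar\omega_\alpha=\mathrm{sgn}(x)|\sin x|^{\alpha}$, $\bar u_\alpha=\mathrm{sgn}(x)|\sin x|^{(1+\alpha)/2}$, $\bar c_{u,\alpha}=(\alpha-1)\bar\psi_{\alpha,x}(0)$, with the single normalization $c_u=(\alpha-1)\psi_x(0)$. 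The residuals $F_{i,\alpha}$ are then $O(|1-\alpha|)$, and the profile corrections $R_{i,\alpha}$ contribute only $O(|1-\alpha|^{1/2})E^2$ thanks to refined cancellation estimates. General $\alpha<1$ follows from $C^{\alpha'}\subset C^{\alpha}$ with $\alpha'\in(\max(\alpha,1-\delta_0),1)$.

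Your Euler-type local analysis is consistent with this profile ($u\sim|z|^{(1+\alpha)/2}$, $\omega\sim|z|^{\alpha}$, $c_u=(\alpha-1)\Psi_z(0)$). However, nothing in your Steps 1--3 makes $\mathcal E$ small when $\alpha$ is far from $1$, and tuning the normalization cannot manufacture a profile. The paper even suggests that intrinsically low-regularity blowup may be focusing.

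Your fallback $a\to1^-$ also fails. There $c_{u,\infty}=2(a-1)+o(a-1)\to0$, the blowup time diverges, and the smooth profiles converge to the non-blowing state $\sin x$.

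The set-up needs repair too. With $x$ unscaled the equations admit only one amplitude/time scaling ($u,\omega,\psi$ must share the factor $C_u$), so independent $c_u,c_\omega$ do not give the system you wrote. For even $U$ the condition $U_z(0)=\bar U_z(0)$ is vacuous. And the periodic problem has no far field.

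The second gap is in Step 2, which you rightly call the crux but propose to close with tools that are too weak. The damping is not something that appears only at $C^{\alpha}$ regularity. It is the stability of the linearization around the smooth state $\sin x$, and the $C^\alpha$ profile enters only through the small terms $R_{i,\alpha}$, $F_{i,\alpha}$. The paper's estimate goes as follows:
\begin{enumerate}
\item With $\rho=\frac{1}{2\pi(1-\cos x)}$ on $(u_x,\omega)$, the identity $(\sin x f_x,f\rho)=\frac12(f^2,\rho)$ turns transport into $-[(u_x,u_x\rho)+(\omega,\omega\rho)+(u,u\rho)]$.
\item The cross terms $2\sin x\,\psi_{xx}$ in $(L_1)_x$ and $2\sin x\,u_x$ in $L_2$ cancel exactly through $-\psi_{xx}=\omega$. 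This requires the same weight on both components, not a relative constant.
\item The remaining nonlocal terms $2[-(\cos x\psi,\omega\rho)+(\sin x\psi,u_x\rho)+(u\cos x,\omega\rho)]$ are of the same order as the damping. The paper controls them by expanding in the $L^2(\rho)$-orthonormal bases $o^k=\sin kx-\sin(k-1)x$, $e^k=\cos kx-\cos(k+1)x$. This reduces the estimate to positivity of an explicit infinite quadratic form, proved by an interval-arithmetic eigenvalue bound on a $200\times200$ truncation plus a tail estimate.
\end{enumerate}
Only damping $0.16$ survives, a margin that generic Cauchy--Schwarz and Hardy constants cannot reach.

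Nor can you gain damping by making the weight more singular. Near $0$, $L_2\approx-2(\omega_x(0)+\psi_x(0))x$, so $\omega$ keeps a generic linear term, and the single modulation is already spent on keeping $u$ vanishing at the origin. This caps the weight on $\omega$ below $|x|^{-3}$, and the cancellation above ties the weight on $u_x$ to it.

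Finally, Step 3 needs derivatives adapted to the profile, e.g.\ $K^2=\|D_xu_x\|_\rho^2+\|D_x\omega\|_\rho^2$ with $D_x=\sin x\,\partial_x$, because higher derivatives of $|\sin x|^{\alpha}$ are not square integrable at $0$. Convergence to the profile also needs a separate energy for $(u_{\tau,x},\omega_\tau)$, not the bootstrap alone.
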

\begin{theorem}
Consider the weak advection model \eqref{1dwp} with viscosity. There exists a constant $\delta_1>0$ such that for $a\in(1-\delta_1,1)$, the weak advection model \eqref{1dwp} develops a finite time singularity for some $C^{\infty}$ initial data.
\end{theorem}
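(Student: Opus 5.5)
The plan is to treat the viscous weak advection model \eqref{1dwp1} with $\nu>0$ as a perturbation of the inviscid blowup given by the first theorem, in the dynamic rescaling formulation. We pass to rescaled variables
\[
u(z,t)=C_u(\tau)\,\hat u\bigl(z/C_l(\tau),\tau\bigr),\qquad \omega(z,t)=C_\omega(\tau)\,\hat\omega\bigl(z/C_l(\tau),\tau\bigr),
\]
with normalization conditions on $c_u=\dot C_u/C_u$, $c_l=\dot C_l/C_l$ chosen so that $\hat u_z(0),\hat\omega_z(0)$ stay fixed. This is the local modulation/orthogonality idea of Section~\ref{sec:num1}. The inviscid profile $(\bar u,\bar\omega,\bar\psi)$ with $a$ close to $1$ is an approximate steady state. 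In the rescaled equation the viscous term becomes $\nu\,C_l^{-2}C_u^{-1}\cdots\partial_{yy}$, i.e.\ it carries a time-dependent prefactor $\nu(\tau)$. We will choose the scaling so that $\nu(\tau)$ decays exponentially in $\tau$. The inviscid blowup is neither focusing nor expanding, so we select $c_l$ slightly positive (an expanding modulation) to make viscosity asymptotically negligible. Alternatively, since the first theorem leaves freedom in $c_l$, we can exploit the family of profiles and use the blowup rate $c_u<0$, so that $\nu C_l^{-2}/C_u$ is integrable in $\tau$.

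The key steps, in order, are as follows. First, recall the linear stability estimate from the proof of the first theorem: a singularly weighted $L^2$ estimate (weights like $|y|^{-\alpha}$ near the origin, with vanishing-order conditions enforced by the normalization) together with weighted $H^k$ estimates, giving $(\mathcal L U,U)_E\le -c_1\|U\|_E^2$. Second, bound the nonlinear and residual terms as in \eqref{intro:framework}, with $\|\mathcal E\|_E$ small for $a\in(1-\delta_1,1)$. Third, and new here, estimate the viscous contribution $\nu(\tau)(\partial_{yy}(\bar U+U),U)_E$. The part acting on $U$ can be integrated by parts and yields a good term minus commutators from the singular weight. The part acting on $\bar U$ is a forcing of size $\nu(\tau)\|\bar U_{yy}\|_E$. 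We then close a bootstrap
\[
\tfrac{d}{d\tau}\|U\|_E\le -c_1\|U\|_E+c_2\|U\|_E^2+c_3+C\nu(\tau)\bigl(1+\|U\|_E\bigr),
\]
and choose $\nu(0)$ (equivalently, the initial scale) small enough that the energy ball is invariant. Fourth, check that $c_u(\tau)$ converges to a negative limit, so that $C_u(\tau)$ grows with $\int_0^\infty C_u^{-1}\,d\tau<\infty$ in physical time. This yields finite-time blowup from smooth data obtained by rescaling the initial perturbation.

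The main obstacle is the third step. With the singular weight $|y|^{-\alpha}$ at the origin, integrating $\partial_{yy}U$ against $U|y|^{-\alpha}$ produces a term $\int U^2\,\partial_{yy}(|y|^{-\alpha})$, which has a bad sign and is more singular than the energy. Viscosity also need not preserve the high vanishing order at $0$ that the weight requires. To handle this, I would first use the normalization to fix only low-order Taylor coefficients and pick $\alpha$ within the range allowed by the damping computation so that the commutator can be bounded via Hardy's inequality by the dissipation $\nu\int U_y^2|y|^{-\alpha}$ plus $\nu\|U\|_E^2$. The latter is absorbed because $\nu(\tau)$ is small and integrable. If this fails, I would replace the pure power weight by a regularized one, e.g.\ $(|y|^2+\varepsilon^2)^{-\alpha/2}$ with $\varepsilon$ tied to $\nu(\tau)$, and accept slightly weaker damping, which is affordable since $\delta_1$ may be smaller than $\delta$.
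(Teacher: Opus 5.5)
You have the paper's architecture: perturb around the inviscid profile in dynamic-rescaling variables, let the viscous prefactor decay, and close a bootstrap. Your ``alternative'' is what the paper actually does. There is no spatial rescaling, and the free constant $C_u(0)=|a-1|^2$ makes the prefactor $\nu C_u(\tau)$ small; it decays because $c_u+\bar c_u<a-1<0$. Your primary route, an expanding modulation $c_l>0$, is not available, because the model is $2\pi$-periodic. A spatial rescaling makes the period $\tau$-dependent and destroys both the profile $\sin x$ and the weight $\rho=\frac{1}{2\pi(1-\cos x)}$. With only $c_u$ you can enforce $u_x(0,\tau)=0$, via $c_u=2(a-1)\psi_x(0)-\nu C_u(\tau)u_{xxx}(0)$, but not also $\omega_x(0)=0$. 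As written, your forcing bound $\nu(\tau)\|\bar U_{yy}\|_E$ is infinite, since $\partial_x\bar u_{xx}=-\cos x\notin L^2(\rho)$. The paper removes this term exactly by putting $\nu C_u(\tau)$ into $\bar c_u$ and using $\partial_{xx}\sin x=-\sin x$.

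The genuine gap is your step three, and neither proposed fix works.

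\textbf{Why Hardy fails.} For $\rho\sim|x|^{-\alpha}$ you have $(f_{xx},f\rho)=-\|f_x\|_\rho^2+\tfrac12(f^2,\rho_{xx})$. The sharp Hardy constant only gives $\tfrac12(f^2,\rho_{xx})\le\frac{2\alpha}{\alpha+1}\|f_x\|_\rho^2$. So the net viscous term can be positive, of size $\frac{\alpha-1}{\alpha+1}\|f_x\|_\rho^2$ and uncontrolled by the energy, exactly when $\alpha>1$. But $\alpha>1$ is precisely the range in which the transport $-2\sin x\,\partial_x$ gives damping; the paper's $\rho$ has $\alpha=2$. The smallness of $\nu(\tau)$ does not help, because it multiplies both terms.

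\textbf{Why the other options fail.} For $\omega$, whose value $\omega_x(0)$ is not fixed, $\|\omega_x\|_\rho$ is not even finite. Flattening the weight near $0$ destroys the inviscid damping, which is an exact computation for this particular $\rho$: it uses the identity $(\sin x f_x,f\rho)=\tfrac12(f^2,\rho)$ and a computer-assisted quadratic-form bound in the $\rho$-orthonormal Fourier basis.

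\textbf{The missing idea.} Accept the positive terms, subtract Taylor parts, and bound them by the local quantity $E_V=\|\omega_{xxx}\|_{L^\infty(I)}+\|u_{xxx}\|_{L^\infty(I)}+|\omega_x(0)|$. Controlling $E_V$ then requires a new energy $I_{k_0}^2=\sum_{k\le k_0}\mu^kE_k^2$, where $E_k^2=(u^{(k+1)},u^{(k+1)}\rho_k)+(\omega^{(k)},\omega^{(k)}\rho_k)$ and $\rho_k=(1+\cos x)^k$.
\begin{itemize}
\item The weight $\rho_k$ is nondegenerate at $0$, so Gagliardo--Nirenberg gives $E_V\lesssim I_4$.
\item It vanishes at $x=\pi$, and this is forced: against a flat weight the top-order transport $-2\sin x\,\partial_x-2k\cos x$ grows at rate $2k-1$ near $\pi$.
\item Its viscous commutator $C(k)(u^{(k+1)},u^{(k+1)}\rho_{k-1})$ is not bounded by $E_k$. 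It is absorbed by the level-$(k-1)$ dissipation once $\mu$ is small.
\end{itemize}
The higher-order norm from the inviscid proof cannot play this role: it is built on $\sin x\,\partial_x$ and does not control $u_{xxx}(0)$. With $I_4$ in hand, the paper closes $\frac{d}{d\tau}I_4\le-(0.1-C|a-1|)I_4+C|a-1|+CI_4^2+C\nu C_u(\tau)(1+I_4)I_4$. Convergence of $c_u$ is neither proved nor needed; the uniform bound $c_u+\bar c_u<a-1<0$ suffices for blowup.
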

We leave the details and the proofs concerning the Hou-Li model to Appendix \ref{append:hou-li}.

In the remainder of this section, we elaborate on our framework for proving finite-time blowup in PDEs beyond classical self-similar or fully stable scenarios, using local modulation and singularly-weighted energy estimates. In many cases—especially when viscous terms are present—the blowup deviates from exact self-similarity. To capture these behaviors, we introduce additional modulation parameters that enforce vanishing conditions locally, allowing us to infer both the blowup rate and its stability directly from the modulation dynamics. Those modulations, stemming from computational concerns of numerical stability, turn out to be keys to the theoretical stability. 
We illustrate this framework across a sequence of increasingly complex examples. In Subsection \ref{subsec:nlh}, we discuss the semilinear heat equation, which resembles the Riccati toy model \eqref{intro:ric}. Subsection \ref{subsec:cgl} follows as a much more technical resolution of the complex Ginzburg-Landau equation with its full stability without symmetry assumptions. We solve the open problem of the singularity formation of the 3D Keller-Segel equation with logistic damping in Subsection \ref{subsec:3dks}, where we combine modulation analysis with a topological argument to handle finite codimensional instability.  We emphasize key conceptual ideas here for clarity, and refer to Chapter \ref{chap:2} for a complete treatise of the semilinear heat equation based on \cite{hou20242}, while leaving the technically challenging parts of complex Ginzburg-Landau \cite{chen2024stability} and 3D Keller-Segel equations \cite{liu2025finite} to the original papers.
\subsection{Nonlinear heat equation}
\label{subsec:nlh}
We consider the semilinear heat equation, one of the simplest and best-studied PDEs with singularities. 
$$a_t=\Delta a+a^2\,,$$
with the blowup profile\begin{equation*}
a(x, t) \sim \frac{1}{T-t} \bar{u}\left(\frac{x}{\sqrt{(T-t)|\log (T-t)|}}\right)\,, \quad \bar{u}(\xi)=\frac{1}{1+|\xi|^2 / 8}.
\end{equation*}

One can see that its blowup scaling resembles the Riccati-type equation, except for a log-correction. Moreover, in its blowup scaling, the diffusion term becomes asymptotically small and to its leading order coincides with \eqref{intro:ric}. This log-correction is of course subtle to compute and capture, requiring special attention.

Inspired by \eqref{intro:ric}, we employ a similar stability argument using singularly-weighted $L^2$ norm and a high-order $H^k$ norm. Extra modulations are required to ensure the vanishing conditions of the perturbation, which are inspired by numerical computations aimed at capturing the steady-state profile. We thus introduce general rescaling beyond self-similarity as $$\hat{u}(z,\tau)=\hat{C}_u(\tau) a(\hat{C}_l(\tau)z,t(\tau))\,,$$ where 
\begin{equation*}
\hat{C}_u=\hat{C}_u(0)\exp{\Big(\int_0^\tau \hat{c}_u(s) ds\Big)},\;  \hat{C}_l=\exp{\Big(-\int_0^\tau\hat{c}_l(s) ds\Big)},\; t=\int_0^\tau \hat{C}_u(s) ds,
\end{equation*}
with $\hat c_u$ and $\hat c_l$ being determined. We introduced the extra degree of freedom $\hat{C}_u(0)$, which we will later choose to be small for the estimates of the viscous term.
The renormalized equation for $\hat u$ reads as
\begin{equation*}
    \hat{u}_\tau=\hat{c}_u \hat{u}-\hat{c}_l z\hat{u}_z+\hat{u}^2+\frac{\hat{C}_u}{\hat{C}_l^2}\hat{u}_{zz}.
\end{equation*}
We then consider the approximate profile $\bar{u}=(1+z^2/8)^{-1}$ that solves
    $$ \bar{c}_u\bar{u}-\bar{c}_l z\bar{u}_z+\bar{u}^2=0, \quad \textup{for}\;\; \bar{c}_u=-1\,,\bar{c}_l=1/2\,.$$
    If we enforce that $u$  is an even function satisfying $u(0, \tau) = u_{zz}(0, \tau) = 0$ for all time $\tau$, we have by the dynamic rescaling equation,
\begin{equation*}
    \hat{c}_u+\bar{u}(0)+\frac{\hat{C}_u \bar{u}_{zz}(0)}{\hat{C}_l^2 \bar{u}(0)}=0\,,\hat{c}_u-2\hat{c}_l+2\bar{u}(0)+\frac{\hat{C}_u (\bar{u}_{zzzz}(0)+{u}_{zzzz}(0))}{\hat{C}_l^2 \bar{u}_{zz}(0)}=0\,.
\end{equation*}
Defining 
$$\lambda (\tau)=\frac{\hat{C}_u (\tau)}{\hat{C}_l^2(\tau)}=\hat{C}_u(0)\exp{\Big(\int_0^\tau ({c_u(s)}+2c_l(s)) ds\Big)}\,,$$
we can simplify the normalization constraints into $${c_u}-\frac{1}{4}\lambda=0\,,{c_u}-2{c_l}-\big(\frac{ 3}{2}+4{u}_{zzzz}(0)\big)\lambda=0\,.$$
This gives 
\begin{equation*}
{c_u}=\frac{1}{4}\lambda\,, \quad {c_l}=-(\frac{ 5}{8}+2{u}_{zzzz}(0))\lambda\,,\end{equation*}
from which we {derive} the ODE for $\lambda$ as 
\begin{equation*}
    \lambda_\tau=\lambda(c_u+2c_l)=-(1+4{u}_{zzzz}(0))\lambda^2\,.
\end{equation*}
We can formally infer the log-correction from the asymptotics $\lambda\approx 1/\tau\approx|\log{(T-t)}|^{-1}$. Those extra modulations ensure that we can work in the singularly weighted space, and we generalize to high dimensions via different spatial scalings in different coordinates. Computations also corroborate our theoretical findings beyond the small perturbation regime; for details see Chapter \ref{chap:2}.
\subsection{Complex Ginzburg-Landau equation}
\label{subsec:cgl}
We consider the complex Ginzburg-Landau equation \begin{equation*} 
    \psi_t=(1+\imath\beta)\Delta \psi + (1+\imath \delta)|\psi|^{p-1}\psi- \gamma \psi, 
\end{equation*}
which reduces to the semilinear heat equation we have considered when $\beta=\delta=\gamma=0,p=2$, while connecting with the nonlinear Schr\"odinger equation in the limit $\beta, |\delta| \to \infty$. It enjoys a similar blowup law with log corrections, and we aim to establish full stability beyond the assumption of even symmetries.

Upon introducing the phase-amplitude decomposition, we obtain \begin{align*}
 &\partial_t u = \big[\Delta - |\nabla \theta|^2 \big]u - \beta\big(2 \nabla u \cdot \nabla \theta  + u \Delta \theta \big)  + u^p - \gamma u, \\
& u \partial_t \theta = \beta \big[\Delta - |\nabla \theta|^2   \big] u + 2 \nabla u \cdot \nabla \theta  + u \Delta \theta + \delta u^p .\end{align*}
One sees that to the leading order in $u$, it resembles the nonlinear heat equation again, and we can adopt a similar stability argument, with the following extra difficulties, however.

\begin{itemize}
\item We need to deal with full stability, corresponding to modulations of terms all the way to second-order at the origin, with $1+d+d(d+1)/2$ degree of freedom in $d$-dimension. We thus introduce a matrix rescaling in space as: $$U(z,\tau) = H(\tau) u\big( \mathbf{R}(\tau) z + V(\tau), t(\tau)\big)\,,$$$$\Theta(z, \tau) = 
%\red{\mu(\tau)} + 
 \theta\big( \mathbf{R}(\tau) z + V(\tau), t(\tau)\big), 
 \quad 
 t(\tau) = \int_0^\tau H^{p-1}(s) ds,
$$
where $ \mathbf{R}(\tau) \in \mathbb{R}^{d \times d}$ is upper triangular, $V(\tau) \in \mathbb{R}^d$ and $H(\tau) \in \mathbb{R}_+$.
 The modulation corresponds to the symmetries of the equation, with $d-1$ extra modulation parameters. Similarly to the semilinear heat equation, we now have a matrix $\mathcal{Q}=H^{p-1}\mathbf{R}^{-1}\mathbf{R}^{-1,T}$ capturing the log-correction and making the viscous terms formally small.

\item General nonlinearity and the phase equation necessitate a lower bound of $U$: we use the maximal principle and a weighted $L^\infty$ estimate.

\item Sharp decay estimates of $\nabla^i U$ require almost tight power for the weights and interpolation and embedding inequalities.

\item Coupling of amplitude and phase needs a top-order energy estimate with special algebraic structure to cancel out top-order terms in diffusion.
$$\begin{aligned}
    \mathscr{D}_U & =\Delta_\mathcal{Q} U-2\beta\langle\nabla U,\nabla\Theta\rangle_\mathcal{Q} -U\langle\nabla \Theta,\nabla\Theta\rangle_\mathcal{Q} -\beta U\Delta_\mathcal{Q} \Theta\,, \\
    \label{im_v1}
\mathscr{D}_\Theta & = \beta\frac{\Delta_\mathcal{Q}  U}{U}+2\frac{\langle\nabla U,\nabla\Theta\rangle_\mathcal{Q} }{U}-\beta \langle\nabla \Theta,\nabla\Theta\rangle_\mathcal{Q} +\Delta_\mathcal{Q} \Theta\,.
\end{aligned}$$
We construct top-order energy as 
$$(|\nabla^k W|^2,\rho_k)+(|\nabla^k \Phi|^2,U^2\rho_k)\,,$$
where $W,\Phi$ are the perturbations in $U=\bar{U}+W,\Theta=\bar{\Theta}+\Phi$.
 \end{itemize}
We refer to \cite{chen2024stability} for the technical details and estimates.
\subsection{3D Keller-Segel equation with logistic damping}
\label{subsec:3dks}
We now demonstrate the applicability of our method to singularity beyond full stability and without an explicit approximate profile. We focus on $3$D Keller-Segel equation, a classical chemotaxis model, with a quadratic logistic damping term $-\mu \rho^2$ modeling density-dependent mortality rate and show the existence of finite-time blowup solutions with nonnegative density and finite mass for a sharp range of $\mu \in \big[0,\frac{1}{3}\big)$. To be specific, consider the model 
$$
\begin{cases}
\partial_t \rho = \Delta \rho -  \nabla \cdot (\rho \nabla c) - \mu \rho^2, \\
\Delta c +\rho = 0.
\end{cases}$$

The logistic damping makes the model inherently nonlocal, even in the radial symmetric assumption, unlike the original Keller-Segel equation, where a partial mass formulation could be introduced to make it local. Nonlocality makes the construction of approximate profiles challenging, and we deploy a phase-portrait method to establish the existence of a profile, treating the viscous term again as asymptotically small. To be specific, the radial profile satisfies $$Q+\beta y \cdot \nabla_y Q = \nabla_y Q \cdot \nabla_y\Delta^{-1}_yQ + (1-\mu)Q^2,
$$ with asymptotics $Q=\frac{1}{1-\mu}-|y|^{2j_0}+O(|y|^{2j_0+2})$ near the origin, where $\beta=\frac{1}{3(1-\mu)}+\frac{1}{2j_0}$. The left-hand side of the profile equation corresponds to the blowup scaling. 

To establish the existence of the profile and make the viscous terms small, we require $$\frac{1}{3(1-\mu)} < \beta < \frac{1}{2},$$which coincides with the sharp range of $\mu$.  

We now conclude finite codimensional stability by introducing the rescaling $$y =\frac{x}{\lambda^{2\beta}}, \quad \frac{d\tau}{dt} = \frac{1}{\lambda^2}, \quad \frac{\lambda_\tau}{\lambda} = -\frac{1}{2},\quad \rho(t,x) = \frac{1}{\lambda^2} \Psi \left( \tau, y \right).$$
The equation in the rescaled variable is $$\partial_\tau \Psi = \lambda^{2-4\beta} \Delta \Psi - \Psi - \beta y \cdot \nabla \Psi + \nabla \Psi \cdot \nabla \Delta^{-1} \Psi + (1-\mu) \Psi^2.$$ 

Consider the perturbative ansatz $$\Psi=Q+\varepsilon^u(\tau,y) + \varepsilon^s(\tau,y),
\text{ with } \varepsilon^u(\tau,y) = \sum_{j=0}^K c_j(\tau) \chi(y) |y|^{2j}.$$
For sufficiently large $K$, we have stability of the perturbation $\varepsilon^s$ using singularly-weighted estimates, provided that $\varepsilon^s$ vanishes to $O(|y|^{2K+2})$ near the origin, which will be enforced by local modulations of the coefficients $c_j$.

We can solve the dynamic equations for $c_j$ as
$$\begin{cases}
    \dot c_j =\frac{j_0 -j}{j_0} c_j + \lambda^{2-4 \beta} [\Delta \Psi]_j +  [N(\varepsilon^u)]_{j}, & 0 \le j < j_0, \\
    \dot c_{j_0} =  \sigma_{0,j_0}c_0 +\lambda^{2-4 \beta} [\Delta \Psi]_{j_0}  +
    [N(\varepsilon^u)]_j
    , & j =j_0, \\
    \dot c_{j} =  \frac{j_0 -j}{j_0} c_j + \sum_{i=0}^{j-1} \sigma_{i,j} c_i + \lambda^{2-4 \beta} [\Delta \Psi]_{j}  + [N(\varepsilon^u)]_j, & j_0 < j \le K.
\end{cases}$$
One can perform a standard bootstrap argument coupled with topological arguments to derive a finite codimensional stability with dimension $j_0+1$. Notice that here $j_0 < j \le K$ corresponds to fake unstable directions. We leave the technical details to \cite{liu2025finite}.
\section{Numerics Provide Basis for Proofs: High Precision NNs and KANs}\label{sec:num2}
In this section, we discuss the numerical tools we have developed for efficient computations of the profile; see the prototype equation \eqref{intro:profile eqn}. We will highlight how machine learning tools can contribute to the computation, especially when the profile is unstable, where numerical time marching might struggle with stability issues. The tools developed have a much broader impact beyond the interest of singularity formation, on general applications in scientific computing,
AI for science, and general machine learning.

In Subsection \ref{subsec:no}, we discuss our contribution to operator learning, where we introduced Fourier continuation to deal with non-periodic problems, and also introduced Scale-Informed Neural Operator for scaling invariance of PDEs. We discuss the high-precision training of PINNs via exact enforcement of hard constraints and asymptotics in Subsection \ref{subsec:pinn}. KAN: Kolmogorov–Arnold Networks as a more interpretable and scaling-efficient neural network architecture, is introduced in Subsection \ref{subsec:kan}. 
We sketch the high-level ideas here, and refer to our articles \cite{maust2022fourier,li2024scale} for details on developments on operator learning, Appendix \ref{append:pinns} based on \cite{wang2025high} for high precision PINN training, and Chapter \ref{chap:3} based on \cite{kan1,liu2024kan,kanbias} for a complete overview of KAN.

\subsection{Neural Operator with Fourier continuation and scaling invariance}
\label{subsec:no}
Neural Operators (NOs) \cite{li2020fourier,kovachki2023neural} stand as an important idea to learn mappings directly between function spaces via parametrization of a nonlinear mapping in the spectral space by neural networks, thus offering the discretization-invariance property regardless of the physical mesh. In the context of PDE solving, NO aims to learn the solution operator directly, and can be coupled with the data loss and the physical loss, resulting in the Physics-Informed Neural Operator (PINO). Originally, PINO leverages differentiation in the Fourier spectral space and is best suited for problems defined on a periodic domain. We extended the methodology by introducing  Fourier continuation in FC-PINO \cite{maust2022fourier} to deal with problems in the whole space, particularly suited for the aforementioned problems of singularity formation. FC-PINO enables efficient learning of blowup profiles, and more generally, families of profiles across varying scaling parameters—thanks to the operator learning formulation. Related to the scale-invariance in PDEs, we introduced the Scale-Informed Neural Operator \cite{li2024scale}. Scale-consistency helps each model
extrapolate to unseen scales, including the challenging Helmholtz and Navier-Stokes simulations. We refer to the details in our papers \cite{maust2022fourier,li2024scale}.
Building on operator learning, which integrates supervised data with PDE constraints, we envision leveraging known profiles from similar equations to guide profile discovery in new regimes.

\subsection{High precision training of PINNs}
\label{subsec:pinn}
Motivated by the need for a numerical profile with high precision to be upgraded to a rigorous proof \eqref{intro:framework}, we developed a high precision training procedure of PINNs on an unbounded domain in \cite{wang2025high}. This approach emphasizes exact enforcement of asymptotic behavior, incorporation of hard constraints, and the use of advanced second-order optimization methods. We demonstrated an accuracy $4$-digits better than previous state-of-the-art by the important work \cite{wang2023asymptotic}, in terms of a 2D Boussinesq equation related to the 3D Euler singularity on the boundary \cite{chen2022stable}. We fully open-sourced our codebase to facilitate future developments and reproducibility. Full technical details are provided in Appendix \ref{append:pinns}. Beyond architectures and physical inductive biases, we also work on developing optimizers for scalable and high-precision training, including our Self-scaled SOAP optimizer, and work on more challenging PDEs with singularities, including the weak convection model introduced in \cite{liu2017spatial}.
\subsection{KAN}
\label{subsec:kan}
Partially motivated by an exact symbolic search for blowups, we proposed Kolmogorov–Arnold Networks (KANs) \cite{kan1}, as an interpretable alternative to modern machine learning architectures. The prevalent fully connected neural network, or Multi-Layer Perceptrons (MLPs), has learnable linear weights between layers and fixed nonlinear activation functions. It is the nonlinearity that renders them expressivity, albeit it often comes at the cost of a huge number of parameters to be trained. KANs leverage the  Kolmogorov-Arnold representation theorem, which reduces the representation of a multivariate function to a composition of univariate functions. This compositional construction aligns well with modern machine learning, and we generalize it to arbitrary depth. Better scaling laws are established under the compositionally smooth assumption empirically and theoretically, for a wide range of benchmarks of symbolic regression, PDE solving, and larger-scale problems of imaging. KANs demonstrate greater expressivity than MLPs and are particularly effective at capturing high-frequency components \cite{kanbias}. 

We further demonstrated the capacity of KANs to address a wide range of scientific problems \cite{liu2024kan}, highlighting their synergy with AI for science. We also developed effective initialization schemes for KAN training \cite{rigas2025initialization} and integrated KAN into Operator Learning, providing a more interpretable and expressive alternative to existing Neural Operator frameworks \cite{lee2025kano}. We leave the detailed discussions to Chapter \ref{chap:3}.

\section{Numerics with Provable Guarantees: EKHMC and ExpMsFEM}\label{sec:num3}
Finally, in the last section, we talk about how insights from theory can guide the design of competitive solvers with theoretical guarantees.

We introduce two such methods: a preconditioned second-order sampler, Ensemble Kalman Hamiltonian Monte Carlo (EKHMC), in Subsection \ref{subsec:EHMC}, and a state-of-the-art multiscale PDE solver, the Exponential Multiscale Finite Element Method (ExpMsFEM), in Subsection \ref{subsec:ExpMsFEM}.
We sketch the high-level ideas here and refer to Chapter \ref{chap:4} for a detailed exposition of ExpMsFEM, based on \cite{chen2021exponential, chen2023exponentially, chen2024exponentially}. A full treatment of EKHMC appears in Appendix \ref{append:ehmc}, based on \cite{liu2022second}.

\subsection{EKHMC}
\label{subsec:EHMC}
Consider the task of sampling from a target distribution $$\pi(q)=\frac{1}{Z_q}\exp(-\Phi(q))\,,$$
which arises naturally in Bayesian inverse problems, among other applications. The simplest approach is via simulation of the Langevin equation, which has the target density as its invariant distribution. Variants of Langevin dynamics have been proposed to boost convergence to the target distribution, among which are the Hamiltonian Monte Carlo with an auxiliary velocity variable introduced, and the preconditioning method to sample the target distribution with large condition numbers. Building upon the works of the Ensemble Kalman Sampler \cite{garbuno2020interacting}, where an interacting particle system was introduced to approximate the covariance as a preconditioner, we proposed a second-order sampler \cite{liu2022second} with empirically faster mixing rates.
\begin{equation*}
\begin{aligned}
\frac{\mathrm{d} q}{\mathrm{d} t}&= p\,, \\
\frac{\mathrm{d} p}{\mathrm{d} t}&=-\mathcal{C}_q(\rho)D \Phi(q)- {\gamma p+\sqrt{2\gamma \mathcal{C}_q(\rho)} \frac{\mathrm{d} W}{\mathrm{d} t}}\,.
\end{aligned}
\end{equation*}
We showed that EKHMC is affine-invariant and proposed a gradient-free version of the algorithm. In the context of Bayesian inverse problems with linear forward maps, we further demonstrated that the associated mean-field dynamics preserve Gaussianity and converge to the target distribution at a rate independent of the linear operator. We leave the detailed discussions to Appendix \ref{append:ehmc}.
\subsection{ExpMsFEM}
\label{subsec:ExpMsFEM}
Consider the model problem in a bounded domain $\Omega \subset \mathbb{R}^d$ with a Lipschitz boundary $\Gamma$. Here, $d=2$. For generality, the boundary can contain disjoint parts $\Gamma = \Gamma_1\cup \Gamma_2$ where $\Gamma_1$ corresponds to the Dirichlet boundary conditions and $\Gamma_2$ corresponds to the Neumann and Robin type boundary conditions. 
% Given the multiscale coefficient $A(x)$ and the perturbation with potential $V(x)$, 
The model equation is: 
\begin{equation}
\left\{
\begin{aligned}
-\nabla \cdot(A\nabla u)+Vu&=f, \ \text{in} \ \Omega\\
u&=0, \ \text{on} \ \Gamma_1\\
A\nabla u\cdot\nu&=\beta u, \ \text{on} \  \Gamma_2 \, .
\end{aligned}
\right.
\end{equation}
Here, $A, V,\beta$ are functions in $L^{\infty}(\Omega)$ and can be rough, which makes the solution oscillating and difficult to solve. The vector $\nu$ is the outer normal to the boundary. 
In particular, when $V=0$, the equation is the standard elliptic equation \cite{chen2021exponential}. If $Vu=-k^2u$ and $u$ is a complex-valued function, one obtains the Helmholtz equation \cite{chen2023exponentially} with wavenumber $k$.

Standard finite element methods often fail to resolve such problems accurately due to the limited regularity of the solution. The key challenge is to design numerical methods that adapt to the multiscale structure of the problem. The core idea is to construct local basis functions that adapt to the local coefficients of the equation. These bases allow efficient compression of the operator and can be reused to solve multiple problems with varying right-hand sides.

One key observation we proposed is to reduce the task of an accurate finite element solution to an accurate approximation. To be precise, we demonstrated via a posteriori estimates that once the bases can serve as good approximations to the solutions, the numerical solution automatically becomes a quasi-optimal approximation. Moreover, by a careful design of the local-to-global coupling, we showed that we only need to achieve a good approximation on each of the local patches.

Next, we discuss how to construct local approximations. This was achieved by a harmonic-bubble splitting. Roughly speaking, the harmonic part depends only on the information on the boundaries of the patch, and the bubble part depends only on the local right-hand side information. Even for challenging cases like the Helmholtz equation, we show that the bubble component remains small and that the harmonic part admits exponentially accurate approximation via oversampling. We rigorously proved exponential convergence and designed our algorithm to achieve such accuracy using only online basis functions, complemented by an offline modification of the right-hand side. We leave the detailed discussions and numerical experiments to Chapter \ref{chap:4}.

\chapter{Blowups via Local Modulations: Nonlinear Heat}
\label{chap:2}
In this chapter and the two subsequent chapters, we present our framework for inferring the precise law and stability of blowups beyond self-similarity via local modulations and singularly-weighted estimates, based on \cite{hou20242, chen2024stability, liu2025finite}. For problems with full stability, such as nonlinear heat (NLH) \cite{hou20242} or complex Ginzburg-Landau (CGL) \cite{chen2024stability} equations, we use a novel idea of enforcing stable normalizations for perturbations around the approximate profile and establish a weighted $H^k$ stability, thereby avoiding the use of a topological argument and the analysis of a linearized spectrum. This result generalizes the $L^2$-based stability argument to blowups that are not exactly self-similar and can be adapted to higher dimensions. Full anisotropic scaling can be introduced to establish full stability beyond any symmetry assumption. The log correction for the blowup rate is {automatically inferred via the local normalization conditions,} captured by the energy estimates and refined estimates of the modulation parameters.  

Crucially, this framework is applicable even when only numerical blowup profiles are available, providing a path toward rigorous analysis guided by computation. For cases involving finite-codimensional stability, our method can be combined with a topological argument. We illustrate those two points by solving the open problem of singularity formation in the 3D Keller–Segel (KS) equation with logistic damping \cite{liu2025finite}. Finally, numerical experiments confirm the effectiveness of our normalization strategy, even under large perturbations outside the strict theoretical regime.

For illustrative purposes, we present our framework in the setting of the NLH with even symmetry, covering both the fully stable and finite-codimensional cases in Section \ref{sec:slh1}, in this chapter. Finally, in Section \ref{subsec:future works}, we outline directions for extending this methodology to handle blowups involving multiple scales, and discuss prospects for turning the modulation approach into a robust computational algorithm beyond the search for singularities. The more technically involved cases, such as the CGL and 3D KS with logistic damping, are addressed in detail in the two following chapters respectively.

\section{Introduction}
We consider the semilinear heat equation \begin{equation}
    \label{sml}
    a_t=\Delta a + a^2,
\end{equation}
where $a(t): {\mathbb{R}}^n \to {\mathbb{R}}$, subject to the boundary condition $\lim_{|x| \to \infty} a(x,t) = 0$ and an initial data $a(0) = a_0$. 
Several blowup criteria were established in the past, dating back to the works of Kaplan \cite{Kapcpam63}, Fujita \cite{FUJsut66}, Levine \cite{LEVarma73},  Friedman-McLeod \cite{FMiunj85}, etc. We refer to the book \cite{quittner2019superlinear} for a comprehensive review on this subject.   Given our interest in the singularity formation, in particular in characterizing blowup solutions to \eqref{sml}, we only mention works in this direction in the following, where a precise law of the blowup can be identified.

Singularity formation in nonlinear PDEs is generally connected to a group of symmetries associated with the problem under consideration. For the classical nonlinear heat equation \eqref{sml}, it is invariant under the scaling transformation 
\begin{equation}\label{def:scaling}
\forall \lambda > 0, \quad a_\lambda(x,t) = \frac{1}{\lambda^2}a\Big( \frac{x}{\lambda}, \frac{t}{\lambda^2}\Big),
\end{equation}
in the sense that if $a$ is a solution to \eqref{sml}, so is $a_\lambda$ with the rescaled initial data $a_{0, \lambda} = \frac{1}{\lambda^2}a_{0}\big(\frac{x }\lambda \big)$. The earliest application of this scaling invariant property to equation \eqref{sml} that we are  aware of is the work by Berger-Kohn \cite{berger1988rescaling}, where the authors developed a so-called rescaling algorithm to capture the blowup profile 
\begin{equation}\label{stabProfile}
a(x, t) \sim \frac{1}{T-t} \bar{u}\left(\frac{x}{\sqrt{(T-t)|\log (T-t)|}}\right)\,, \quad \bar{u}(\xi)=\frac{1}{1+|\xi|^2 / 8}.
\end{equation}
This blowup behavior is in agreement with classification results rigorously established in the works of Filippas-Kohn \cite{filippas1992refined}, Filippas-Liu \cite{filippas1993blowup}, Herrero-Velazquez \cite{HVaihn93} and Velazquez \cite{velazquez1992higher} under the assumption of type-I blowup, namely that  $\limsup\limits_{t\to T}\|a(t)\|_{L^\infty} (T-t) < +\infty$, otherwise, the blowup is of type-II.
In particular, Herrero-Velazquez \cite{HVasnsp92} showed that the blowup behavior \eqref{stabProfile} is generic in dimension $1$, and they claimed the same for higher dimensions in an unpublished work. A rigorous construction was later established by Bricmont-Kupiainen \cite{bricmont1994universality} to provide concrete examples of initial data leading to blowup behaviors classified in the works mentioned above. The method developed in \cite{bricmont1994universality} was generalized in the work of Merle-Zaag \cite{merle1997stability} through spectral analysis and a topological argument to establish the existence and stability of blowup solutions to \eqref{sml} with the behavior \eqref{stabProfile}. 

It is worth mentioning that the scaling invariance \eqref{def:scaling} gives rise to the notion of energy-criticality in the sense that 
$$\|a_\lambda\|_{\dot{H}^1} = \lambda^{n - 6}\|a\|_{\dot{H}^1},$$
and the problem \eqref{sml} is called energy-critical if $n = 6$, energy-subcritical if $n \leq 5$ and energy-supercritical if $n \geq 7$. It is well known that the only type-I blowup occurs in the energy-subcritical case (i.e., $n \leq 5$) from the work of Giga-Kohn \cite{GKcpam85, GKiumj87, GKcpam89}, Giga-Matsui-Sasayama \cite{GMSiumj04} {(see also \cite{GMSmmas04} for the case of convex domains and Quittner \cite{Qduke21} for non-convex domains).}  The blowup in the energy-critical and supercritical cases is more subtle, where type-II blowup may also exist as predicted in \cite{HVcrasp94} and \cite{FHVslps00} through formal matching asymptotic expansions. {Concrete examples of initial data leading to type-II blowup for \eqref{sml} with a general nonlinearity $|a|^{p-1}a$ were exhibited in several works \cite{HVpre94}, \cite{Mma07}, \cite{Capde17},  \cite{CMRjams19},  \cite{PMWjfa21}, \cite{Sjfa12}, \cite{PMWam19}, \cite{PMWZdcds20}, \cite{Hihp20}, \cite{PMWZZarx20},  \cite{Hapde20} and references therein. In particular, the Type II blowup constructed in \cite{Hapde20} for the energy-critical case in dimension $n = 6$ corresponding to the exponent $p=2$ considered in this present work.} Partial classification of type-II blowup was provided in  \cite{MMcpam04, MMjfa09} and \cite{CMRcmp17}, even though a complete classification of all blowup patterns still remains open. 

In this chapter, we are interested in adopting the idea of dynamic rescaling to provide rigorous proofs for the semilinear heat equation with a clear notion of stability. Specifically, just like in numerical algorithms, we introduce proper rescaling conditions to ensure the stability of the perturbation around the approximate steady state, whose proof constitutes the main goal of this chapter.  We adopt a $L^2$-based stability analysis with properly chosen singular weights and normalization conditions, inspired by the line of work pioneered by \cite{chen2021finite, chen2019finite2}, and present our main result as Theorem \ref{thm1}. {We introduce the weighted Sobolev space $\mathcal{E}_k$ for $k = 2n + 10$: 
\begin{equation}\label{def:Ek}
\mathcal{E}_k = \Big\{u: \|u\|^2_{\mathcal{E}_k} = \| u\|^2_{\rho}+\mu\|\nabla^k u\|_{\rho_k}^2 < +\infty\Big\},
\end{equation}
where $\| \cdot\|_{\rho}$ stands for the weighted $L^2$-norm with the weight functions $\rho$, $\rho_k$ being defined in  \eqref{rho_def}, \eqref{rhok_def}, and the constant $\mu$ will be detailed later in \eqref{def_energy}.} 
\begin{theorem}\label{thm1} Let $k = 2n + 10$, there exist positive constants $C_0$ and $\lambda_0$ such that for $0<\lambda<\lambda_0$ and if the initial perturbation {$g$ is even\footnote{We call a multivariate function $a$ even if it is even in each one of the coordinates, namely if $a(\xi_1 x_1,\xi_2 x_2,\cdots,\xi_n x_n)=a( x_1,x_2,\cdots, x_n)$ for any $\xi_i\in\{-1,1\}$.} and satisfies $\|g\|_{\mathcal{E}_k} \leq C_0\lambda$,} the equation \eqref{sml} with initial data $$a(x,0)=\lambda^{-1}\big(\bar{u}(x)+g(x)\big),$$ 
{admits a solution $a(x,t)$ that blows up in some finite time $T$.}  Moreover, we have the following convergence in $\mathcal{E}_k$,
$$\lim _{t \rightarrow T}(T-t) a\left(((T-t)|\log (T-t)|)^{\frac{1}{2}} z, t\right)=\bar{u}(z).$$
\end{theorem}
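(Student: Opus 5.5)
We will follow the dynamic rescaling framework sketched in Subsection \ref{subsec:nlh}. We set $\hat u(z,\tau)=\hat C_u(\tau)a(\hat C_l(\tau)z,t(\tau))$, with $\hat C_u(0)=\lambda$ and $\hat C_l(0)=1$, so that $\hat u(\cdot,0)=\bar u+g$. Here $\bar u=(1+|z|^2/8)^{-1}$ solves $-\bar u-\tfrac12 z\cdot\nabla\bar u+\bar u^2=0$. We write $\hat u=\bar u+u$, $c_u=\hat c_u+1$ and $c_l=\hat c_l-\tfrac12$. In higher dimensions we will allow a separate spatial scaling $c_{l,i}$ in each coordinate; the isotropic ansatz has too few parameters to kill all second-order Taylor coefficients of an even perturbation.

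The modulation parameters are fixed by the local normalization conditions $u(0,\tau)=0$ and $\partial_{z_i}^2u(0,\tau)=0$ for all $i$. By evenness, $u=O(|z|^4)$ at the origin is then propagated, provided $g$ satisfies these conditions initially. If $g$ does not, we first absorb its low-order Taylor coefficients into a slight change of $\bar u$ and of the initial scalings, which costs $O(\lambda)$. Evaluating the equation at the origin gives algebraic formulas of the type $c_u=\tfrac14\lambda(\tau)+\ldots$ and $c_{l,i}=-(\tfrac58+2\partial_i^4u(0))\lambda+\ldots$. Here $\lambda(\tau)=\hat C_u/\hat C_l^2$ is the effective viscosity, and it satisfies $\lambda_\tau=-(1+O(\|u\|))\lambda^2$.

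The perturbation then solves
\[
u_\tau=\mathcal L u+\lambda\Delta u+N(u)+F,
\]
with
\[
\mathcal Lu=-u-\tfrac12 z\cdot\nabla u+2\bar u u,\qquad N=u^2+c_uu-c_lz\cdot\nabla u,
\]
and the forcing $F=\lambda\Delta\bar u+c_u\bar u-c_lz\cdot\nabla\bar u$ of size $O(\lambda)$. The core is the energy estimate in $\mathcal E_k$. For the $L^2$ part we take $\rho\sim|z|^{-\alpha}$ near $0$ with $\alpha$ slightly above $n+4$, using that $u=O(|z|^4)$, and a polynomial weight at infinity. Integrating by parts gives
\[
(\mathcal Lu,u\rho)\le -c\|u\|_\rho^2,
\]
since near the origin the computation in the introduction yields the coefficient $\tfrac{n+4-\alpha}{4}<0$. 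Near infinity, $\bar u\to0$ and the transport $-\tfrac12z\cdot\nabla$ with a suitably growing weight gives damping. For the top-order term $\|\nabla^ku\|_{\rho_k}$, differentiation improves the damping coefficient by $k/2$, so we choose $k=2n+10$ large enough that the lower-order commutators can be absorbed by interpolation with the $L^2$ part (with small $\mu$). Weighted Sobolev embedding then gives $\|u\|_{W^{2,\infty}}$-type control near the origin by $\|u\|_{\mathcal E_k}$. From this we obtain $|c_u|,|c_l|\lesssim\lambda$ and $\|N\|\lesssim\|u\|^2+\lambda\|u\|$.

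The viscous term $\lambda\Delta u$ is the main obstacle. It is formally small, but it does not respect the singular weight: $(\Delta u,u\rho)$ produces $\nabla\rho$ and $\Delta\rho$ terms, and $\Delta\rho\sim|z|^{-\alpha-2}$ is more singular than $\rho$. We would first try to bound $\lambda|(\Delta u,u\rho)|$ by splitting near and far. Near the origin we use the $O(|z|^4)$ vanishing together with the $H^k$ control, which give $|u|\lesssim|z|^4\|u\|_{\mathcal E_k}$ and $|\nabla u|\lesssim|z|^3\|u\|_{\mathcal E_k}$, making the singular integrals converge for $\alpha<n+6$; this yields $\lambda\, C\|u\|_{\mathcal E_k}^2$. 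Far away, the dissipation has a good sign up to weight derivatives. The combined estimate then reads
\[
\tfrac{d}{d\tau}E\le -cE+C\lambda E+CE^{3/2}+C\lambda E^{1/2}.
\]

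A bootstrap on $E\le (C_0'\lambda)^2$ closes for $\lambda_0$ small, since $\lambda(\tau)$ is decreasing, with $\lambda\sim 1/\tau$. This gives global existence in $\tau$ and $\|u\|_{\mathcal E_k}\lesssim\lambda(\tau)\to0$. Finally we translate back to physical variables. We have $c_u+2c_l=\lambda_\tau/\lambda$ and $t(\tau)=\int\hat C_u$. Since $\hat C_u$ decays like $e^{-\tau}$ (because $\hat c_u=-1+O(\lambda)$ is integrable away from $-1$ up to $O(\log\tau)$), $T=t(\infty)<\infty$, with $T-t\sim\hat C_u$ up to factors that are polynomial in $\tau$. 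From $\lambda=\hat C_u/\hat C_l^2\sim1/\tau$ and $\tau\sim|\log(T-t)|$, we get $\hat C_l\sim((T-t)|\log(T-t)|)^{1/2}$, and $\hat C_u\sim(T-t)$ after the refined estimate $\int(c_u-\tfrac14\lambda)\,d\tau<\infty$. This yields the stated convergence in $\mathcal E_k$.
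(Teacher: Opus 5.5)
Your overall route is the paper's: anisotropic dynamic rescaling, the normalization $u(0)=\partial_i^2u(0)=0$, a singular weight $|z|^{-\alpha}$ with $n+4<\alpha<n+6$ combined with a weighted $H^k$ norm, and a bootstrap $\|u\|_{\mathcal E_k}\lesssim\lambda$. The gap is that you drop the anisotropy right after introducing it, which leaves the last step unproved. With $\hat C_l^i$ there are $n$ viscosities $\lambda_i=\hat C_u/(\hat C_l^i)^2$, the dissipation is $\sum_i\lambda_i\partial_i^2$, and $\partial_\tau\lambda_i=-\lambda_i\bigl(\lambda_i+4\sum_j\partial_i^2\partial_j^2u(0)\,\lambda_j\bigr)$. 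The energy estimates only use upper bounds on the $\lambda_i$. So they close with $\lambda:=\max_i\lambda_i$, for which $\partial_\tau\lambda^{-1}=1+O(\|u\|_{\mathcal E_k})$ and $\lambda\tau\to1$. The theorem, however, asserts convergence at the isotropic scale $((T-t)|\log(T-t)|)^{1/2}$, which requires $\lambda_i\tau\to1$ for \emph{every} $i$. Your relation $\lambda_\tau=-(1+O(\|u\|))\lambda^2$ fails for a $\lambda_i$ much smaller than the maximum, because the coupling error is $O(\|u\|_{\mathcal E_k}\lambda\lambda_i)$, not $O(\|u\|_{\mathcal E_k}\lambda_i^2)$. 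You must exclude such separation. For example, $\partial_\tau\lambda_i^{-1}=1+O(\|u\|_{\mathcal E_k}\lambda)\lambda_i^{-1}=1+O(\tau^{-2})\lambda_i^{-1}$ gives $\lambda_i^{-1}\lesssim\tau$ by Gronwall, and then $\lambda_i\tau\to1$. The paper does this with $\kappa=\sum_i\lambda_i^{-1}$.

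\textbf{Smaller points.}
\begin{itemize}
\item Your forcing $F=\lambda\Delta\bar u+c_u\bar u-c_lz\cdot\nabla\bar u$ vanishes only to second order at the origin. Its quadratic part is $-\tfrac12\sum_{i,j}\lambda_i\,\partial_i^2\partial_j^2u(0)\,z_j^2$, which is cancelled only by the quadratic part of the viscous term. Hence $\|F\|_\rho=\infty$ for $\alpha\ge n+4$, and your term $C\lambda E^{1/2}$ cannot come from Cauchy--Schwarz. Either pair $F$ with $u\rho$ pointwise using $|u|\lesssim|z|^4\|\nabla^4u\|_\infty$, or, as the paper does, move $\lambda_i\sum_j\tfrac12\partial_i^2\partial_j^2u(0)z_j^2\chi$ (with a cutoff $\chi$) from the viscous term into the forcing so that both vanish to fourth order.
\item The control you need at the origin is $W^{4,\infty}$, not $W^{2,\infty}$.
\item The bootstrap on the shrinking threshold $(C_0'\lambda)^2$ closes because $|\partial_\tau\log\lambda|=O(\lambda)$ is small compared with the damping rate. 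Monotonicity of $\lambda$ by itself only keeps $\lambda\le\lambda_0$.
\item The Taylor-absorption step for $g$ is vacuous: $\|g\|_\rho<\infty$ already forces $g(0)=\partial_i^2g(0)=0$.
\item No refined estimate is needed for $\hat C_u\sim T-t$: $d\hat C_u/dt=\hat c_u\to-1$ and $\hat C_u\to0$ give $\hat C_u/(T-t)\to1$ directly.
\end{itemize}
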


\begin{remark}
    Using the scaling invariance of \eqref{sml}, we can introduce an initial rescaling in space (corresponding to introducing $\hat{C}_l(0)$  in the dynamic rescaling formulation \eqref{cl} in Section \ref{sec:dy}) to obtain a result comparable to the theorems in \cite{bricmont1994universality,merle1997stability} that characterize the blowup time precisely. Here we highlight obtaining the correct rate and, for the sake of simplicity, do not rescale in space at $t=0$.

\end{remark}\begin{remark} We work under the even assumption for illustration purposes of the dynamical rescaling technique developed in this chapter. We refer to our subsequent work \cite{chen2024stability} on the complex Ginzburg-Landau equation for a generalized dynamical rescaling technique to remove the even assumption, where translational and rotational modulations were introduced; see step 1 in Section 1.2 therein.
\end{remark}
Compared with most of the aforementioned works on semilinear equations that work in parabolic scaling, we work in variables that correspond to the true blowup scaling and obtain stability precisely with respect to the weighted $H^k$ norms we construct, instead of resorting to a topological argument to identify the existence of the initial data for blowup.
\subsection{Literature review and main contributions}
The idea of dynamic rescaling formulation or the modulation technique to study blowup was originally introduced in the numerical study of self-similar blowup of the nonlinear Schr\"odinger equation  \cite{weinstein1985modulational,soffer1990multichannel,soffer2006soliton,mclaughlin1986focusing,landman1988rate}. Later on, the formulation has been generalized to various dispersive problems, both as numerical techniques and as an analysis tool; see for example nonlinear Schr\"odinger equation \cite{kenig2006global,merle2005blow}, compressible Euler equations \cite{buckmaster2019formation},  and the nonlinear heat equation \cite{berger1988rescaling,merle1997stability}. Recently, this modulation technique has been adopted to establish self-similar singularity for incompressible Euler equations in \cite{ elgindi2021finite, chen2019finite2, chen2022stable}.

When the equation admits an analytical approximate profile for blowups, analyzing the spectrum of the linearized operator has proven to be useful for establishing the blowup in many cases; see for example, the semilinear heat equation \cite{bricmont1994universality, merle1997stability} and the 2D Keller-Segel equation \cite{raphael2014stability, collot2022refined}. While this methodology is powerful, it hinges on the fact that we are able to construct a simple and analytical approximate steady state and can analyze the spectrum of the linearized operator explicitly (for semilinear heat equations) or at least asymptotically (for Keller-Segel equations). In this chapter, we provide a proof of blowup for the semilinear heat equation without analyzing the eigenvalues or the eigenfunction of the linearized operator at all, and we rule out the unstable directions via a clear characterization of a singularly weighted Sobolev space, instead of using Brouwer's fixed-point theorem and a topological argument. {This framework can be adopted even if we
only have a numerical or implicit profile and do not have explicit information on the
spectrum of its linearized operator; see the follow-up work by the last author and collaborators \cite{liu2025finite} on 3D Keller-Segel equation with logistic damping.} 

On the other hand, a direct $L^2$ \cite{chen2021finite} or $L^\infty$-based \cite{chen2022stable} stability argument with appropriate normalization conditions has been proven successful, even if no explicit approximate steady state can be identified. In fact, they are often combined with a numerical profile and rigorous computer-assisted proofs. See  \cite{chen2021finite, chen2020singularity, hou2024blowup} for applications in various 1D models for the Euler equations, \cite{ chen2019finite2, chen2022stable} for 3D axissymmetric incompressible Euler equations, and { \cite{MRRSam22a} for the compressible Navier-Stokes equation (and the accompanying paper on the compressible Euler equation \cite{MRRSam22b}). The nature of the blowup in \cite{MRRSam22b, MRRSam22a} and the current work is quite similar in the sense that the dominant behavior of the rescaled equations is driven by the inviscid part, although in each case one must use different scaling to get the precise asymptotic behavior.} The methodology can be roughly summarized in the following two steps. Firstly, we link self-similar singularity with convergence to a steady state using the dynamic rescaling equation and obtain approximate steady states either analytically or numerically. Then, upon choosing appropriate normalization conditions, we can perform linear and nonlinear stability estimates to show that the perturbation around the approximate steady state will remain small. Therefore, we can obtain a self-similar blowup with rates prescribed by the normalizing constants.

%Up until now, this line of work has been somewhat limited to the self-similar setting {to the authors' awareness} since it was believed that one has to at least; maybe except for the work. 
This chapter adopts the $L^2$-based methodology to establish blowups beyond the self-similar setting; see also \cite{collot2023stable} on the 1D inviscid primitive equation and \cite{van2003formal} on the harmonic map heat flow, where log corrections were also observed. We show that one can obtain the correct blowup rate by imposing proper vanishing conditions on the perturbation, without a priori knowledge of a formal blowup rate. Compared with a self-similar blowup, the crucial difference is that now we have an algebraic, instead of exponential, convergence of the normalizing constants in the rescaled time $\tau$, inferred for example, by \eqref{cu}. 

Another contribution is that we introduce different spatial rescalings in $n$ different dimensions in Section \ref{highd}, giving enough degrees of freedom for the normalization conditions. Those different rescaling constants in different dimensions will indeed converge to the same rescaling constant close to the blowup time. This approach may shed some light on the generalization of the dynamic rescaling framework to higher dimensions for other problems. We believe our method is robust for type-I singularities, especially for problems having non-self-adjoint linear operators, see for example our follow-up work  \cite{chen2024stability} on the complex Ginzburg-Landau equation.
 Finally, we demonstrate our choice of normalization to be effective even beyond the regime of small perturbations, in Section \ref{sec:num} based on numerical experiments. 

 \subsection{Notations}
Throughout the chapter, we use $(\cdot,\cdot)$ to denote the inner product on $L^2(\mathbb{R}^n)$: $(f,g)=\int_{\mathbb{R}^n}fg$.     We use $C$ to denote absolute constants dependent only on the dimension $n$, which may vary from line to line. We use $A\lesssim B$ for positive $B$ to denote that there exists a constant $C>0$ such that $A\leq CB$.  We adopt the notation of the Japanese bracket as $\langle z\rangle=\sqrt{1+|z|^2}\,.$
\section{Dynamic Rescaling Formulation and Normalization Conditions}
\label{sec:dy}
In this section, we discuss our dynamic rescaling formulation. Via enforcing local vanishing modulation conditions, we derive the law of the blowup formally and motivate the choices of singular weights for stability analysis.
\subsection{1D case}
We focus on the 1D case first and generalize to higher dimensions in Section \ref{highd}.
For the semilinear heat equation \eqref{sml}, we introduce the dynamic rescaling formulation
$$\hat{u}(z,\tau)=\hat{C}_u(\tau) a(\hat{C}_l(\tau)z,t(\tau))\,,$$ with $$\hat{C}_u=\hat{C}_u(0)\exp{(\int_0^\tau \hat{c}_u d\tau)}\,, \hat{C}_l=\exp{(\int_0^\tau -\hat{c}_l d\tau)}\,,t=\int_0^\tau \hat{C}_u d\tau\,.$$
Here, we introduce an extra degree of freedom $\hat{C}_u(0)$ as in \cite{chen2020singularity, hou2024blowup}, which we will later choose to be small for the estimates of the viscous term.
We have $$\hat{u}_\tau=\hat{c}_u \hat{u}-\hat{c}_l z\hat{u}_z+\hat{u}^2+\frac{\hat{C}_u}{\hat{C}_l^2}\hat{u}_{zz}\,.$$
    We know there exists an approximate profile $\bar{u}=(1+z^2/8)^{-1}$ which solves $$ \bar{c}_u\bar{u}-\bar{c}_l z\bar{u}_z+\bar{u}^2=0\,, \bar{c}_u=-1\,,\bar{c}_l=1/2\,.$$
 By using the dynamic rescaling formulation, we reduce the problem of establishing a blowup in the physical variables and quantifying its blowup rate to the problem of establishing stability in the dynamic rescaling formulation. We want to show that $\hat{u}$ converges to the steady state $\bar{u}$ of the dynamic rescaling equation and the normalization constants also converge.
We put the ansatz  $$\hat{u}=\bar{u}+u\,, \hat{c}_u=\bar{c}_u+c_u\,, \hat{c}_l=\bar{c}_l+c_l\,.$$
We will elaborate on how to enforce normalization conditions $c_u$ and $c_l$ such that the perturbed solution $u$ of the dynamic rescaling equation is stable for all time. Namely, we want to show that $u$ remains small for all time, and thus $\hat{c}_u,\hat{c}_l$ will correspond to the correct blowup scaling.

 If we enforce that the even perturbation satisfies $u(0)$ and $u_{zz}(0)$ vanish for all time, by the dynamic rescaling equation we have 
\begin{equation}\label{cons_hat}
    \hat{c}_u+\bar{u}(0)+\frac{\hat{C}_u \bar{u}_{zz}(0)}{\hat{C}_l^2 \bar{u}(0)}=0\,,\hat{c}_u-2\hat{c}_l+2\bar{u}(0)+\frac{\hat{C}_u (\bar{u}_{zzzz}(0)+{u}_{zzzz}(0))}{\hat{C}_l^2 \bar{u}_{zz}(0)}=0\,.
\end{equation}

\noindent Define $$\lambda=\frac{\hat{C}_u}{\hat{C}_l^2}=\hat{C}_u(0)\exp{(\int_0^\tau {c_u}+2c_l d\tau)}\,,$$
we can simplify the normalization condition into $${c_u}-\frac{1}{4}\lambda=0\,,{c_u}-2{c_l}-(\frac{ 3}{2}+4{u}_{zzzz}(0))\lambda=0\,.$$
Therefore we solve \begin{equation}\label{cu}{c_u}=\frac{1}{4}\lambda\,,{c_l}=-(\frac{ 5}{8}+2{u}_{zzzz}(0))\lambda\,.\end{equation}
And thus we can simplify the ODE for $\lambda$ as \begin{equation}\label{lamb}
    \lambda_\tau=\lambda(c_u+2c_l)=-(1+4{u}_{zzzz}(0))\lambda^2\,.
\end{equation}
\begin{remark}
    Notice that formally, when the perturbation is small, we can further solve this ODE to obtain $$\lambda\approx1/\tau = \frac{1}{|\log{(T-t)}|}\,.$$ Therefore, the effect of the viscosity terms can be treated perturbatively. We will make this heuristic argument rigorous later on by choosing $\hat{C}_u(0)$ small.
\end{remark}
%We  
\begin{remark}\label{rmk2}
To motivate our choice of normalization conditions, we can plug in an ansatz $\rho=z^{-\alpha}$ for the singular weight we use in the $L^2$ estimate, and calculate linear damping for the evolution of $u$. Via an integration by parts, we know that up to the linear part near the origin, we have $$(u_\tau,u\rho)\approx(-1+\frac{1}{4}\frac{(\rho z)_z}{\rho}+2)(u,u\rho)=(1-\frac{\alpha-1}{4})(u,u\rho)\,.$$ We calculate that we need $\alpha>5$ to extract linear damping, and therefore we need to enforce the perturbation $u$ to vanish to higher orders.

 Of course, we need to take care of nonlinear estimates. Thus, the singular weights cannot be as simple as $\rho=z^{-6}$, but this serves as the starting point of our stability analysis.
\end{remark}

\subsection{nD case}\label{highd}
A crucial idea in the $n$-dimensional case is that we introduce $n$ different scaling parameters in different directions. This gives us more freedom to enforce the normalization conditions and obtain a perturbation with the same vanishing orders. Consider
$$\hat{u}(z,\tau)=\hat{C}_u(\tau) a(\hat{C}_l^1(\tau)z_1,\hat{C}_l^2(\tau)z_2,\cdots,\hat{C}_l^n(\tau)z_n,t(\tau))\,,$$ with the same $\hat{C}_u$ and $t(\tau)$ defined as before, and \begin{equation}
    \label{cl}\hat{C}_l^i=\exp{(\int_0^\tau -\hat{c}_l^i d\tau)}\,.
\end{equation}  The equation for $\hat{u}$ is \begin{equation}\label{drf}\hat{u}_\tau=\hat{c}_u \hat{u}-\sum_i\hat{c}_l^iz_i \hat{u}_{i}+\hat{u}^2+\sum_i\lambda_i\hat{u}_{ii}\,,\end{equation}
where we use the short-hand notation for partial derivatives: we denote $f_{i}=\partial_{z_i}f$ and $f_{ij}=\partial_{z_j}\partial_{z_i}f$.
Using the same radial approximate steady state $\bar{u},\bar{c}_l,\bar{c}_u$ and a similar ansatz
\begin{equation}
\label{ansatz}\hat{u}=\bar{u}+u\,, \hat{c}_u=\bar{c}_u+c_u\,, \hat{c}_l^i=\bar{c}_l+c_l^i\,,
\end{equation}
we can enforce the same normalization condition that $u$ is of $O(|z|^4)$. Notice that if we choose  $u$ to be an even perturbation, we only need to enforce $u(0)=0$ and ${u}_{ii}(0)=0$. Those $n+1$ constraints can be solved exactly to obtain \begin{equation}
\label{cun}
    {c_u}=\frac{1}{4}\sum_i\lambda_i\,,{c_l^i}=-\sum_j\lambda_j(\frac{ 1+4\delta_{ij}}{8}+2{u}_{iijj}(0))\,,
\end{equation}
where $\delta_{ij}=1$ if $i=j$, and $0$ otherwise. Here we define $$\lambda_i=\frac{\hat{C}_u}{(\hat{C}^i_l)^2}=\hat{C}_u(0)\exp{(\int_0^\tau {c_u}+2c^i_l d\tau)}\,,$$
and we obtain  the ODE for $\lambda$ as follows:\begin{equation}
\label{lambn}\partial_\tau\lambda_i=\lambda_i(c_u+2c^i_l)=-(\sum_j4{u}_{iijj}(0)\lambda_j+\lambda_i)\lambda_i\,.
\end{equation}
Notice that  \eqref{cun}, \eqref{lambn} are consistent with  \eqref{cu}, \eqref{lamb} in the 1D case.
 \section{Stability of Perturbation and Finite Time Blowup}
 Building upon the general strategy of a weighted $L^2$-based stability argument as in \cite{chen2021finite, chen2019finite2}, we will prove Theorem \ref{thm1} in this section. We denote $\lambda=\max_i{\lambda_i}$.

\subsection{$L^2$ stability analysis}

Plugging in the ansatz \eqref{ansatz} into the dynamic rescaling equation \eqref{drf} and using the fact that $\bar{u}$ is an approximate steady state, we write down the evolution equation for $u$ as follows:
\begin{equation*}
    {u}_\tau=L(u)+N(u)+\sum_i F_i(z, \tau)+ \sum_i \lambda_i V_i(u)\,,
\end{equation*}
where we reorganize the different terms into the linear, nonlinear, error, and viscous terms respectively as $$\quad L=(-1+{c_u}) {u}- \sum_i(\frac{1}{2}+{c_l^i})z_i {u}_{i}+2\bar{u}u\,,\quad N={u}^2\,,$$
$$ F_i=\frac{1}{4}\lambda_i\bar{u}-{c_l^i}z_i\bar{u}_i+\lambda_i (\bar{u}_{ii}+\sum_j\frac{1}{2}u_{iijj}(0)z_j^2\chi(|z|))\,,$$
$$ V_i={u}_{ii}-\sum_j\frac{1}{2}u_{iijj}(0)z_j^2\chi(|z|)\,.$$
Here $\chi(z)$ is a 1D even smooth cutoff function such that $\chi(z)=0$ for $|z|\geq 2$ and $\chi(z)=1$ for $|z|\leq 1$. We introduce such a cutoff function to make each one of the four terms integrable in the weighted $L^2$ space. We name and group the terms in such a way that is convenient for our analysis. The ``linearized operator $L$'' is obtained by treating the scaling parameters $c_u$ and $c_l$ as known parameters, although they actually depend on $u$. As a result, the ``linearized operator $L$'' actually contains nonlinear terms in the original physical variables.

To show that the dynamic rescaling equation is stable for even perturbations and converges to a steady state, we will perform a weighted $L^2$ estimate with a singular weight $\rho$ and a weighted $L^2$ norm\begin{equation}
\label{rho_def}\rho=|z|^{-5-n}+10^{-3}|z|^{1-n}\,,\quad \|f\|_{\rho}=(f^2,\rho)^{1/2}\,.
\end{equation}
We choose such a weight to extract damping near the origin as in Remark \ref{rmk2}, while also having good control of growth at infinity, to make $L^{\infty}$ and thus the nonlinear estimates easier.
Via an integration by parts\footnote{We can justify the integration by parts here, and similarly the subsequent ones, by a density argument. Notice that both terms are indeed integrable. For compactly supported smooth functions, we can, of course, do integration by parts since the boundary integral vanishes as the radius goes to infinity. For general cases, we can take a sequence of compactly supported smooth functions that approximates the functions in the weighted spaces and then take limits. By the Cauchy-Schwarz inequality, the integrals also converge, and we validate the integration by parts. Such approximate functions exist since we can first truncate the function in an annulus $\epsilon\leq|z|\leq1/\epsilon$ such that the norm outside of the annulus is small; then the weighted norm is equivalent to a regular $L^2$-norm and we can approximate by compactly supported smooth functions inside the annulus \cite{taylor2006measure} and zero extend to the whole space.}, we have a standard $L^2$ estimate for the linear part:
\begin{equation*}
    (L,u\rho)=([(-1+{c_u})+\frac{1}{2} \sum_i(\frac{1}{2}+{c_l^i})\frac{(z_i\rho)_i}{\rho}+2\bar{u}]u,u\rho)\,.
\end{equation*}
We plug in the singular weight \eqref{rho_def}, using $O(\lambda)$ notations due to the form \eqref{cun}, and simplify as
$$\begin{aligned}
    &(-1+{c_u})+\frac{1}{2} \sum_i(\frac{1}{2}+{c_l^i})\frac{(z_i\rho)_i}{\rho}+2\bar{u}\\=&O((1+|\nabla^4u(0)|)\lambda)-\frac{1}{4}+\frac{0.006}{4 (10^{-3}+z^{-6})}-\frac{2z^2}{8+z^2}\,.
\end{aligned}$$
By a straightforward computation and the AM-GM inequality we have $$0.006(8+z^2)-4 (10^{-3}+z^{-6})2z^2=0.048-0.002z^2-8z^{-4}\leq 0\,.$$
Therefore, we have the simple linear stability
\begin{equation*}
    (L,u\rho)\leq(-\frac{1}{4}+O((1+|\nabla^4u(0)|)\lambda))(u,u\rho)\leq(-\frac{1}{4}+C(1+|\nabla^4u(0)|)\lambda)\|u\|_{\rho}^2\,.
\end{equation*}

The estimate of the nonlinear term is straightforward:
$$(N,u\rho)\leq\|u\|_{\infty}\|u\|_{\rho}^2\,.$$

Now we compute the error terms. We compute $\bar{u}_{ii}=-\frac{\bar{u}^2}{4}+\frac{z_i^2\bar{u}^3}{8}$. As a consequence, we use Fubini's principle to arrive at $$\begin{aligned}
\sum_{i}F_i&=\frac{\sum_i\lambda_i}{4}(\bar{u}+\frac{1}{2}z\cdot\nabla\bar{u}-\bar{u}^2)+\sum_i\frac{\lambda_i}{2}(z_i\bar{u}_i+\frac{z_i^2\bar{u}^3}{4})\\&+\sum_j\sum_i\frac{\lambda_i u_{iijj}(0)}{2}(4z_j\bar{u}_j+z_j^2\chi(|z|))\,.
\end{aligned}$$

Using the fact that $\bar{u}$ is an approximate solution to the dynamic rescaling equation, we know that the first term vanishes. We can compute to simplify $$\sum_{i}F_i=-\sum_i\lambda_iz_i^2|z|^2\frac{\bar{u}^3}{64}+\sum_j\sum_i\frac{\lambda_i u_{iijj}(0)z_j^2}{2}(\chi(|z|)-\bar{u}^2)\,.$$
We know that the error term is $O(|z|^4)$ at $z=0$ and $O(|z|^{-2})$ at $\infty$; thus lies in the weighted space. We conclude that $$(\sum_{i}F_i,u\rho)\leq C(1+|\nabla^4u(0)|)\lambda\|u\|_{\rho}\,.$$

The viscous part is more subtle since we need to deal with the singularity carefully. Notice that $$|\nabla^2\rho|\lesssim|\rho/|z|^2|\,.$$ We do integration by parts twice to derive
\begin{equation*}
\begin{aligned}
    (V_i,u\rho)&=(-\frac{1}{2}u_{iijj}(0)z_i^2\chi(z)|z|,\frac{u}{|z|}\rho)-(u_i,u_i\rho)-(u_i,u\rho_i)\\&\leq-\|u_i\|_{\rho}^2+C(|\nabla^4u(0)|^2+\|\frac{u}{|z|}\|_{\rho}^2)\,.
\end{aligned}
\end{equation*}
Finally, we decompose the whole space into the near field $I=[-1,1]^d$ and its complement $I^c$, notice that $u=O(z^4)$ at $z=0$, we have the estimate $$\|\frac{u}{|z|}\|_{\rho}^2\lesssim\int_{I}\frac{u^2}{|z|^{7+n}}+\int_{I^c}{u^2}{\rho}\lesssim(\sup_{I}\frac{u}{|z|^4})^2+\|u\|_{\rho}^2\lesssim\|\nabla^4u\|_{\infty}^2+\|u\|_{\rho}^2\,.$$

Denote $E^2_0=(u,u\rho)$. We collect the $L^2$ estimate as\begin{equation}
\begin{aligned}
\label{L2-col}
    \frac{1}{2}\partial_\tau E^2_0&=(L+N+\sum_i(F_i+\lambda _iV_i),u\rho)\leq(-\frac{1}{4}+C(1+\|\nabla^4u\|_{\infty})\lambda+\|u\|_{\infty})E^2_0\\&+C\lambda(1+\|\nabla^4u\|_{\infty}) E_0+C\lambda\|\nabla^4u\|_{\infty}^2\,.
    \end{aligned}
\end{equation}
To close the $L^2$ estimate, we need higher-order estimates to control $L^{\infty}$ norms.
\subsection{Higher order stability analysis}
\label{hknorm}
Consider the weighted $H^k$ norm for $k=2n + 10$:
\begin{equation}\label{rhok_def}
    E^2_k(u)=(\nabla^ku,\nabla^ku\rho_k)\,,\quad \rho_k=1+10^{-3k}|z|^{2k+1-n}\,,
\end{equation} 
and we will estimate \begin{equation*}
    \frac{1}{2}\partial_\tau E^2_k=(\nabla^kL+\nabla^kN+\sum_i(\nabla^kF_i+\lambda _i\nabla^kV_i),\nabla^ku\rho_k)\,.
\end{equation*}
Before we start, we will state the following lemma concerning interpolation inequalities of lower order terms and $L^\infty$ estimates of a Morrey-type. We define the weighted auxiliary norms $D_j=\|\nabla^j u\langle z\rangle^{j+(1-n)/2}\|_{2}$. By \eqref{rho_def} and \eqref{rhok_def}, we know that $D_0\lesssim E_0$, $D_k\lesssim E_k$.
\begin{lemma}\label{lem:int}
    For any $\nu>0$, there exists a   $C(\nu)$ such that the inequalities hold:
    $$D_j\leq\nu D_k+C(\nu)D_0\,,\quad 0\leq j<k\,,$$
    $$\|\nabla^j u\langle z\rangle^{i+1/2}\|_{\infty}\lesssim\|\nabla^{j+n}{u}\langle z\rangle^{i+({n+1})/{2}}\|_2,\quad 0\leq j,i\,.$$
\end{lemma}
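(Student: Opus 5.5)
Both inequalities are weighted versions of classical results: Gagliardo--Nirenberg interpolation and Sobolev/Morrey embedding. The plan is to reduce each to its unweighted form on dyadic annuli, using scaling. The weight $\langle z\rangle^{j+(1-n)/2}$ behaves like $R^{j+(1-n)/2}$ on the annulus $A_R=\{R\le |z|\le 2R\}$ for $R\ge 1$, and like a constant on the unit ball. The exponent is arranged so that each extra derivative costs exactly one power of $\langle z\rangle$, which is precisely the scaling under which dyadic pieces become comparable.

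\textbf{Step 1: first inequality.} I first treat the unit ball $B_2$. There the standard interpolation inequality on a bounded Lipschitz domain, $\|\nabla^j u\|_{L^2(B_2)}\le \nu\|\nabla^k u\|_{L^2(B_2)}+C(\nu)\|u\|_{L^2(B_2)}$ (Ehrling/Gagliardo--Nirenberg), gives the claim directly.

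For an annulus $A_R$, I rescale $z=Rw$ with $w\in A_1$, $v(w)=u(Rw)$, and apply the same domain inequality on a slightly enlarged fixed annulus $\tilde A_1$. This gives
$$R^{j}\|\nabla^j u\|_{L^2(A_R)}\le \nu R^{k}\|\nabla^k u\|_{L^2(\tilde A_R)}+C(\nu)\|u\|_{L^2(\tilde A_R)},$$
with constants independent of $R$. Multiplying by $R^{(1-n)/2}$ turns each term into the weighted piece of $D_j$, $D_k$, $D_0$ respectively.

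Squaring, summing over dyadic $R=2^m$, and noting that the enlarged annuli overlap only boundedly many times gives $D_j^2\le 2\nu^2 C D_k^2+C(\nu)^2 C D_0^2$. Renaming $\nu$ finishes the first inequality.

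\textbf{Step 2: second inequality.} This is a Morrey/Sobolev embedding. Since $n>n/2$, the bound $\|f\|_{L^\infty(B_1)}\lesssim \sum_{l\le n}\|\nabla^l f\|_{L^2(B_2)}$ holds. Taken naively, however, this needs the lower-order terms too, and the statement has only the top derivative $\nabla^{j+n}u$ on the right.

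To avoid the lower-order terms, I would argue from decay at infinity. For functions in our class, with $\nabla^j u\to 0$ at infinity (finite weighted norms), one can write $\nabla^j u(z)$ as an iterated radial integral from infinity along the ray through $z$. Then, for a point $z$ with $|z|\sim R$, I use the scale-invariant estimate on the exterior region: on a cone or sector $\Gamma_z$ of opening angle $O(1)$ lying beyond $z$,
$$|f(z)|\lesssim \int_{\Gamma_z}|\nabla^n f(y)|\,|y-z|^{-0}\cdots,$$
which Cauchy--Schwarz converts into $|f(z)|^2\lesssim \int |\nabla^n f|^2 |y|^{n}\,dy$ localized appropriately. Concretely, the fundamental identity $f(z)=c\int_{\Gamma_z} \partial^{\alpha}f(y)\,K(y-z)\,dy$ with $|\alpha|=n$ and $|K(w)|\lesssim 1$ (the Sobolev representation formula on cones) applies. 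Cauchy--Schwarz with the weights $\langle y\rangle^{2i+n+1}$ then gives
$$\langle z\rangle^{2i+1}|f(z)|^2\lesssim \Big(\int_{\Gamma_z}\langle y\rangle^{-2i-n-1}\langle z\rangle^{2i+1}\,dy\Big)\,\|\nabla^n f\langle y\rangle^{i+(n+1)/2}\|_2^2.$$
The first factor is bounded uniformly because $|y|\gtrsim|z|$ on $\Gamma_z$ and the cone integral of $\langle y\rangle^{-2i-n-1}$ is $\lesssim\langle z\rangle^{-2i-1}$. Applying this with $f=\nabla^j u$ gives the second inequality.

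\textbf{Main obstacle.} The key delicate point is the second inequality near the origin and the justification of the representation formula. The kernel for $n$ derivatives in $n$ dimensions must be checked to be bounded (order $|w|^{n-n}$, so only logarithmic issues at worst), and the boundary terms at infinity must vanish. For the latter I would use the density argument from the footnote, approximating by compactly supported smooth functions, for which the representation formula on the infinite cone is exact. If the borderline kernel gives a logarithm, I would instead use $n$ derivatives plus Hölder continuity on the unit scale, together with the decay obtained in the far field.
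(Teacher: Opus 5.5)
Your argument is correct. For the first inequality you take a genuinely different route from the paper. The paper stays in the weighted setting: it integrates by parts once in $D_j^2$ and uses $|\partial_i\langle z\rangle^{2j+1-n}|\lesssim\langle z\rangle^{2j-n}$ to get $D_j^2\lesssim D_{j-1}(D_j+D_{j+1})$. This gives $D_j^2\le\nu D_{j+1}^2+C(\nu)D_{j-1}^2$, and an induction closes the argument. You instead import the classical Ehrling/Gagliardo--Nirenberg inequality on a fixed ball and annulus, and let dyadic rescaling produce the weights.

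The paper's proof is self-contained and uses exactly the fact that each derivative of the weight costs one power of $\langle z\rangle$. Yours only needs the weight to be comparable to $R^{j+(1-n)/2}$ on $\{R\le|z|\le 2R\}$. That makes it more robust (non-smooth weights, $L^p$ versions), at the price of citing an external theorem.

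For the second inequality you use the same mechanism as the paper. In both, $f=\nabla^j u$ is represented exactly by $n$-th derivatives over an outward region on which $|y|\ge|z|$. This is followed by weighted Cauchy--Schwarz and the bound $\int_{|y|\ge|z|}\langle y\rangle^{-2i-n-1}\,dy\lesssim\langle z\rangle^{-2i-1}$. The paper's orthant identity $f(z)=(-1)^n\int_{\{y_i\ge z_i\}}\partial_1\cdots\partial_n f\,dy$ (for $z_i\ge 0$) is slightly simpler than your cone.

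The concerns in your last paragraph do not arise. Average $f(z)=\frac{(-1)^n}{(n-1)!}\int_0^\infty r^{n-1}\partial_r^n f(z+r\omega)\,dr$ over directions with $\omega\cdot z\ge 0$. Then $r^{n-1}$ is exactly the polar Jacobian, so the kernel is bounded and no logarithm appears. Moreover $|z+r\omega|^2\ge|z|^2+r^2$ on the cone. The origin is harmless because $\langle y\rangle^{-2i-n-1}$ is integrable on $\mathbb{R}^n$. The heuristic line involving $|y-z|^{-0}$ and $|y|^n$ can simply be dropped.
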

\begin{proof}
    We use an integration by parts to compute for $k>j>0$: $$D_j^2=-\sum_i\int(\partial_i^2\nabla^{j-1}u\cdot\nabla^{j-1}u\langle z\rangle^{2j+1-n}+\partial_i\nabla^{j-1}u\cdot\nabla^{j-1}u\partial_i\langle z\rangle^{2j+1-n})\,.$$
    Noticing that $(\partial_i\langle z\rangle^{2j+1-n})^2\lesssim\langle z\rangle^{2j+1-n}\langle z\rangle^{2j-1-n}$, by Cauchy-Schwarz inequalities, we have 
    $D_j^2\lesssim D_{j-1}(D_{j}+D_{j+1})$.
    By a weighted AM-GM inequality, we compute for any $\nu>0$, $D_j^2\leq\nu D^2_{j+1}+C(\nu)D^2_{j-1}$.

    Now we prove the first interpolation inequality. Since $\nu$ is arbitrary, we only need to prove for $j=k-1$, which we can use induction on $k$ and the obtained inequality $D_j\leq\nu D_{j+1}+C(\nu)D_{j-1}$ to conclude.

    For the second inequality, we borrow the idea of proof for the embedding (3.13) of Proposition 1  in our follow-up paper \cite{chen2024stability}. We can assume $u\in C^{\infty}_c$ by a density argument and consider WLOG $z\in\mathbb{R}^n$ with $z_i\geq0$ for all components. In the region $\Omega(z)=\{y\in\mathbb{R}^n,y_i\geq z_i\}$ we have $|y|\geq |z|$ for any $y\in\Omega(z)$. By Lebniz's rule and the Cauchy-Schwarz inequality, we have 
$$\begin{aligned}|\nabla^j{u}(z)|&\lesssim \int_{\Omega(z)}|\partial_1\partial_2\cdots\partial_n\nabla^j{u}(y)|dy\\&\lesssim \|\nabla^{j+n}{u}\langle y\rangle^{i+{(n+1)}/{2}}\|_2(\int_{|y|\geq |z|}\langle y\rangle^{-2i-n-1}dy)^{1/2}\,.\end{aligned}$$
    We thus collect the pointwise bound and conclude the proof of the inequality.
\end{proof}
With the lemma in mind, we
denote the terms as lower order terms (l.o.t. for short) if their $\rho_k$-weighted $L^2$-norms are bounded by $\nu E_k+C(\nu)E_0$ for any $\nu>0$. Notice that for $0<j\leq k$, $\nabla^{j}\bar{u}\nabla^{k-j}u$ are l.o.t. since we can estimate $$|\nabla^{j}\bar{u}|\rho_k^{1/2}\lesssim\langle z\rangle^{-2-j}\langle z\rangle^{k+(1-n)/2}\lesssim\langle z\rangle^{k-j+(1-n)/2}\,,$$
and thus their $\rho_k$-weighted $L^2$-norms are bounded by $D_{k-j}$.
Therefore we collect the linear estimate via an integration by parts and $O(\lambda)$ notations as 
$$\begin{aligned}(\nabla^kL,\nabla^ku\rho_k)&\leq ([-1-\frac{k}{2}+\frac{1}{4}\frac{(z\rho_k)_z}{\rho_k}+2\bar{u}]\nabla^ku,\nabla^ku\rho_k)\\&+C(1+\|\nabla^4u\|_{\infty})\lambda E_k^2+\nu E_k^2+C(\nu)E_0^2\,.\end{aligned}$$ We can compute the damping as $$-1-\frac{k}{2}+\frac{n}{4}+\frac{1}{4}\frac{(2k+1-n)10^{-3k}|z|^{2k+1-n}}{1+10^{-3k}|z|^{2k+1-n}}+\frac{2}{1+|z|^2/8}\leq-\frac{1}{2} \,,$$
 where the last inequality is equivalent to $$(1+|z|^2/8)(2k+2-n+10^{-3k}|z|^{2k+1-n})-8(1+10^{-3k}|z|^{2k+1-n})\geq0\,,$$ which can be implied by an AM-GM inequality via $$\begin{aligned}
     &3n+\frac{10^{-3k}}{8}|z|^{2k+3-n}\geq(2k+3-n)(\frac{3n}{2})^{\frac{2}{2k+3-n}}(\frac{10^{-3k}}{8(2k+1-n)})^{1-\frac{2}{2k+3-n}}|z|^{2k+1-n}\\&>10^{-3k}/8(10^{6k/(2k+3-n)})|z|^{2k+1-n}>8\times10^{-3k}|z|^{2k+1-n}\,.
 \end{aligned}$$
We collect the linear estimate by choosing a small enough $\nu$ to get
\begin{equation*}
(\nabla^kL,\nabla^ku\rho_k)\leq (-\frac{1}{4}+C(1+\|\nabla^4u\|_{\infty})\lambda)E_k^2+CE_0^2\,.
\end{equation*}

For the nonlinear term $\nabla^kN$, by Leibniz's rule, we know that it will be a linear combination of $\nabla^{k-j}u\nabla^{j}u$. For a typical term, assume WLOG that $j\leq k/2$. By the interpolation lemma, we have 
$$\|\nabla^{k-j}u\nabla^{j}u\|_{\rho_k}\leq D_{k-j}\|\nabla^ju\langle z\rangle^j\|_{\infty}\lesssim D_{k-j}D_{j+n}\lesssim(D_k+D_0)^2\,.$$
Therefore, we can collect the nonlinear estimate 
\begin{equation*}
(\nabla^kN,\nabla^ku\rho_k)\leq C(E_k+E_0)^2 E_k\,.
\end{equation*}

For the error term, notice that $\nabla^kF_i$ is  $O(z^{-2-k})$ at $\infty$. Therefore, it is square integrable with the weight $\rho_k$ and we can estimate  \begin{equation*}
(\nabla^kF_i,\nabla^ku\rho_k)\leq C\lambda(1+\|\nabla^4u\|_{\infty}) E_k\,.
\end{equation*}

We estimate the viscous term using integration by parts twice
\begin{equation*}
(\nabla^kV_i,\nabla^ku\rho_k)= - (\nabla^ku_i,\nabla^ku_i\rho_k)-(\nabla^ku_i,\nabla^ku(\rho_k)_i)\leq\frac{1}{2}(\nabla^ku,\nabla^ku(\rho_k)_{ii})\leq CE_k^2\,.
\end{equation*}
Finally, we gather our $H^k$ estimate as
\begin{equation}
\begin{aligned}
\label{Hk-col}
    &\frac{1}{2}\partial_\tau E^2_k\leq (-\frac{1}{4}+C(1+\|\nabla^4u\|_{\infty})\lambda)E_k^2+CE_0^2+C(E_k+E_0)^2 E_k\\&+C\lambda(1+\|\nabla^4u\|_{\infty}) E_k+C\lambda E_k^2\,.
    \end{aligned}
\end{equation}
Using again the interpolation lemma, we know that $\|\nabla^4u\|_{\infty}+\|u\|_{\infty}\lesssim E_k+E_0$.
Combined with \eqref{L2-col}, we know that there exists a constant $\mu_0$, such that for $0<\mu<\mu_0$, if we consider the energy \begin{equation}\label{def_energy}
    E^2=E_0^2+\mu E_k^2\,,
\end{equation} we have the estimate 
\begin{equation*}
    \frac{1}{2}\partial_\tau E^2\leq(-\frac{1}{10}+C(1+E)\lambda)E^2+CE^3+C\lambda (1+E)E+C\lambda E^2\,.
\end{equation*}
Namely that \begin{equation}\label{energy}\partial_\tau E\leq(-\frac{1}{10}+CE\lambda+CE)E+C\lambda +C\lambda E\,.\end{equation}
Notice that here $C\geq1$ is an absolute constant.
\subsection{Finite time blowup}\label{sec4}
Recall the ODE \eqref{lambn} for $\lambda_i$, and noticing that $\lambda=\max{\lambda_i}$, we define $\gamma=1/\lambda$. $\gamma(0)=1/\hat{C}_u(0)$ will be the constant we choose now. The ODE for $\gamma$ is  
$$
    \partial_\tau\gamma=-\frac{\partial_\tau\lambda_i}{\lambda_i^2}=1+4\sum_ju_{iijj}(0)\frac{\lambda_j}{\lambda_i}\,,\quad i=\text{argmax}{\lambda_i}\,.
$$ \textit{a priori} estimate yields $|\nabla^4u(0)|\leq CE$. We can assume WLOG that $C\geq 1$. Defining $G=E\gamma$, we have \begin{equation}\label{odegamma}|\partial_\tau\gamma-1|\leq 4nC\frac{G}{\gamma}\,.\end{equation} We will show that $E$ decays as $1/\tau$. We calculate an ODE for $G$:
\begin{equation}\label{odeg}\begin{aligned}
    \partial_\tau G&\leq (-\frac{1}{10}+CE\lambda+CE)G+C+C E+E(1+4nC\frac{G}{\gamma})\\&\leq(-\frac{1}{10}+2C\frac{1}{\gamma})G+C+8nCG^2(\frac{1}{\gamma}+\frac{1}{\gamma^2})\,.
\end{aligned}\end{equation}
We choose $\hat{C}_u(0)=1/\gamma(0)\leq1/(10000nC^2)$ small enough such that if we start from $G(0)< 100C$, we will have the bootstrap estimate \begin{equation}
    \label{bootstrapbound}G< 100C,\quad \gamma\geq \gamma(0)\geq10000nC^2
\end{equation} for all time via a standard bootstrap argument. 
\begin{proof}[Proof of the bootstrap bound \eqref{bootstrapbound}]
    In fact, we know by \eqref{odegamma} that $\partial_\tau\gamma(0)>0$.  Assume that the bootstrap estimate is false,   then by continuity, there exists a rescaled time $\tau_0>0$ such that \eqref{bootstrapbound} holds for $0<\tau<\tau_0$ and either $G(\tau_0)\geq 100C$ or $\gamma(\tau_0)\leq \gamma(0)$. We compute by \eqref{odegamma} that in $(0,\tau_0)$, $\partial_\tau\gamma\geq1-\frac{400nC^2}{10000nC^2}>0$ which rules out the latter case. As a consequence, we estimate the ODE for $G$ \eqref{odeg} in $(0,\tau_0)$ as $$\partial_\tau G\leq -\frac{1}{20} G+C+G\frac{1}{50}\,.$$By continuity, we estimate $\partial_\tau G(\tau_0)<0 $ and we conclude that the former case cannot hold either. We reach a contradiction and conclude the bootstrap.
\end{proof}
Based on the bootstrap estimates, we have the following estimate for   $\gamma$: $$|\partial_\tau\gamma-1|\leq\frac{400nC^2}{\gamma}\,.$$
Thus  $\gamma/\tau\to 1$ as $\tau\to\infty$.  Thus we have $$|(\hat{C}_u)_t+1|=|\frac{(\hat{C}_u)_t t_\tau}{\hat{C}_u}+1|=|\frac{(\hat{C}_u)_\tau}{\hat{C}_u}+1|=|\hat{c}_u+1|=|c_u|\leq\frac{n}{4\gamma}\,,\quad \tau_t=1/\hat{C}_u\,.$$
We can finally show that there exists a blowup time $T>0$, such that $$\lim_{t\to T}\frac{\hat{C}_u}{T-t}=1\,,\quad\lim_{t\to T}\frac{\tau}{|\log(T-t)|}=1\,.$$ %which implies $$\lim_{t\to T}\frac{\hat{C}_l}{\sqrt{(T-t)|\log(T-t)|}}= 1\,.$$ 
Moreover, defining $\kappa=\sum_i\frac{1}{\lambda_i}$, we compute the following ODE $$\partial_\tau\kappa=n+4\sum_i\sum_j u_{iijj}(0)\frac{\lambda_j}{\lambda_i}\leq n+4nC\frac{G}{\gamma^2}\kappa\leq n+\frac{400nC^2}{\gamma^2}\kappa\,. $$
Therefore for sufficiently large $\tau$, $\partial_\tau\kappa\leq n+800nC^2\frac{\kappa}{\tau^2}$. We integrate and get for sufficiently large $\tau$, $\kappa\leq n\tau+1600n^2C^2\log\tau$. Therefore since $\lambda=\max{\lambda_i}=1/\gamma\to1/\tau$, we have $n\leq\liminf\sum_i\frac{\lambda}{\lambda_i}\leq\limsup\sum_i\frac{\lambda}{\lambda_i}\leq n.$ Namely we get $\lambda_i\tau\to\frac{\lambda_i}{\lambda}\to1$, and thus we arrive at the law $$\lim_{t\to T}\frac{\hat{C}^i_l}{\sqrt{(T-t)|\log(T-t)|}}=\lim_{t\to T}\sqrt\frac{\hat{C}_u}{{(T-t)\lambda_i\tau}}= 1\,.$$ 

Therefore for $C_0=100C, \lambda_0=1/(10000nC^2)$ and for initial data satisfying the assumption of Theorem \ref{thm1}, we know that $\gamma(0)=1/\lambda$ and   $u=g$ defined in the rescaled formulation satisfy the bootstrap assumption.
We conclude Theorem \ref{thm1} based on the asymptotics of $\hat{C}_u, \hat{C}_l^i$.

\section{Numerical Experiments}\label{sec:num}
In this section, we conduct numerical experiments to corroborate our analysis that our choice of normalization in Section \ref{sec:dy} indeed preserves a stable blowup and therefore we are able to capture the log-correction numerically, both in the 1D case and in the case of higher dimensions with nonradial perturbations. We remark that our proofs in the work are derived independently of the numerical results in this section. 

\textbf{Data availability statement:} The data and the code will be available upon request.

In practice, we hope to compute the profile even if we do not have prior knowledge. Therefore in our numerical experiment, we solve  \eqref{drf} with initial data as a large perturbation to the approximate steady state. We will compute $\hat{u}$ dynamically and recall that  our choice of normalization $\hat{c}_l$, $\hat{c}_u$ in \eqref{cons_hat} ensures that $\hat{u}(0)$, $\hat{u}_{zz}(0)$ remain constants in time.  

\subsection{1D case}

In our numerical study, we choose the initialization that is more general than the assumption of our theorem as $$\hat{u}(0,z)=(1+z^2/8+z^4/10)^{-1}\,, \quad\hat{C}_u(0)=1\,, \quad\lambda=1\,.$$
At each time step $\tau_m$, we first determine the normalization constants as $$\hat{c}_u=-\hat{u}(0)-\frac{\lambda\hat{u}_{zz}(0)}{\hat{u}(0)}\,,\hat{c}_l=\frac{\hat{c}_u}{2}+\hat{u}(0)+\frac{\lambda\hat{u}_{zzzz}(0)}{2\hat{u}_{zz}(0)}\,.$$
Next, we can determine the time step $k$ via the standard numerical stability conditions for a convection-diffusion equation, and then we use the 4-th order Runge-Kutta scheme for the discretization in time and a cubic spline for the discretization in space to evolve the equation $$\hat{u}_\tau=\hat{c}_u \hat{u}-\hat{c}_l z\hat{u}_z+\hat{u}^2+\lambda\hat{u}_{zz}\,.$$
Finally, we update our $\lambda$ for time $\tau_{m+1}=\tau_m+k$ by a 4-th order Runge-Kutta discretization scheme  of the ODE $$(\log\lambda)_\tau=(2\hat{c}_l+\hat{c}_u)\,.$$

We use a fixed nonuniform mesh in space with even symmetry considered, and our computational domain is $[0,10^5]$ with $2000$ gridpoints in space. We report that after $10^9$ iterations in time, the rescaled time $\tau\approx3.9887\times10^5$ and $\log(\hat{C}_u)\approx -3.9886\times10^5$. This means that the amplitude of the solution in the physical space grows $\exp(3.9886\times10^5)$ times, which is impossible to compute if we do not use a dynamic rescaling formulation. We remark that the computation is very stable and we stopped after $10^9$ iterations only due to concerns of computational time. In theory, we can compute for an arbitrarily long time and witness an arbitrary growth of the amplitude in the physical space.

To see that the profile $\hat{u}$ converges indeed to the steady state $\bar{u}$, we plot the profile after $m=5 \times 10^4, 5 \times 10^5, 5 \times 10^6, 10^7,1.5  \times 10^7, 2 \times 10^7$ iterations and compare it with the steady state. We see that the profile converges fast; see Figure \ref{fig:profile}. Furthermore, we investigate the convergence rate of the profile. Define $\gamma(\tau)=\sup_{z}\{\hat{u}(\tau)-\bar{u}\}$. We plot $\gamma\tau$ after $2 \times 10^7$ until $5 \times10^7$ iterations, corresponding to $\tau\in[218,11638]$. We see that the residue is approximately of order $1/\tau$; see Figure \ref{fig:res}. However, we are only using a finite domain and as time becomes larger, the effect of the finite domain size becomes more obvious, and $\gamma\tau$ will increase slightly.

\begin{figure}
    \centering
    \includegraphics[width=0.8\linewidth]{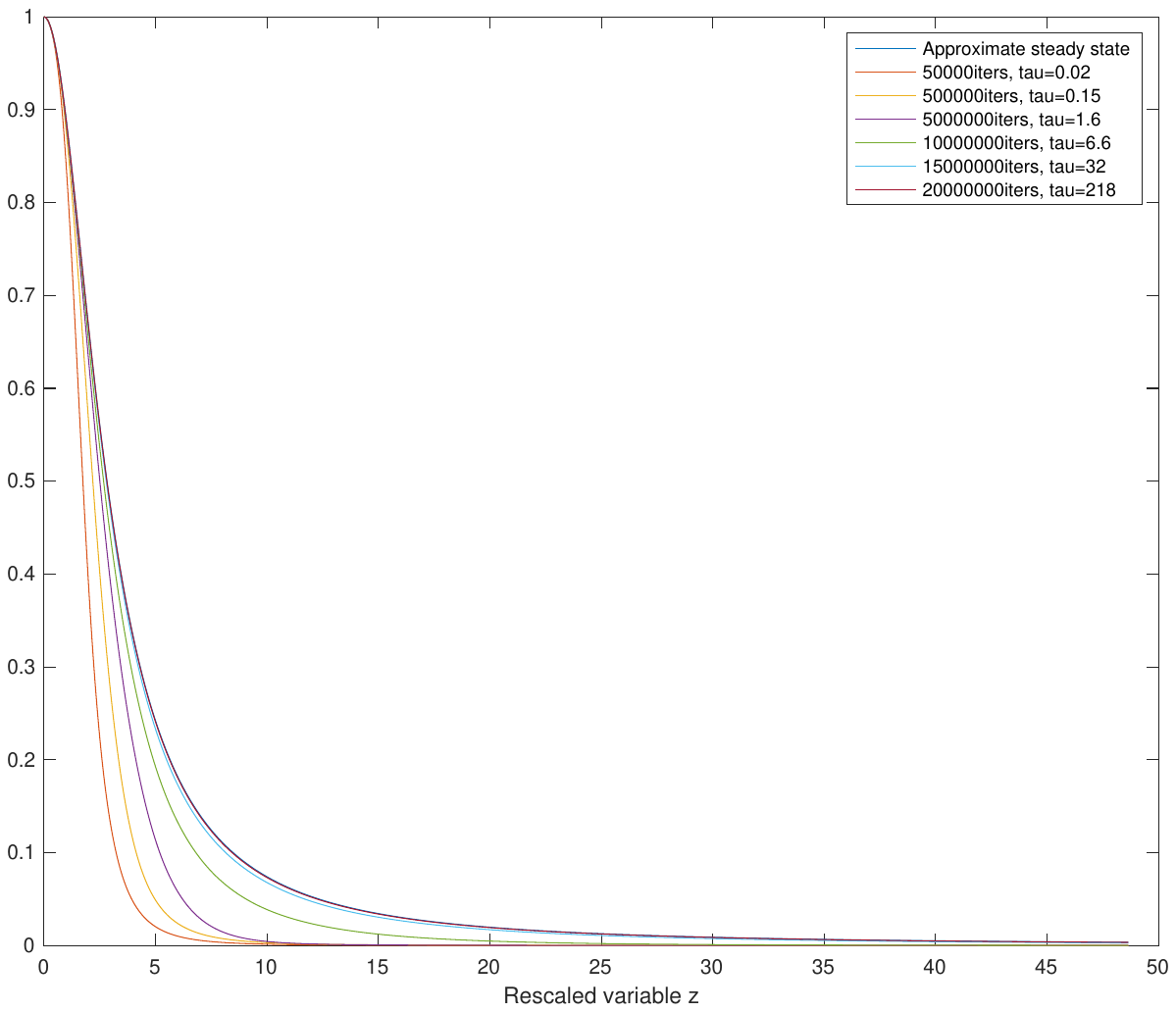}
    \caption{Comparison of the profile to the approximate steady state}
    \label{fig:profile}
\end{figure}

\begin{figure}
    \centering
    \includegraphics[width=0.7\linewidth]{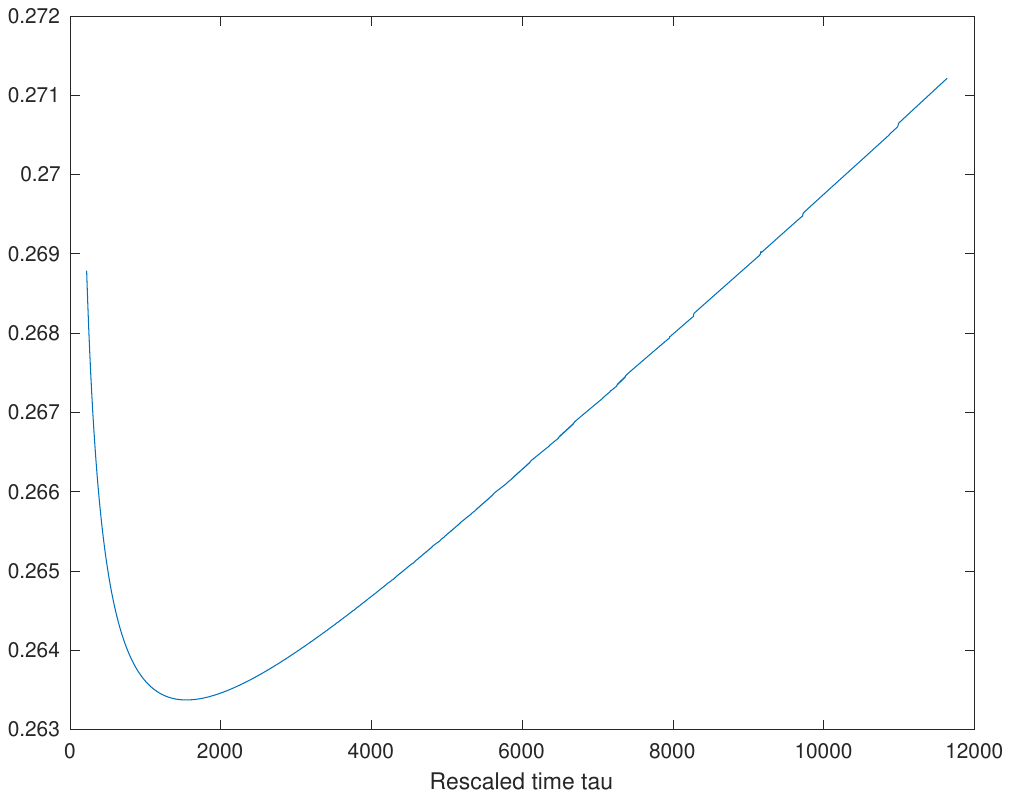}
    \caption{Plot of the  residue multiplied by the rescaled time}
    \label{fig:res}
\end{figure}
To see that we can recover the correct convergence rate, we plot $(1/2-\hat{c}_l)\tau$ and $(\hat{c}_u+1)\tau$ in time to see that they indeed converge to the correct constant $5/8$ and $1/4$ respectively and therefore will give the correct log-scaling; see for example indicated by \eqref{cu}. Again for visualization purposes, we only plot for the first $5\times 10^7$ iterations and we can see that they converge to the desired constants very fast; see Figure \ref{fig:cl}.
\begin{figure}[ht]

\centering
\includegraphics[width=6cm]{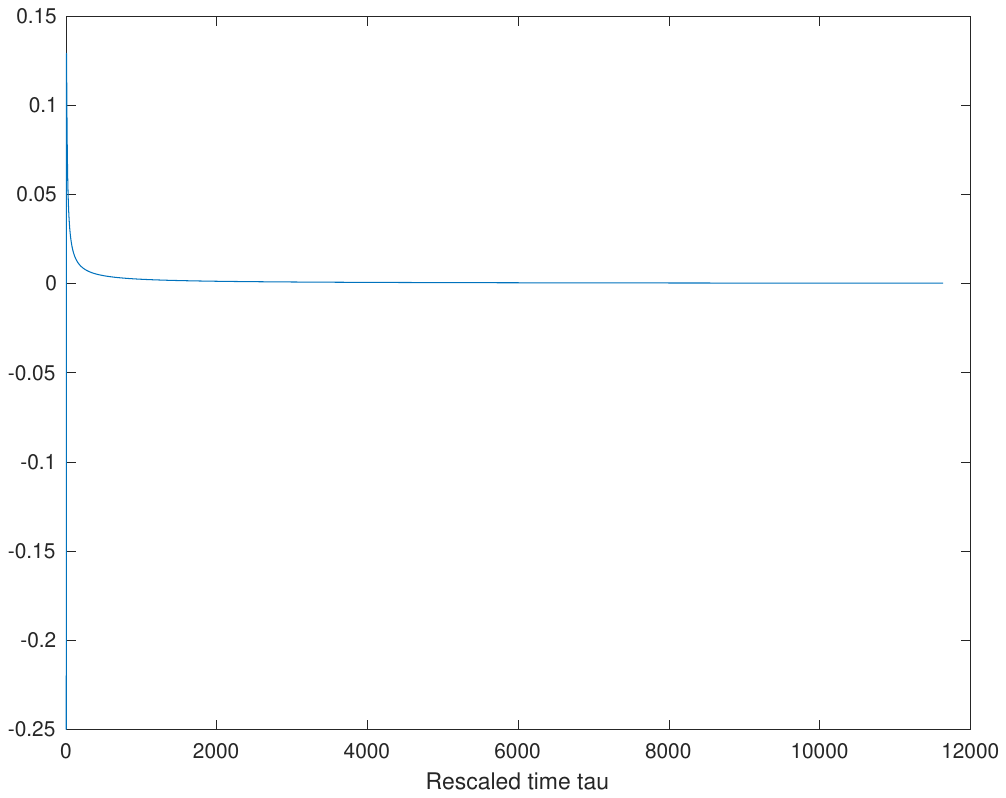}
\includegraphics[width=6cm]{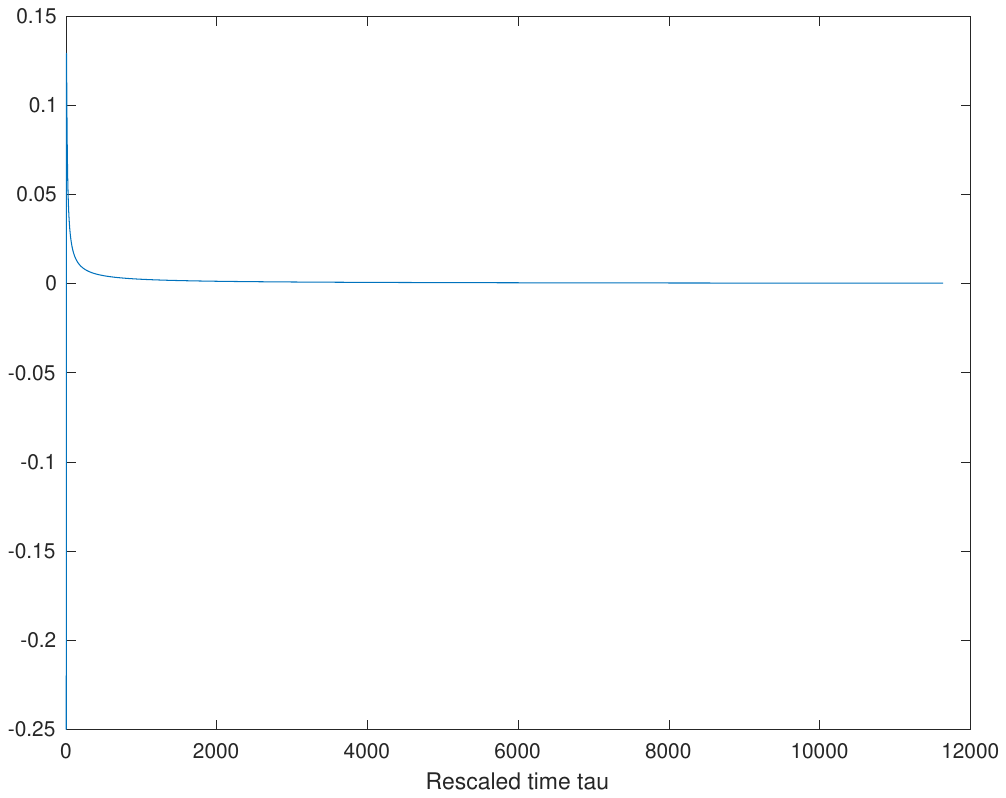}
\caption{Fitting the law of the normalization constants. Left: $(1/2-\hat{c}_l)\tau-5/8$ versus $\tau$; right: $(\hat{c}_u+1)\tau-1/4$ versus $\tau$.}
\label{fig:cl}
\end{figure}

\subsection{2D case}
For the 2D example, we choose a nonradial initialization  as $$\hat{u}(0,x,y)=(1+(x^2+y^2)/8+x^4/100)^{-1}\,, \quad\hat{C}_u(0)=1\,, \quad\lambda_1=\lambda_2=1\,.$$
At each time step $\tau_m$, we first determine the normalization constants as $$\hat{c}_u=-\hat{u}(0,0)-\frac{\lambda_1\hat{u}_{xx}(0,0)+\lambda_2\hat{u}_{yy}(0,0)}{\hat{u}(0,0)}\,,$$$$\hat{c}^1_l=\frac{\hat{c}_u}{2}+\hat{u}(0,0)+\frac{\lambda_1\hat{u}_{xxxx}(0,0)+\lambda_2\hat{u}_{xxyy}(0,0)}{2\hat{u}_{xx}(0,0)}\,,$$$$\hat{c}^2_l=\frac{\hat{c}_u}{2}+\hat{u}(0,0)+\frac{\lambda_1\hat{u}_{xxyy}(0,0)+\lambda_2\hat{u}_{yyyy}(0,0)}{2\hat{u}_{yy}(0,0)}\,.$$
Next, we can determine the time step $k$ via the standard numerical stability conditions for a convection-diffusion equation, and then we use the 4-th order Runge-Kutta scheme for the discretization in time and a cubic spline for the discretization in space to evolve the equation $$\hat{u}_\tau=\hat{c}_u \hat{u}-\hat{c}^1_l x\hat{u}_x-\hat{c}^2_l y\hat{u}_y+\hat{u}^2+\lambda_1\hat{u}_{xx}+\lambda_2\hat{u}_{yy}\,.$$
Finally, we update our $\lambda_1$, $\lambda_2$ for time $\tau_{m+1}=\tau_m+k$ by a 4-th order Runge-Kutta discretization scheme   of the ODE $$(\log\lambda_i)_\tau=(2\hat{c}^i_l+\hat{c}_u)\,,\quad i=1,2\,.$$

We use a fixed nonuniform mesh in space with even symmetry considered, and our computational domain is $[0,4000]$ with $200$ gridpoints in space in each direction. 
To see that we can recover the correct convergence rate,
we plot $R_i:=(1/2-\hat{c}_l^i)\tau$ and $R_u:=(\hat{c}_u+1)\tau$ as a function of $\tau$ after $10^7$ iterations to see that they indeed converge to the correct constant $3/4$ and $1/2$ respectively and therefore will give the correct log-scaling; see for example indicated by \eqref{cun}. We can see that they converge to the desired constants very fast; see Figure \ref{fig:clnd}.
\begin{figure}[ht]

\centering
\includegraphics[width=0.8\linewidth]{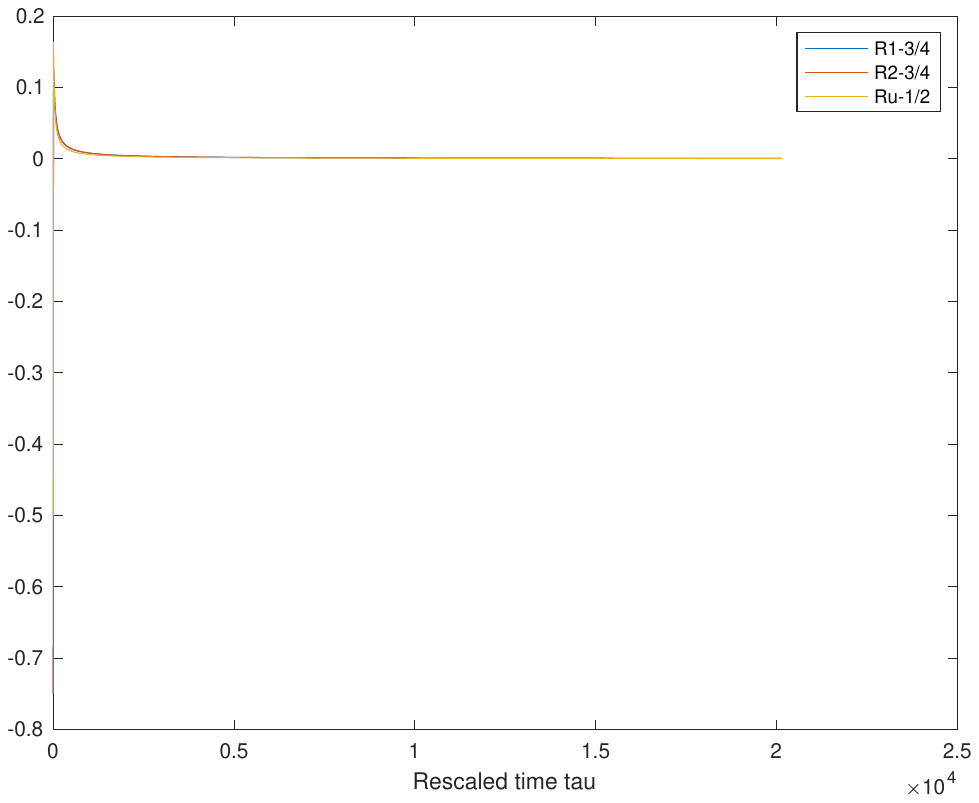}
\caption{Fitting the law of the normalization constants for 2D.}
\label{fig:clnd}
\end{figure}

\section{Finite Codimensional stability}\label{sec:slh1}

In this section, we will briefly sketch the high-level idea to study high-order vanishing type-I blowup for the 1D semilinear heat equation \eqref{sml} under \textit{radial (even symmetric)} setting.
\subsection{Self-similar renormalization and the approximate solution} For any fixed $m \in \mathbb{Z}_{> 1}$, we introduce the self-similar coordinate
\begin{equation}
    y = \frac{x}{\lambda^{\frac{1}{m}}}, \quad \frac{d\tau}{dt} = \frac{1}{\lambda^2}, \quad \tau\Big|_{t=0} =0, \quad 
\frac{\lambda_\tau}{\lambda} = -\frac{1}{2},
\label{ss coordinate: heat eq1}
\end{equation}

and corresponding renormalization
\begin{equation}
a(t,x) = \frac{1}{\lambda^2} U \left( \tau, y \right),
\label{ss renormalization: heat eq1}
\end{equation}
then $\lambda(\tau) = \lambda_0 e^{-\frac{1}{2} \tau}$ and $U$ solves the equation
\begin{equation}
\partial_\tau U = \lambda^{2-\frac{2}{m}} U_{yy} - U - \frac{1}{2m} y \cdot \nabla U + U^2,
\label{eq: heat renormalized setting1}
\end{equation}
where the diffusion term can be regarded as a perturbation since $2- \frac{2}{m} >0$. This in turn motivates us to find an approximate solution solving the equation
\begin{equation}
-U - \frac{1}{2m} y \cdot \nabla U + U^2=0.
\label{sec: PDE U*1}
\end{equation}
In particular, this equation can be explicitly solved by
\begin{equation}
U_* (y)= (1+ c y^{2m})^{-1}.
\label{sec3: profile1}
\end{equation}
Here $c>0$ is a constant, and $m>1$ describes the vanishing order of the next order expansion of $U_*$ near the origin.

\subsection{Linear stability}
We fix $m>1$ and $c=1$ in \eqref{sec3: profile1}, and plug the ansatz  $U = U_* + \epsilon$ into \eqref{eq: heat renormalized setting1}, it then follows that $\varepsilon$ solves
\[
\partial_\tau \varepsilon= \lambda^{2-\frac{2}{m}} U_{yy} + \mathcal{L} \varepsilon+ \varepsilon^2,
\]
where the linearized operator reads
\begin{equation}
\mathcal{L}\varepsilon=-\varepsilon-\frac{1}{2m}y \partial_y \varepsilon +2U_*\varepsilon.
\label{sec: L1}
\end{equation}
Next, we introduce a weighted $L_\Theta^2$ space with singular weight $\Theta(y) = y^{-4m-4}$ near the origin to extract damping, which is the essential step to close the nonlinear stability. Via the integration by parts, we have the coercivity near the origin
\begin{equation}
\left( \mathcal{L} \varepsilon, \varepsilon\right)_{L_\Theta^2}
= \left( \left( -1 + 2U_* + \frac{(\Theta y)_y}{4 m \Theta} \right) \varepsilon, \varepsilon\right)_{L_{\Theta}^2} 
\approx  - \frac{3}{4m} \left(\varepsilon, \varepsilon\right)_{L_{\Theta}^2}.
\label{sec3: cor L 11}
\end{equation}
In particular, with careful analysis, there is a small constant $0< \kappa = \kappa(m,U_*) \ll 1$ such that \eqref{sec3: cor L 11} can be extended to
\begin{equation}
\left( \mathcal{L} \varepsilon, \varepsilon\right)_{L_{\Theta+\kappa}^2}
 \leq - \frac{1}{4m} \left(\varepsilon, \varepsilon\right)_{L_{\Theta+\kappa}^2}.
\label{sec3: cor L 21}
\end{equation}

Additionally, we need to introduce the higher Sobolev norm $\dot H^{\bar K}$ to close the bootstrap argument. Precisely,
\begin{align}
\left( \mathcal{L} \varepsilon, \varepsilon\right)_{\dot H^{\bar K}}
& =\left( \left(-1-\frac{\bar K}{2m}+2U_*+\frac{1}{4m} \right)\partial_y^{\bar K}\varepsilon, \partial_y^{\bar K}\varepsilon \right)_{L^2} + O(\| \varepsilon\|_{H^{\bar K-1}} \| \varepsilon\|_{\dot H^{\bar K}}) \notag \\
& \leq - \frac{2\bar K-4m-1}{4m} \| \varepsilon\|_{\dot H^{\bar K}}^2 + O(\| \varepsilon\|_{H^{{\bar K}-1}} \| \varepsilon\|_{\dot H^{\bar K}}),
\label{sec3: cor L 31}
\end{align}
where the leading order enjoys damping once we choose $\bar K = \bar K(m) \gg 1$.

\subsection{Modulation ODEs and nonlinear stability} With the singular weight $\Theta(y) = |y|^{-4m-4}$ given previously, introducing a cutoff function $\chi$, we can further \textit{radially} decompose $\varepsilon$ into
\begin{equation}
\varepsilon(\tau,y)= \varepsilon_u(\tau,y) + \varepsilon_s(\tau,y), \quad \text{ with }  \quad \varepsilon_u = \sum_{j=0}^{m} c_j(\tau) \chi(y) y^{2j},
\end{equation}
such that $\varepsilon_s(\tau,y) = O(y^{2m+2})$ near the origin, which yields an ODE system for modulation parameters $\{ c_j \}_{j=0}^m$:
\begin{equation}
\begin{cases}
    \dot c_j = \left( 1- \frac{j}{m} \right) c_j + [\varepsilon_u^2]_j + \lambda^{2- \frac{2}{m}} [U_{yy}]_j, & 0 \leq j < m-1,
    \\
    \dot c_{m} = [\varepsilon_u^2]_m - 2c_0 + \lambda^{2- \frac{2}{m}}[U_{yy}]_m, & j=m.
\end{cases}
\end{equation}
Here $[h]_j$ is the $2j$-th order coefficient of Taylor expansion of $h(r)$ at the origin.
Additionally, $\varepsilon_s = O(y^{2m+2})$ solves the equation
\[
\partial_\tau \varepsilon_s =  \mathcal{L} \varepsilon_s  + 2 \varepsilon_u \varepsilon_s + \varepsilon_s + G[\lambda,U,\varepsilon_u],
\]
with the modulation term $G[\lambda,U,\varepsilon_u]= O(y^{2m+2})$ given by 
\[
G[\lambda,U,\varepsilon_u]= \left( \lambda^{2-\frac{2}{m}} U_{yy} + \mathcal{L}\varepsilon_u + \varepsilon_u^2 \right) - \sum_{j=0}^K \left[\lambda^{2-\frac{2}{m}} U_{yy} + \mathcal{L} \varepsilon_u + \varepsilon_u^2 \right]_j \chi  y^{2j}.
\]
Finally, we can use the standard topological argument together with \eqref{sec3: cor L 11} and \eqref{sec3: cor L 31} to derive the nonlinear stability with finite codimension $m+1$. 

\begin{remark}
    We expect that this nonlinear stability result can be improved to finite-codimension $m-1$, which is two dimensions lower than our previous findings. 
    The key underlying reason is the presence of two degrees of freedom, namely, the choice of the blowup time $T>0$ and the shrinking rate. These degrees of freedom can be utilized through a matching argument to recover the corresponding unstable directions as in \cite{Limodestability, lizhou2025nonradial}.

    Alternatively, one may employ a method of dynamical rescaling to establish stability with finite codimension $m-1$. 
    Specifically, we modify the coordinate \eqref{ss coordinate: heat eq1} as
    \[
    y = \frac{x}{\mu^\frac{1}{m}}, \quad  \frac{d\tau}{dt} = \frac{1}{\lambda^2}, \quad \tau\big|_{t=0}=0, \quad \frac{\lambda_\tau}{\lambda} = -\frac{1}{2} + \frac{1}{2} c_a, \quad \frac{\mu_\tau}{ \mu} = - \frac{1}{2}  - mc_s.
    \]
    We then define the corresponding renormalization
    \[
    u(t,x) = \frac{1}{\lambda^2} U (\tau,y).
    \]
    Under this coordinate transformation, $U$ satisfies
    \[
    \partial_\tau U = \lambda^2 \mu^{-\frac{2}{m}} U_{yy} + \left( -1 + c_a \right) U -\left( \frac{1}{2m} + c_s \right) y \partial_y U + U^2,
    \]
    where the parameters $(c_a,c_s)$ are determined by the modulation conditions
    \[
    U(\tau, 0) = U_*(0) \qquad \text{ and } \qquad  [U(\tau,0)]_{m}= [U_*]_{m},
    \] 
    and we eliminate neutral modes by fixing $c_0=c_m=0$.
    By applying a similar argument of modulation ODEs, we obtain the nonlinear stability with finite codimension $m-1$. Notably, introducing extra scaling parameters to perturb the scaling symmetry is crucial for extending the argument to the nonradial setting; see previous works by the second author and collaborators \cite{chen2024stability,hou20242}.
\end{remark}

\begin{remark}

Compared with the semilinear heat equation, analyzing the Keller-Segel equation with logistic damping involves several additional challenges. For example, 
the profile $U_*$ 
introduced in \eqref{sec3: profile1} is an explicit solution to 
the first-order and separable local equation \eqref{sec: PDE U*1}. 
In contrast, for Keller-Segel equation with logistic damping, the associated profile equatiois inherently nonlocal and cannot be trivially solved. 
This nonlocality requires a more delicate analysis.

    Moreover, since there is no explicit nontrivial solution to the profile equation, additional effort is required to derive quantitative properties of the profile $Q$. Combined with the nonlocal nature, these complexities make the establishment of linear coercivity of Keller-Segel equation more intricate than in the case of the semilinear heat equation.  Detailed strategies to handle these obstacles will be presented in Section 4 of \cite{liu2025finite}.

\end{remark}

\section{Future works}
\label{subsec:future works}
Having established our robust framework of blowups via local modulations and singularly weighted estimates, we outline some potential directions for generalizing our methods for more challenging singularities with multiple scales, and towards investigation as promising numerical methods.

An intriguing blowup phenomenon happens with a traveling-wave type singularity, say of the following $$\bar{w}(x,t)=(T-t)^{-3/2}\bar{\Omega}(\frac{x-1/2(T-t)^{1/2}}{(T-t)})\,.$$
%For example, in the above explicit example, $T=4, \bar{\Omega}(z)=-\frac{2}{1+16z^2}$.

One goal is to generalize the idea of local modulations to traveling-wave type singularities, where there are two scales at play: the inner scale that determines the blowup profile at $x=1/2(T-t)^{1/2}+O(T-t)$, and the outer scale that determines the blowup location at $x=O((T-t)^{1/2})$. We have demonstrated that via arguments of local modulations, the outer scale is shown to be stable, leveraging the precise vanishing order of the profile and making the perturbation vanish at a higher order. 

By studying this simple model, one can gain insights to establish singularity for more complicated models like Keller-Segel or potentially Navier-Stokes equations, which involve multiple blowup scales. Again, our framework of local modulations comes at the benefit of using only limited non-explicit information of the approximate profile, seamlessly amenable to computations of profiles and computer-assisted proofs.

On the other hand, numerical investigations showcased the efficiency of our method in the computation of the semilinear heat equation \cite{hou20242}. As indicated by our analysis, we enforced $\hat{u}(0,t)$ and $\nabla^2\hat{u}(0,t)$ to be constant for data with even symmetries. In practice, one can come up with more stable enforcement of those vanishing conditions, say with a splitting method to enforce those constant quantities, potentially improving the numerical stability of our method. Implementation of the numerical methods for general classes of blowups, including those with multiple scales, is of immediate future interest.

For general nonlinear steady-state equations, one approach is to introduce artificial time and perform time marching for convergence. Since we demonstrated that a clever nonlinear rescaling can rule out the potentially unstable directions and enhance convergence, we have reasons to believe that our methodology can be adopted broadly as numerical solvers beyond singularity formation.

\chapter{Kolmogorov-Arnold Network}
\label{chap:3}

In this chapter, we present Kolmogorov-Arnold Network (KAN), a novel deep learning architecture inspired by the Kolmogorov-Arnold representation theorem, mostly based on our works \cite{kan1,liu2024kan,kanbias}. Partially motivated by the need for symbolic computation of singularities, KANs offer a powerful tool for science, especially when interpretability is desired. Compared to prevalent Multi-Layer Perceptrons (MLPs) with  \textit{fixed} activation functions on \textit{nodes} (``neurons''), KANs have \textit{learnable} activation functions on \textit{edges} (``weights''). KANs learn interpretable 1D functions on their edges whose connection graph is also simple enough to be explained. Through examples in mathematics, KANs are shown to be useful ``collaborators'' helping scientists (re)discover mathematical and physical laws. Moreover, KANs are shown to be more accurate and have faster scaling laws than MLPs in function fitting and PDE solving, both theoretically and empirically.

\begin{figure}[hb]
    \centering
    \includegraphics[width=1.0\linewidth]{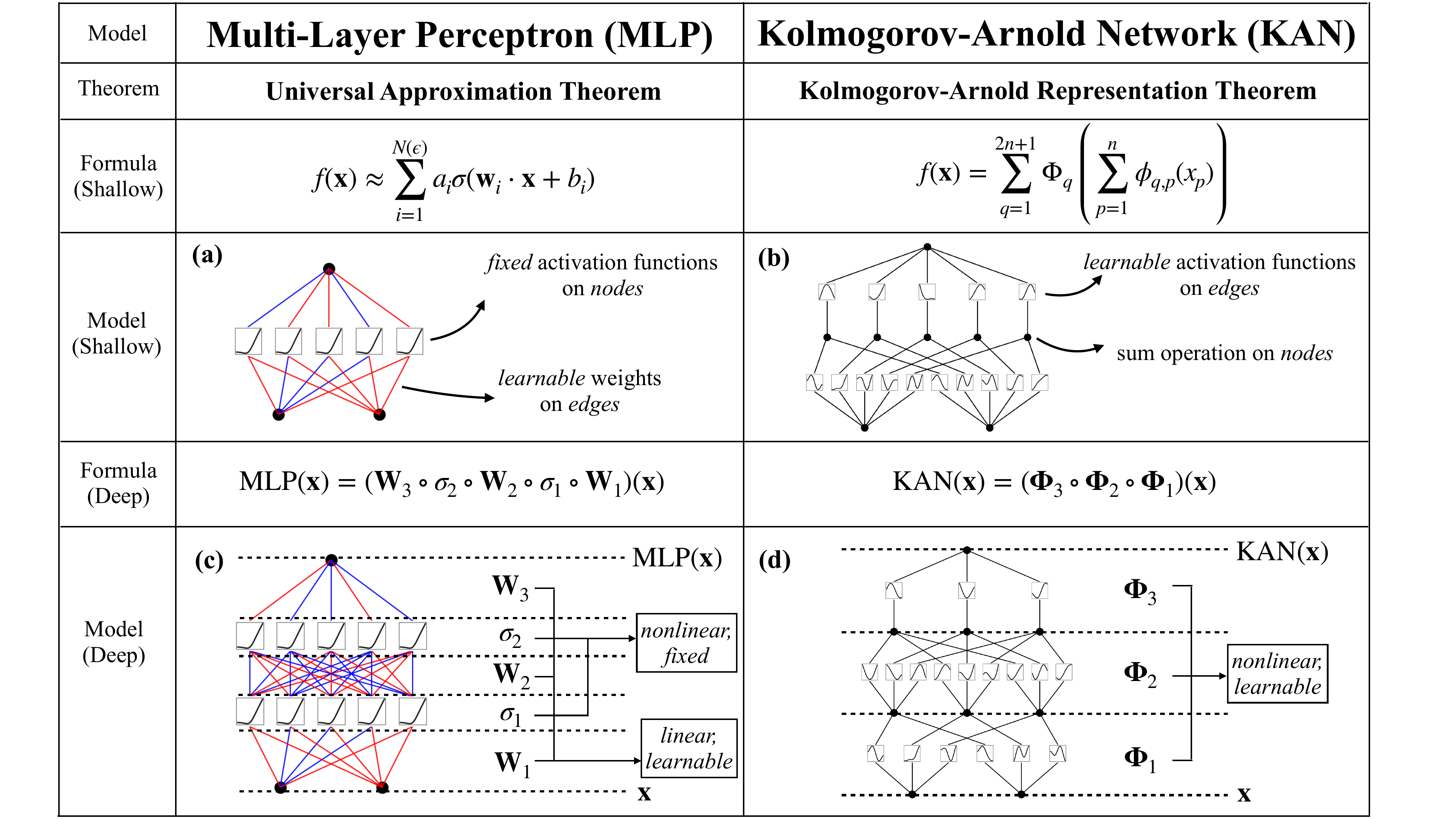}
    \caption{Multi-Layer Perceptrons (MLPs) vs. Kolmogorov-Arnold Networks (KANs)}
    \label{fig:kan_mlp}
\end{figure}

\section{Introduction}

Multi-layer perceptrons (MLPs)~\cite{haykin1994neural,cybenko1989approximation,hornik1989multilayer}, also known as fully-connected feedforward neural networks, are foundational building blocks of today's deep learning models. The importance of MLPs can never be overstated, since they are the default models in machine learning for approximating nonlinear functions, due to their expressive power guaranteed by the universal approximation theorem~\cite{hornik1989multilayer}. However, MLPs often lack interpretability, which makes them less useful for tasks when interpretability is key, e.g., when we want to extract symbolic formulas from datasets. In science, symbolic functions are prevalent, e.g., $E=mc^2$ (energy-mass relation), $r=\frac{a}{1+e{\cos}\theta}$ (ellipse), $p = e^{-\frac{E}{kT}}/Z$ (Boltzman distribution). Although MLPs can numerically approximate these functions to a reasonable accuracy, they cannot reveal symbolic structures of these equations.  %However, are MLPs the best nonlinear regressors we can build? Despite the prevalent use of MLPs, they have significant drawbacks. In transformers~\cite{vaswani2017attention} for example, MLPs consume almost all non-embedding parameters and are typically less interpretable (relative to attention layers) without post-analysis tools~\cite{cunningham2023sparse}.

Therefore, we need a representation theorem that is more aligned with symbolic representations than the universal approximation theorem. In our search, the good old Kolmogorov-Arnold representation theorem (KA theorem) came to our attention. Although the KA theorem has long been considered irrelevant for learning~\cite{girosi1989representation} because the theorem does not guarantee smoothness, we are more optimistic about the smoothness of deeper representations. For example, as we will show, $f(x_1,x_2,x_3,x_4)={\exp}({\sin}(x_1^2+x_2^2)+{\sin}(x_3^2+x_4^2))$ can be smoothly represented by a three-layer network, but a two-layer network that attempts to fit this function leads to pathological representations.

Unsurprisingly, the possibility of using Kolmogorov-Arnold representation theorem to build neural networks has been studied~\cite{sprecher2002space,koppen2002training,lin1993realization,lai2021kolmogorov,leni2013kolmogorov,fakhoury2022exsplinet,montanelli2020error}. However, most work has stuck with the original depth-2 width-($2n+1$) representation, and many did not have the chance to leverage more modern techniques (e.g., back propagation) to train the networks. Our contribution lies in generalizing the original Kolmogorov-Arnold representation to arbitrary widths and depths, revitalizing and  contextualizing it in today's deep learning world, as well as using empirical experiments to highlight its potential for AI + Science due to its accuracy and interpretability.

Named after the two great mathematicians, Andrey Kolmogorov and Vladimir Arnold, this new type of network is called the \textit{Kolmogorov-Arnold Network} (KAN). %Whereas MLPs are inspired by the universal approximation theorem, KANs are inspired by the Kolmogorov-Arnold representation theorem~\cite{kolmogorov, kolmogorov1957representation, braun2009constructive}. 
Like MLPs, KANs have fully-connected structures. However, while MLPs place fixed activation functions on \textit{nodes} (``neurons''), KANs place learnable activation functions on \textit{edges} (``weights''), as illustrated in Figure~\ref{fig:kan_mlp}.  %As a result, KANs have no linear weight matrices at all: instead, 
Each learnable weight parameter in an MLP is replaced by a learnable 1D function (parametrized as a spline) in a KAN. KANs' nodes simply sum incoming signals without applying any non-linearities. 

Although interpretability is our initial motivation to develop KANs, KANs demonstrate impressive accuracy and fast scaling laws as well, both theoretically and empirically. Despite their elegant mathematical interpretation, KANs are nothing more than combinations of splines and MLPs, leveraging their respective strengths and avoiding their respective weaknesses. Splines are accurate for low-dimensional functions % easy to adjust locally, and able to switch between different resolutions. However, splines have a 
but suffer from the curse of dimensionality (COD) problem. %, because of their inability to exploit compositional structures.  
MLPs, On the other hand, suffer less from COD thanks to their ability to learn features and compositional structure, but are less accurate than splines in low dimensions. %, because of their inability to optimize univariate functions. 
KANs have MLPs on the outside and splines on the inside, combining the best of two things into one.

The chapter is organized as follows: is organized as follows: In Section~\ref{sec:KAN}, we introduce the KAN architecture, analyze the network's approximation ability, and propose two training techniques to make KANs interpretable and accurate. In Section~\ref{sec:kan_interpretability_experiment}, we show that KANs are interpretable and can be used for scientific discoveries. We use a knot theory example from mathematics 
to demonstrate that KANs can be helpful ``collaborators'' for scientists. In Section~\ref{sec:kan_accuracy_experiment}, we show that KANs are more accurate than MLPs for data fitting and PDE solving with better scaling laws. We discuss the superiority of KANs learning high-frequency functions in Section~\ref{sec:sp}. We discuss related works, follow-up works inspired by our work, and draw conclusions in Section~\ref{sec:conclusions}.  

\section{Kolmogorov–Arnold Networks (KAN)}\label{sec:KAN}

Multi-Layer Perceptrons (MLPs) are inspired by the universal approximation theorem. We instead focus on the Kolmogorov-Arnold representation theorem, which can be realized by a new type of neural network called Kolmogorov-Arnold networks (KAN). We review the Kolmogorov-Arnold theorem in Section~\ref{subsec:kart}, to inspire the design of Kolmogorov-Arnold Networks in Section~\ref{subsec:kan_architecture}. Section~\ref{subsec:kan_scaling_theory} provides mathematical description of KANs' expressive power. Section~\ref{subsec:kan_grid_extension} and Section~\ref{subsec:kan_simplification} propose techniques to make KANs accurate and interpretable.

\subsection{Kolmogorov-Arnold representation theorem}\label{subsec:kart}

Vladimir Arnold and Andrey Kolmogorov established that if $f$ is a multivariate continuous function on a bounded domain, then $f$ can be written as a finite composition of continuous functions of a single variable and the binary operation of addition. More specifically, for a smooth $f:[0,1]^n\to\mathbb{R}$,
\begin{equation}\label{eq:KART}
    f(\mathbf{x}) = f(x_1,\cdots,x_n)=\sum_{q=1}^{2n+1} \Phi_q\left(\sum_{p=1}^n\phi_{q,p}(x_p)\right),
\end{equation}
where $\phi_{q,p}:[0,1]\to\mathbb{R}$ and $\Phi_q:\mathbb{R}\to\mathbb{R}$. In a sense, they showed that the only true multivariate function is addition, since every other function can be written using univariate functions and sum. One might naively consider this great news for machine learning: learning a high-dimensional function boils down to learning a polynomial number of 1D functions. However, these 1D functions can be non-smooth and even fractal, so they may not be learnable in practice~\cite{poggio2020theoretical}. Because of this pathological behavior, the Kolmogorov-Arnold representation theorem was regarded as theoretically sound but practically useless~\cite{poggio2020theoretical}. 

However, we are more optimistic about the usefulness of the Kolmogorov-Arnold theorem for machine learning. First of all, we need not stick to the original Eq.~(\ref{eq:KART}) which has only two-layer non-linearities and a small number of terms ($2n+1$) in the hidden layer: we will generalize the network to arbitrary widths and depths. Deeper and wider networks potentially have stronger expressive power with smooth functions. Moreover, most functions in science and daily life are often smooth and have sparse compositional structures~\cite{lin2017does}, potentially facilitating smooth Kolmogorov-Arnold representations.

\subsection{KAN architecture}\label{subsec:kan_architecture} 

Suppose we have a supervised learning task consisting of input-output pairs $\{x_i,y_i\}$, where we want to find $f$ such that $y_i\approx f(x_i)$ for all data points.
Eq.~(\ref{eq:KART}) implies that we are done if we can find appropriate univariate functions $\phi_{q,p}$ and $\Phi_q$. This inspires us to design a neural network which explicitly parametrizes Eq.~(\ref{eq:KART}). Since all functions to be learned are univariate functions, we can parametrize each 1D function as a B-spline curve, with learnable coefficients of local B-spline basis functions~\footnote{Details in Appendix~\ref{app:kan-implmentation-detail} and illustrated in Figure~\ref{fig:spline-notation} right.}. Now we have a prototype of KAN, whose computation graph is exactly specified by Eq.~(\ref{eq:KART}) and  illustrated in Figure~\ref{fig:kan_mlp} (b) (with the input dimension $n=2$), appearing as a two-layer neural network with activation functions placed on edges instead of nodes (simple summation is performed on nodes), and with width $2n+1$ in the middle layer.

As mentioned, such a network is known to be too simple to approximate any function arbitrarily well in practice with smooth splines! %Indeed, in the last subsection we mention the caveat that activation functions sometimes need to be non-smooth~\cite{schmidt2021kolmogorov,
%poggio2022deep}, which are not learnable in practice with gradient descent. 
We therefore generalize our KAN to be wider and deeper. %It is not immediately clear how to make KANs deeper, since Kolmogorov-Arnold representations correspond to two-layer KANs. To the best of our knowledge, there is not yet a ``generalized'' version of the theorem that corresponds to deeper KANs. 
The key insight comes from the analogy between MLPs and KANs. In MLPs, once we define a layer (which is composed of a linear transformation and nonlinearties), we can stack more layers to make the network deeper. To build deep KANs, we should first answer: ``what is a KAN layer?'' It turns out that a KAN layer with $n_{\text{in}}$-dimensional inputs and $n_{\text{out}}$-dimensional outputs can be defined as a matrix of 1D functions 
\begin{align}
    {\mathbf\Phi}=\{\phi_{q,p}\},\qquad p=1,2,\cdots,n_{\text{in}},\qquad q=1,2\cdots,n_{\text{out}},
\end{align}
where the functions $\phi_{q,p}$ have trainable parameters (parameterized as B-splines, see Appendix~\ref{app:kan-implmentation-detail}), as detaild below. In the Kolmogov-Arnold theorem, the inner functions form a KAN layer with $n_{\text{in}}=n$ and $n_{\text{ out}}=2n+1$, and the outer functions form a KAN layer with $n_{\text{in}}=2n+1$ and $n_{\text{out}}=1$. So the Kolmogorov-Arnold representations in Eq.~(\ref{eq:KART}) are simply compositions of two KAN layers. Now it becomes clear what it means to have deeper Kolmogorov-Arnold representations: simply stack more KAN layers!

The shape of a general KAN is represented by an integer array 
\begin{align}\label{arraykan}
    [n_0,n_1,\cdots,n_L],
\end{align}
where $n_i$ is the number of nodes in the $i^{\text{th}}$ layer of the computational graph. We denote the $i^{\text{th}}$ neuron in the $l^{\text{th}}$ layer by $(l,i)$, and the activation value of the $(l,i)$-neuron by $x_{l,i}$. Between layer $l$ and layer $l+1$, there are $n_ln_{l+1}$ activation functions: the activation function that connects $(l,i)$ and $(l+1,j)$ is denoted by 
\begin{align}
    \phi_{l,j,i},\quad l=0,\cdots, L-1,\quad i=1,\cdots,n_{l},\quad j=1,\cdots,n_{l+1}.
\end{align}
The pre-activation of $\phi_{l,j,i}$ is simply $x_{l,i}$; the post-activation of $\phi_{l,j,i}$ is denoted by $\tilde{x}_{l,j,i}\equiv \phi_{l,j,i}(x_{l,i})$. The activation value of the $(l+1,j)$ neuron is simply the sum of all incoming post-activations: 
\begin{equation}\label{eq:kanforward}
    x_{l+1,j} =  \sum_{i=1}^{n_l} \tilde{x}_{l,j,i} = \sum_{i=1}^{n_l}\phi_{l,j,i}(x_{l,i}), \qquad j=1,\cdots,n_{l+1}.
\end{equation}
In matrix form, this reads
\begin{equation}\label{eq:kanforwardmatrix}
    \mathbf{x}_{l+1} = 
    \underbrace{\begin{pmatrix}
        \phi_{l,1,1}(\cdot) & \phi_{l,1,2}(\cdot) & \cdots & \phi_{l,1,n_{l}}(\cdot) \\
        \phi_{l,2,1}(\cdot) & \phi_{l,2,2}(\cdot) & \cdots & \phi_{l,2,n_{l}}(\cdot) \\
        \vdots & \vdots & & \vdots \\
        \phi_{l,n_{l+1},1}(\cdot) & \phi_{l,n_{l+1},2}(\cdot) & \cdots & \phi_{l,n_{l+1},n_{l}}(\cdot) \\
    \end{pmatrix}}_{\mathbf{\Phi}_l}
    \mathbf{x}_{l},
\end{equation}
where ${\mathbf \Phi}_l$ is the function matrix corresponding to the $l^{\text{th}}$ KAN layer. A general KAN network is a composition of $L$ layers: given an input vector  $x_0\in\mathbb{R}^{n_0}$, the output of KAN is
\begin{equation}\label{eq:KAN_forward}
    {\text{KAN}}(\mathbf{x}) = (\mathbf{\Phi}_{L-1}\circ \mathbf{\Phi}_{L-2}\circ\cdots\circ\mathbf{\Phi}_{1}\circ\mathbf{\Phi}_{0})\mathbf{x}.
\end{equation}
We can also rewrite the above equation to make it more analogous to Eq.~(\ref{eq:KART}), assuming output dimension $n_{L}=1$, and define $f(\mathbf{x})\equiv {\text{KAN}}(\mathbf{x})$:
\begin{equation}
    f(\mathbf{x})=\sum_{i_{L-1}=1}^{n_{L-1}}\phi_{L-1,i_{L},i_{L-1}}\left(\sum_{i_{L-2}=1}^{n_{L-2}}\cdots\left(\sum_{i_2=1}^{n_2}\phi_{2,i_3,i_2}\left(\sum_{i_1=1}^{n_1}\phi_{1,i_2,i_1}\left(\sum_{i_0=1}^{n_0}\phi_{0,i_1,i_0}(x_{i_0})\right)\right)\right)\cdots\right),
\end{equation}
which is quite cumbersome. In contrast, our abstraction of KAN layers and their visualizations are cleaner and intuitive. The original Kolmogorov-Arnold representation Eq.~(\ref{eq:KART}) corresponds to a 2-Layer KAN with shape $[n,2n+1,1]$. Notice that all the operations are differentiable, so we can train KANs with back propagation. For comparison, an MLP can be written as interleaving of affine transformations $\mathbf{W}$ and non-linearities $\sigma$:
\begin{equation}
    {\text{MLP}}(\mathbf{x}) = (\mathbf{W}_{L-1}\circ\sigma\circ \mathbf{W}_{L-2}\circ\sigma\circ\cdots\circ\mathbf{W}_1\circ\sigma\circ\mathbf{W}_0)\mathbf{x}.
\end{equation}
It is clear that MLPs treat linear transformations and nonlinearities separately as $\mathbf{W}$ and $\sigma$, while KANs treat them all together in $\mathbf{\Phi}$. In Figure~\ref{fig:kan_mlp} (c) and (d), we visualize a three-layer MLP and a three-layer KAN, to clarify their differences. We use $k$-th order B-splines to parameterize the nonlinearities, and implementation details of KANs are left in Appendix~\ref{app:kan-implmentation-detail}.

{\textbf{Remark: Complexities}}. Assuming a KAN with depth $L$, width $N$, grid size $G$, spline order $k$. The model has $O(N^2GL)$ parameters. Suppose a training batch has size $B$, memory usage is $O(2^kBN^2GL)$, the number of operations is $O(2^kBN^2GL)$ both for forward and backward runs. The $2^k$ factor is due to the recursive computation of order $k$ splines.

\subsection{KAN's approximation abilities and scaling laws}\label{subsec:kan_scaling_theory}

Recall that in Eq.~\eqref{eq:KART}, the 2-Layer width-$(2n+1)$ representation may be non-smooth. However, deeper representations may bring the advantages of smoother activations. To facilitate an approximation analysis, we still assume smoothness of activations, but allow the representations to be arbitrarily wide and deep, as in Eq.~(\ref{eq:KAN_forward}). To emphasize the dependence of our KAN on the finite set of grid points, we use $\mathbf{\Phi}_l^G$ and $\Phi_{l,i,j}^G$ below to replace the notation $\mathbf{\Phi}_l$ and $\Phi_{l,i,j}$ used in Eq.~\eqref{eq:kanforward} and  \eqref{eq:kanforwardmatrix}.
\begin{theorem}[Approximation theory, KAN]\label{approx thm}
Let $\mathbf{x}=(x_1,x_2,\cdots,x_n)$.
    Suppose that a function $f(\mathbf{x})$ admits a representation  \begin{equation}\label{fkan}
    f = (\mathbf{\Phi}_{L-1}\circ\mathbf{\Phi}_{L-2}\circ\cdots\circ\mathbf{\Phi}_{1}\circ\mathbf{\Phi}_{0})\mathbf{x}\,,
\end{equation}
 as in Eq.~\eqref{eq:KAN_forward}, where each one of the $\Phi_{l,i,j}$ are  $(k+1)$-times continuously differentiable. Then there exists a constant $C$ depending on $f$ and its representation, such that we have the following approximation bound in terms of the grid size $G$: there exist $k$-th order B-spline functions $\Phi_{l,i,j}^G$ such that for any $0\leq m\leq k$, we have the bound \begin{equation}\label{appro bound}
    \|f-(\mathbf{\Phi}^G_{L-1}\circ\mathbf{\Phi}^G_{L-2}\circ\cdots\circ\mathbf{\Phi}^G_{1}\circ\mathbf{\Phi}^G_{0})\mathbf{x}\|_{C^m}\leq CG^{-k-1+m}\,.
\end{equation}
Here we adopt the notation of $C^m$-norm measuring the magnitude of derivatives up to order $m$: $$
\|g\|_{C^m}=\max _{|\beta| \leq m} \sup _{\mathbf{x}\in [0,1]^n}\left|D^\beta g(\mathbf{x})\right| .
$$
 
\end{theorem}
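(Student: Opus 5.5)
Approximate each univariate activation by its spline quasi-interpolant and propagate the errors through the composition layer by layer with a telescoping argument.

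\textbf{One-dimensional estimate.} For each $\Phi_{l,i,j}\in C^{k+1}$ on a bounded interval $I_l$, classical spline approximation theory (e.g.\ de Boor's quasi-interpolant on a uniform grid of size $G$) gives a $k$-th order B-spline $\Phi^G_{l,i,j}$ with
\[
\|\Phi_{l,i,j}-\Phi^G_{l,i,j}\|_{C^m(I_l)}\le C\,G^{-k-1+m},\qquad 0\le m\le k .
\]
The constant depends on $\|\Phi_{l,i,j}\|_{C^{k+1}}$ and on $I_l$. The interval $I_l$ is a slight enlargement of the range of the exact intermediate activations $\mathbf{x}_l$ over $[0,1]^n$, which is compact since the composition is continuous. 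Because the perturbed activations stay within $O(G^{-k-1})$ of the exact ones, for $G$ large they remain inside $I_l$; finitely many small $G$ are absorbed into $C$. In particular the spline derivatives up to order $k$ are bounded uniformly in $G$.

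\textbf{Telescoping.} Set $R_l=\mathbf{\Phi}_{L-1}\circ\cdots\circ\mathbf{\Phi}_{l}$ for the exact tail, with $R_L=\mathrm{id}$, and $\mathbf{x}^G_l=\mathbf{\Phi}^G_{l-1}\circ\cdots\circ\mathbf{\Phi}^G_0\,\mathbf{x}$ for the approximate head. Then
\[
f-\mathbf{x}^G_L=\sum_{l=0}^{L-1}\Big[R_{l+1}\big(\mathbf{\Phi}_l\mathbf{x}^G_l\big)-R_{l+1}\big(\mathbf{\Phi}^G_l\mathbf{x}^G_l\big)\Big].
\]
Each tail $R_{l+1}$ is $C^{k+1}$ on a neighborhood of the relevant ranges, being a composition of sums of $C^{k+1}$ univariate functions.

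\textbf{Bounding each term in $C^m$.} Write $\mathbf{a}=\mathbf{\Phi}_l\mathbf{x}^G_l$ and $\mathbf{b}=\mathbf{\Phi}^G_l\mathbf{x}^G_l$, so that $R_{l+1}(\mathbf{a})-R_{l+1}(\mathbf{b})=\int_0^1 DR_{l+1}(\mathbf{b}+s(\mathbf{a}-\mathbf{b}))\,ds\cdot(\mathbf{a}-\mathbf{b})$. Differentiate this up to $m$ times in $\mathbf{x}$ using Leibniz and Faà di Bruno. The factor $\mathbf{a}-\mathbf{b}=(\mathbf{\Phi}_l-\mathbf{\Phi}^G_l)(\mathbf{x}^G_l)$ has $C^m$ norm bounded by
\[
\sum_{r\le m}\|\Phi_{l}-\Phi^G_{l}\|_{C^r}\cdot\mathrm{poly}\big(\|\mathbf{x}^G_l\|_{C^m}\big)\lesssim G^{-k-1+m}.
\]
This uses that $\|\mathbf{x}^G_l\|_{C^m}$ is bounded uniformly in $G$, which follows by induction on $l$ from the uniform spline derivative bounds. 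The integral factor needs at most $m$ derivatives of $DR_{l+1}$, i.e.\ $C^{m+1}\subset C^{k+1}$ regularity of the exact tail, together with the same uniform bounds on $\mathbf{a},\mathbf{b}$. The worst power of $G$ comes from the $m$-th derivative of the difference, giving $G^{-k-1+m}$. Summing over the $L$ layers yields the stated bound.

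\textbf{Main obstacle.} The delicate step is the uniform-in-$G$ control of $\|\mathbf{x}^G_l\|_{C^m}$ up to $m=k$, and keeping the perturbed arguments inside the domains where the $C^{k+1}$ bounds of $\Phi$ and the spline estimates hold. I would handle both by an induction on $l$ with enlarged intervals, using that the $C^k$ norm of a quasi-interpolant is controlled by the $C^k$ norm of the function it approximates.
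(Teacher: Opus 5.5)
Your proof is correct. It uses the same two ingredients as the paper, the one-dimensional spline estimate and a layer-by-layer telescoping sum, but you telescope in the opposite direction, and that choice matters.

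\textbf{The paper's ordering.} The paper swaps in the spline at layer $l$ with the \emph{exact} head $\mathbf{\Phi}_{l-1}\circ\cdots\circ\mathbf{\Phi}_0$ and the \emph{spline} tail $\mathbf{\Phi}^G_{L-1}\circ\cdots\circ\mathbf{\Phi}^G_{l+1}$. This makes the per-layer estimate $\|(\Phi_{l,i,j}-\Phi^G_{l,i,j})\circ(\text{exact head})\|_{C^m}\le C_0G^{-k-1+m}$ immediate by the chain rule, since the inner map is smooth and independent of $G$. The price is the step the paper asserts with a bare ``therefore'': that the spline tail carries this $C^m$ error over to $\|R_l\|_{C^m}\le C_1G^{-k-1+m}$. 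If done through the mean-value formula as in your proposal, this needs $m+1$ derivatives of the spline tail. At $m=k$ that exceeds what degree-$k$ splines have, so one must compare $(\Phi^G)^{(k)}$ back to $\Phi^{(k)}$, for instance by an induction on depth.

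\textbf{Your ordering.} You put the $m+1\le k+1$ derivatives on the exact tail, which matches the $C^{k+1}$ hypothesis exactly. Splines then enter only through at most $k$ derivatives, via the uniform bounds on $\|\mathbf{x}^G_l\|_{C^m}$ that you get by induction from $\|(\Phi^G)^{(r)}\|_\infty\le\|\Phi^{(r)}\|_\infty+CG^{-k-1+r}$. So your argument covers the endpoint $m=k$ cleanly.

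\textbf{What each route costs.} You pay with that induction and with the bookkeeping on enlarged intervals. The paper's version needs the same domain-containment argument, since its spline tail is evaluated at perturbed arguments, but it does not mention it. In both versions the $C^k$ norm at $m=k$ has to be read piecewise (or as $W^{k,\infty}$), because compositions of degree-$k$ splines are only $C^{k-1}$ across knots.
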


We leave the proof and an in-depth discussion on the implications of the theorem in Subsection~\ref{app:proof}. Asymptotically, provided that the assumption in Theorem \ref{approx thm} holds, KANs with finite grid size can approximate the function well with a residue rate independent of the dimension. This comes naturally since we only use splines to approximate 1D functions.  In particular, for $m=0$, we recover the accuracy in $L^\infty$ norm, which in turn provides a bound of RMSE on the finite domain, which gives a scaling exponent $k+1$. Of course, the constant $C$ is dependent on the representation; hence it will depend on the dimension. Notice that if the assumption in the theorem holds for a shallow KAN, it automatically holds for a deeper KAN by setting the remaining layers to identity. We also remark that: since the assumption in the theorem is a  strong one, the neural scaling law should not be expected to be universally applicable to all machine learning applications. 

KANs take advantage of the intrinsically low-dimensional compositional representation of underlying functions. This result shares an analogy to the rate in generalization error bounds of finite training samples, for a similar space studied for regression problems; see \cite{horowitz2007rate, kohler2021rate}, and also specifically for MLPs with ReLU activations \cite{schmidt2020nonparametric}. On the other hand, for general Sobolev or Besov spaces, sharp approximation rates have been obtained for ReLU-MLPs (and more generally MLPs with most piecewise polynomial activation functions) \cite{yarotsky2017error, bartlett2019nearly, siegel2023optimal}. These rates exhibit the curse of dimensionality, which is unavoidable due to the fact that Sobolev and Besov spaces with fixed smoothness are very large in high dimensions. By leveraging the representations of KANs using MLPs, we can provide a more general version of approximation theory in a larger function class, establishing KANs as universal approximators, as stated below.

We establish that MLPs can be represented using KANs of a comparable size. Specifically, we show that any MLP with the ReLU$^k$ activation function can be reparameterized as a KAN with a comparable number of parameters. This shows that the approximation and representation capabilities of KANs are at least as good as MLPs. 
\begin{theorem}\label{mlp-kan-representation-thm}
    Let $\Omega\subset \mathbb{R}^d$ be a bounded domain. Suppose that a function $f:\mathbb{R}^d\rightarrow \mathbb{R}$ can be represented by an MLP with width $W\geq 1$, depth $L\geq 1$, and activation function $\sigma_k = \max(0,x)^k$ for $k \geq 1$. Then there exists a KAN $g$ with width $W$, depth at most $2L$, and grid size $G = 2$ with $k$-th order B-spline functions such that
    \begin{equation}
        g(\mathbf{x}) = f(\mathbf{x})
    \end{equation}
    for all $\mathbf{x}\in \Omega$.
\end{theorem}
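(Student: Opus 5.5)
The plan is to convert the MLP layer by layer into KAN layers, using each affine map $\mathbf{W}_l$ and each coordinatewise activation $\sigma_k$ as its own KAN layer. Write the MLP as $f=\mathbf{W}_{L-1}\circ\sigma_k\circ\mathbf{W}_{L-2}\circ\cdots\circ\sigma_k\circ\mathbf{W}_0$, where $\mathbf{W}_l(\mathbf{y})=A_l\mathbf{y}+b_l$ and all widths are at most $W$. Since $\Omega$ is bounded and each layer map is continuous, every intermediate activation vector ranges over a bounded set. So we fix a bounded interval $[-R,R]$ containing all pre-activations and post-activations of every neuron for $\mathbf{x}\in\Omega$. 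On this interval each univariate function we need will be represented exactly by a $k$-th order B-spline with a suitable grid.

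\textbf{Affine layers.} The map $\mathbf{y}\mapsto A\mathbf{y}+b$ is a single KAN layer: take $\phi_{j,i}(y_i)=A_{ji}y_i+b_j/n_{\text{in}}$. Summing over $i$ at node $j$ then gives $(A\mathbf{y}+b)_j$. Each such $\phi_{j,i}$ is affine, hence a polynomial of degree $\le 1\le k$, and is exactly representable by $k$-th order B-splines on any grid, in particular one with $G=2$ intervals on $[-R,R]$.

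\textbf{Activation layers.} The map $\mathbf{y}\mapsto\sigma_k(\mathbf{y})$ coordinatewise is a diagonal KAN layer: set $\phi_{i,i}=\sigma_k$ and $\phi_{j,i}=0$ for $j\ne i$. The key point is that $\sigma_k(x)=\max(0,x)^k$ is a piecewise polynomial of degree $k$ with a single breakpoint at $0$, where it is $C^{k-1}$. Choose the grid on $[-R,R]$ with $G=2$ intervals $[-R,0],[0,R]$, i.e.\ the knot at $0$. The space of $k$-th order splines on this grid is exactly the set of piecewise polynomials of degree $\le k$ that are $C^{k-1}$ at interior knots. By the Curry--Schoenberg theorem it is spanned by the $G+k$ B-splines (with extended knots), so $\sigma_k$ lies in this span on $[-R,R]$. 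Explicitly, the space is spanned by $1,x,\dots,x^k,(x)_+^k$, and $(x)_+^k$ is one of these basis functions.

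\textbf{Composition and count.} Alternate these layers. Merging adjacent affine maps is unnecessary: the MLP has $L$ affine maps and $L-1$ activations, so we get $2L-1\le 2L$ KAN layers. The width of every layer is at most $W$, the grid size is $2$, and the composition equals $f$ on $\Omega$, since each layer is exact on the range of its inputs.

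The only genuinely delicate step is the activation layer. There we must make sure the B-spline parametrization, whose knots are fixed per function, can realize $\sigma_k$ exactly. This needs a grid with a knot at $0$ and the correct smoothness class. If the KAN convention requires a uniform grid symmetric on a fixed interval, then we pick $[-R,R]$ with $G=2$ so that the midpoint knot is $0$; if the domain convention is different, we rescale via the adjacent affine layers. We also need to make sure the bounding interval covers all intermediate values. That follows from boundedness of $\Omega$ and continuity, and it can be arranged by enlarging $R$ uniformly for all layers.
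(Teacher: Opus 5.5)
Your proposal is correct and is essentially the paper's proof: each affine map becomes one KAN layer with $\phi_{j,i}(y)=A_{ji}y+b_j/n_{\text{in}}$, and each coordinatewise $\sigma_k$ becomes a diagonal KAN layer realized exactly by $k$-th order B-splines on the $G=2$ grid $\{-R,0,R\}$, whose extended basis spans the $C^{k-1}$ piecewise degree-$k$ polynomials with breakpoint $0$; $R$ is then chosen so that all intermediate MLP values on the bounded $\Omega$ lie in $[-R,R]$. The differences are cosmetic: the paper explicitly sets the residual weight $w_b=0$ in $\phi=w_b\,b+w_s\,\mathrm{spline}$ (you use this implicitly) and puts the affine layers on a $G=1$ grid, while your count of $2L-1$ layers is slightly sharper than the paper's bound $2L$.
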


As a corollary, we can draw conclusions about the approximation capabilities of KANs by leveraging existing results about MLPs (see for instance \cite{barron1993universal,leshno1993multilayer,klusowski2018approximation,siegel2020approximation,siegel2022sharp,hon2022simultaneous,lu2021deep,shen2022optimal,yarotsky2018optimal,siegel2023optimal,yang2023nearly,yang2023optimal}). For example, we have the following result giving optimal approximation rates for very deep KANs on Sobolev spaces (see for instance \cite{adams2003sobolev} for the background on Sobolev spaces).
\begin{corollary}
\label{corollary}
    Let $\Omega\subset \mathbb{R}^d$ be a bounded domain with smooth boundary, $s > 0$ and $1\leq p,q\leq \infty$ be such that $1/q - 1/p < s/d$. This guarantees that the compact Sobolev embedding $W^s(L_q(\Omega))\subset\subset L_p(\Omega)$ holds. 
    
    Let $W_0 := W_0(d)$ be a fixed width (depending upon the input dimension $d$). Then for any $f\in W^s(L_q(\Omega))$ and any $L \geq 1$, there exists a KAN $g$ with width $W_0$, depth $L$, and grid size {$G=2$} with $k$-th order $B$-spline functions such that
    \begin{equation}
        \|f - g\|_{L_p(\Omega)} \leq CL^{-2s/d},
    \end{equation}
    where $C$ is a constant independent of $L$.
\end{corollary}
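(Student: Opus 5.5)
The plan is to obtain Corollary~\ref{corollary} directly from Theorem~\ref{mlp-kan-representation-thm} together with a known optimal approximation result for very deep ReLU$^k$ (in particular ReLU) MLPs on Sobolev spaces. I would invoke the result of \cite{siegel2023optimal}: for $\Omega$ bounded with smooth boundary and $s,p,q$ satisfying the compact embedding condition $1/q-1/p<s/d$, there is a fixed width $W_0'=W_0'(d)$ such that every $f\in W^s(L_q(\Omega))$ and every depth $L'\ge1$ admit a ReLU MLP $\tilde f$ with width $W_0'$ and depth $L'$ satisfying
\begin{equation*}
\|f-\tilde f\|_{L_p(\Omega)}\le C' L'^{-2s/d}\|f\|_{W^s(L_q(\Omega))}.
\end{equation*}
This is the superconvergent deep-network rate, achieved via bit-extraction constructions, and the smooth boundary is used there through a Sobolev extension to a cube.

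The next step is to convert this MLP into a KAN. Apply Theorem~\ref{mlp-kan-representation-thm} with $k=1$ (ReLU), or with general $k$ if the cited rate is available for ReLU$^k$. This gives a KAN $g$ of width $W_0'$, depth at most $2L'$ and grid size $G=2$ with $g=\tilde f$ on $\Omega$. Hence $\|f-g\|_{L_p(\Omega)}\le C'L'^{-2s/d}$.

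To match the requested depth $L$, I would reindex. For $L\ge2$ take $L'=\lfloor L/2\rfloor\ge L/4$, so that the KAN has depth at most $L$ and error at most $C'4^{2s/d}L^{-2s/d}$. If the depth must equal $L$ exactly, pad with identity layers: the identity is a piecewise linear function representable on a grid of size $2$ by degree-$k$ B-splines on the bounded range of the activations (a polynomial of degree $\le k$ lies in the spline space). For $L=1$ the bound is trivial: take $g=0$ and let $C\ge\|f\|_{L_p}$, which is finite by the embedding. Setting $W_0=W_0'$ finishes the argument.

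The main obstacle is matching the hypotheses of the cited MLP theorem. First, that theorem is stated for ReLU; if KANs with order-$k$ splines are desired for $k>1$, I need either a ReLU$^k$ version, or the observation that ReLU itself is exactly representable by order-$k$ B-splines on a grid of size $2$. The latter holds, since piecewise linear functions with the knot on the grid lie in the spline space of order $k\ge1$, and it lets the proof of Theorem~\ref{mlp-kan-representation-thm} go through. Second, the bounded ranges of the intermediate activations must fit the spline domains. This holds because $\Omega$ is bounded and each layer is continuous.
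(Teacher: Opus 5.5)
Your argument is the paper's: feed the fixed-width deep-network rate of \cite{siegel2023optimal} into Theorem~\ref{mlp-kan-representation-thm}. Your bookkeeping is correct and fills in what the paper leaves implicit: the choice $L'=\lfloor L/2\rfloor$, identity padding via a polynomial in the spline space, and the trivial case $L=1$. For $k=1$ your proof is complete.

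One claim in your last paragraph is false, however. For $k\ge 2$, $\max(0,x)$ is \emph{not} a combination of the degree-$k$ B-splines on the extended uniform grid $\{-R,0,R\}$. With simple knots every such B-spline, and hence every linear combination, is $C^{k-1}$ across $0$; the smooth SiLU residual term does not help. ReLU is not $C^1$ at $0$. This is exactly why Theorem~\ref{mlp-kan-representation-thm} pairs $\sigma_k$ with order-$k$ splines. So for $k\ge2$ you cannot rerun the Theorem's proof on a ReLU network.

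There are two ways to close this. The first is to use the $\sigma_k$ (piecewise-polynomial) version of the rate, which is what the paper invokes. The second is to approximate ReLU directly. The function
\[
\sigma^{(h)}(x)=\frac{1}{k!\,h^{k-1}}\sum_{j=0}^{k-1}(-1)^{k-1-j}\binom{k-1}{j}\sigma_k(x+jh)
\]
satisfies $|\sigma^{(h)}(x)-\max(0,x)|\le C_k h$ for all $x\in\mathbb{R}$. Replace every ReLU in your fixed network $\tilde f$ by $\sigma^{(h)}$, putting the shifts into the biases and the coefficients into the next affine map. This gives a $\sigma_k$-network of width $kW_0'$ and the same depth. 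Since ReLU is $1$-Lipschitz and there are finitely many layers, this network is uniformly within any prescribed tolerance of $\tilde f$ on $\Omega$ once $h$ is small; take the tolerance to be $L'^{-2s/d}$. Theorem~\ref{mlp-kan-representation-thm} then applies with this $k$, and the rest of your argument goes through unchanged.
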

This result, which follows immediately from Theorem \ref{mlp-kan-representation-thm} and the approximation rates for ReLU (and more generally piecewise polynomial) neural networks derived in \cite{siegel2023optimal}, shows that very deep KANs attain an exceptionally good approximation rate on Sobolev spaces. In particular, in terms of the number of parameters $P$ they attain an approximation rate of $O(P^{-2s/d})$, while a classical (even non-linear) method of approximation can only attain a rate of $O(P^{-s/d})$ \cite{devore1998nonlinear}. This phenomenon, which is often called superconvergence \cite{devore2021neural}, also occurs for very deep ReLU$^k$ networks. However, it comes at the cost of parameters which are not encodable using a fixed number of bits and thus is not practically realizable \cite{yarotsky2020phase,siegel2023optimal}.

Now that the basic architecture of KANs is in place, we propose a few techniques to make KANs accurate and interpretable.

\subsection{Tricks for interpretability: pruning and symbolifying KANs}\label{subsec:kan_simplification}

\begin{comment}
\begin{figure}[t]
    \centering
    \includegraphics[width=1\linewidth]{kan/toy_interpretability_evolution.png}
    \caption{Caption}
    \label{fig:enter-label}
\end{figure}
\end{comment}

How do we choose the KAN shape? If we know that the dataset is generated via the symbolic formula $f(x,y) = {\exp}({\sin}(\pi x)+y^2)$, then we know that a $[2,1,1]$ KAN is able to express this function. However, in practice we do not know the shape a priori, so it would be nice to have approaches to determine this shape automatically. The idea is to start from a large enough KAN and train it with sparsity regularizations followed by pruning. One may even symbolify activation functions into symbolic functions like exp, sine, etc, to make KANs a useful tool for symbolic regression. The idea is to match learned spline functions with candidates in a symbolic function library specified by human users and replace the spline functions with the best-fitting ones.
\subsection{Tricks for accuracy: grid update and grid extension}\label{subsec:kan_grid_extension}
\textbf{Grid update} Since input data and (especially) hidden activations can have time-varying ranges in training, we update grids on the fly based on the statistics of input/activation ranges. The grid is initialized to be in [-1,1] (e.g., when $G=5$, the grid points are [-1, -0.6, -0.2, 0.2, 0.6, 1.0]), but once it receives input/activations, say, in the range [-3,3] (the maximum and minimum values are 3 and -3, respectively), the grid will be updated to [-3,3] (correspondingly, grid points become [-3,-1.8,-0.6,0.6,1.8,3.0]) to accommodate the whole range.

\textbf{Grid extension} A spline can be made arbitrarily accurate to a target function as the grid can be made arbitrarily fine-grained. This good feature can be inherited by KANs. By contrast, MLPs do not have the notion of ``fine-graining''. For KANs, one can first train a KAN with fewer parameters and then extend it to a KAN with more parameters by simply making its spline grids finer, without the need to retrain the larger model from scratch. The main idea of grid extension is: for each 1D function defined on a coarse grid, we determine the coefficient of a finer grid using least squares that minimize the difference between the two curves evaluated on data samples. Details of how to perform grid extension are included in Figure~\ref{fig:spline-notation}.

\subsection{Benefits of deep KANs}

\begin{figure}
    \centering
    \includegraphics[width=0.9\linewidth]{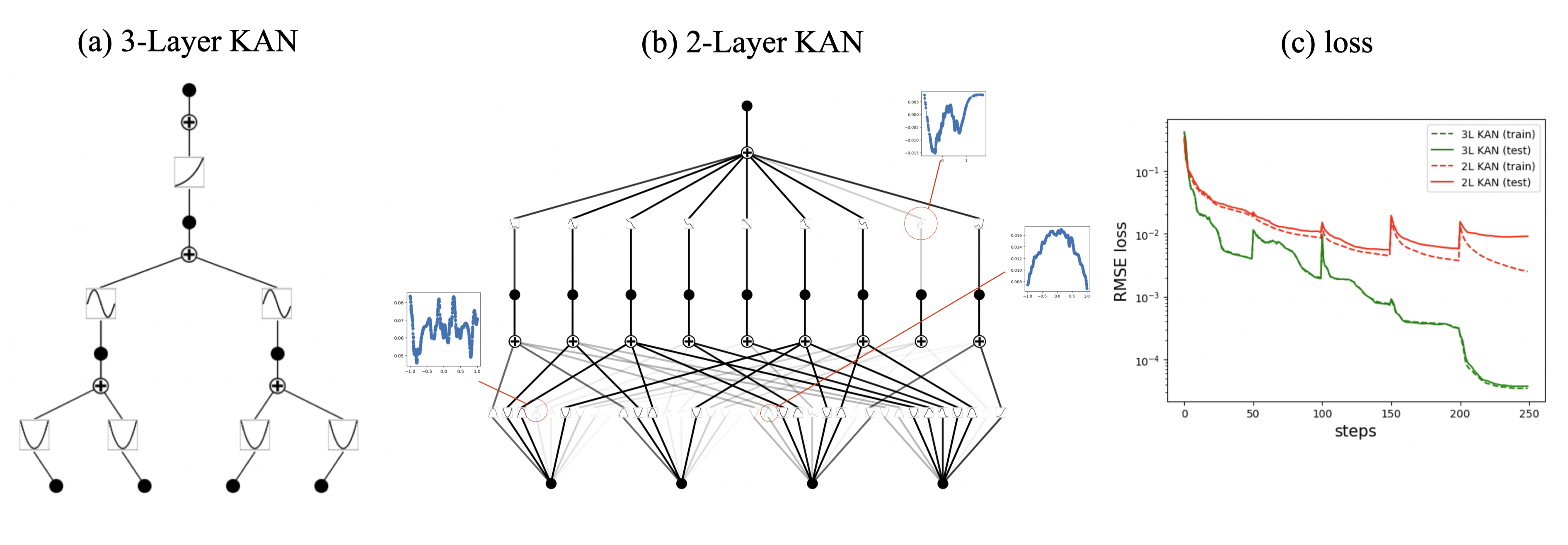}
    \caption{Fitting the function $f(x_1,x_2,x_3,x_4)={\exp}(\frac{1}{2}({\sin}(\pi(x_1^2+x_2^2))+{\sin}(\pi(x_3^2+x_4^2))))$. (a) 3-Layer KAN admits smooth representations. (b) The 2-Layer KAN learns highly oscillatory representations. (c) The 3-layer KAN achieves lower losses and has a smaller train-test gap than the 2-layer KAN.}
    \label{fig:deep-benefit}
\end{figure}

It is one of our major contributions to generalize the 2-layer KA representations to multiple layers. Although it is challenging to prove the benefits of deeper KANs theoretically, we want to present a concrete example where 3-layer KANs admit smooth representations while 2-layer KANs do not. We consider fitting a function $f(x_1,x_2,x_3,x_4)={\exp}(\frac{1}{2}({\sin}(\pi(x_1^2+x_2^2))+{\sin}(\pi(x_3^2+x_4^2))))$ where we draw samples (3000 training, 1000 training) uniformly from $[-1,1]^4$. We train a 3L KAN ([4,2,1,1]) and a 2L KAN ([4,9,1]) with the LBFGS optimizer for 250 steps, with increasing $G=3,5,10,20,50$ (50 steps for each $G$). As shown in Figure~\ref{fig:deep-benefit}, we see that the 3-layer KAN has smooth representations (as expected, since the parse tree of the symbolic formula has depth 3), while the 2-layer KAN learns highly oscillatory functions on some edges. The 3-layer KAN also achieves lower losses than the 2-layer KAN. While the 3-layer KAN has a small train-test gap, the 2-layer KAN starts to overfit at large grid sizes.

\section{KANs Are Interpretable}\label{sec:kan_interpretability_experiment}

In this section, we show that KANs can be interpretable on synthetic toy tasks and realistic research questions in math and physics. 

\textbf{Synthetic toy datasets}\label{subsec:supervised-interpretable}
\begin{figure}[tbp]
    \centering
    \includegraphics[width=0.9\linewidth]{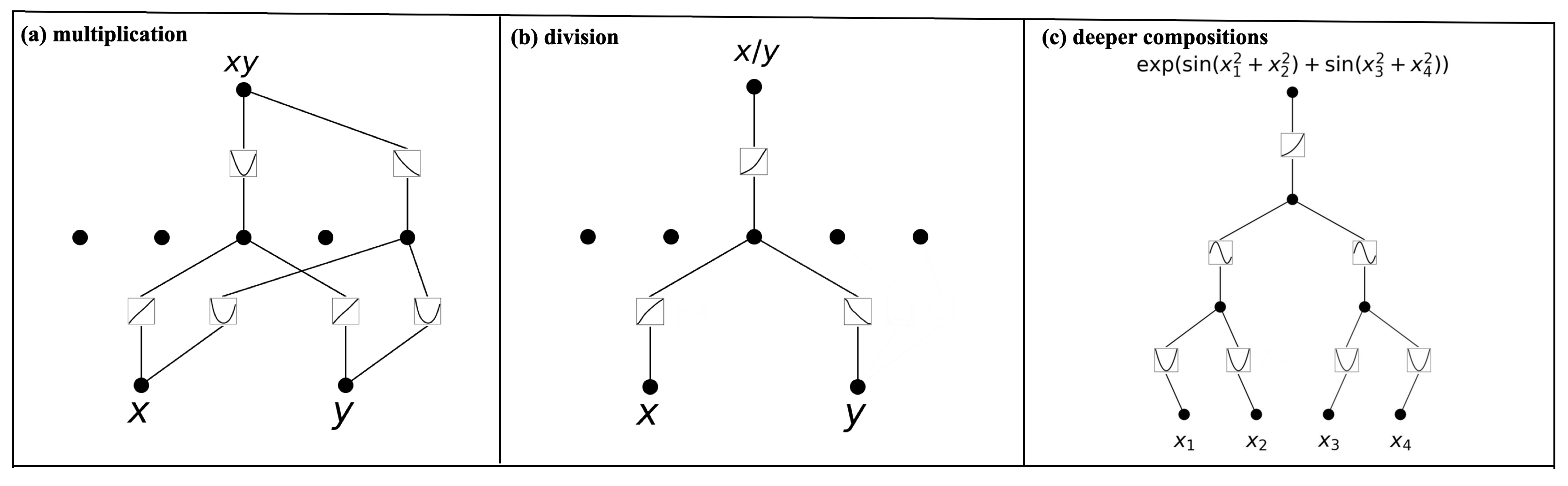}
    \caption{KANs are interpretable for simple symbolic tasks.}
    \label{fig:interpretable_examples}
\end{figure}
We first examine KANs' ability to reveal the compositional structures in symbolic formulas. Three examples are presented in  Figure~\ref{fig:interpretable_examples}. KANs are able to reveal the compositional structures present in these formulas, as well as learn the correct univariate functions. (1) Multiplication $f(x,y)=xy$. KAN computes it via the equation $2xy = (x+y)^2-(x^2+y^2)$. (2) Division of positive numbers $f(x,y)=x/y$. KAN computes it via ${\exp}({\log}x-{\log}y)$. (3) Deeper compositions $f(x_1,x_2,x_3,x_4)={\exp}({\sin}(x_1^2+x_2^2)+{\sin}(x_3^2+x_4^2))$.

\textbf{Application to Mathematics: Knot Theory} Knot theory is a subject in low-dimensional topology that sheds light on topological aspects of three-manifolds and four-manifolds and has a variety of applications, including in biology and topological quantum computing. In \cite{davies2021advancing}, supervised learning and human domain experts were utilized to arrive at a new theorem relating algebraic and geometric knot invariants. They use network attribution methods to find that the signature $\sigma$ is mostly dependent on meridinal distance $\mu$ (real $\mu_r$, imag $\mu_i$) and longitudinal distance $\lambda$. We show   that KANs can not only identify these important variables with much smaller networks and much more automation, but also present some interesting new results and insights.

\begin{figure}[t]
    \centering\includegraphics[width=0.9\linewidth]{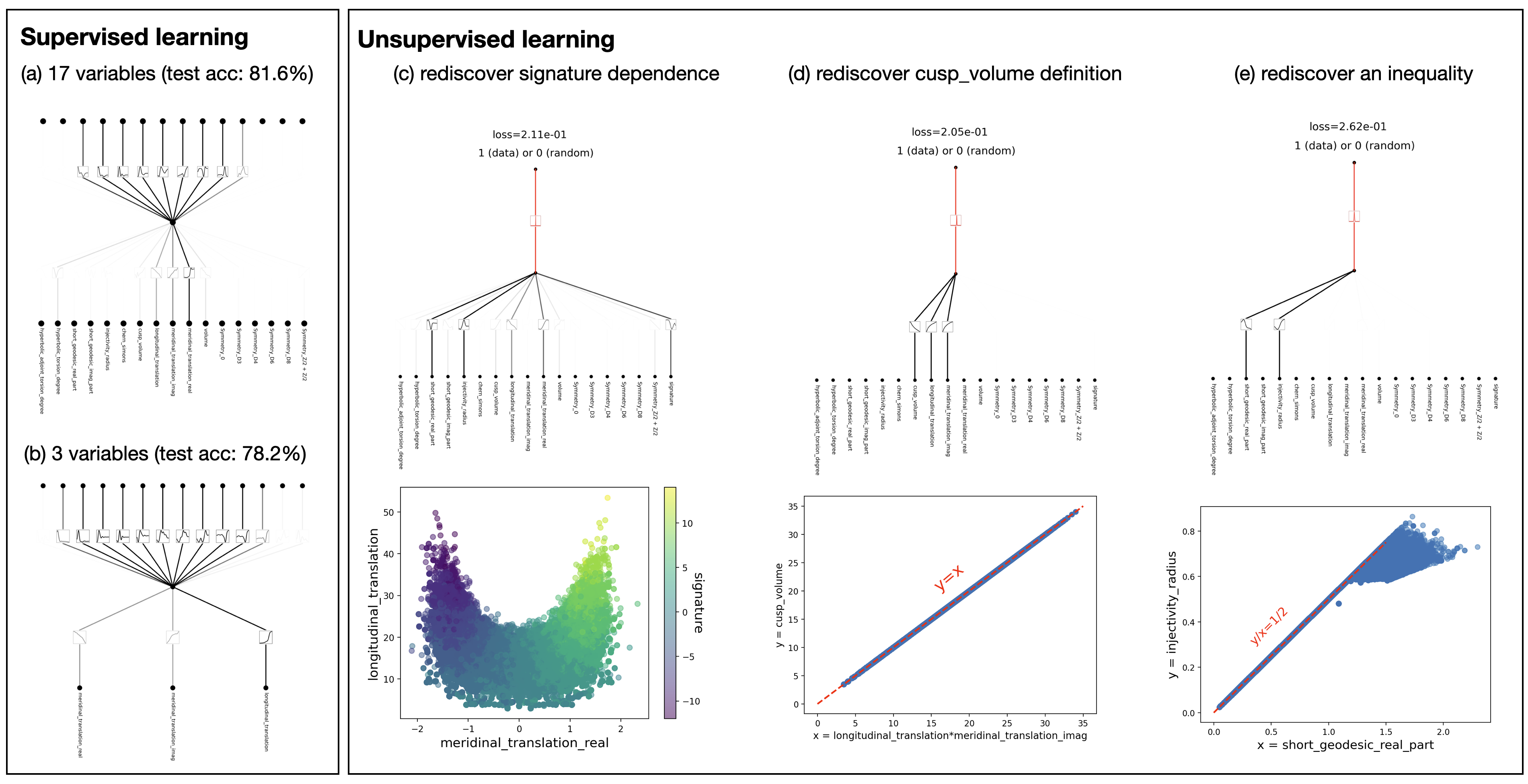}
    \caption{Knot dataset. Supervised mode (left): we rediscover DeepMind's three important variables. Unsupervised mode (right): we discover three ``new'' relations without supervision.}
    \label{fig:knot-supervised}
\end{figure}
  
We treat 17 knot invariants as inputs and signature as outputs. Similar to the setup in~\cite{davies2021advancing}, signatures (which are even numbers) are encoded as one-hot vectors and networks are trained with cross-entropy loss. We find that an extremely small $[17,1,14]$ KAN is able to achieve $81.6\%$ test accuracy (while DeepMind's 4-layer width-300 MLP achieves 78\% test accuracy). The $[17,1,14]$ KAN ($G=3$, $k=3$) has $\approx 200$ parameters, while the MLP has $\approx 3\times 10^5$ parameters. It is remarkable that KANs can be both more accurate and much more parameter efficient than MLPs at the same time. In terms of interpretability, we scale the transparency of each activation according to its magnitude, so it becomes immediately clear which input variables are important without the need for feature attribution (see Figure~\ref{fig:knot-supervised} left top): signature is mostly dependent on $\mu_r$, and slightly dependent on $\mu_i$ and $\lambda$, while dependence on other variables is small. We then train a $[3,1,14]$ KAN on the three important variables, obtaining test accuracy $78.2\%$ (Figure~\ref{fig:knot-supervised} left bottom). 

We attempt to make discoveries beyond DeepMind's in the unsupervised learning mode, where we treat all 18 variables (including signature) as inputs. We train 200 networks with different random seeds. They can be grouped into three clusters, with representative KANs displayed in Figure~\ref{fig:knot-supervised}. These three groups of dependent variables are (1) rediscovering DeepMind's relation in unsupervised learning. (2) cusp volume is by definition of the multiplication of two translations. (3) short geodesic $g_r$ is upper bounded by two times of injecitivy radius~\cite{petersen2006riemannian}. It is interesting that KANs' unsupervised mode can rediscover several known mathematical relations. The good news is that the results discovered by KANs are probably reliable; the bad news is that we have not discovered anything new yet. It is worth noting that we have chosen a shallow KAN for simple visualization, but deeper KANs can probably find more relations if they exist. We would like to investigate how to discover more complicated relations with deeper KANs in future work. 

\section{KANs Are Accurate}\label{sec:kan_accuracy_experiment}

In this section, we demonstrate that KANs are more accurate at representing functions than MLPs in various tasks (regression and PDE solving). When comparing two families of models, it is fair to compare both their accuracy (loss) and their complexity (number of parameters). All experiments reported in the work are reproducible on CPUs, usually within minutes, at most in a day. Codes are built based on PyTorch~\cite {paszke2019pytorch}.

\begin{figure}[ht]
    \centering
    \includegraphics[width=1.0\linewidth]{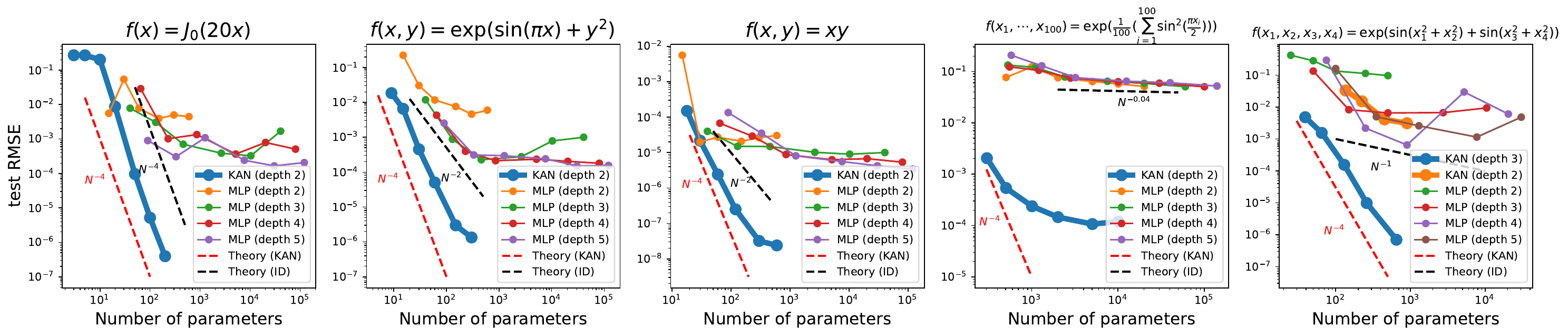}
    \caption{Compare KANs to MLPs on five toy examples. KANs can almost saturate the fastest scaling law predicted by our theory $(\alpha=4)$, while MLPs scales slowly and plateau quickly.}
    \label{fig:model_scaling}
\end{figure}

\textbf{Toy datasets} In Section~\ref{subsec:kan_scaling_theory}, our theory suggested that test RMSE loss $\ell$ scales as $\ell\propto N^{-(k+1)}=N^{-4} (k=3)$ with model parameters $N$. However, this relies on the existence of a smooth Kolmogorov-Arnold representation. As a sanity check, we construct five examples we know have smooth KA representations: (1) $f(x)=J_0(20x)$, which is the Bessel function. Since it is a univariate function, it can be represented by a spline, which is a $[1,1]$ KAN. (2) $f(x,y)={\exp}({\sin}(\pi x)+y^2)$. We know that it can be exactly represented by a $[2,1,1]$ KAN. (3) $f(x,y)=xy$. We know from Figure~\ref{fig:interpretable_examples} that it can be exactly represented by a $[2,2,1]$ KAN. (4) A high-dimensional example $f(x_1,\cdots,x_{100})={\exp}(\frac{1}{100}\sum_{i=1}^{100}{\sin}^2(\frac{\pi x_i}{2}))$ which can be represented by a $[100,1,1]$ KAN. (5) A four-dimensional example $f(x_1,x_2,x_3,x_4)={\exp}(\frac{1}{2}({\sin}(\pi(x_1^2+x_2^2))+{\sin}(\pi(x_3^2+x_4^2))))$ which can be represented by a $[4,4,2,1]$ KAN. The empirical scaling for KANs is quite aligned with theory and outperforms MLPs.

\textbf{Fitting Images} We task KANs with three images: (1) The Cameraman picture is the standard picture for the image fitting task. (2) The turbulence profile is taken from PDEBench~\cite{takamoto2022pdebench}, demonstrating high-frequency and fractal behavior typical in scientific computing. (3) Van Gogh's \textit{The Starry Night} is quite challenging because it contains fine-grained details as well. In addition to MLPs, We compare KANs with these stronger baselines: (A) MLP with random Fourier features (MLP\_RFF). Before feeding input coordinates $\mathbf{x}\equiv(x,y)$ to the MLP, we first augment them into a higher-dimensional feature space $\Phi(\mathbf{x})=(\mathbf{x},\Phi_1(\mathbf{x}),\cdots, \Phi_{N_f}(\mathbf{x}))$, where $\Phi_i(\mathbf{x})=({\cos}(\mathbf{s}_i\cdot \mathbf{x}), {\sin}(\mathbf{s}_i\cdot \mathbf{x})), i=1,\cdots, N_f$, and $\mathbf{s}_i\sim \mathcal{N}(0,s^2)$ ($s$ controls the frequency bias). We choose $N_f=50$ and $s=3, 30$. (B) SIREN~\citep{sitzmann2020implicit} uses sines as activation functions in MLPs and uses large initialization for the first layer (effectively creating high-frequency features). To compare KANs and baselines as fairly as possible, we try two control strategies (same shape or number of parameters) and report both performance (measured by PSNR) and efficiency (wall time). For all baseline models, 1 means their width is the same as KAN 1, while 2 means their number of parameters is (approximately) the same as KAN 1 ($\sqrt{G}$ times wider, where $G=10$ is the grid size used in KAN 1). We also explore KAN 2, which uses a finer grid ($G=100$ instead of $G=10$) for the first layer only (inspired by the idea of random Fourier features in the input layer). The whole image is treated as the training set and there is no test set. All models are trained with the Adam Optimizer for 15000 steps with learning rate decay (5000 steps for learning rate $10^{-3}$, $10^{-4}$ and $10^{-5}$), with batch size 1024, on a V100 GPU.

 We have a few observations from the results: (1) KANs are comparable to or even outperform baseline methods (including SIREN) in terms of PSNR, however with more training time. (2) Having random features in the inputs is useful for MLPs, especially high-frequency random features ($s=30$ outperforms $s=3$). We may also understand KANs' superior performance as being good at generating random features in early layers. By changing the grid size in the first layer from $G=10$ to $G=100$ (KAN 2), PSNR significantly increases with little additional overhead in training time. We show the turbulence profile in Figure~\ref{fig:turbulence-main}.

\begin{figure}
    \centering
    \includegraphics[width=1.0\linewidth]{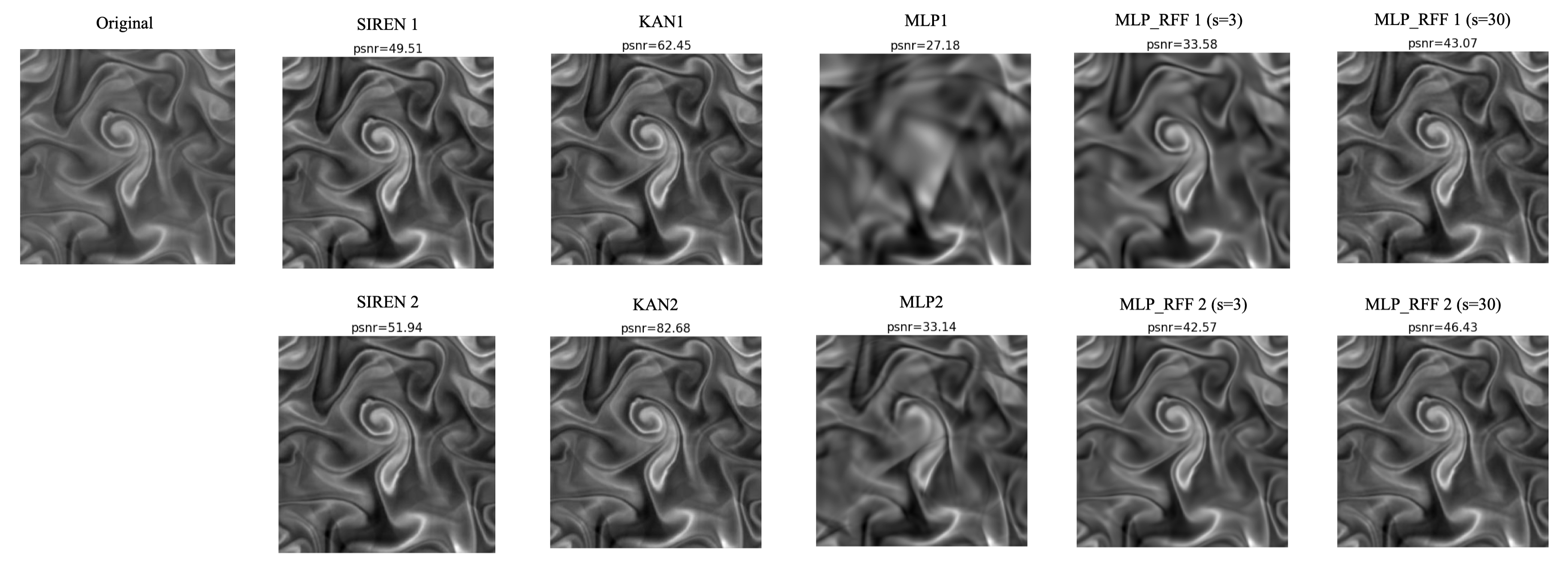}
    \caption{Image fitting task (a PDE solution from PDEBench~\cite{takamoto2022pdebench}). KAN outperforms baseline methods in terms of PSNR.}
    \label{fig:turbulence-main}
\end{figure}

\textbf{Solving partial differential equations (PDEs}
\begin{figure}[t]
    \centering
    \includegraphics[width=1.0\linewidth]{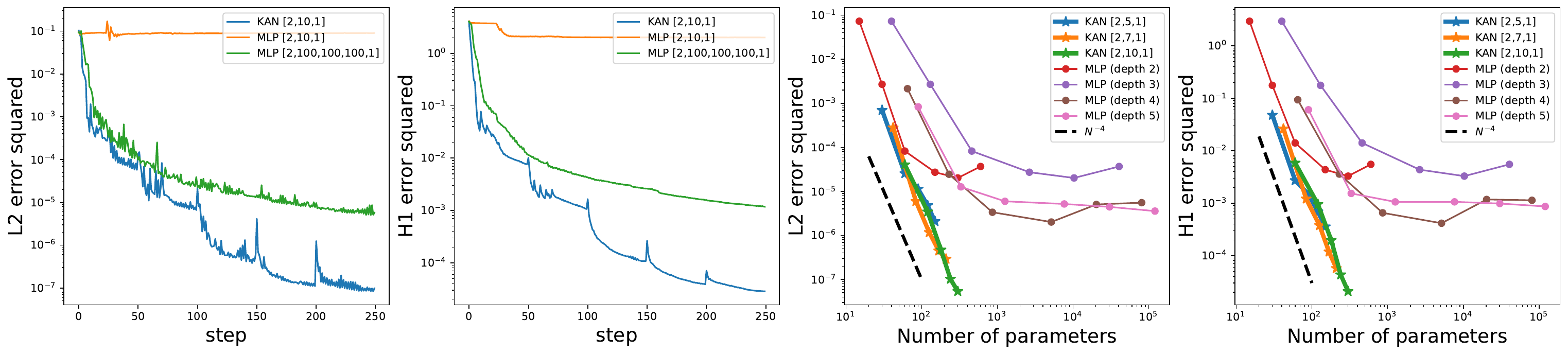}
    \caption{The PDE example. We plot L2 squared and H1 squared losses between the predicted solution and ground truth solution. First and second: training dynamics of losses. Third and fourth: scaling laws of losses against the number of parameters. KANs converge faster, achieve lower losses, and have steeper scaling laws than MLPs.}
    \label{fig:PDE}
\end{figure}
We consider a Poisson equation with zero Dirichlet boundary data. For $\Omega=[-1,1]^2$, consider the PDE 
$u_{xx}+u_{yy}=f$ with zero boundary condition. We consider the data $f=-\pi^2(1+4y^2)\sin(\pi x)\sin(\pi y^2)+2\pi\sin(\pi x)\cos(\pi y^2)$ for which $u=\sin(\pi x)\sin(\pi y^2)$ is the true solution. We use the framework of physics-informed neural networks (PINNs) \cite{raissi2019physics, karniadakis2021physics} to solve this PDE, with the loss function given by $\text{loss}_{\text{pde}}=\alpha\text{loss}_i+\text{loss}_b\coloneqq\alpha\frac{1}{n_i}\sum_{i=1}^{n_i}|u_{xx}(z_i)+u_{yy}(z_i)-f(z_i)|^2+\frac{1}{n_b}\sum_{i=1}^{n_b}u^2\,,$
where we use $\text{loss}_i$ to denote the interior loss, discretized and evaluated by a uniform sampling of $n_i$ points $z_i=(x_i,y_i)$ inside the domain, and similarly we use $\text{loss}_b$ to denote the boundary loss, discretized and evaluated by a uniform sampling of $n_b$ points on the boundary. $\alpha=0.01$ is the hyperparameter balancing the effect of the two terms. KANs are shown to have Pareto frontiers than MLPs for this simple example. 

\section{KANs Have Less Spectral Bias}\label{sec:sp}
In this section, from the perspective of learning and optimization, we study the spectral bias of KANs compared with MLPs. Standard MLPs with ReLU activations (or even tanh) are known to suffer from the spectral bias \cite{rahaman2019spectral,xu2019frequency,xu2019training}, in the sense that they will fit low-frequency components first. This is in contrast to traditional iterative numerical methods like the Jacobi method that learn high frequencies first \cite{xu2019frequency}. Although the spectral bias acts as a regularizer that improves performance for machine learning applications \cite{rahaman2019spectral,zhang2021understanding,poggio2018theory,zhang2020rethink,fridovich2022spectral}, for scientific computing applications, it may be necessary to learn high-frequencies as well. 
To alleviate the spectral bias, high-frequency information has to be encoded using methods like Fourier feature mapping
\cite{sitzmann2020implicit,tancik2020fourier,benbarka2022seeing,novello2024taming}, or one needs to use nonlinear activation functions more similar to traditional methods; see for example, the hat activation function \cite{hong2022activation} which resembles a finite element basis. We demonstrate that KANs are less biased toward low frequencies than MLPs. We highlight that the multi-level learning feature specific to KANs, i.e. grid extension of splines, improves the learning process for high-frequency components.  Detailed comparisons with different choices of depth, width, and grid sizes of KANs are made, shedding some light on how to choose the hyperparameters in practice. 
 We remark that although the spectral bias is considered a form of regularization which is desirable for machine learning tasks \cite{rahaman2019spectral,zhang2021understanding,poggio2018theory,zhang2020rethink,fridovich2022spectral}, for scientific computing applications it is typically important to capture all frequencies and so the spectral bias may negatively affect the performance of neural networks for such applications \cite{wang2022and,rathorechallenges,cai2019multi,hong2022activation}. 

\subsection{Spectral bias theory for shallow KANs}
We consider the spectral bias properties of KANs with a single layer. This theory is very similar to the theory developed in \cite{hong2022activation,zhang2023shallow,ronen2019convergence} for the spectral bias of single hidden layer MLPs. The key observation is that a single layer KAN is a linear model. In particular, we see that if $L = 1$ then the KAN applied to an input $\mathbf{x}\in \mathbb{R}^d$ is (for simplicity, we consider the case of a KAN without the SiLu non-linearity)
\begin{equation}\label{linear-KAN-representation}
{\text{KAN}}(\mathbf{x},\theta)_i=\sum_{j=1}^d\sum_{l=1}^{G+k-1} c_{ijl}B_l(x_j),
\end{equation}
where $\theta = \{c_{ijl}\}$ are the parameters of the KAN. Here $i=1,...,d'$ where $d'$ is the dimension of the output, $j = 1,...,d$, and $l = 1,...,G+k-1$. Note that the only parameter here is the grid size $G$, since the width is determined by the input and output dimensions.

Based upon this, we can analyze least squares fitting with shallow KANs. In particular, let $\Omega = [-1,1]^d$ be the (symmetric) unit cube in $\mathbb{R}^d$, let $f:\Omega\rightarrow \mathbb{R}^{d'}$ be a target function we are trying to learn, and consider the (continuous) least squares regression loss
\begin{equation}\label{least-squares-fitting}
    L(\theta) = \int_{\Omega}\|f(\mathbf{x}) - {\text{KAN}}(\mathbf{x},\theta)\|^2d\mathbf{x}.
\end{equation}
Due to the representation \eqref{linear-KAN-representation}, this loss function is quadratic in the parameters $\theta$. Let $M$ denote the corresponding Hessian matrix, i.e. so that $$L(\theta) = (1/2)\theta^TM\theta + b^T\theta.$$ This Hessian matrix (indexed by $i,j,l$) is given by
\begin{equation}\label{hessian-matrix-equation}
    M_{(i,j,l),(i',j',l')} = \begin{cases}
        \int_\Omega B_l(x_j)B_{l'}(x_{j'})d\mathbf{x} & i = i'\\
        0 & i\neq i'.
    \end{cases}
\end{equation}

The convergence of gradient descent on the least squares regression is determined by the eigendecomposition of the Hessian matrix $M$ which is estimated in the following theorem. {The theorem is essentially a generalization of the fact that the Gram matrix of the B-spline basis is well conditioned (see for instance \cite{devore1993constructive}, Theorem 4.2 in Chapter 5).}
\begin{theorem}\label{hessian-eigenvalue-bound-theorem}
    Given a single hidden layer KAN with grid size $G$, degree $k$ B-splines, input dimension $d$ and output dimension $d'$, let $M$ denote the Hessian matrix defined in \eqref{hessian-matrix-equation} corresponding to the least squares fitting problem \eqref{least-squares-fitting}. Then the eigenvalues $0\leq \lambda_1(M) \leq \cdots \leq \lambda_{N}(M)$ (here $N = (G+k-1)dd'$) satisfy
    \begin{equation}
        \frac{\lambda_{N}(M)}{\lambda_{d'(d-1) + 1}(M)} \leq Cd
    \end{equation}
    for a constant $C$ depending only on the spline degree $k$.
\end{theorem}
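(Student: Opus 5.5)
The plan is to exploit the block structure of $M$ and reduce everything to one-dimensional spline facts. By \eqref{hessian-matrix-equation}, $M$ is block diagonal with $d'$ identical blocks $A$, one per output index $i$. So the spectrum of $M$ is the spectrum of $A$ repeated $d'$ times. It then suffices to show two things: $A$ has at most $d-1$ zero eigenvalues, and $\lambda_{\max}(A)/\lambda_d(A)\le Cd$. The $d'(d-1)$ possibly vanishing eigenvalues of $M$ are then exactly the ones excluded by the index $d'(d-1)+1$.

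\textbf{Step 1: compute $A$ explicitly.} Let $\mathcal G_{ll'}=\int_{-1}^1 B_lB_{l'}$ be the one-dimensional B-spline Gram matrix and $b_l=\int_{-1}^1 B_l$. By Fubini on $[-1,1]^d$, the diagonal blocks are $2^{d-1}\mathcal G$, and for $j\ne j'$ the off-diagonal blocks are $2^{d-2}bb^T$. Hence
\[
A = 2^{d-1}\, I_d\otimes Q + 2^{d-2}\, ww^T,\qquad Q=\mathcal G-\tfrac12 bb^T,\quad w=\mathbf 1_d\otimes b .
\]
For a coefficient vector $v$ with associated spline $s=\sum_l v_lB_l$, we have $v^TQv=\int s^2-\frac12(\int s)^2$. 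This is $2$ times the variance of $s$ under the uniform measure, so $Q\succeq 0$. Because the B-splines form a partition of unity, $\sum_l B_l=1$, the all-ones vector $e$ spans $\ker Q$.

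\textbf{Step 2: one-dimensional bounds.} From the cited well-conditioning of the B-spline Gram matrix, $\mathcal G$ has eigenvalues in $[c/G,\,C/G]$ with $c,C$ depending only on $k$. Since $Q\preceq\mathcal G$, we get $\lambda_{\max}(Q)\le C/G$. Since $Q$ is a rank-one perturbation of $\mathcal G$, Cauchy interlacing gives $\lambda_2(Q)\ge\lambda_1(\mathcal G)\ge c/G$. Also $|b|^2\le C/G$, because each $b_l=O(1/G)$ and only $O(G)$ of them are nonzero. This yields the upper bound
\[
\lambda_{\max}(A)\le 2^{d-1}\tfrac{C}{G}+2^{d-2}d|b|^2\le C\,2^{d}\tfrac{d}{G}.
\]

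\textbf{Step 3: lower bound on $\lambda_d(A)$, which I expect to be the main obstacle.} The kernel of $A$ consists of the vectors $a\otimes e$ with $\sum_j a_j=0$ (using $b^Te=\int 1=2$), so it has dimension $d-1$. It remains to show $v^TAv\ge c\,2^d|v|^2/G$ for all $v\perp\ker A$. Write such a $v$ as $v=\alpha\,\mathbf 1_d\otimes e+r$ with $r\perp \mathbb R^d\otimes e$, so that $|v|^2\approx d\alpha^2|e|^2+|r|^2$. Then
\[
v^TAv=2^{d-1}r^T(I\otimes Q)r+2^{d-2}\bigl(2d\alpha+w\cdot r\bigr)^2 ,
\]
and the first term is at least $2^{d-1}(c/G)|r|^2$. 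I would split into two cases according to the size of $|r|$ relative to $\sqrt d\,|\alpha|$. If $|r|$ is comparable to or larger than it, the $Q$-term alone controls $|v|^2$. If $|r|$ is small, then
\[
|w\cdot r|\le |w|\,|r|\le \sqrt{dC/G}\,|r|
\]
cannot cancel $2d\alpha$, so the second term is $\gtrsim d^2\alpha^2\gtrsim d\alpha^2|e|^2/G$, using $|e|^2\approx G$. This case balancing is where the constants must be tracked carefully; I would first try the threshold $|r|\le \epsilon\sqrt{dG}\,|\alpha|$ with $\epsilon$ small depending only on $k$. Combining Steps 2 and 3 gives $\lambda_{\max}(A)/\lambda_d(A)\le Cd$, and hence the theorem.
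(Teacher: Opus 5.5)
Your proof is correct. The upper bound is essentially the paper's, but the lower bound on $\lambda_d$ goes a different and more careful way.

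\textbf{What the paper does.} It uses the same splitting: a block-diagonal part with blocks $C-D$, plus the rank-one term $(v\otimes\mathbf 1)(v\otimes\mathbf 1)^T$. It normalizes to $[0,1]$, so your powers of $2$ do not appear. It gets $\lambda_{\max}\le d\,\lambda_{\max}(C)$ from Jensen's inequality $D\preceq C$ alone. For the lower bound it applies Courant--Fischer to the test space $\{(w_i): v^Tw_i=0\}\oplus\operatorname{span}(v\otimes\mathbf 1)$. It then checks the Rayleigh quotient on each of the two summands separately. This uses only the condition number of the one-dimensional Gram matrix. As written, however, it does not control the cross term $2\alpha\sum_i v^TCu_i$ between the summands. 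These are Euclidean-orthogonal but not $B$-orthogonal, so the separate bounds do not by themselves bound the quotient on their sum.

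\textbf{What your route adds.} Decomposing along $e$ instead of $v$ closes exactly this gap. Since $Qe=0$ by the partition of unity, the block-diagonal part decouples exactly. All coupling then sits in the single scalar $(2d\alpha+w\cdot r)^2$, and your two-case argument handles it. As a by-product you identify $\ker A=\{a\otimes e:\sum_j a_j=0\}$ exactly, which is the non-uniqueness the paper's remark after the theorem alludes to. Your final constant $Cd$ also matches the statement, whereas the paper's last line writes $\le K$ after having derived $dK$.

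\textbf{What it costs.} You need absolute scalings: $\mathcal G$ has spectrum in $[c/G,C/G]$, $|b|^2\lesssim 1/G$, and $|e|^2\asymp G$. Scale-free ratios are not enough, so you rely on a (quasi-)uniform grid. The paper's well-conditioning claim tacitly assumes this too. The bound $|b|^2\le 2\lambda_{\max}(\mathcal G)$ also follows directly from $bb^T\preceq 2\mathcal G$. A small naming point: the inequality $\lambda_2(Q)\ge\lambda_1(\mathcal G)$ is Weyl's interlacing for rank-one updates, not Cauchy interlacing, though your conclusion is right.
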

Theorem \ref{hessian-eigenvalue-bound-theorem} shows that away from $d'(d-1)$ eigenvectors the matrix $M$ is well conditioned. This means that gradient descent will converge at the same rate in all directions orthogonal to these $d'(d-1)$ eigenvectors. Note that since the number of eigenvectors we must remove is independent of the grid size $G$, we expect that when $G$ is relatively large most components of the KAN will converge at roughly the same rate toward the solution. Thus the KAN with a large number of grid points will not exhibit the same spectral bias toward low frequencies seen by MLPs. {We remark that in constrast the Hessian associated with a two layer ReLU MLP with width $n$ has a condition number which scales like $n^4$ \cite{hong2022activation}, which explains the strong spectral bias exhibited by ReLU MLPs.}
\begin{remark}
    We note that the $d'(d-1)$ eigenvectors which must be excluded is not an artifact of the proof. In fact, this is due to the fact that the KAN parameterization is not unique. Indeed, the constant function $f(x) = 1$ can be parameterized in $d$ different ways by using the B-splines in each of the $d$ different directions. This ambiguity gives rise to directions in parameter space where the function parameterized by the KAN doesn't change and this results in degenerate eigenvectors of the matrix $M$.  
\end{remark}
The analysis given is necessarily highly simplified and heuristic. In particular, we only analyze a single layer of the KAN network and consider the continuous least squares loss. Nonetheless, we argue that it gives an explanation for why we would expect KANs to have a significantly different spectral bias than MLPs, and in particular why we expect that they learn all frequencies roughly similarly. In the remainder of this section, we experimentally test this hypothesis and compare the spectral bias of KANs with MLPs on a variety of simple problems. We implement these numerical experiments using the pykan package version $0.2.5$.

\subsection{1D waves of different frequencies}
In the first example, we take the same setting as in \cite{rahaman2019spectral} and study the regression of a linear combination of waves of different frequencies.
Consider the function prescribed as 
$$f(x)=\sum A_i \sin \left(2 \pi k_i z+\varphi_i\right),\quad k=(5,10,\cdots,45,50).$$
The phases $\varphi_i$ are uniformly sampled from $[0,2\pi]$ and we consider two cases of amplitudes: one with equal amplitude $A_i=1$ and another with increasing amplitude $A_i=0.1i$. We use a neural network, either ReLU MLP or KAN, to regress $f$ sampled at $200$ uniformly spaced points in $[0,1]$, with full batch ADAM iteration as the optimizer with a learning rate of $0.0003$. For MLPs, we train with $80000$ iterations as in \cite{rahaman2019spectral}; for KANs, we only train with 8000 iterations. Normalized magnitudes of discrete Fourier
transform at frequencies $k_i$ are computed as $|\tilde{f}_{k_i}/A_i|$ and averaged over 10 runs of different phases.

We plot the evolution of $|\tilde{f}_{k_i}/A_i|$ during training across all frequencies; see Figures \ref{fig:1D_eqn}, \ref{fig:1D_inc} for comparisons of MLPs and KANs with different sizes for equal and increasing amplitudes respectively. KANs suffer significantly less than MLPs from spectral biases. Once the size of KANs, especially the grid size and depth is large enough, KANs almost learn all frequencies at the same time, while even very deep and wide MLPs still have difficulties learning higher frequencies, even with $10$x epochs!

\begin{figure}[htbp]
    \centering
    \begin{minipage}[b]{0.32\textwidth}
        \centering
        \includegraphics[width=\textwidth]{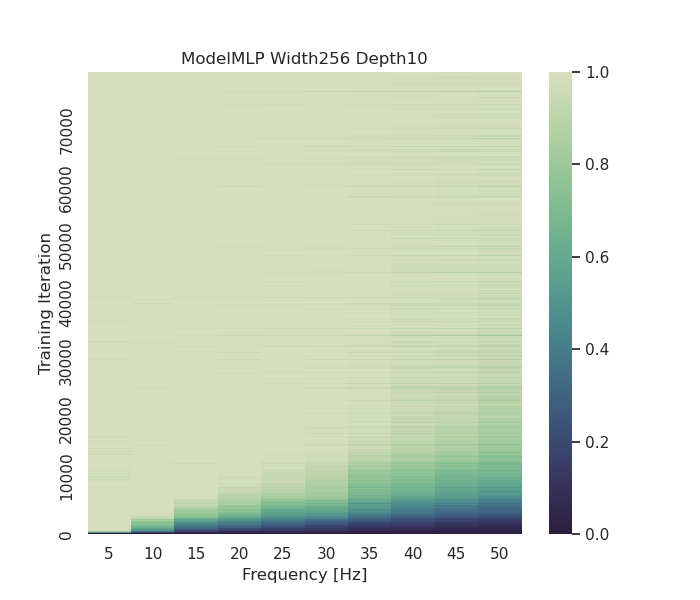}
    
    \end{minipage}
    \hfill
    \begin{minipage}[b]{0.32\textwidth}
        \centering
        \includegraphics[width=\textwidth]{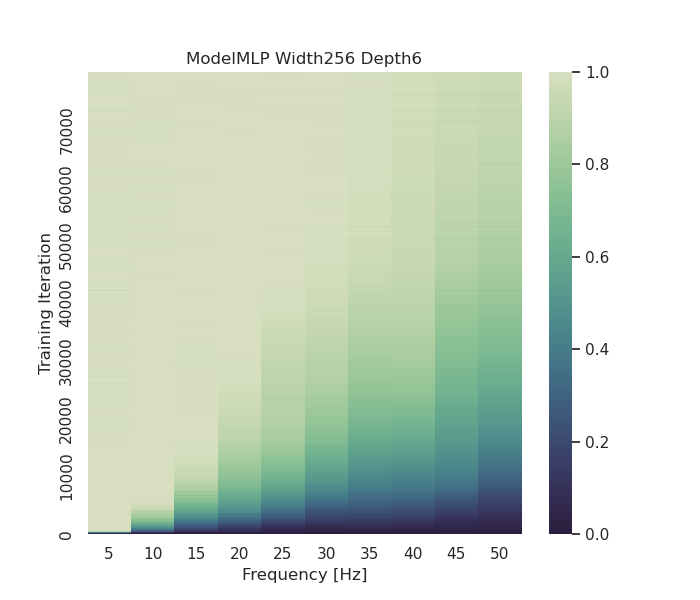}
    \end{minipage}
    \hfill
    \begin{minipage}[b]{0.32\textwidth}
        \centering
        \includegraphics[width=\textwidth]{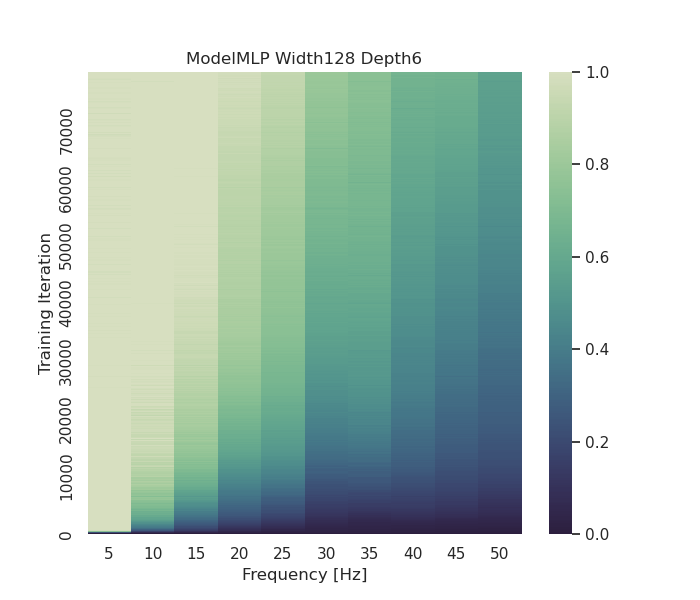}
    \end{minipage}

    \vspace{1em}

    \begin{minipage}[b]{0.32\textwidth}
        \centering
        \includegraphics[width=\textwidth]{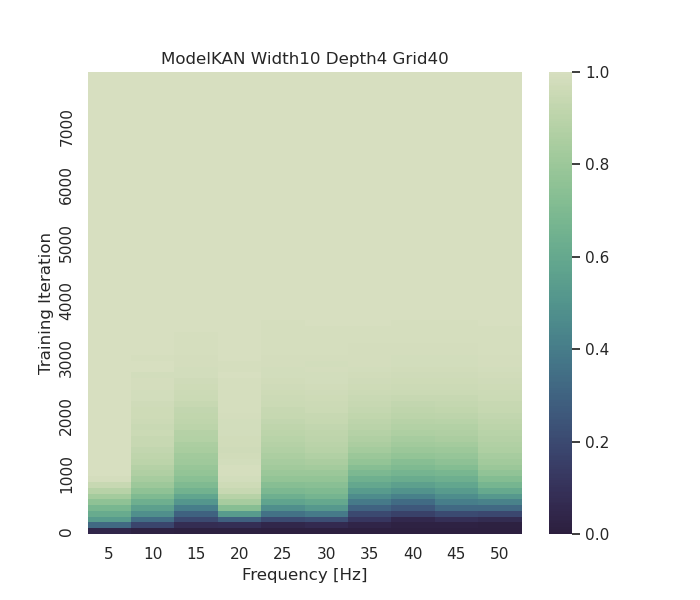}
    \end{minipage}
    \hfill
    \begin{minipage}[b]{0.32\textwidth}
        \centering
        \includegraphics[width=\textwidth]{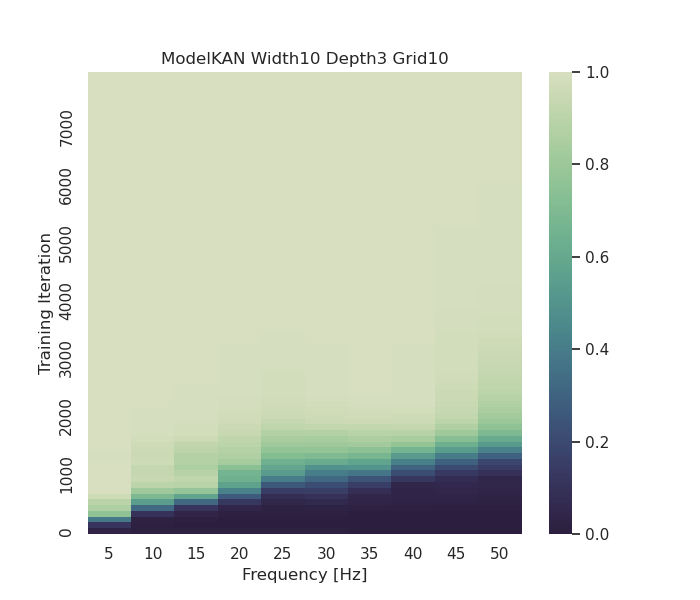}
    \end{minipage}
    \hfill
    \begin{minipage}[b]{0.32\textwidth}
        \centering
        \includegraphics[width=\textwidth]{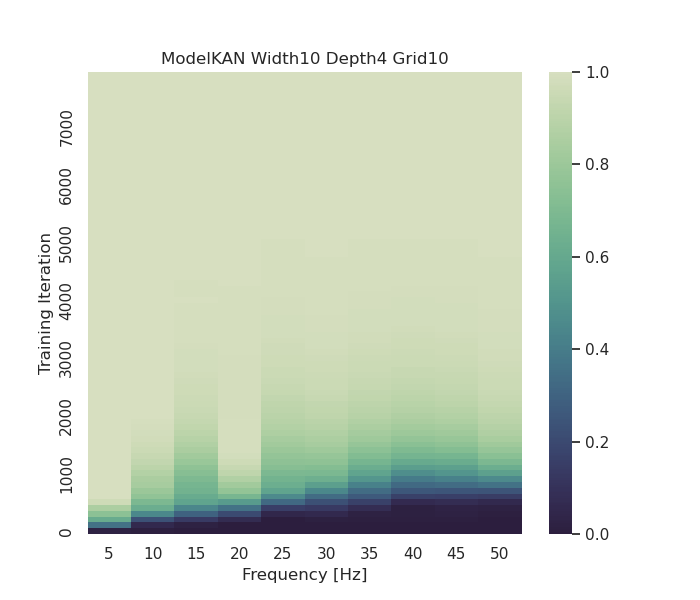}
    
    \end{minipage}
    
    \caption{1D wave dataset, where the target function has equal amplitudes of different frequency modes. Under various hyperparameters, MLPs manifest strong spectral biases (top), while KANs do not (bottom). Note that the y axis (training steps) of MLP is 10 times that of KAN.}
    \label{fig:1D_eqn}
\end{figure}

\begin{figure}[htbp]
    \centering
    \begin{minipage}[b]{0.32\textwidth}
        \centering
        \includegraphics[width=\textwidth]{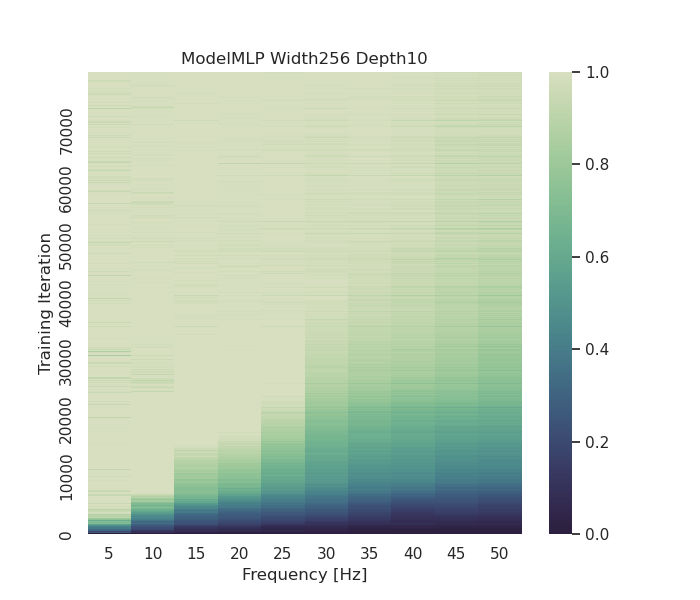}
    
    \end{minipage}
    \hfill
    \begin{minipage}[b]{0.32\textwidth}
        \centering
        \includegraphics[width=\textwidth]{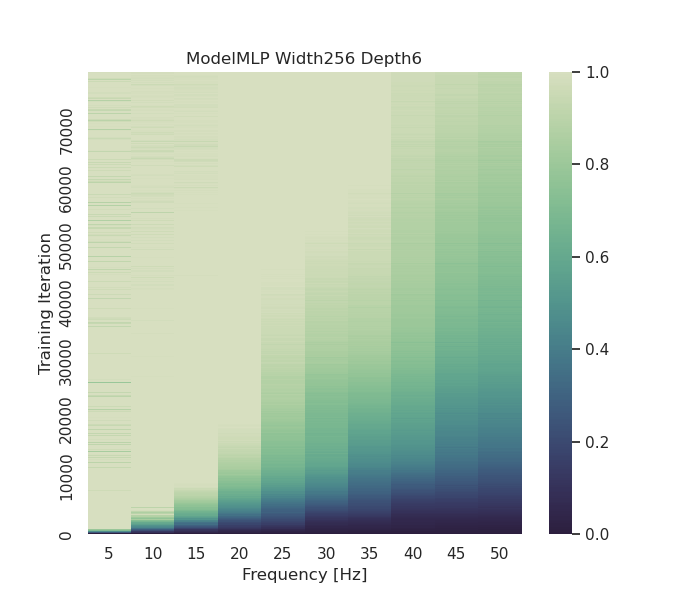}
    \end{minipage}
    \hfill
    \begin{minipage}[b]{0.32\textwidth}
        \centering
        \includegraphics[width=\textwidth]{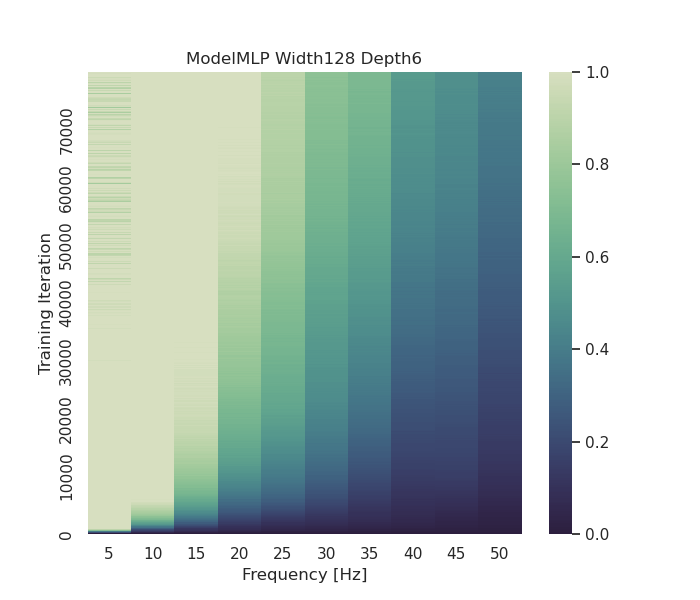}
    \end{minipage}

    \vspace{1em}

    \begin{minipage}[b]{0.32\textwidth}
        \centering
        \includegraphics[width=\textwidth]{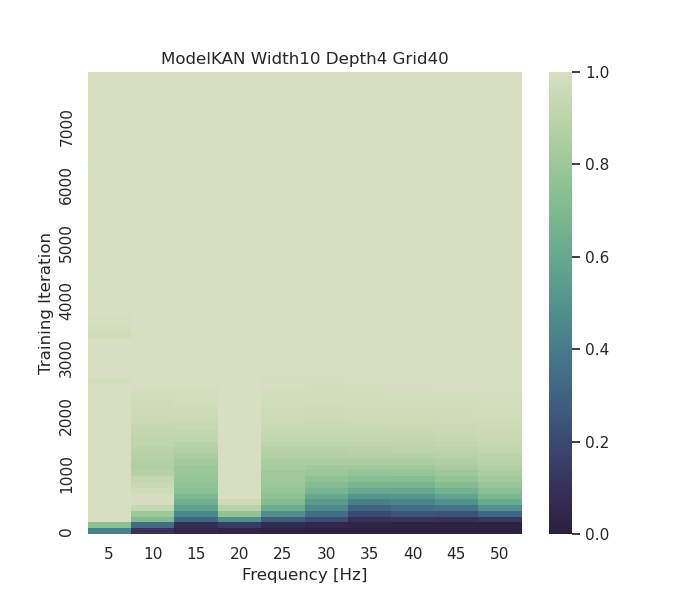}
    \end{minipage}
    \hfill
    \begin{minipage}[b]{0.32\textwidth}
        \centering
        \includegraphics[width=\textwidth]{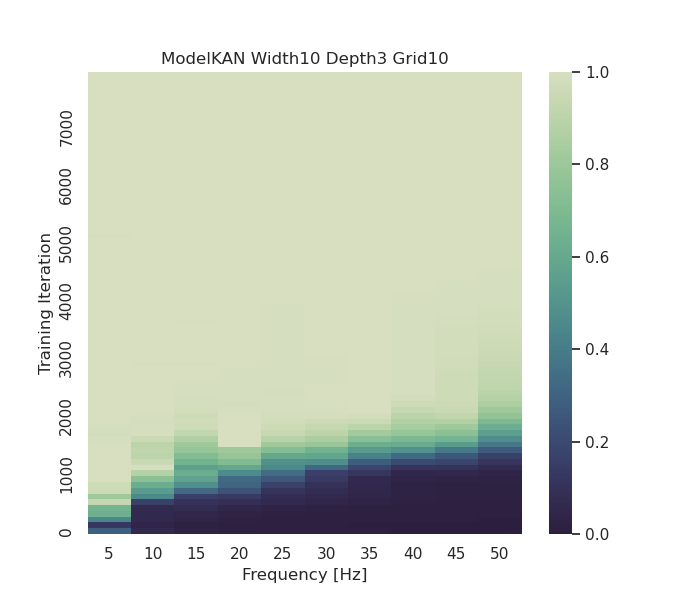}
    \end{minipage}
    \hfill
    \begin{minipage}[b]{0.32\textwidth}
        \centering
        \includegraphics[width=\textwidth]{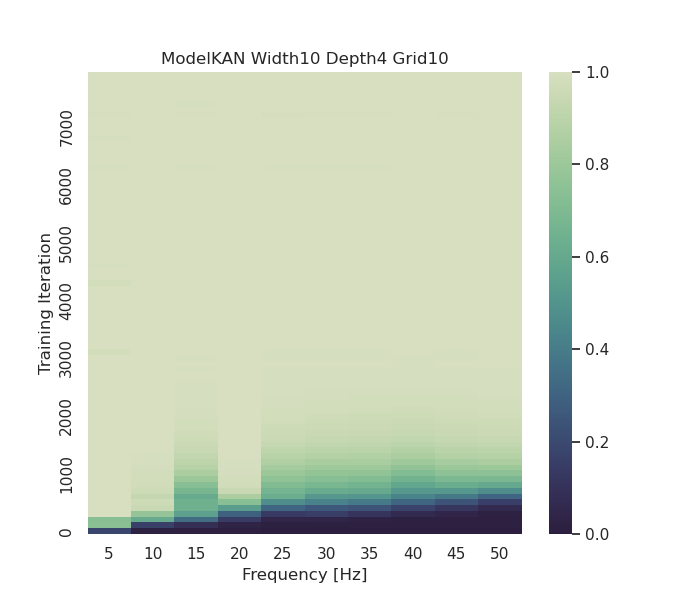}
    
    \end{minipage}
    
    \caption{1D wave dataset, where the target function has increasing amplitudes of different frequency modes. Under various hyperparameters, MLPs manifest severe spectral biases (top), while KANs do not (bottom). Note that the y axis (training steps) of MLP is 10 times that of KAN.}
    \label{fig:1D_inc}
\end{figure}

% \begin{figure}[htbp]
%     \centering
%     \begin{minipage}[b]{0.32\textwidth}
%         \centering
%         \includegraphics[width=\textwidth]{kan/1D_frequency/width10depth3grid10modelKAN.png}
    
%     \end{minipage}
%     \hfill
%     \begin{minipage}[b]{0.32\textwidth}
%         \centering
%         \includegraphics[width=\textwidth]{kan/1D_frequency/width3depth3grid10modelKAN.png}
%     \end{minipage}
%     \hfill
%     \begin{minipage}[b]{0.32\textwidth}
%         \centering
%         \includegraphics[width=\textwidth]{kan/1D_frequency/width10depth2grid10modelKAN.png}
%     \end{minipage}

%     \vspace{1em}

%     \begin{minipage}[b]{0.32\textwidth}
%         \centering
%         \includegraphics[width=\textwidth]{kan/1D_frequency/width10depth4grid10modelKAN.png}
%     \end{minipage}
%     \hfill
%     \begin{minipage}[b]{0.32\textwidth}
%         \centering
%         \includegraphics[width=\textwidth]{kan/1D_frequency/width10depth3grid20modelKAN.png}
%     \end{minipage}
%     \hfill
%     \begin{minipage}[b]{0.32\textwidth}
%         \centering
%         \includegraphics[width=\textwidth]{kan/1D_frequency/width10depth3grid40modelKAN.png}
    
%     \end{minipage}
    
%     \caption{We can see that as we increase the layers, we can learn higher and higher frequency.}
%     \label{fig:1D_eqn_kan}
% \end{figure}
\subsection{Gaussian random field}
In this example, we consider fitting functions sampled from a Gaussian random field.
The target function $f$ is sampled from a $d$-dimensional Gaussian random field with mean zero and covariance $\exp(-|x-y|^2/(2\sigma^2))$.  Here small $\sigma$ corresponds to rough functions and large $\sigma$ corresponds to smooth functions. 

To approximate the Gaussian random field, we sample $f$ using the KL expansion \cite{karhunen1947under}. We sample $N=5000$ points uniformly from $[-1,1]^d$ and calculate the (empirical) covariance matrix $K$. Then we truncate its first $m<N$ eigenpairs $\lambda_i,\phi_i$, with the cutoff threshold $\lambda_{m+1}<0.1\lambda_{1}\leq\lambda_{m}$ and sample $f$ approximately via $$f=\sum_{i\leq m}\lambda_i\xi_i\phi_i,$$ where $\xi_i$ are i.i.d standard Gaussians $N(0,1)$. For $f$ with different scales $\sigma$ and dimensions $d$, we split the points into $80\%$ training and $20\%$ testing points. We use MLPs and KANs with different sizes to regress on the training set, with the mean squared loss as the loss function. For MLPs, we use $500$ iterations of LBFGS iteration, and for KANs, we use the grid extension technique, with grid sizes $(10,20,30,40,50)$, each trained with $100$ iterations of LBFGS.

We plot the loss curves here and compare the losses of different scales $\sigma$ and dimensions $d$, using an MLP of $6$ layers and $256$ neurons in each hidden layer, and KANs with $10$ neurons in each hidden layer and $2,3,4$ layers; see Figure \ref{grf:train} for the regression loss on the training set with dimensions $2,3,4$ and scales $2^i$, $i=0,-1,-2,-3$. We see that for larger scale and smoother functions, MLP performs better, while for smaller scale and rough functions, KANs perform better without suffering much from spectral biases, and grid extension is especially helpful. We remark that one can choose smaller grid sizes of KANs for smoother functions and obtain more accurate regressions.

Precisely since KANs are not susceptible to spectral biases, they are likely to overfit to noises. As a consequence,
we notice that KANs are more subject to overfitting on the training data regression when the task is very complicated; see the second line of Figure \ref{grf:test}. On the other hand, we can increase the number of training points to alleviate the overfitting; see the last line of Figure \ref{grf:test} where we increased the number of training and test samples by $10$x. We remark that the current implementation of grid extension is prone to oscillation after refining grids during the undersampled regime, as observed in \cite{rigas2024adaptive}, and we will improve it in future works. 
\begin{figure}[htbp]
    \centering
    \begin{minipage}[b]{0.24\textwidth}
        \centering
        \includegraphics[width=\textwidth]{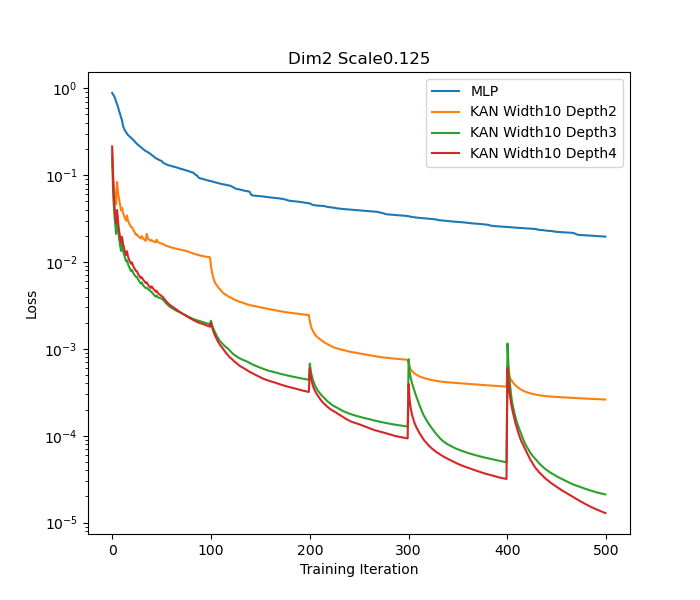}
    
    \end{minipage}
    \hfill
    \begin{minipage}[b]{0.24\textwidth}
        \centering
        \includegraphics[width=\textwidth]{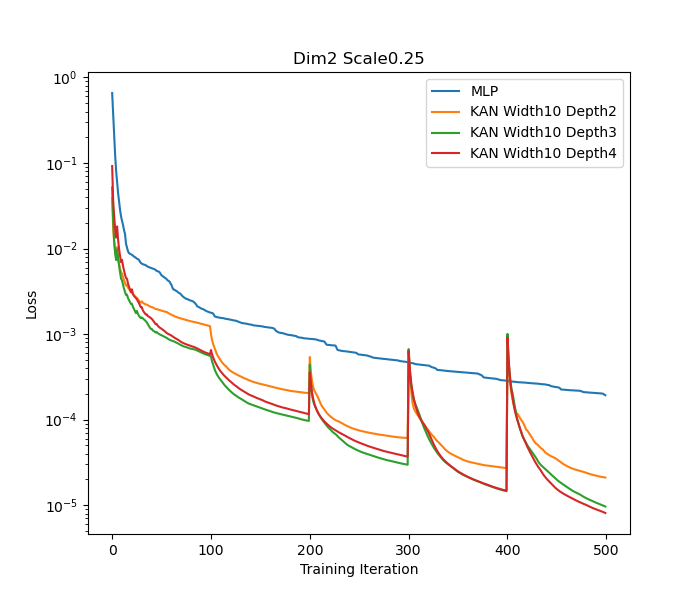}
    \end{minipage}
    \hfill
    \begin{minipage}[b]{0.24\textwidth}
        \centering
        \includegraphics[width=\textwidth]{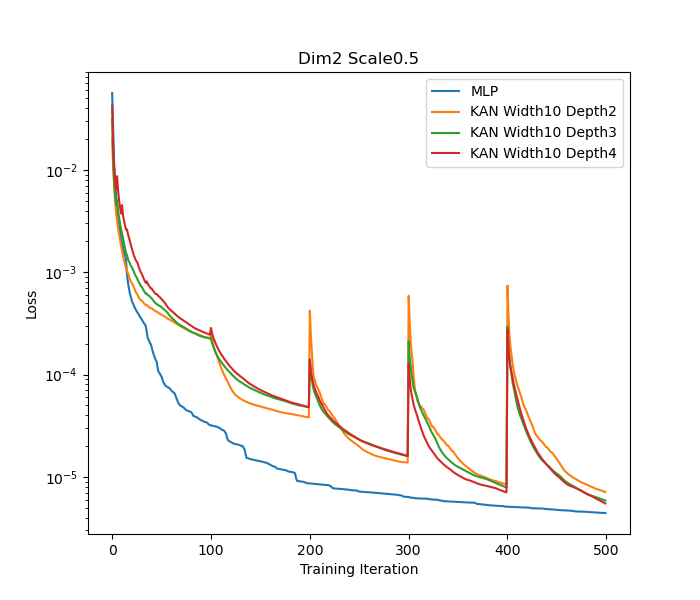}
    \end{minipage}
    \hfill
    \begin{minipage}[b]{0.24\textwidth}
        \centering
        \includegraphics[width=\textwidth]{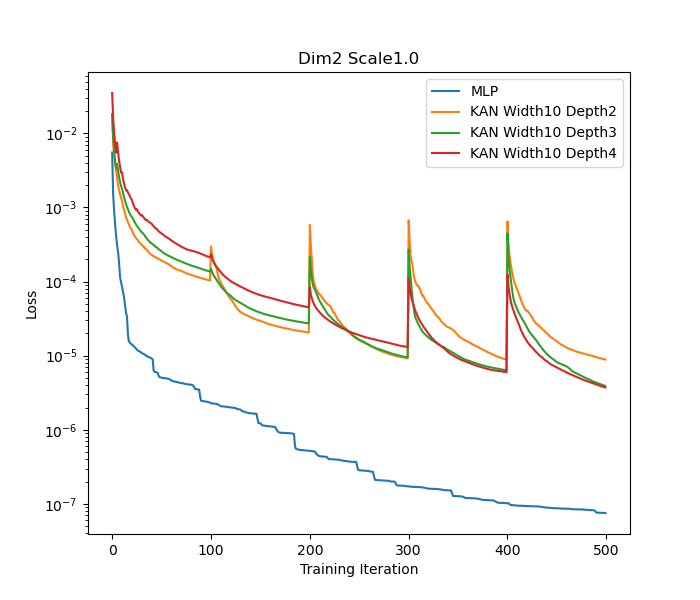}
    \end{minipage}
    \vspace{1em}
\begin{minipage}[b]{0.24\textwidth}
        \centering
        \includegraphics[width=\textwidth]{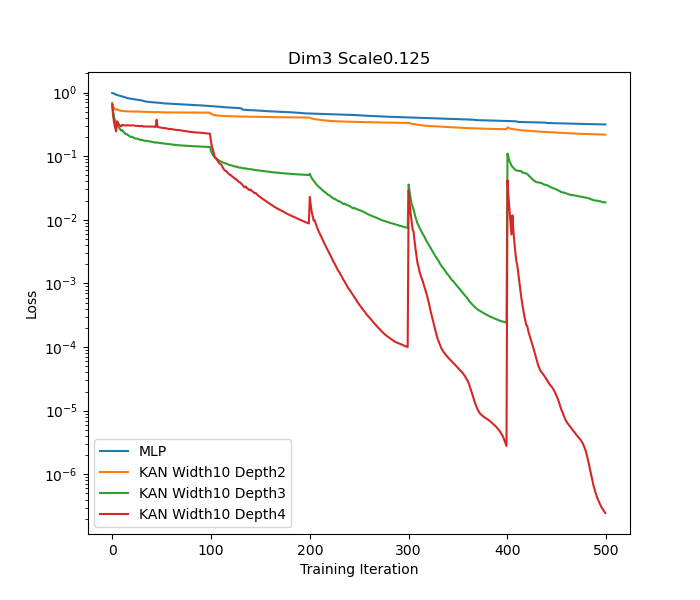}
    
    \end{minipage}
    \hfill
    \begin{minipage}[b]{0.24\textwidth}
        \centering
        \includegraphics[width=\textwidth]{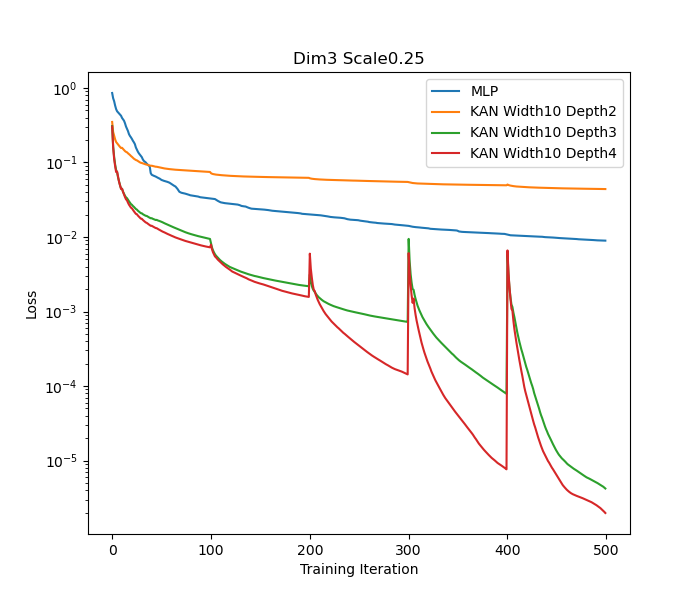}
    \end{minipage}
    \hfill
    \begin{minipage}[b]{0.24\textwidth}
        \centering
        \includegraphics[width=\textwidth]{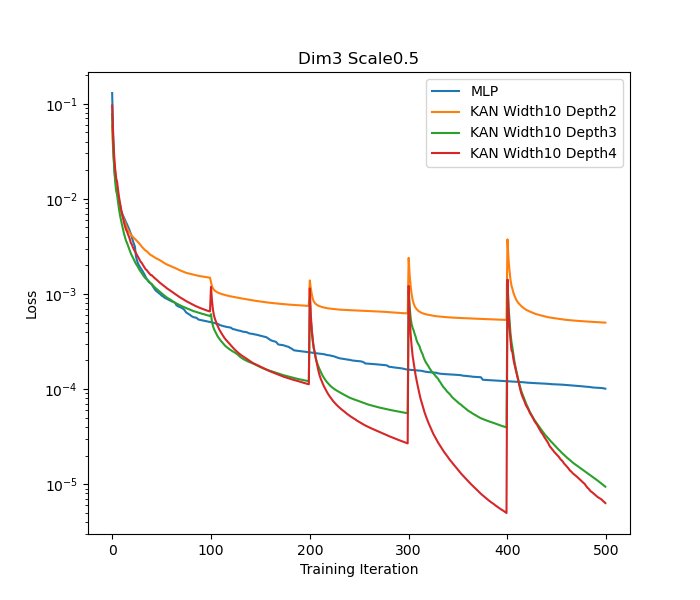}
    \end{minipage}
    \hfill
    \begin{minipage}[b]{0.24\textwidth}
        \centering
        \includegraphics[width=\textwidth]{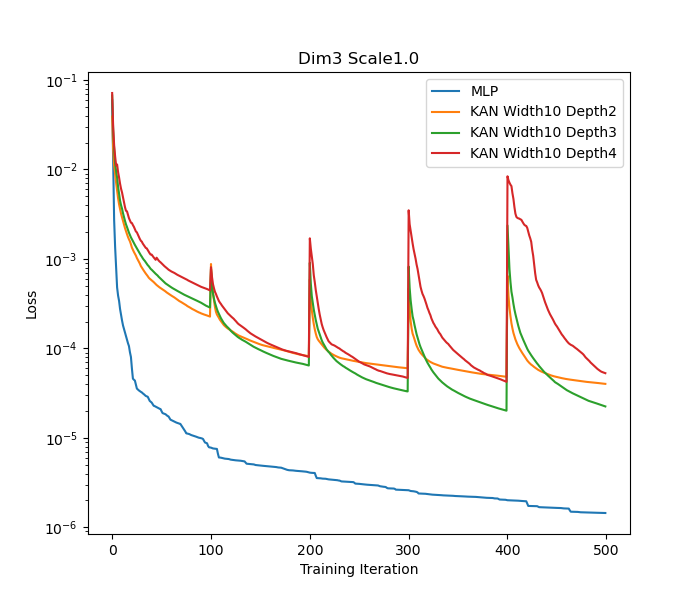}
    \end{minipage}
    \vspace{1em}\begin{minipage}[b]{0.24\textwidth}
        \centering
        \includegraphics[width=\textwidth]{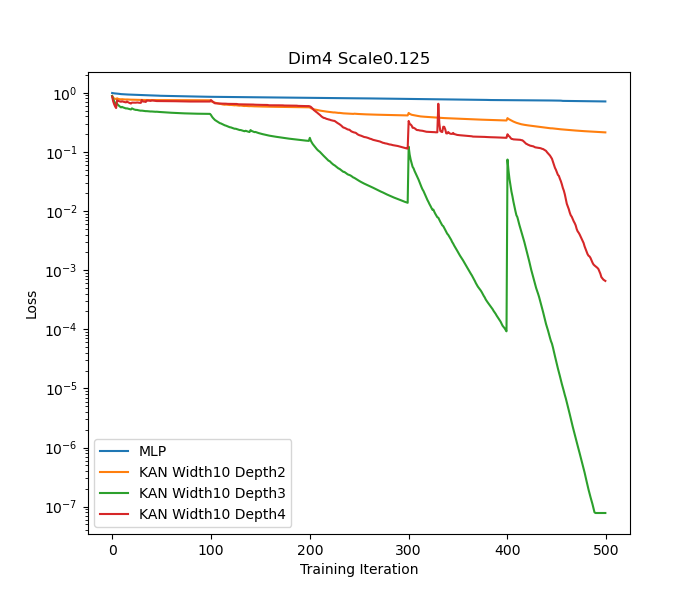}
    
    \end{minipage}
    \hfill
    \begin{minipage}[b]{0.24\textwidth}
        \centering
        \includegraphics[width=\textwidth]{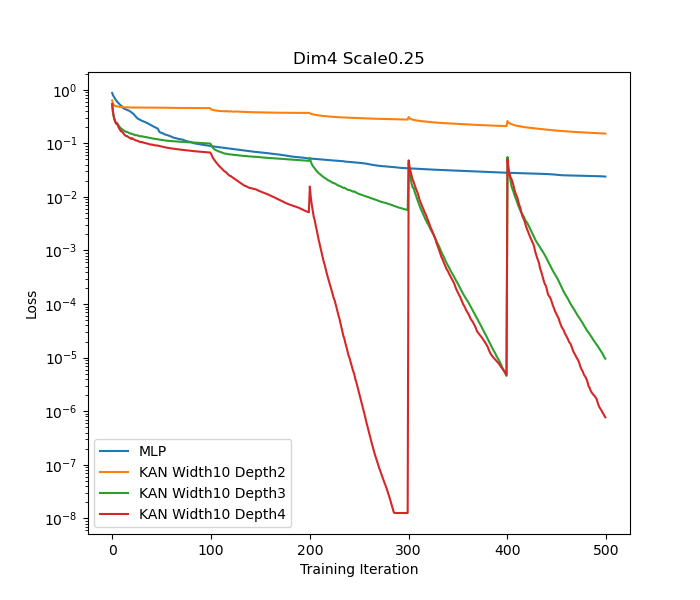}
    \end{minipage}
    \hfill
    \begin{minipage}[b]{0.24\textwidth}
        \centering
        \includegraphics[width=\textwidth]{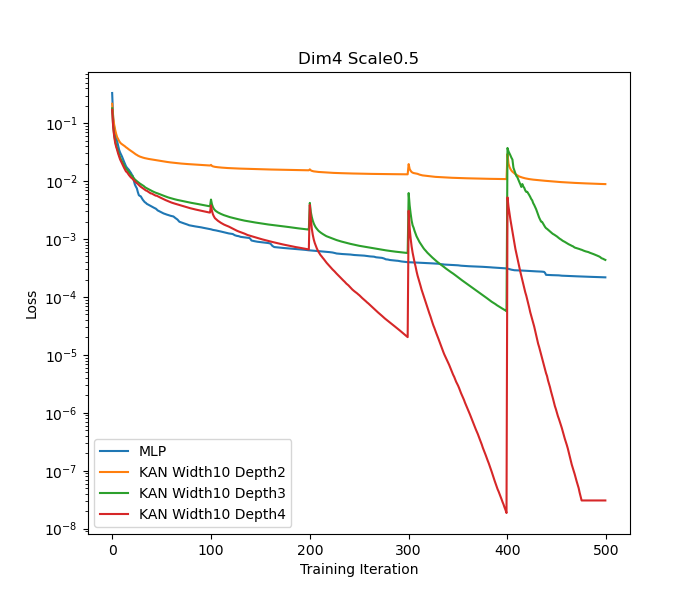}
    \end{minipage}
    \hfill
    \begin{minipage}[b]{0.24\textwidth}
        \centering
        \includegraphics[width=\textwidth]{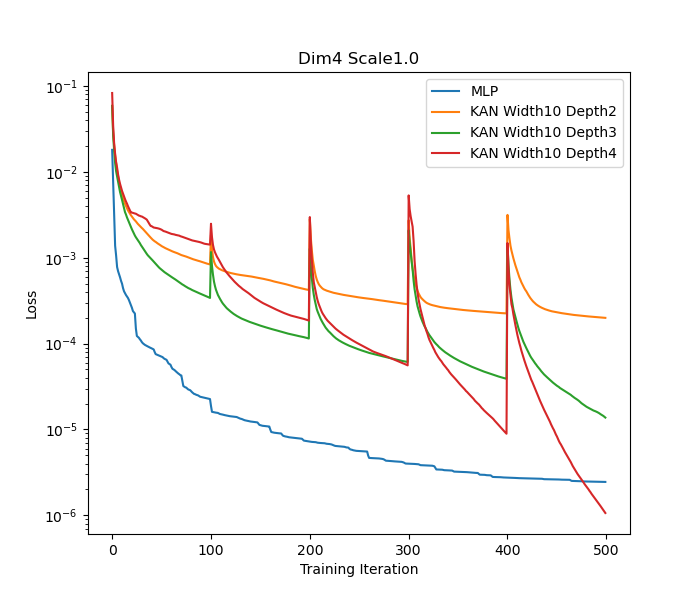}
    \end{minipage}
    \vspace{1em}

    \caption{The Gaussian random field dataset. Training losses of MLP and KANs, with different scales and dimensions.}
    \label{grf:train}
\end{figure}

\begin{figure}[htbp]
    \centering
    \begin{minipage}[b]{0.24\textwidth}
        \centering
        \includegraphics[width=\textwidth]{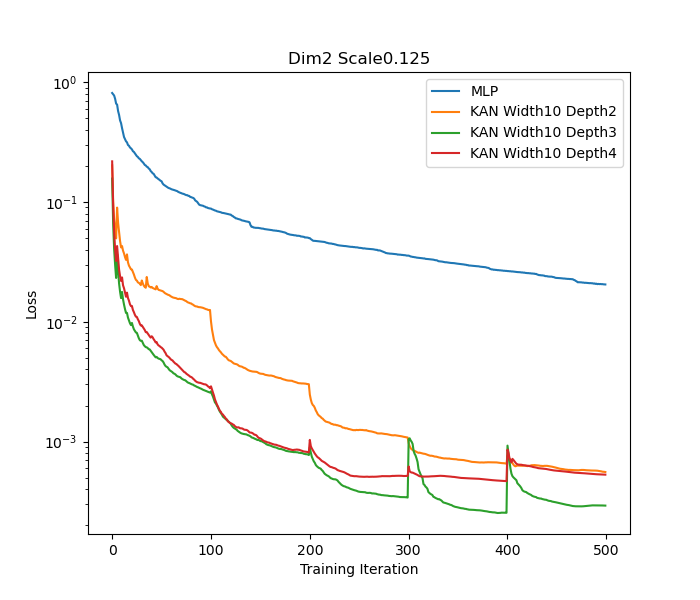}
    
    \end{minipage}
    \hfill
    \begin{minipage}[b]{0.24\textwidth}
        \centering
        \includegraphics[width=\textwidth]{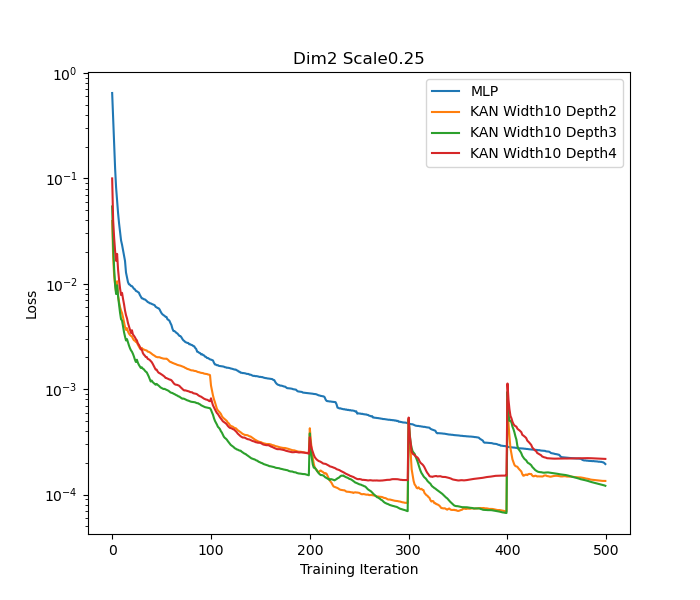}
    \end{minipage}
    \hfill
    \begin{minipage}[b]{0.24\textwidth}
        \centering
        \includegraphics[width=\textwidth]{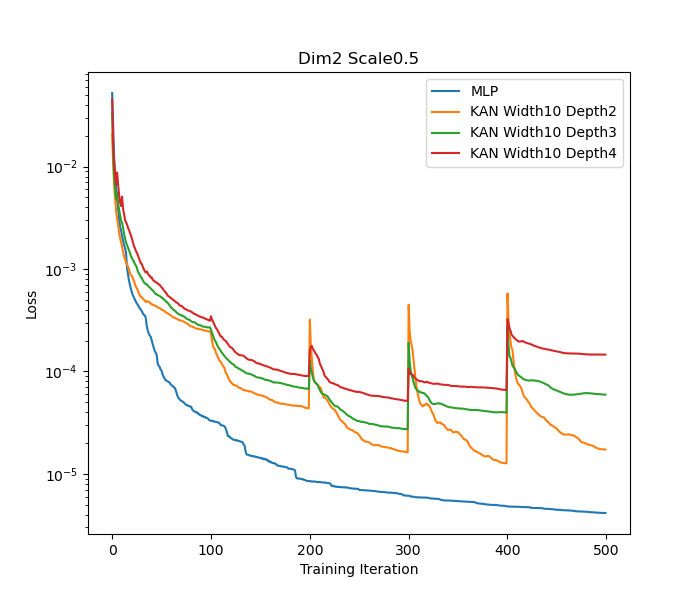}
    \end{minipage}
    \hfill
    \begin{minipage}[b]{0.24\textwidth}
        \centering
        \includegraphics[width=\textwidth]{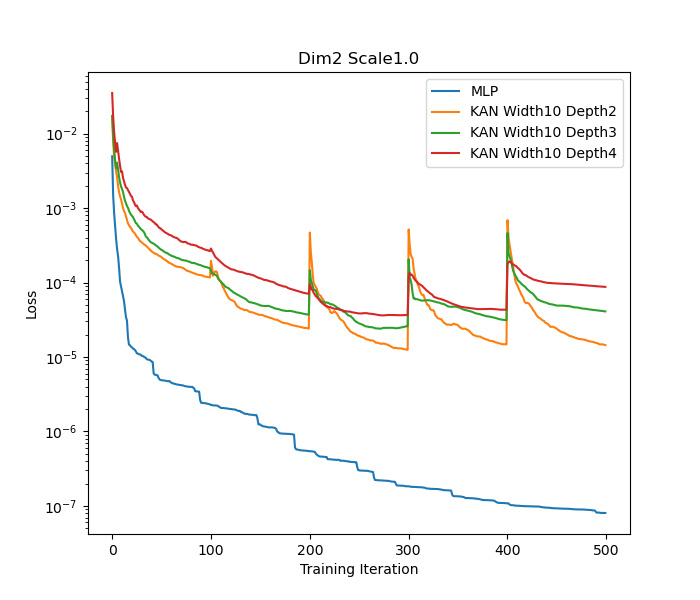}
    \end{minipage}
    \vspace{1em}
\begin{minipage}[b]{0.24\textwidth}
        \centering
        \includegraphics[width=\textwidth]{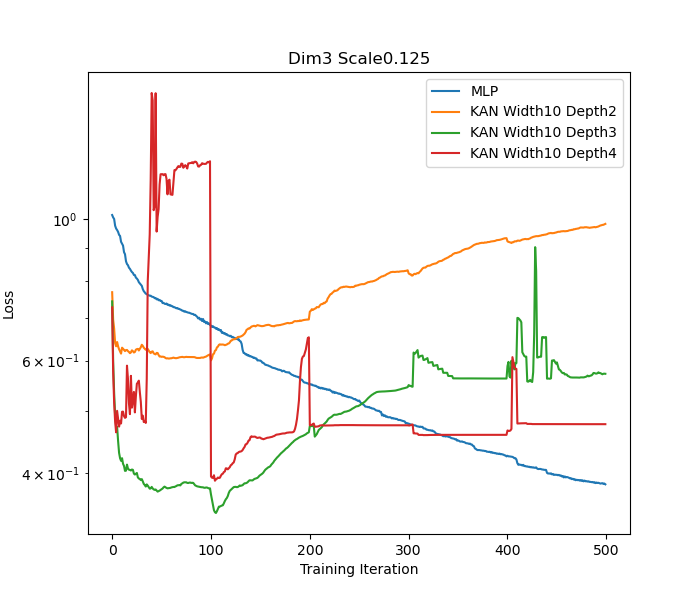}
    
    \end{minipage}
    \hfill
    \begin{minipage}[b]{0.24\textwidth}
        \centering
        \includegraphics[width=\textwidth]{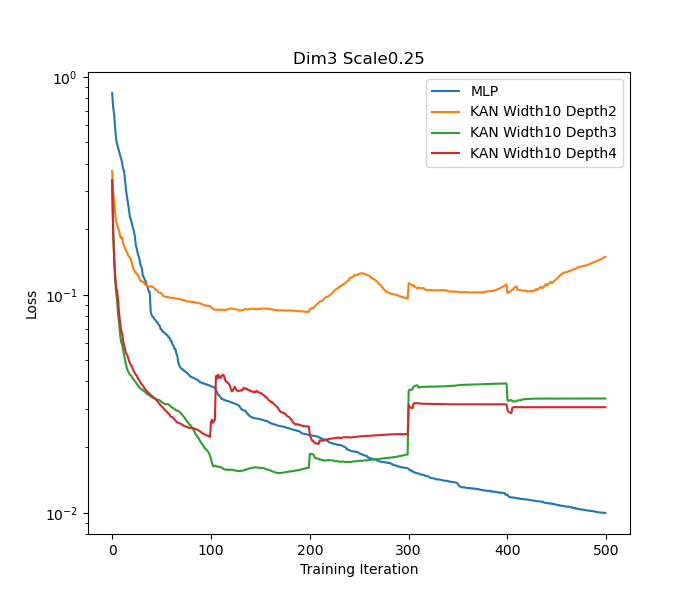}
    \end{minipage}
    \hfill
    \begin{minipage}[b]{0.24\textwidth}
        \centering
        \includegraphics[width=\textwidth]{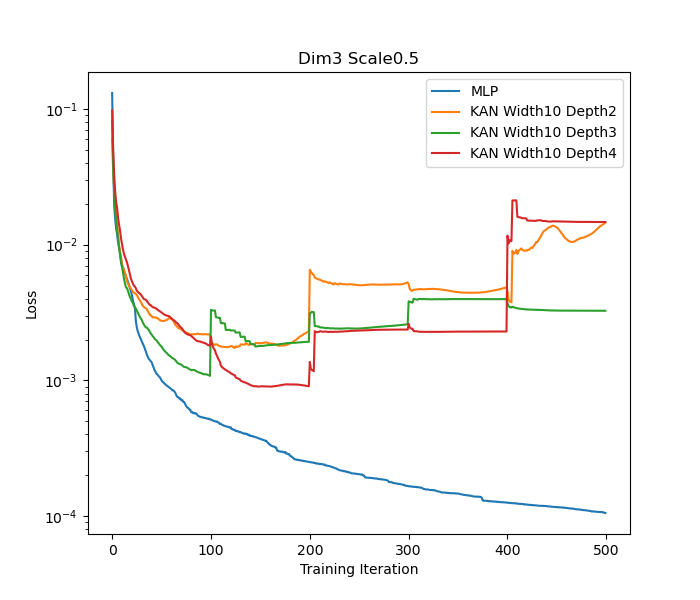}
    \end{minipage}
    \hfill
    \begin{minipage}[b]{0.24\textwidth}
        \centering
        \includegraphics[width=\textwidth]{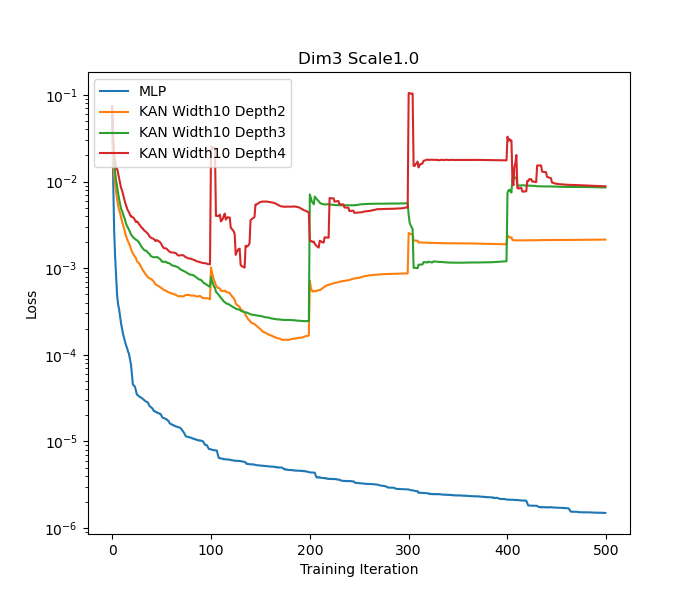}
    \end{minipage}
    \vspace{1em}\begin{minipage}[b]{0.24\textwidth}
        \centering
        \includegraphics[width=\textwidth]{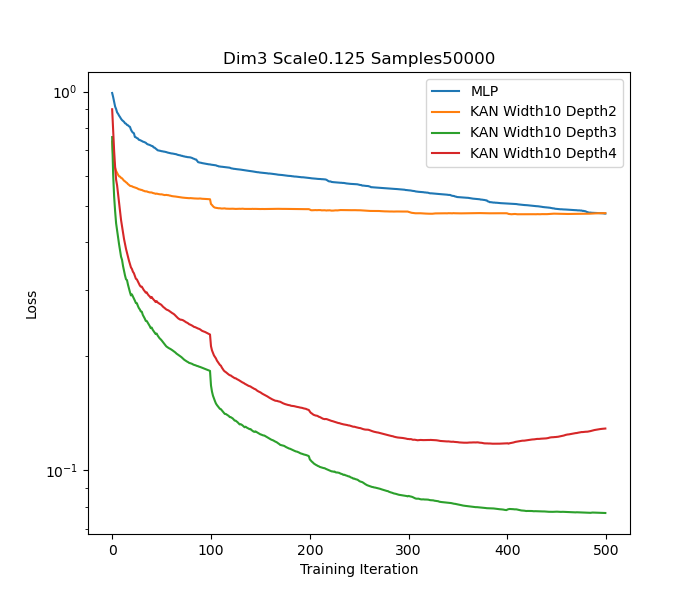}
    
    \end{minipage}
    \hfill
    \begin{minipage}[b]{0.24\textwidth}
        \centering
        \includegraphics[width=\textwidth]{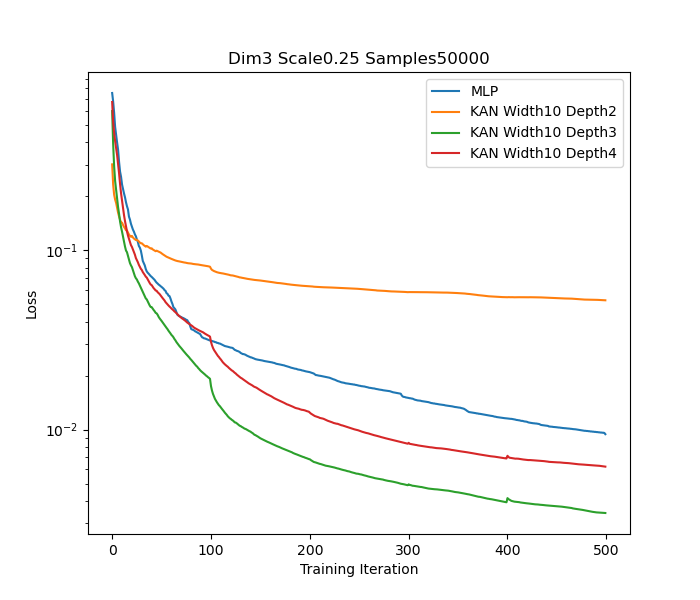}
    \end{minipage}
    \hfill
    \begin{minipage}[b]{0.24\textwidth}
        \centering
        \includegraphics[width=\textwidth]{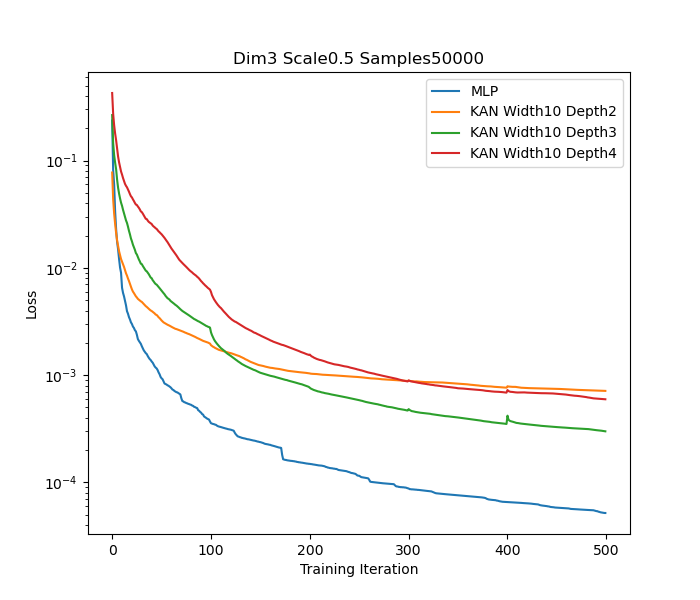}
    \end{minipage}
    \hfill
    \begin{minipage}[b]{0.24\textwidth}
        \centering
        \includegraphics[width=\textwidth]{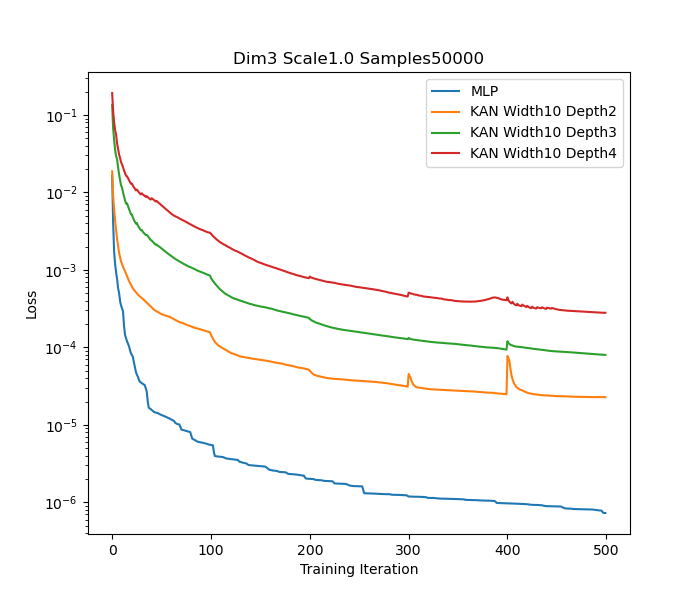}
    \end{minipage}
    \vspace{1em}

    \caption{The Gaussian random field dataset. Test losses of MLP and KANs, with different scales and dimensions. Increasing the number of samples by $10$x helps overfitting.}
    \label{grf:test}
\end{figure}
\subsection{PDE example}
In this example, we solve the 1D Poisson equation with a high-frequency solution, similar to \cite{xu2019frequency}. To be precise, consider the equation with zero Dirichlet boundary condition\begin{equation}
    -u_{xx}=f\,\,\,\, \text{in} [-1,1]\,,\quad u(-1)=u(1)=0\,.
\end{equation}
Here for a frequency $k\in \mathbb{N}$, the right-hand-side and the associated true solution are $$f=\pi^2\sin(\pi x)+\pi^2{k}\sin(k\pi x),\quad u=\sin(\pi x)+\frac{1}{k}\sin(k\pi x).$$
The different frequencies are normalized in a way that for $k> 1$, the ground truth has the same energy ($H^1$) norm. We use the variational form of the elliptic equation and the associated Deep Ritz Method \cite{yu2018deep}. Parametrizing $u$ by a neural network, we minimize the loss $$\lambda \int_{-1}^1 \left(\frac{1}{2}u_x^2-fu\right) dx+u^2(-1)+u^2(1).$$

For frequencies $k=2,4,8,16,32$, we use $2000$ uniformly spaced sample points and the neural network using an MLP of 6 layers and 256 neurons in each hidden layer and a KAN of 2 layers with 10 neurons in the hidden layer. We choose the hyperparameter $\lambda=0.01$ balancing the energy and boundary loss and perform LBFGS iterations. For MLPs, we use 200 iterations, and for KANs, we use grid sizes $(20,40)$, each trained with 100 iterations. We plot the relative $L^2$ and $H^1$ losses compared to the ground truth in Figure \ref{fig:drm}. We can see that KANs perform consistently better, and the residue barely deteriorates when the frequency increases, whereas it becomes extremely hard for MLPs to optimize when $k=16,32.$

\begin{figure}[htbp]
    \centering
    \begin{minipage}[b]{0.32\textwidth}
        \centering
        \includegraphics[width=\textwidth]{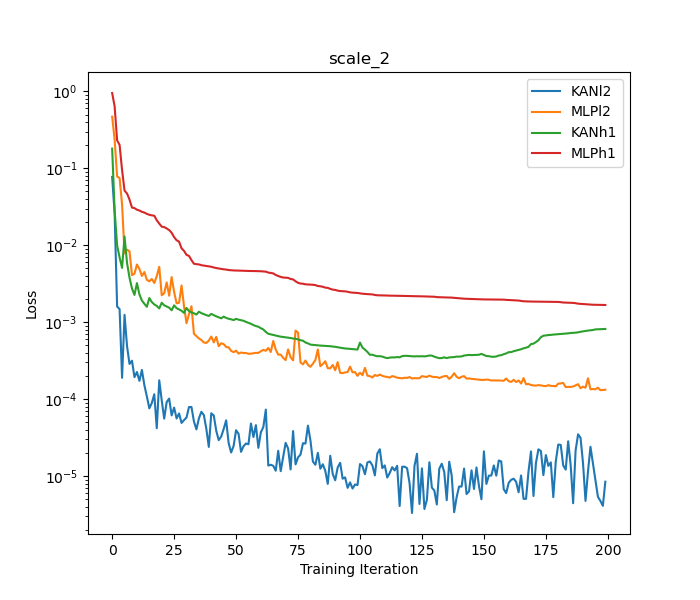}
    
    \end{minipage}
    \hfill
    \begin{minipage}[b]{0.32\textwidth}
        \centering
        \includegraphics[width=\textwidth]{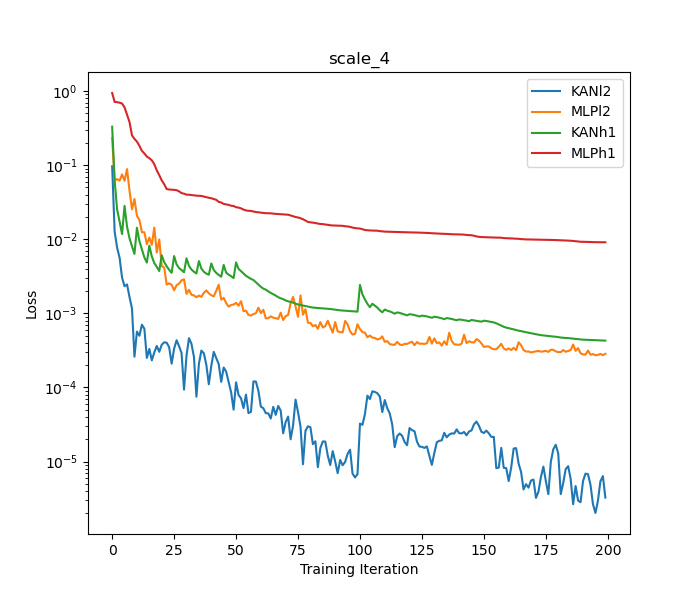}
    \end{minipage}
    \hfill
    \begin{minipage}[b]{0.32\textwidth}
        \centering
        \includegraphics[width=\textwidth]{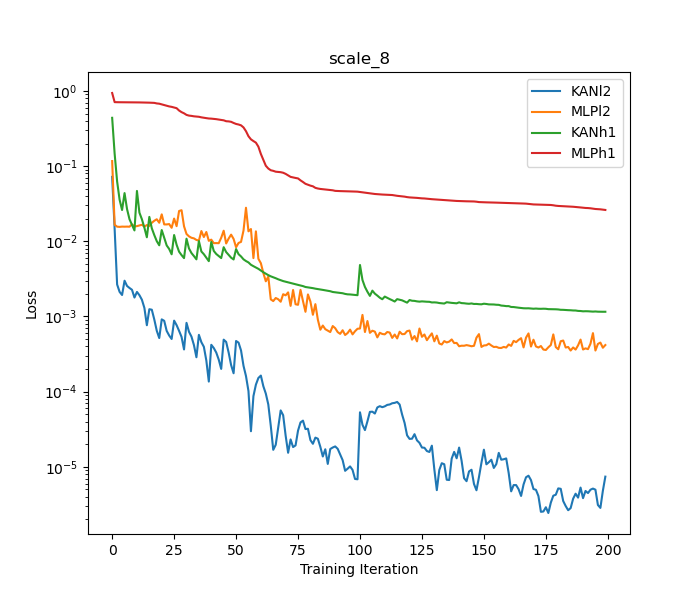}
    \end{minipage}

    \vspace{1em}

    \begin{minipage}[b]{0.48\textwidth}
        \centering
        \includegraphics[width=\textwidth]{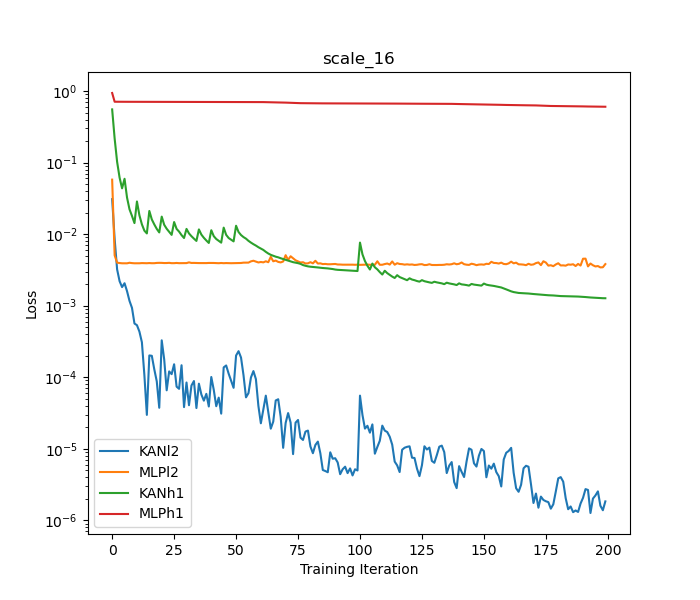}
    \end{minipage}
    \hfill
    \begin{minipage}[b]{0.48\textwidth}
        \centering
        \includegraphics[width=\textwidth]{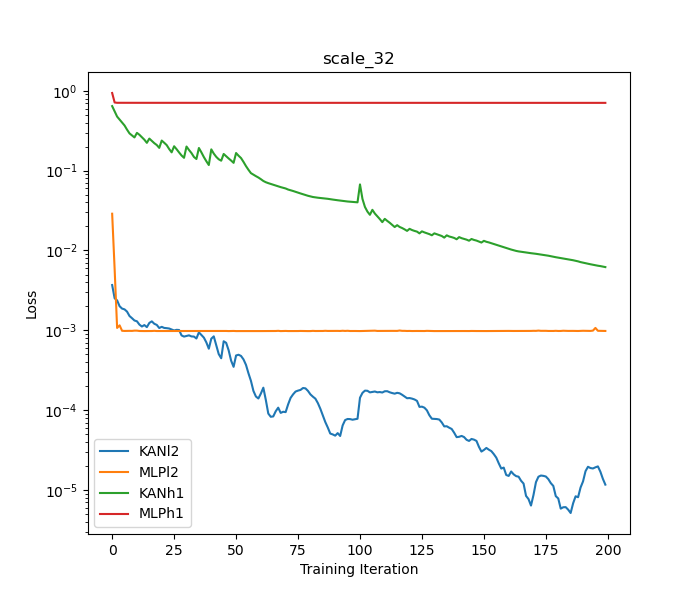}
    
    \end{minipage}
    
    \caption{Solving PDEs. $L^2$ and $H^1$ losses of MLP and KAN with different frequencies of the solution.}
    \label{fig:drm}
\end{figure}
\section{Conclusions and Discussions}\label{sec:conclusions}
Inspired by the Kolmogorov-Arnold representation theorem, we propose the Kolmogorov-Arnold Networks (KANs) as promising alternatives to MLPs. %, especially when one cares about interpretability -- an example is extracting symbolic formulas from datasets, which is a common task in science. 
Our contributions are three-fold: (1) we put the KA theorem in the perspective of modern machine learning, relating to MLPs, and generalize the representation from two-layer to multiple layers via the KAN layers introduced, greatly enhancing expressive power. (2) we show that KANs are interpretable, serving as a useful tool for scientific discoveries. (3) we show that KANs are accurate and have nice scaling laws via theory and experiments. The major limitation of this work, however, is that our numerical examples focus on various aspects of science and are relatively small-scale. The scalability and extensibility of KANs for large-scale machine-learning tasks are left as future work. Especially we want to highlight the potential of applications of KANs to AI for Science tasks, since KANs can extract interpretable information from data to provide scientific insights, and reversely from scientific prior to build better models \cite{liu2024kan}.

\subsection{Related works}\label{app:related_works}

\textbf{Kolmogorov-Arnold theorem and neural networks.} The connection between the Kolmogorov-Arnold theorem (KAT) and neural networks is not new in the literature ~\cite{poggio2022deep,schmidt2021kolmogorov,sprecher2002space,koppen2002training,lin1993realization,lai2021kolmogorov,leni2013kolmogorov,fakhoury2022exsplinet,ismayilova2024kolmogorov,poluektov2023new}, but the pathological behavior of inner functions makes KAT appear unpromising in practice~\cite{poggio2022deep}. Most of these prior works stick to the original 2-layer width-($2n+1$) networks, which were limited in expressive power and many of them are even predating back-propagation. Therefore, most studies were built on theories with rather limited or artificial toy experiments. More broadly speaking, KANs are also somewhat related to generalized additive models (GAMs)~\cite{agarwal2021neural}, graph neural networks~\cite{zaheer2017deep} and kernel machines~\cite{song2018optimizing}. The connections are intriguing and fundamental but might be out of the scope of the current work.

Our contribution lies in generalizing the Kolmogorov network to arbitrary widths and depths, revitalizing and contexualizing them in today's deep learning stream, as well as highlighting its potential role as a foundation model for AI + Science.  

There are also subsequent works exploring other parametrizations of activation functions in the KAN formulation, including special polynomials \cite{aghaei2024fkan, seydi2024exploring, ss2024chebyshev}, rational functions \cite{aghaei2024rkan}, radial basis function \cite{li2024kolmogorov,ta2024bsrbf}, Fourier series \cite{xu2024fourierkan}, and wavelets \cite{bozorgasl2024wav,seydi2024unveiling}. Active follow-up research focuses on applying KANs to various domains, such as partial differential equations \cite{wang2024kolmogorov,shukla2024comprehensive, rigas2024adaptive} and operator learning \cite{abueidda2024deepokan,shukla2024comprehensive,nehma2024leveraging}, graphs \cite{bresson2024kagnns,de2024kolmogorov,kiamari2024gkan,zhang2024graphkan}, time series \cite{vaca2024kolmogorov,genet2024tkan,xu2024kolmogorov,genet2024temporal}, computer vision \cite{cheon2024kolmogorov,azam2024suitability,li2024u,cheon2024demonstrating,seydi2024unveiling,bodner2024convolutional}, and various scientific problems \cite{liu2024ikan,liu2024initial,yang2024endowing,herbozo2024kan,kundu2024kanqas, li2024coeff,ahmed24graphkan,liu2024complexityclaritykolmogorovarnoldnetworks,peng2024predictive,pratyush2024calmphoskan}. 

\textbf{Neural Scaling Laws (NSLs).} NSLs are the phenomena where test losses behave as power laws against model size, data, compute etc~\cite{kaplan2020scaling,henighan2020scaling,gordon2021data,hestness2017deep,sharma2020neural,bahri2021explaining,michaud2023the,song2024resource}. The origin of NSLs still remains mysterious, but competitive theories include intrinsic dimensionality~\cite{kaplan2020scaling}, quantization of tasks~\cite{michaud2023the}, resource theory~\cite{song2024resource}, random features~\cite{bahri2021explaining}, compositional sparsity~\cite{poggio2022deep}, and maximum  arity~\cite{michaud2023precision}. This work contributes to this space by showing that a high-dimensional function can surprisingly scale as a 1D function (which is the best possible bound one can hope for) if it has a smooth Kolmogorov-Arnold representation. Our work brings fresh optimism to neural scaling laws. We have shown in our experiments that this fast neural scaling law can be achieved on synthetic datasets, but future research is required to address the question  whether this fast scaling is achievable for more complicated tasks (e.g., language modeling): Do KA representations exist for general tasks? If so, does our training find these representations in practice?

\textbf{Mechanistic Interpretability (MI).} MI is an emerging field that aims to mechanistically understand the inner workings of neural networks~\cite{olsson2022context,meng2022locating,wang2023interpretability,elhage2022toy,nanda2023progress,zhong2023the,liu2023seeing,elhage2022solu,cunningham2023sparse}. MI research can be roughly divided into passive and active MI research. Most MI research is passive in focusing on understanding existing neural networks trained with standard methods. Active MI research attempts to achieve interpretability by designing intrinsically interpretable architectures or developing training methods to explicitly encourage interpretability~\cite{liu2023seeing,elhage2022solu}. Our work lies in the second category, where the model and training method are by design interpretable.   

\textbf{Learnable activations.} The idea of learnable activations in neural networks is not new in machine learning. Trainable activations functions are learned in a differentiable way~\cite{goyal2019learning, fakhoury2022exsplinet, ramachandran2017searching, zhang2022neural} or searched in a discrete way~\cite{bingham2022discovering}. Activation function are parametrized as polynomials~\cite{goyal2019learning}, splines~\cite{fakhoury2022exsplinet,bohra2020learning,aziznejad2019deep}, sigmoid linear unit~\cite{ramachandran2017searching}, or neural networks~\cite{zhang2022neural}. KANs use B-splines to parametrize their activation functions.

\textbf{Symbolic Regression.} There are many off-the-shelf symbolic regression methods based on genetic algorithms (Eureka~\cite{Dubckov2011EureqaSR}, GPLearn~\cite{gplearn}, PySR~\cite{cranmer2023interpretable}), neural-network based methods (EQL~\cite{martius2016extrapolation}, OccamNet~\cite{dugan2020occamnet}), physics-inspired method (AI Feynman~\cite{udrescu2020ai,udrescu2020ai2}), and reinforcement learning-based methods~\cite{mundhenk2021symbolic}. KANs are most similar to neural network-based methods, but differ from previous works in that our activation functions are continuously learned before symbolic snapping rather than manually fixed~\cite{Dubckov2011EureqaSR,dugan2020occamnet}.

\textbf{Physics-Informed Neural Networks (PINNs) and Physics-Informed Neural Operators (PINOs).}
In Section~\ref{sec:kan_accuracy_experiment} PDE, we demonstrate that KANs can replace the paradigm of using MLPs for imposing PDE loss when solving PDEs. We refer to Deep Ritz Method \cite{yu2018deep}, PINNs \cite{raissi2019physics, karniadakis2021physics} for PDE solving, and Fourier Neural operator \cite{li2020fourier}, PINOs \cite{li2021physics, kovachki2023neural, maust2022fourier}, DeepONet \cite{lu2021learning} for operator learning methods learning the solution map. There is potential to replace MLPs with KANs in all the aforementioned networks. 

\textbf{AI for Mathematics.} AI has recently been applied to several problems in Knot theory, including detecting whether a knot is the unknot~\cite{Gukov:2020qaj,kauffman2020rectangular} or a ribbon knot~\cite{gukov2023searching}, and predicting knot invariants and uncovering relations among them~\cite{hughes2020neural,Craven:2020bdz,Craven:2022cxe,davies2021advancing}. For a summary of data science applications to datasets in mathematics and theoretical physics see e.g.~\cite{Ruehle:2020jrk,he2023machine}, and for ideas how to obtain rigorous results from ML techniques in these fields,  see~\cite{Gukov:2024aaa}.

\section{Appendix}
\subsection{Implementation details of KAN}\label{app:kan-implmentation-detail}

\begin{figure}[ht]
    \centering
    \includegraphics[width=0.9\linewidth]{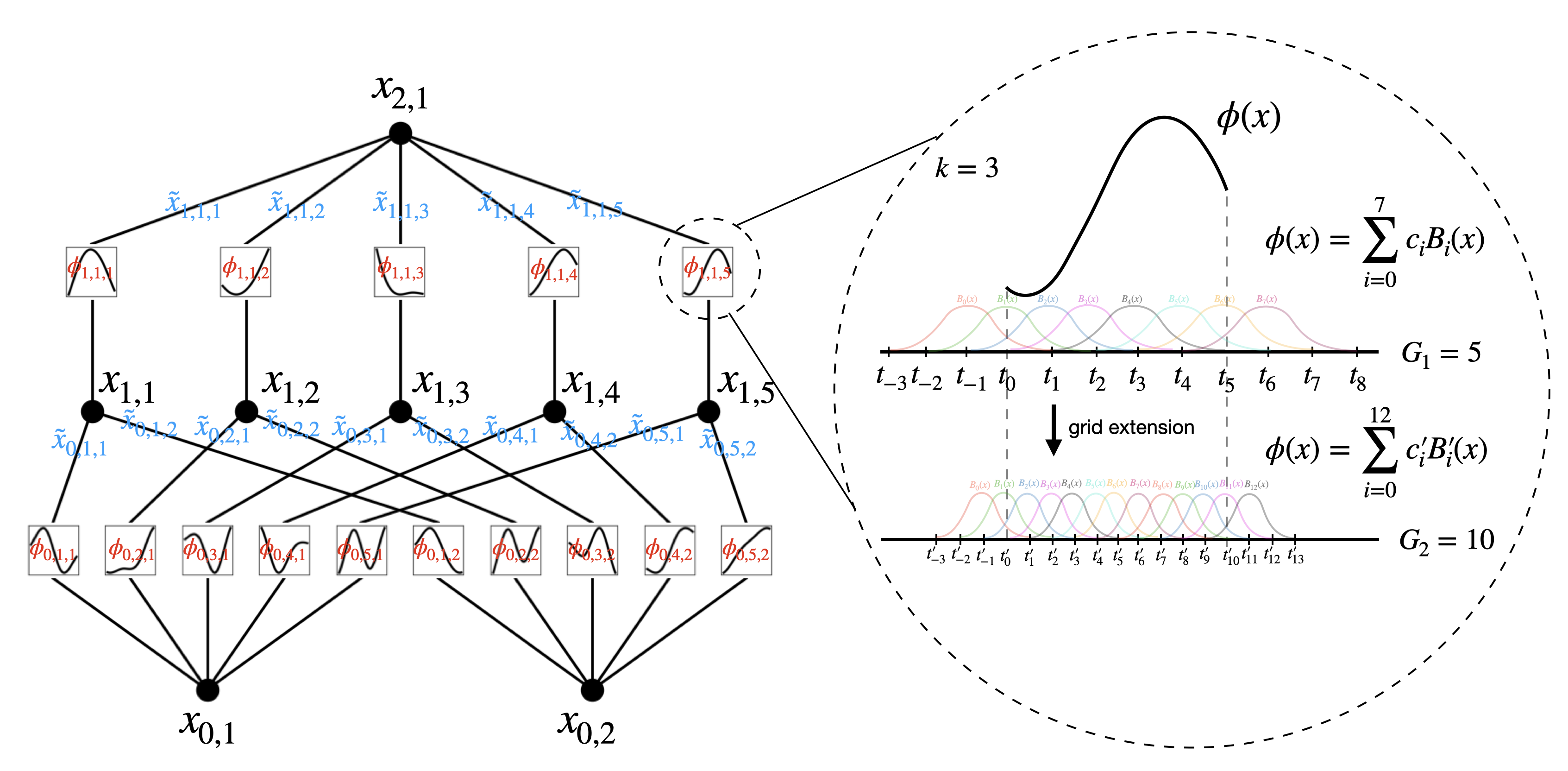}
    \caption{Left: Notations of activations that flow through the network. Right: an activation function is parameterized as a B-spline, which allows switching between coarse-grained and fine-grained grids.}
    \label{fig:spline-notation}
\end{figure}

\textbf{Implementation details.}
Although a KAN layer Eq.~(\ref{eq:kanforward}) looks extremely simple, it is non-trivial to make it well optimizable. The key tricks are: %(1) Parametrization. We use B-splines to parametrize the activation functions; the coefficients of B-spline basis are the trainable parameters. 
%(1) Parallelism. Activation functions are the most computationally expensive part of KAN (compared to efficient vector-matrix multiplication for MLPs); we attempt to parallelize activation computations. 
\begin{enumerate}[(1)]
    \item  Residual activation functions. We include a basis function $b(x)$ (similar to residual connections) such that the activation function $\phi(x)$ is the sum of the basis function $b(x)$ and the spline function:
    \begin{align}\label{KAN-activation-function}
        \phi(x)=w_{b} b(x)+w_{s}{\text{spline}}(x).
    \end{align}
    We set
    \begin{align}
        b(x)={\text{silu}}(x)=x/(1+e^{-x})
    \end{align}
    in most cases. ${\text{spline}}(x)$ is parametrized as a linear combination of B-splines such that
    \begin{align}
        {\text{spline}}(x) = \sum_i c_iB_i(x)
    \end{align}
    where $c_i$s are trainable (see Figure~\ref{fig:spline-notation} for an illustration). In principle $w_b$ and $w_s$ are redundant since it can be absorbed into $b(x)$ and ${\text{spline}}(x)$. However, we still include these factors (which are by default trainable) to better control the overall magnitude of the activation function.
    \item Initialization scales. Each activation function is initialized to have $w_s=1$ and $\text{spline}(x)\approx 0$~\footnote{This is done by drawing B-spline coefficients $c_i\sim\mathcal{N}(0,\sigma^2)$ with a small $\sigma$, typically we set $\sigma=0.1$.}. $w_b$ is initialized according to the Xavier initialization, which has been used to initialize linear layers in MLPs.
    \item Update of spline grids. We update each grid on the fly according to its input activations, to address the issue that splines are defined on bounded regions but activation values can evolve out of the fixed region during training~\footnote{Other possibilities are: (a) the grid is learnable with gradient descent, e.g., \cite{xu2015nonlinear}; (b) use normalization such that the input range is fixed. We tried (b) at first but its performance is inferior to our current approach.} Grid updates (grid size $G_1\to G_1$) use the same least square method as grid extensions (grid size $G_1\to G_2>G_1$). 
\end{enumerate}

\textbf{Parameter count.} For simplicity, let us assume a network 
\begin{enumerate}[(1)]
    \item of depth $L$,
    \item with layers of equal width $n_0=n_1=\cdots=n_{L}=N$,
    \item with each spline of order $k$ (usually $k=3$) on $G$ intervals (for $G+1$ grid points).
\end{enumerate}
Then there are in total $O(N^2L(G+k))\sim O(N^2LG)$ parameters. In contrast, an MLP with depth $L$ and width $N$ only needs $O(N^2L)$ parameters, which appears to be more efficient than KAN. Fortunately, KANs usually require much smaller $N$ than MLPs, which not only saves parameters, but also achieves better generalization (see e.g., Figure~\ref{fig:model_scaling} and~\ref{fig:PDE}) and facilitates interpretability. 
We remark that for 1D problems, we can take $N=L=1$ and the KAN network in our implementation is nothing but a spline approximation. For higher dimensions, we characterize the generalization behavior of KANs with a theorem below.

\subsection{Proofs}
\label{app:proof}

\begin{proof}[Proof of Theorem~\ref{approx thm}]
    By the classical 1D B-spline theory \cite{de1978practical} and the fact that $\Phi_{l,i,j}$ as continuous functions can be uniformly bounded on a bounded domain, we know that there exist finite-grid B-spline functions $\Phi_{l,i,j}^G$ such that for any $0\leq m\leq k$, $$\|(\Phi_{l,i,j}\circ\mathbf{\Phi}_{l-1}\circ\mathbf{\Phi}_{l-2}\circ\cdots\circ\mathbf{\Phi}_{1}\circ\mathbf{\Phi}_{0})\mathbf{x}-(\Phi_{l,i,j}^G\circ\mathbf{\Phi}_{l-1}\circ\mathbf{\Phi}_{l-2}\circ\cdots\circ\mathbf{\Phi}_{1}\circ\mathbf{\Phi}_{0})\mathbf{x}\|_{C^m}\leq C_0G^{-k-1+m}\,,$$
     with a  constant $C_0$  independent of $G$. We fix those B-spline approximations. Therefore we have  that the residue $R_l$ defined via $$R_l\coloneqq (\mathbf{\Phi}^G_{L-1}\circ\cdots\circ\mathbf{\Phi}^G_{l+1}\circ\mathbf{\Phi}_{l}\circ\mathbf{\Phi}_{l-1}\circ\cdots\circ\mathbf{\Phi}_{0})\mathbf{x}-(\mathbf{\Phi}_{L-1}^G\circ\cdots\circ\mathbf{\Phi}_{l+1}^G\circ\mathbf{\Phi}_{l}^G\circ\mathbf{\Phi}_{l-1}\circ\cdots\circ\mathbf{\Phi}_{0})\mathbf{x}$$
satisfies $$\|R_l\|_{C^m}\leq C_1G^{-k-1+m}\,,$$
with another constant independent of $G$. Finally notice that $$f-(\mathbf{\Phi}^G_{L-1}\circ\mathbf{\Phi}^G_{L-2}\circ\cdots\circ\mathbf{\Phi}^G_{1}\circ\mathbf{\Phi}^G_{0})\mathbf{x}=R_{L-1}+R_{L-2}+\cdots+R_1+R_0\,,$$
we know that \eqref{appro bound} holds for another constant $C$ independent of $G$.
\end{proof}

    \textbf{Remark}: We can be more precise about the dependence of the constant $C$ in the theorem. Define the compositionally smooth function class $\mathcal{C}^{n,W,L,k}$as the class of functions in the form of \eqref{fkan} such that the input dimension equals $n$, the width or $\max_{0\leq i\leq L}n_i$ in the definition \eqref{arraykan} equals $W\geq n$, depth equals $L$, smoothness equals $k$. Then $C$ only depends on $W,L,k$ and $\max \|{\phi_{l,i,j}}\|_{C^m}$.

\begin{proof}[Proof of Theorem \ref{mlp-kan-representation-thm}]
We will show that each layer of an MLP with the activation function $\sigma_k$ can be represented by a KAN with two hidden layers, width $W$ and grid size $G = 2$ with degree $k$ B-splines. By composing such layers, we obtain the desired result. 

For a single layer of MLP, we consider the linear part and the non-linear activation separately. We first observe that on any compact subset of $\mathbb{R}^W$ the linear function
\begin{equation}
    x_i = \sum_{j=0}^W a_{ij}x^{in}_j + b_i
\end{equation}
can be represented with a single KAN layer of width $W$ by setting $\phi_{ij}$ to the linear function
\begin{equation}
    \phi_{i,j}(x) = a_{ij}x + \frac{b_i}{n}.
\end{equation}
We claim that this linear function can be exactly represented on any interval $[-R,R]$ in the form \eqref{KAN-activation-function}. To do this, we first set $w_b = 0$ and choose the grid points for the B-splines to be $$\{-(2k-1)R, -(2k-3)R,...,-R,R,...,(2k-3)R, (2k-1)R\}.$$ 
Note that based upon the KAN architecture, this corresponds to the extension of the uniform grid $t_0 = -R, t_1 = R$ which has grid size $G = 1$. It is also easy to verify that there are $(k+1)$ B-splines supported on this grid, whose restriction to $[-R,R]$ span the space of polynomials of degree $k$. Thus, in particular, any linear function on $[-R,R]$ can be represented as a linear combination of these B-splines.

Next, we consider the non-linear activation, which is given by the coordinatewise application of $\sigma_k$, i.e.
\begin{equation}
    x_i^{out} = \sigma(x_i).
\end{equation}
This can be represented by a single hidden layer KAN by setting
\begin{equation}
    \phi_{i,j}(x) = \begin{cases}
        \sigma_k(x) & i=j\\
        0 & i \neq j.
    \end{cases}
\end{equation}
We claim that the functions $\sigma_k$ can be represented in the form \eqref{KAN-activation-function} on any finite interval $[-R,R]$. To to this, we again set $w_b = 0$ and choose the grid points for the B-splines to be
$$\{-kR, -(k-1)R,...,-R,0,R,...,(k-2)R, (k-1)R\}.$$
This grid is the grid extension of the uniform grid $t_0 = -R, t_1 = 0, t_2 = R$ which has grid size $G = 2$. It is easy also to verify that there are $(k+2)$ B-splines supported on this grid and that any piecewise polynomial on $[-R,R]$ with a single breakpoint at $0$ which is $C^{k-1}$ is a linear combination of these B-splines. Hence the function $\sigma_k$ can be represented on $[-R,R]$ in the form \eqref{KAN-activation-function} using this grid.

The proof is now completed by composing these layers and choosing $R$ sufficiently large so that for any input $x\in \Omega$ (which is bounded) the inputs and outputs of every neuron in the original MLP lie in the interval $[-R,R]$.
\end{proof}

\begin{proof}[Proof of Theorem \ref{hessian-eigenvalue-bound-theorem}]
    We first observe from \eqref{hessian-matrix-equation} that the matrix $M$ is block diagonal with $d'$ identical blocks. Denoting these $(G+k-1)d\times (G+k-1)d$ blocks by $B$, it thus suffices to prove that
    \begin{equation}
        \frac{\lambda_{(G+k-1)d}(B)}{\lambda_{d}(B)} \leq C.
    \end{equation}
    To do this, we analyze the blocks $B$ and note that they take the form
    \begin{equation}
        B = \begin{pmatrix}
            C & D & \cdots & D\\
            D & C & \cdots & D\\
            \vdots & \vdots & \ddots & \vdots\\
            D & D & \cdots & C
        \end{pmatrix}.
    \end{equation}
    Here the diagonal sub-blocks $C\in \mathbb{R}^{(G+k-1)\times (G+k-1)}$ are the Gram matrix of the one-dimensional B-spline basis, i.e.
    \begin{equation}
        C_{ij} = \int_0^1 B_i(x)B_j(x)dx,
    \end{equation}
    and the off-diagonal sub-blocks $D\in \mathbb{R}^{(G+k-1)\times (G+k-1)}$ are rank one matrices
    \begin{equation}
        D = vv^T,
    \end{equation}
    where the vector $v\in \mathbb{R}^{G+k-1}$ is given by
    \begin{equation}
        v_{i} = \int_0^1 B_i(x)dx.
    \end{equation}
    It is well-known that the Gram matrix $C$ is well-conditioned uniformly in $G$ for a fixed $k$, i.e. $\lambda_{G+k-1}(C)/\lambda_1(C) \leq K$ for a fixed constant $K$ depending only upon $k$. See for instance \cite{devore1993constructive}, Theorem 4.2 in Chapter 5, where it is shown that the $L_2$-norm of a spline and the properly scaled $\ell_2$-norm of its B-spline coefficients are equivalent up to a constant depending only on $k$. This is equivalent to the well-conditioning of the Gram matrix $C$.
    
    In addition, we can easily verify using Jensen's inequality (or Cauchy-Schwartz) that $D \preceq C$. Indeed, letting $w\in \mathbb{R}^{G+k-1}$ we see that
    \begin{equation}
        w^TDw = \left(\int_0^1 f(x)dx\right)^2 \leq \int_0^1 f(x)^2dx = w^TCw,
    \end{equation}
    where the function $f(x) = \sum_{i=1}^{G+k-1} w_iB_i(x)$.
    
    Let $\mathbf{1}\in \mathbb{R}^d$ be the vector of ones and note that
    \begin{equation}
        (v\otimes \mathbf{1})(v\otimes \mathbf{1})^T = \begin{pmatrix}
            D & D & \cdots & D\\
            D & D & \cdots & D\\
            \vdots & \vdots & \ddots & \vdots\\
            D & D & \cdots & D
        \end{pmatrix}
    \end{equation}
    so that $B-(v\otimes \mathbf{1})(v\otimes \mathbf{1})^T$ is a block diagonal matrix with diagonal blocks $C - D$. We proceed to upper bound the largest eigenvalue of $B$ by
    \begin{equation}\begin{split}
        \lambda_{(G+k-1)d}(B) &= \max_{\|w\| = 1} w^TBw \\
        &= \max_{\|w\| = 1} w^T\begin{pmatrix}
            C-D & 0 & \cdots & 0\\
            0 & C-D & \cdots & 0\\
            \vdots & \vdots & \ddots & \vdots\\
            0 & 0 & \cdots & C-D
        \end{pmatrix}w + (w^T(v\otimes \mathbf{1}))^2.
    \end{split}
    \end{equation}
    Writing $w = (w_1,...,w_d)$ with $w_i\in \mathbb{R}^{G+k-1}$ and $\sum_{i=1}^{G+k-1} \|w_i\|^2 = 1$ and using that $D = vv^T$, we get the bound
    \begin{equation}
    \begin{split}
        \lambda_{(G+k-1)d}(B) &\leq \max_{\|w_1\|^2 + \cdots + \|w_d\|^2 = 1} \sum_{i=1}^d w_i^TCw_i + \left(\sum_{i=1}^d v^Tw_i\right)^2 - \sum_{i=1}^d (v^Tw_i)^2\\
        &\leq \max_{\|w_1\|^2 + \cdots + \|w_d\|^2 = 1} \sum_{i=1}^d w_i^TCw_i + (d-1)\sum_{i=1}^d (v^Tw_i)^2
    \end{split}
    \end{equation}
    Since $D \preceq C$ we have $(v^Tw_i)^2 \leq w_i^TCw_i$ which gives the bound
    \begin{equation}
        \lambda_{(G+k-1)d}(B) \leq d\max_{\|w_1\|^2 + \cdots + \|w_d\|^2 = 1} \sum_{i=1}^d w_i^TCw_i = d\lambda_{G+k-1}(C).
    \end{equation}
    Next, we lower bound the $d$-th eigenvalue of $B$. For this, we use the Courant-Fisher minimax theorem to see that
    \begin{equation}
        \lambda_{d}(B) = \max_{W_{d}}\min_{w\in W_{d},~\|w\| = 1} w^TBw,
    \end{equation}
    where the maximum is taken over all subspaces of $W_d$ of codimension $<d$. We consider the specific subspace
    \begin{equation}
        W_d = \{(w_1,...,w_d);~v^Tw_i = 0~\text{for all $i=1,...,d$}\} \oplus \text{span}(v\otimes \mathbf{1})
    \end{equation}
    and observe that for any $(w_1,...,w_d)$ with $v^Tw_i = 0$ we have
    \begin{equation}
        w^TBw = \sum_{i=1}^d w_i^TCw_i \geq \lambda_1(C)\sum_{i=1}^d \|w_i\|^2 = \lambda_1(C)\|w\|^2,
    \end{equation}
    while for the $w = v\otimes \mathbf{1}$ (which is orthogonal) we have
    \begin{equation}
        w^TBw = \sum_{i=1}^d v^TCv + (d-1)\sum_{i=1}^d \|v\|^2 \geq \lambda_1(C)d\|v\|^2 = \lambda_1(C)\|w\|^2.
    \end{equation}
    Thus, $\lambda_d(B) \geq \lambda_1(W)$. Combining these bounds and using the well-conditioning of the Gram matrix $C$, we get
    \begin{equation}
    \frac{\lambda_{(G+k-1)d}(B)}{\lambda_{d}(B)} \leq d\frac{\lambda_{G+k-1}(C)}{\lambda_1(C)} \leq K
    \end{equation}
    for a constant $K$ which only depends upon $k$. This completes the proof.
\end{proof}

\chapter{Exponentially Convergent Multiscale Finite Element Method}
\label{chap:4}
In this chapter, we present the exponentially convergent multiscale finite element method (ExpMsFEM), developed for efficient model reduction of PDEs in heterogeneous media without scale separation and for high-frequency wave propagation problems. This method, based on a series of articles \cite{chen2021exponential, chen2023exponentially, chen2024exponentially}, systematically enriches the approximation space within a non-overlapping domain decomposition framework to achieve nearly exponential convergence with respect to the number of basis functions. We provide a concise overview based on our review paper \cite{chen2024exponentially}, with technical details and full theoretical analysis found in the cited works.

A central challenge in achieving exponential convergence is overcoming the algebraic Kolmogorov 
n-width barrier, which requires basis functions that depend on the right-hand side of the equation. ExpMsFEM addresses this by introducing a two-part function representation: an offline stage, where basis functions independent of the right-hand side are constructed and used to assemble the Galerkin system; and an online stage, where problem-dependent correctors are computed efficiently and in parallel. This decomposition enables high-accuracy solutions and computational reuse in multi-query scenarios.

\section{Introduction}
Multiscale methods provide an efficient way to solve challenging PDEs. A few local basis functions adapted to the problem are constructed offline to provide an effective model reduction of the equation. One can then use the reduced model to compute the solution online, possibly with different right-hand sides and in a way much faster than solving the original equation. This property is beneficial in multi-query scenarios such as optimal design and inverse problems. Moreover, multiscale methods are inevitable for challenging problems in rough media and high-frequency wave propagation since standard numerical methods suffer from a vast number of degrees of freedom. See examples of the failure of finite element methods (FEMs) in elliptic equations with rough coefficients \cite{babuvska2000can} and the pollution effect in the Helmholtz equation \cite{babuska1997pollution}.

In this chapter, we present the framework of ExpMsFEM, the exponentially convergent multiscale finite element method. It is a generalization of the classical MsFEM \cite{hou_multiscale_1997}. The main contribution of ExpMsFEM is the systematic improvement over MsFEM to achieve exponentially convergent accuracy regarding the number of basis functions. Also, unlike most generalizations of MsFEM in the literature, ExpMsFEM does not rely on the partition of unity functions to connect local and global approximation spaces. Instead, ExpMsFEM uses edge localization and coupling intrinsic to the non-overlapped domain decomposition to communicate the local and global approximations.

In the literature, exponentially convergent multiscale methods have been pioneered in the work of optimal basis \cite{babuska2011optimal} based on the partition of unity functions; see also the developments in \cite{smetana2016optimal,buhr2018randomized,chen2020randomized,babuvska2020multiscale, schleuss2020optimal, ma2021error, ma2021novel}. The work demonstrates the importance of Caccioppoli's inequality in establishing exponential convergence; more precisely, the inequality implies the \textit{low approximation complexity} of the restriction operator acting on harmonic-type functions. The theory of ExpMsFEM is also based on some arguments using Caccioppoli's inequality. Additionally, since no partition of unity functions is used, technical tools such as $C^{\alpha}$ estimates and trace theorems are needed to analyze ExpMsFEM. ExpMsFEM was the first method to achieve exponential convergence on Helmholtz equations. We will comment on the similarities and differences between the optimal basis work and ExpMsFEM at the end of the chapter. We focus on articulating the main ideas and the computational framework in the case of 2D stationary problems with homogeneous boundary data. We provide references for the detailed analysis in the corresponding papers \cite{chen2021exponential, chen2023exponentially}.

\subsection{Organization} In Section \ref{sec: model problem}, we present the model problem that is the focus of this chapter. In Section \ref{sec: The ExpMsFEM Framework}, we present the motivation and framework of the ExpMsFEM. We provide numerical experiments to demonstrate the effectiveness of the ExpMsFEM framework in Section \ref{sec: Numerical Experiments}. In Section \ref{sec: Discussions}, we discuss related literature, future possibilities, and open questions. 

\section{Model Problem}
\label{sec: model problem}
Consider the model problem in a bounded domain $\Omega \subset \mathbb{R}^d$ with a Lipschitz boundary $\Gamma$. Here, $d=2$. For generality, the boundary can contain disjoint parts $\Gamma = \Gamma_1\cup \Gamma_2$ where $\Gamma_1$ corresponds to the Dirichlet boundary conditions and $\Gamma_2$ corresponds to the Neumann and Robin type boundary conditions. 

The model equation is: 
\begin{equation}
\label{eqn: model}
\left\{
\begin{aligned}
-\nabla \cdot(A\nabla u)+Vu&=f, \ \text{in} \ \Omega\\
u&=0, \ \text{on} \ \Gamma_1\\
A\nabla u\cdot\nu&=\beta u, \ \text{on} \  \Gamma_2 \, .
\end{aligned}
\right.
\end{equation}
Here, $A, V,\beta$ are functions in $L^{\infty}(\Omega)$ and can be rough, which makes the solution oscillating and difficult to solve. The vector $\nu$ is the outer normal to the boundary. 

In particular, when $V=0$, the equation is the standard elliptic equation \cite{chen2021exponential}. If $Vu=-k^2u$ and $u$ is a complex-valued function, one obtains the Helmholtz equation \cite{chen2023exponentially} with wavenumber $k$. 

The weak formulation of \eqref{eqn: model} is given by
\begin{equation}
\label{eqn: weak form}
a(u, v) :=(A\nabla u, \nabla {v})_{\Omega}+(V u,  {v})_{\Omega}  -( \beta u,  {v})_{\Gamma_2} =( f,  {v})_{\Omega} , \quad \forall v \in \mathcal{H}(\Omega)\, ,
\end{equation}
where $(\cdot,\cdot)_X$ is the standard $L^2$ inner product on the set $X$.
The space for $v$ is $\mathcal{H}(\Omega):=\{w \in H^1(\Omega): w|_{\Gamma_1}=0 \}$ and the solution $u \in \mathcal{H}(\Omega)$. The energy norm $\|\cdot\|_{\mathcal{H}(\Omega)}$ is defined as 
\[\|w\|_{\mathcal{H}(\Omega)}^2:=(A\nabla w, \nabla {w})_{\Omega}+(|V| w,  {w})_{\Omega}\,.\]
Here, we adopt an abuse of notation that the space can be real-valued or complex-valued, depending on the context.

A generic assumption for $A$ is $0<A_{\min}\leq A(x)\leq A_{\max} < \infty$. We will present more detailed assumptions on $V,\beta$ later in specific problems that our theory in \cite{chen2021exponential,chen2023exponentially} covers. 
Indeed, the theory can encompass the case for very general $V$, provided that $|(Vu,u)|_{\Omega}\leq V_0(u,u)_{\Omega}$ for some constant $V_0$ and the PDE satisfies good stability estimates; see for example the rough Helmholtz example in \cite{chen2023exponentially}.
In this review, we mainly focus on the \textit{conceptual algorithmic framework} of solving the equation \eqref{eqn: model} via ExpMsFEM rather than a detailed analysis of the equation and the method.

\section{The ExpMsFEM Framework}
\label{sec: The ExpMsFEM Framework}
In subsection \ref{sec: Solving PDEs as function approximation}, we discuss the general recipe for solving PDEs as a function approximation problem. This motivates us to find accurate function representations to be used in the Galerkin method. We explain how ExpMsFEM manages to get exponentially convergent representations in subsections \ref{sec: Harmonic-bubble splitting}, \ref{sec: edge loc}, \ref{sec: Exponentially convergent SVD} and \ref{sec: alg ExpMsFEM}.
\subsection{Solving PDEs as function approximation}
\label{sec: Solving PDEs as function approximation}
By the standard finite element theory (e.g., \cite{brenner2008mathematical}), when using the Galerkin method to solve \eqref{eqn: weak form}, a key step is to find a function representation, or a space of basis functions that can approximate the solution accurately. More precisely, suppose the space is $S$, then, one usually wants
 \begin{equation}
    \label{apro proxy}
        {\eta(S)}:=\sup _{f \in L^{2}(\Omega) \backslash\{0\}} \inf _{v \in S} \frac{\left\|N(f)-v\right\|_{\mathcal{H}(\Omega)}}{\|f\|_{L^{2}(\Omega)}}
    \end{equation}
to be small. Here, $N: f \to u$ is the solution operator\footnote{Sometimes, $N$ is chosen to be the solution operator of the adjoint equation; for example see \cite{melenk2010convergence}.} of \eqref{eqn: model}.

For example, consider the elliptic equation with $V=0$ and $\Gamma_2=\emptyset$. In such cases, the Galerkin method provides an optimal approximation of the solution in the space of basis functions with respect to the energy norm \cite{brenner2008mathematical, chen2021exponential}, due to the Galerkin orthogonality. Therefore, a small $\eta(S)$ directly implies a small error in the solution. For the Helmholtz equation, similar arguments hold based on the G\aa rding-type inequality, which leads to the quasi-optimality of the solution; see, for example, \cite{melenk2010convergence, chen2023exponentially}. The failure of many finite element methods in elliptic equations with rough coefficients \cite{babuvska2000can} and Helmholtz's equations \cite{babuska1997pollution} is due to the poor approximation property. $\eta(S)$ is typically not small if $S$ is the standard finite element space, such as the space of tent functions.

Conceptually, ExpMsFEM finds an exponentially convergent function representation of the solution through the following three steps: (1) harmonic-bubble splitting, (2) edge localization, (3) oversampling and exponentially convergent singular value decomposition (SVD). We will detail the three steps and discuss relevant rigorous results at the end of subsections \ref{sec: Harmonic-bubble splitting}, \ref{sec: edge loc}, and \ref{sec: Exponentially convergent SVD}. Then, we summarize the algorithm in subsection \ref{sec: alg ExpMsFEM}.
\subsection{Harmonic-bubble splitting}
\label{sec: Harmonic-bubble splitting}
Consider a shape regular and uniform partition of the domain $ \Omega $ into finite elements with a mesh size $H$. The collection of elements is denoted by $\mathcal{T}_H=\{T_1, T_2,..., T_r\}$. Let $\mathcal{E}_H=\{e_1,e_2,...,e_q\}$ be the collection of edges in the interior of $\Omega$. We use $\mathcal{N}_H=\{x_1,x_2,...,x_p\}$ to denote the collection of interior nodes. We also use $E_H$ to denote the collection of interior edges as a set, i.e., $E_H=\bigcup_{e \in \mathcal{E}_H} e \subset \Omega$. A more detailed explanation of the mesh structure can be found in \cite{chen2021exponential,chen2023exponentially}.

In each element $T\in\mathcal{T}_H$,
we decompose the solution $u$ into $u=u_{T}^{\mathsf{h}}+u_T^{\mathsf{b}}$ such that
\begin{equation}
\label{eqn:decomposed}
\begin{aligned}
    &\left\{
    \begin{aligned}
    -\nabla \cdot (A \nabla u_T^\mathsf{h} )+V u^\mathsf{h}_T&=0, \ \text{in} \  T\\
    u_T^\mathsf{h}&=u, \ \text{on} \  \partial T \setminus (\Gamma_1\cup\Gamma_2)\\
    u_T^\mathsf{h}&=0, \ \text{on} \  \partial T \cap \Gamma_1\\
    A\nabla u_T^\mathsf{h}\cdot\nu&=\beta u_T^\mathsf{h},\  \text{on} \ \partial T \cap \Gamma_2\, ,
    \end{aligned}
    \right.
    \\
    &\left\{
    \begin{aligned}
    -\nabla \cdot (A \nabla u^\mathsf{b}_T )+V u^\mathsf{b}_T&=f,\  \text{in} \  T\\
    u^\mathsf{b}_T&=0, \ \text{on} \  \partial T\setminus (\Gamma_1\cup\Gamma_2)\\
    u^\mathsf{b}_T&=0, \ \text{on} \  \partial T\cap \Gamma_1\\
   A\nabla u^\mathsf{b}_T\cdot\nu&=
  \beta u^\mathsf{b}_T, \ \text{on} \ \partial T \cap \Gamma_2 \, .
    \end{aligned}
    \right.
    \end{aligned}
    \end{equation}
In short, $u^{\mathsf{h}}_T$ incorporates the interior boundary value of $u$ on the element, while $u^{\mathsf{b}}_T$ contains information of the right-hand side. All equations in \eqref{eqn:decomposed} should be understood in the standard weak sense as in \eqref{eqn: weak form}.

We can further define a global decomposition $u=u^{\mathsf{h}}+u^{\mathsf{b}}$, such that for each $T$, it holds that $u^{\mathsf{h}}(x)=u^{\mathsf{h}}_T(x)$, $u^{\mathsf{b}}(x)=u^{\mathsf{b}}_T(x)$ when $x \in T$. 
 Here, the component $u^{\mathsf{h}}_T$ (resp. $u^{\mathsf{h}}$) is called the local (resp. global) \textit{harmonic part}, $u^{\mathsf{b}}_T$ (resp. $u^{\mathsf{b}}$) is the local (resp. global) \textit{bubble part}, of the solution $u$. Here, the harmonic part $u^\mathsf{h}$ is not necessarily a harmonic function due to the existence of $A$ and $V$, but it has a similar low complexity property that a harmonic function has, due to the iterative argument of Caccioppoli's inequality first proposed in \cite{babuska2011optimal}. We will discuss this low complexity property in subsection \ref{sec: Exponentially convergent SVD}.
 
 Now, in the representation $u=u^{\mathsf{h}}+u^{\mathsf{b}}$, the part $u^{\mathsf{b}}$ can be directly computed by solving local problems in parallel since the local boundary conditions are all known. We are left to deal with the part $u^\mathsf{h}$.
\begin{remark}
\label{rmk: harmonic bubble splitting}
    We discuss several theoretical concerns and possible generalizations below:
\begin{itemize}
    \item A sufficient condition for the local components in \eqref{eqn:decomposed} to be well-defined is that the operator $u \to -\nabla \cdot (A \nabla u) + Vu$ (as well as the corresponding boundary conditions) is elliptic in each local element, implied by the Poincar\'e inequality. In \cite{chen2021exponential}, we consider elliptic equations with $V = 0$ and $\Gamma_2 = \emptyset$, so this condition is satisfied. In \cite{chen2023exponentially}, we consider the Helmholtz equation where $V <0, |V|=O(k^2)$ and $\operatorname{Re}\beta =0, \operatorname{Im}\beta = O(k)$. For such a case, the elliptic property is guaranteed when $H = O(1/k)$.
    \item For the global components $u^\mathsf{h}$, $u^{\mathsf{b}}$ to be well-defined, we need the condition that the solution $u$ is continuous. This can be guaranteed by the $C^{\alpha}$ estimates of the equation \eqref{eqn: model} under the assumptions mentioned earlier; see discussions in \cite{chen2021exponential,chen2023exponentially}.
    \item We can generalize the above decomposition to PDEs with inhomogeneous boundary conditions. To achieve so, we incorporate these boundary data into the equation for $u^\mathsf{b}$; see also Section 5.3 in \cite{chen2023exponentially} for a concrete example of problems with inhomogeneous boundary data.
\end{itemize}
\end{remark}

\subsection{Edge localization} 
\label{sec: edge loc}
The next step is to find some local basis functions that accurately approximate $u^\mathsf{h}$. ExpMsFEM uses the idea of \textit{edge localization} to localize this approximation task.

First, we define the ``harmonic extension'' operator $Q_{E_H}$ that maps the edge values $\tilde{u}^\mathsf{h} = u^\mathsf{h}|_{E_H} \in H^{1/2}(E_H)$ to $ u^\mathsf{h} \in H^1(\Omega)$, through the relation in the first set of equation in \eqref{eqn:decomposed}. Here, we adopt the convention that if we write a tilde on the top of a function, it is the restriction of this function on the edge set. We have that $u^\mathsf{h} = Q_{E_H}\tilde{u}^\mathsf{h} = Q_{E_H}\tilde{u}$, since $u^\mathsf{h}$ and $u$ have the same edge values.

Then, let $C(E_H)$ be the space of continuous functions on $E_H$. We consider the edge interpolation operator $I_H: H^{1/2}(E_H) \cap C(E_H) \to H^{1/2}(E_H) \cap C(E_H)$ such that \[I_H \tilde{u} = \sum_{x_i \in \mathcal{N}_H} \tilde{u}(x_i)\tilde{\psi}_i\] where the edge function $\tilde{\psi}_i$ is linear on $E_H$ and satisfies $\tilde{\psi}_i(x_j) = \delta_{ij}$. Note that by the convention of our notation we have $\psi_i = Q_{E_H}\tilde{\psi}_i \in H^1(\Omega)$. It is worth noting that $\psi_i's$ are the basis functions used in the vanilla MsFEM.

With the interpolation operator, we can write 
\[Q_{E_H}\tilde{u} = Q_{E_H}(\tilde{u}-I_H\tilde{u}) + \sum_{x_i \in \mathcal{N}_H} u(x_i) \psi_i\, . \]

Now, the residue $\tilde{u}-I_H\tilde{u}$ is zero at each interior node. This property allows us to localize the residue to each edge. Indeed, by an abuse of notation, we can write
\begin{equation}
    Q_{E_H}(\tilde{u}-I_H\tilde{u}) = \sum_{e \in \mathcal{E}_H} Q_{E_H}(\tilde{u}-I_H\tilde{u})|_{e}\, ,
\end{equation}
where we equate the function $(\tilde{u}-I_H\tilde{u})|_{e}$ that is defined on $e$ to its zero extension to $E_H$, so that $(\tilde{u}-I_H\tilde{u})|_{e}\in H^{1/2}(E_H)$ and thus $Q_{E_H}(\tilde{u}-I_H\tilde{u})|_{e}$ makes sense.

Therefore, we localize the approximation task of $u^\mathsf{h}$ to $Q_{E_H}(\tilde{u}-I_H\tilde{u})|_{e}$, which is defined for each edge $e$.

\begin{remark}
\label{rmk: theory for edge localization}
    Again, we discuss several theoretical concerns below:
    \begin{itemize}
        \item Once the condition in Remark \ref{rmk: harmonic bubble splitting} is satisfied, the extension operator $Q_{E_H}$ is well-defined because the local equation is elliptic.
        \item According to the comment in Remark \ref{rmk: harmonic bubble splitting}, the solution $u$ is continuous, so the nodal interpolation $I_H \tilde{u}$ is well-defined.
        \item One can rigorously show that if we can approximate each local term with \[\|Q_{E_H}(\tilde{u}-I_H\tilde{u})|_{e} - w_e\|_{\mathcal{H}(\Omega)} \leq \epsilon_e \, ,\]
        then the global approximation error satisfies
        \[\|Q_{E_H}(\tilde{u}-I_H\tilde{u}) - \sum_{e \in \mathcal{E}_H} w_e\|_{\mathcal{H}(\Omega)}^2 \leq C_{\mathrm{mesh}}\sum_{e \in \mathcal{E}_H}\epsilon_e^2\, , \]
        where $C_{\mathrm{mesh}}$ is a constant dependent on the mesh structure only. In our previous work \cite{chen2021exponential,chen2023exponentially}, we formalize the approximation in the edge space via the $H_{00}^{1/2}(e)$ norm, which is equivalent to the $\mathcal{H}(\Omega)$ norm here after the extension by $Q_{E_H}$; see Proposition 2.5 and Theorem 2.6 in \cite{chen2021exponential}. In this review chapter, we explain the ideas using $Q_{E_H}$ rather than $H_{00}^{1/2}(e)$, since the former is more concise in an algorithm-focused exposition.
        
        We call the step from local approximation to global approximation \textit{edge coupling}.
    \end{itemize}
\end{remark}
% \yc{$Q_{E_H}$: this notation is not consistent with our previous works. I can understand that this might be useful for an algorithm-focused exposition.}
\subsection{Exponentially convergent SVD}
\label{sec: Exponentially convergent SVD}
Recall that by using the harmonic-bubble splitting and edge localization, we get the representation

\begin{equation}
    u = u^{\mathsf{h}}+u^{\mathsf{b}} = \sum_{e \in \mathcal{E}_H} Q_{E_H}(\tilde{u}-I_H\tilde{u})|_{e} + \sum_{x_i \in \mathcal{N}_H} u(x_i) \psi_i + u^{\mathsf{b}}\, .
\end{equation}
ExpMsFEM then relies on oversampling and local SVD to get an exponentially convergent approximation of each $Q_{E_H}(\tilde{u}-I_H\tilde{u})|_{e}$. For each $e$, consider an oversampling domain $w_e \supset e$. Any domain containing $e$ in the interior may be used, and as an illustrative example, we set 
\begin{equation*}
    \label{eqn: os domain 1 layer}
        \omega_e=\overline{\bigcup \{T\in \mathcal{T}_H: \overline{T} \cap e \neq \emptyset\}}\, .
    \end{equation*} 
An illustration of this choice for a quadrilateral mesh is given in Figure \ref{fig:os domain}. 
         \begin{figure}[ht]
        \centering
\tikzset{every picture/.style={line width=0.75pt}} %set default line width to 0.75pt    
\begin{tikzpicture}[x=0.75pt,y=0.75pt,yscale=-1,xscale=1]
%uncomment if require: \path (0,236); %set diagram left start at 0, and has height of 236

%Shape: Grid [id:dp4502413607423936] 
\draw  [draw opacity=0][dash pattern={on 4.5pt off 4.5pt}] (60,64) -- (201.5,64) -- (201.5,163) -- (60,163) -- cycle ; \draw  [dash pattern={on 4.5pt off 4.5pt}] (70,64) -- (70,163)(109,64) -- (109,163)(148,64) -- (148,163)(187,64) -- (187,163) ; \draw  [dash pattern={on 4.5pt off 4.5pt}] (60,74) -- (201.5,74)(60,113) -- (201.5,113)(60,152) -- (201.5,152) ; \draw  [dash pattern={on 4.5pt off 4.5pt}]  ;
%Straight Lines [id:da8503725141214521] 
\draw    (109,113) -- (148,113) ;
%Straight Lines [id:da21825982078459039] 
\draw    (70,74) -- (187,74) ;
%Straight Lines [id:da29433241828016854] 
\draw    (187,74) -- (187,152) -- (70,152) -- (70,74) ;
%Shape: Grid [id:dp9605623982728824] 
\draw  [draw opacity=0][dash pattern={on 4.5pt off 4.5pt}] (274,71) -- (399.5,71) -- (399.5,162) -- (274,162) -- cycle ; \draw  [dash pattern={on 4.5pt off 4.5pt}] (274,71) -- (274,162)(313,71) -- (313,162)(352,71) -- (352,162)(391,71) -- (391,162) ; \draw  [dash pattern={on 4.5pt off 4.5pt}] (274,71) -- (399.5,71)(274,110) -- (399.5,110)(274,149) -- (399.5,149) ; \draw  [dash pattern={on 4.5pt off 4.5pt}]  ;
%Straight Lines [id:da6107855326361947] 
\draw    (313,71) -- (313,110) ;
%Straight Lines [id:da9454074405226199] 
\draw    (274,71) -- (274,149) -- (352,149) -- (352,71) -- cycle ;

% Text Node
\draw (124,102) node [anchor=north west][inner sep=0.75pt]   [align=left] {$\displaystyle e$};
% Text Node
\draw (166,76) node [anchor=north west][inner sep=0.75pt]   [align=left] {$\displaystyle \omega _{e}$};
% Text Node
\draw (88,171) node [anchor=north west][inner sep=0.75pt]   [align=left] {interior edge};
% Text Node
\draw (253,171) node [anchor=north west][inner sep=0.75pt]   [align=left] {edge connected to boundary};
% Text Node
\draw (314,89) node [anchor=north west][inner sep=0.75pt]   [align=left] {$\displaystyle e$};
% Text Node
\draw (276,130) node [anchor=north west][inner sep=0.75pt]   [align=left] {$\displaystyle \omega _{e}$};
\end{tikzpicture}
        \caption{Illustration of oversampling domains. On the right, we use an edge connected to the upper boundary as an illustrating example.}
        \label{fig:os domain}\index{figures}
    \end{figure}
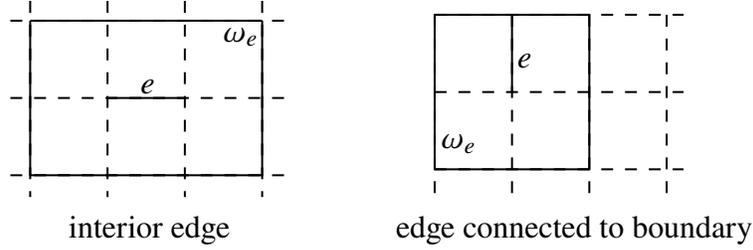

We can view $(\tilde{u}-I_H\tilde{u})|_{e}$ as the image of an operator acting on $u|_{\omega_e} \in H^1(\omega_e)$. We denote this operator by $R_e$ such that $Q_{E_H}(\tilde{u}-I_H\tilde{u})|_{e} = Q_{E_H}R_e (u|_{\omega_e})$. Now, we apply the harmonic-bubble splitting in subsection \ref{sec: Harmonic-bubble splitting} to the domain $\omega_e$, which leads to $u|_{\omega_e} = u_{\omega_e}^\mathsf{h} + u_{\omega_e}^\mathsf{b}$. It follows that
\begin{equation}
    Q_{E_H}(\tilde{u}-I_H\tilde{u})|_{e} = Q_{E_H}R_e u^\mathsf{h}_{\omega_e} + Q_{E_H}R_e u^\mathsf{b}_{\omega_e}\, .
\end{equation}
The term $R_e u^\mathsf{h}_{\omega_e}$ is a restriction of a harmonic part. As we mentioned at the beginning of this chapter, one can prove that the restriction operator acting on harmonic-type functions is of \textit{low approximation complexity}. More precisely, consider the space of harmonic parts in $\omega_e$, defined via 
\begin{equation}
    % \label{eqn-Helmholtz-harmonic-space}
    \begin{aligned}
        U(\omega_e):= \{v \in \mathcal{H}(\omega_e): &-\nabla \cdot (A \nabla v)+Vv=0, \text{ in } \omega_e\\
        % & v=0, \text{ on } \Gamma_D \cap \partial \omega_e, \\ 
        &A\nabla v\cdot\nu=\beta v, \text{ on } \Gamma_1 \cap \partial \omega_e\} 
        \, .
    \end{aligned}
    \end{equation}
The space is equipped with the norm $\|\cdot\|_{\mathcal{H}(\omega_e)}$. Then, one can show that the left singular values (in descending order) of the local operator 
\[Q_{E_H}R_e: (U(\omega_e),\|\cdot\|_{\mathcal{H}(\omega_e)}) \to (\mathcal{H}(\Omega), \|\cdot\|_{\mathcal{H}(\Omega)})\]
decays as $\lambda_{e,m} \leq C\exp(-bm^{\frac{1}{d+1}})$ in dimension $d$, for some generic constant $C,b$ independent of $m$ and $H$. Equivalently, if we write the left singular vectors as $v_{e,m} \in H^1(\Omega)$, which is local and supported in the neighboring elements of the edge $e$, then there exists some coefficient $b_{e,j}$ such that
\begin{equation}
    \|Q_{E_H}R_e u^\mathsf{h}_{\omega_e} - \sum_{1\leq j \leq m} b_{e,j} v_{e,j} \|_{\mathcal{H}(\Omega)} \leq C\exp(-bm^{\frac{1}{d+1}})\|u^\mathsf{h}_{\omega_e}\|_{\mathcal{H}(\omega_e)}\, .
\end{equation}
For more details, see Theorem 3.10 in \cite{chen2023exponentially}. Then, summing these local errors up, we get 
\begin{equation}
\begin{aligned}
    \sum_{e \in \mathcal{E}_H} \|u^\mathsf{h}_{\omega_e}\|_{\mathcal{H}(\omega_e)}^2 &\leq 2\sum_{e \in \mathcal{E}_H} (\|u|_{\omega_e}\|_{\mathcal{H}(\omega_e)}^2 + \|u_{\omega_e}^\mathsf{b}\|_{\mathcal{H}(\omega_e)}^2)\\
    & = O(\|u\|^2_{\mathcal{H}(\Omega)}+\|f\|^2_{L^2(\Omega)})\, ,
\end{aligned}
\end{equation}
where we used the fact that $\|u_{\omega_e}^\mathsf{b}\|_{\mathcal{H}(\omega_e)} = O(\|f\|_{L^2(\omega_e)})$ by the elliptic estimate.

Combining the above  estimates with edge coupling in Remark \ref{rmk: theory for edge localization}, we get the representation
\begin{equation}
\label{eqn: exp representation}
\begin{aligned}
    u = u^{\mathsf{h}}+u^{\mathsf{b}} = &\sum_{e \in \mathcal{E}_H} \sum_{1\leq j \leq m} b_{e,j} v_{e,j} + \sum_{x_i \in \mathcal{N}_H} u(x_i) \psi_i + u^{\mathsf{n}} \\
    &+ O\left(\exp(-bm^{\frac{1}{d+1}})(\|u\|_{\mathcal{H}(\Omega)}+\|f\|_{L^2(\Omega)})\right)\, ,
\end{aligned}
\end{equation}
where $u^\mathsf{n} := u^\mathsf{b} + \sum_{e \in \mathcal{E}_H} Q_{E_H}R_e u^\mathsf{b}_{\omega_e}$ is a part that depends on $f$ locally.
\begin{remark}
    We discuss several theoretical aspects and the implications of the above representation.
    \begin{itemize}
        
        \item The proof of the exponentially decaying singular values of $Q_{E_H}R_e$ is based on two steps. The first step is the iterative argument of Caccioppoli's inequality, first proposed in \cite{babuska2011optimal} and then refined in \cite{ma2021novel}. It shows that the singular values of the restriction operator on $U(\omega_e)$, which restricts a function from the original domain $\omega_e$ to a subdomain $\omega^* \supset e$, decay nearly exponentially fast. The second step is based on a stability estimate of the operator $Q_{E_H}R_e$ acting on $U(\omega^*)$; see Lemma 3.10 in \cite{chen2021exponential} or Lemma 6.1, 6.2 in \cite{chen2023exponentially}. 
        \item We can understand that the oversampling technique is used to take advantage of the low complexity property of the restriction operator. Historically, the idea of oversampling was proposed in \cite{hou_multiscale_1997} to reduce the resonance error in MsFEM.
        \item The remarkable thing about the representation in \eqref{eqn: exp representation} is the exponentially decaying error bound. 
        
        First, for elliptic equations with rough coefficients, the error bound implies that these basis functions can capture the behavior of the solution, which is a hard task for FEMs. Therefore, ExpMsFEM overcomes the difficulty of rough coefficients.
        
        Second, for the Helmholtz equation, the stability constant of the solution operator can depend on $k$; indeed, this is the main cause of the pollution effect \cite{babuska1997pollution}. Denote the stability constant by $C_{\text{stab}}(k)$ such that $\|u\|_{\mathcal{H}(\Omega)} \leq C_{\text{stab}}(k)\|f\|_{L^2(\Omega)}$. A prevalent and reasonable assumption on the constant is that of polynomial growth, namely $C_{\mathrm{stab}}(k)\leq C(1+k^\gamma)$ for some constants $\gamma$ and $C$; see, for example, \cite{lafontaine2019most}. In such case, we can further bound the error by
        \[\exp(-bm^{\frac{1}{d+1}})(\|u\|_{\mathcal{H}(\Omega)}+\|f\|_{L^2(\Omega)}) \leq  \exp(-bm^{\frac{1}{d+1}})(C(1+k^\gamma)+1)\|f\|_{L^2(\Omega)}\, .\]
        Therefore, once the number of basis functions per edge $m \sim \log^{d+1}(k)$ (logarithmically on $k$ only), the approximation error can be uniformly small for all $k$. It implies that the quantity $\eta(S)$ in \eqref{apro proxy} is small, which is important in determining the error of Galerkin's methods. In this sense, ExpMsFEM overcomes the difficulty of the pollution effect by using basis functions whose number scales at most $\log^{d+1}(k)$.
        \item The exponentially accurate representation in \eqref{eqn: exp representation} will not be possible if we do not use terms dependent on the right-hand side. Indeed, using basis functions independent of $f$, the optimal approximation error rate will be algebraic if the right-hand side is in $L^2(\Omega)$ only, due to well-known results in approximation theory (the Kolmogorov $n$-width \cite{pinkus2012n, melenk2000n}); see also the complexity analysis of the Green function of Helmholtz's equation \cite{engquist2018approximate}. From this perspective, we can understand that ExpMsFEM breaks the Kolmogorov barrier by using \textit{nonlinear model reduction} \cite{peherstorfer2022breaking}, i.e., the basis functions can depend on the input of the model, here the right-hand side.
    \end{itemize}
\end{remark}

\subsection{The solver based on ExpMsFEM}
\label{sec: alg ExpMsFEM}
Now, we can use the representation in \eqref{eqn: exp representation} to solve the equation efficiently. First, we form $\psi_i, v_{e,j}$ by computing the local extension $Q_{E_H}\tilde{\psi}_i$ for each node and the top-$m$ left singular vectors $v_{e,j}, 1\leq j \leq m$ of the local operator $Q_{E_H}R_e$ for each $e$; problems on different nodes and edges are independent and parallelizable. These become our offline basis functions.

For any right-hand side $f$, we compute the online part $u^\mathsf{n}$ by solving local linear equations involving $f$. This step can be parallelized. 

Then, we form an effective equation for $u-u^\mathsf{n}$ as
\begin{equation}
\label{eqn: effective eqn for u h}
    a(u-u^\mathsf{n},v)=(f,v)_{\Omega}-a(u^\mathsf{n},v)\, ,
\end{equation}
for any $v \in \mathcal{H}(\Omega)$. We solve the equation for $u-u^\mathsf{n}$ using a Galerkin method. As an example, using the Ritz-Galerkin method, we choose 
\[S = \mathrm{span}~\{\psi_i\ \text{for } x_i \in \mathcal{N}_H, \ v_{e,j} \  \text{for } 1\leq j \leq m, e \in \mathcal{E}_H \}\, ,\] 
and find a numerical solution $u_S \in S$
that satisfies
\begin{equation}
\label{eqn: Galerkin}
    a(u_S,v)=(f,v)_{\Omega}-a(u^\mathsf{n},v)\, ,
\end{equation}
for any $v \in S$. The final numerical solution is given by $u_S + u^\mathsf{n}$. We call $u^\mathsf{n}$ the online part and $u_S$ the offline part since $u_S$ lies in a space that is independent of $f$.

Note that in the Galerkin method for solving $u_S$, the stiffness matrix only needs to be assembled once and can be used for different $f$ afterward. We can understand \eqref{eqn: effective eqn for u h} as a reduced model of the original equation.
\begin{remark}
    We discuss several theoretical aspects regarding the effectiveness of the above method.
    \begin{itemize}
        \item The accuracy of the numerical solution is due to the quasi-optimality property mentioned earlier in subsection \ref{sec: Solving PDEs as function approximation}: once $\eta(S)$ is small, the solution error is of the same order compared to the optimal approximation using the basis functions, which is exponentially small according to the representation \eqref{eqn: exp representation}.
        \item When the solution is complex-valued, such as in the Helmholtz equations, we can use both the Ritz and Petrov versions of the Galerkin methods; for the former, if $\overline{S} \neq S$, we need to replace $S$ by $S+\overline{S}$; see discussions in \cite{chen2023exponentially}.
        \item One thing worth noting is that $\|u^\mathsf{n}\|_{\mathcal{H}(\Omega)}$ is of order $O(H)$, due to the standard elliptic estimate \cite{chen2021exponential,chen2023exponentially}. Therefore, if we aim for $O(H)$ accuracy only, we can ignore this part, and simply setting $u^\mathsf{n}$ = 0 in the above algorithm will lead to a solution accurate up to $O(H)$.
    \end{itemize}
\end{remark}

\section{Numerical Experiments}
\label{sec: Numerical Experiments}
In this section, we present some numerical experiments to demonstrate the effectiveness of ExpMsFEM. For all the experiments, we consider the domain $\Omega=[0,1]\times [0,1]$ and discretize it by a uniform two-level quadrilateral mesh; see a fraction of this mesh in Figure \ref{fig:mesh1}, where we also show an edge $e$ and its oversampling domain $\omega_e$ in solid lines. 
\begin{figure}[htbp]
\centering

\tikzset{every picture/.style={line width=0.75pt}} %set default line width to 0.75pt        

\begin{tikzpicture}[x=0.75pt,y=0.75pt,yscale=-1,xscale=1]
%uncomment if require: \path (0,303); %set diagram left start at 0, and has height of 303
%Shape: Grid [id:dp32131165057233446] 
\draw  [draw opacity=0][dash pattern={on 4.5pt off 4.5pt}] (170.5,75) -- (365.5,75) -- (365.5,231) -- (170.5,231) -- cycle ; \draw  [color={rgb, 255:red, 0; green, 0; blue, 0 }  ,draw opacity=1 ][dash pattern={on 4.5pt off 4.5pt}] (209.5,75) -- (209.5,231)(248.5,75) -- (248.5,231)(287.5,75) -- (287.5,231)(326.5,75) -- (326.5,231) ; \draw  [color={rgb, 255:red, 0; green, 0; blue, 0 }  ,draw opacity=1 ][dash pattern={on 4.5pt off 4.5pt}] (170.5,114) -- (365.5,114)(170.5,153) -- (365.5,153)(170.5,192) -- (365.5,192) ; \draw  [color={rgb, 255:red, 0; green, 0; blue, 0 }  ,draw opacity=1 ][dash pattern={on 4.5pt off 4.5pt}] (170.5,75) -- (365.5,75) -- (365.5,231) -- (170.5,231) -- cycle ;
%Shape: Grid [id:dp5565750274689868] 
\draw  [draw opacity=0][dash pattern={on 0.84pt off 2.51pt}] (170.5,75) -- (365.5,75) -- (365.5,231) -- (170.5,231) -- cycle ; \draw  [color={rgb, 255:red, 0; green, 0; blue, 0 }  ,draw opacity=1 ][dash pattern={on 0.84pt off 2.51pt}] (183.5,75) -- (183.5,231)(196.5,75) -- (196.5,231)(209.5,75) -- (209.5,231)(222.5,75) -- (222.5,231)(235.5,75) -- (235.5,231)(248.5,75) -- (248.5,231)(261.5,75) -- (261.5,231)(274.5,75) -- (274.5,231)(287.5,75) -- (287.5,231)(300.5,75) -- (300.5,231)(313.5,75) -- (313.5,231)(326.5,75) -- (326.5,231)(339.5,75) -- (339.5,231)(352.5,75) -- (352.5,231) ; \draw  [color={rgb, 255:red, 0; green, 0; blue, 0 }  ,draw opacity=1 ][dash pattern={on 0.84pt off 2.51pt}] (170.5,88) -- (365.5,88)(170.5,101) -- (365.5,101)(170.5,114) -- (365.5,114)(170.5,127) -- (365.5,127)(170.5,140) -- (365.5,140)(170.5,153) --
(365.5,153)(170.5,166) -- (365.5,166)(170.5,179) -- (365.5,179)(170.5,192) -- (365.5,192)(170.5,205) -- (365.5,205)(170.5,218) -- (365.5,218) ; \draw  [color={rgb, 255:red, 0; green, 0; blue, 0 }  ,draw opacity=1 ][dash pattern={on 0.84pt off 2.51pt}] (170.5,75) -- (365.5,75) -- (365.5,231) -- (170.5,231) -- cycle ;
%Straight Lines [id:da38546691028387126] 
\draw    (209.5,114) -- (209.5,192) -- (326.5,192) -- (326.5,114) -- cycle ;

%Straight Lines [id:da5883609541572945] 
\draw [color={rgb, 255:red, 0; green, 0; blue, 0 }  ,draw opacity=1 ][line width=0.75]    (248.5,153) -- (287.5,153) ;
%Shape: Grid [id:dp6978138592522893] 
\draw  [draw opacity=0][dash pattern={on 0.84pt off 2.51pt}] (407.5,157) -- (507.5,157) -- (507.5,218) -- (407.5,218) -- cycle ; \draw  [color={rgb, 255:red, 0; green, 0; blue, 0 }  ,draw opacity=1 ][dash pattern={on 0.84pt off 2.51pt}] (417.5,157) -- (417.5,218)(430.5,157) -- (430.5,218)(443.5,157) -- (443.5,218)(456.5,157) -- (456.5,218)(469.5,157) -- (469.5,218)(482.5,157) -- (482.5,218)(495.5,157) -- (495.5,218) ; \draw  [color={rgb, 255:red, 0; green, 0; blue, 0 }  ,draw opacity=1 ][dash pattern={on 0.84pt off 2.51pt}] (407.5,167) -- (507.5,167)(407.5,180) -- (507.5,180)(407.5,193) -- (507.5,193)(407.5,206) -- (507.5,206) ; \draw  [color={rgb, 255:red, 0; green, 0; blue, 0 }  ,draw opacity=1 ][dash pattern={on 0.84pt off 2.51pt}]  ;
%Shape: Grid [id:dp3571739898377386] 
\draw  [draw opacity=0][dash pattern={on 4.5pt off 4.5pt}] (406.5,73) -- (508.5,73) -- (508.5,135) -- (406.5,135) -- cycle ; \draw  [color={rgb, 255:red, 0; green, 0; blue, 0 }  ,draw opacity=1 ][dash pattern={on 4.5pt off 4.5pt}] (416.5,73) -- (416.5,135)(455.5,73) -- (455.5,135)(494.5,73) -- (494.5,135) ; \draw  [color={rgb, 255:red, 0; green, 0; blue, 0 }  ,draw opacity=1 ][dash pattern={on 4.5pt off 4.5pt}] (406.5,83) -- (508.5,83)(406.5,122) -- (508.5,122) ; \draw  [color={rgb, 255:red, 0; green, 0; blue, 0 }  ,draw opacity=1 ][dash pattern={on 4.5pt off 4.5pt}]  ;

% Text Node
\draw (263.5,140) node [anchor=north west][inner sep=0.75pt]   [align=left] {$\displaystyle e$};
% Text Node
\draw (303.5,115) node [anchor=north west][inner sep=0.75pt]   [align=left] {$\displaystyle \omega _{e}$};
% Text Node
\draw (414,136) node [anchor=north west][inner sep=0.75pt]   [align=left] {coarse mesh};
% Text Node
\draw (424,218) node [anchor=north west][inner sep=0.75pt]   [align=left] {fine mesh};

\end{tikzpicture}

\caption{Two level mesh: a fraction}
\label{fig:mesh1}\index{figures}
\end{figure}
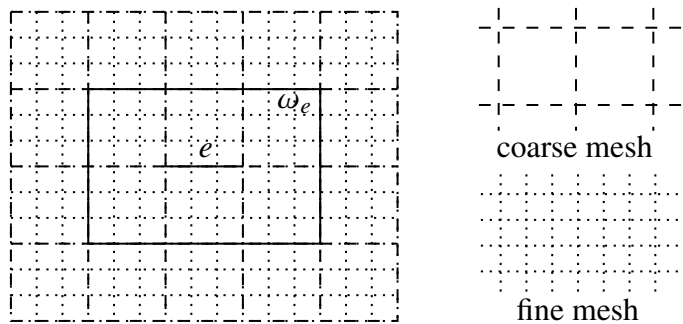
 The coarse and fine mesh sizes are denoted by $H$ and $h$, respectively. 
 
For a given equation,  we compute the reference solution $u_{\text{ref}}$ using the classical FEM on the fine mesh with a sufficiently small $h$, which we choose to be $h=1/1024$. By \textit{a posteriori} estimates, we can check that the fine mesh indeed resolves the corresponding problems; thus, the associated fine mesh solutions could serve as accurate reference solutions for all of our numerical examples. In our numerical computation, we solve local problems that are required in the ExpMsFEM framework using the fine mesh. For detailed implementation, we refer to \cite{chen2021exponential,chen2023exponentially}.
 \begin{remark}[Accuracy on the discrete level]
For simplicity of presentation, we do not provide error analysis of ExpMsFEM on the fully discrete level, where the accuracy of the local problems can depend on the resolution of the fine grid. For a detailed error estimate on the fully discrete level in the context of partition of unity methods, see, for example, \cite{ma2021error, ma2021wavenumber}.
\end{remark}
The accuracy of a numerical solution $u_{\mathrm{sol}}$ is computed by comparing it with the reference solution $u_{\text{ref}}$ on the fine mesh. The accuracy will be measured both in the $L^2$ norm and energy norm: 
    \begin{equation}
\label{rel_error}
\begin{aligned}
e_{L^{2}}&=\frac{\|u_{\text{ref}}-u_{\mathrm{sol}}\|_{L^{2}(\Omega)}}{\|u_{\text{ref}}\|_{L^{2}(\Omega)}}\, ,\\
e_{\mathcal{H}}&=\frac{\|u_{\text{ref}}-u_{\mathrm{sol}}\|_{\mathcal{H}(\Omega)}}{\|u_{\text{ref}}\|_{\mathcal{H}(\Omega)}}\, . 
\end{aligned}
\end{equation}

In subsection \ref{sec: exp period multiscale}, we consider an elliptic equation where the coefficient $A(x)$ is periodic but contains multiple scales. This example demonstrates the exponential accuracy of ExpMsFEM. In subsection \ref{sec: exp high contrast}, we consider an elliptic equation where $A(x)$ is of high contrast. This example shows the robustness of ExpMsFEM regarding the high contrast. In subsection \ref{sec: exp Helmholtz}, an instance of Helmholtz's equation with rough media and mixed boundary conditions is presented. This example illustrates the effectiveness of ExpMsFEM in solving general indefinite Helmholtz equations.
\subsection{A periodic example with multiple spatial scales}
\label{sec: exp period multiscale}
In the first example, we consider an elliptic problem ($V=0$) with multiple spatial scales. We choose the coefficient $A$ with five scales as follows: 
   \begin{equation}
\begin{aligned} 
A(x)=\frac{1}{6}\left(\frac{1.1+\sin \left(2 \pi x_1 / \epsilon_{1}\right)}{1.1+\sin \left(2 \pi x_2 / \epsilon_{1}\right)}+\frac{1.1+\sin \left(2 \pi x_2 / \epsilon_{2}\right)}{1.1+\cos \left(2 \pi x_1 / \epsilon_{2}\right)}+\frac{1.1+\cos \left(2 \pi x_1 / \epsilon_{3}\right)}{1.1+\sin \left(2 \pi x_2 / \epsilon_{3}\right)}\right.\\\left.+\frac{1.1+\sin \left(2 \pi x_2 / \epsilon_{4}\right)}{1.1+\cos \left(2 \pi x_1 / \epsilon_{4}\right)}+\frac{1.1+\cos \left(2 \pi x_1 / \epsilon_{5}\right)}{1.1+\sin \left(2 \pi x_2 / \epsilon_{5}\right)}+\sin \left(4 x_1^{2} x_2^{2}\right)+1\right) \, ,\end{aligned}
\end{equation}
where $x=(x_1,x_2)$, $\epsilon_1=1/5$, $\epsilon_2=1/13$, $\epsilon_3=1/17$, $\epsilon_4=1/31$, $\epsilon_5=1/65$. We choose homogeneous Dirichlet boundary conditions, i.e., $\Gamma_2=\emptyset$. We set $f=-1$.

In this example, we illustrate the exponential accuracy and the convergence rate with respect to the coarse mesh size $H$. We take $H=2^{-i}$, $i=3,4,...,7$ and take $m=1,2,...,6$ for each $H$. The numerical results are shown in Figure \ref{fig:eg1}, where $N_c=1/H$. 
\begin{figure}[ht]
    \centering
    \includegraphics[width=6cm]{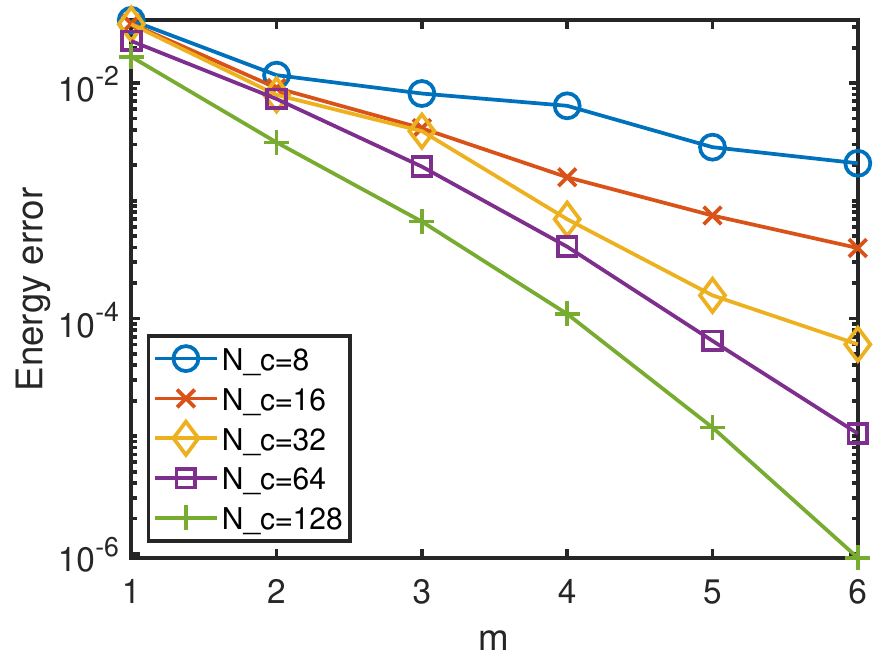}
    \includegraphics[width=6cm]{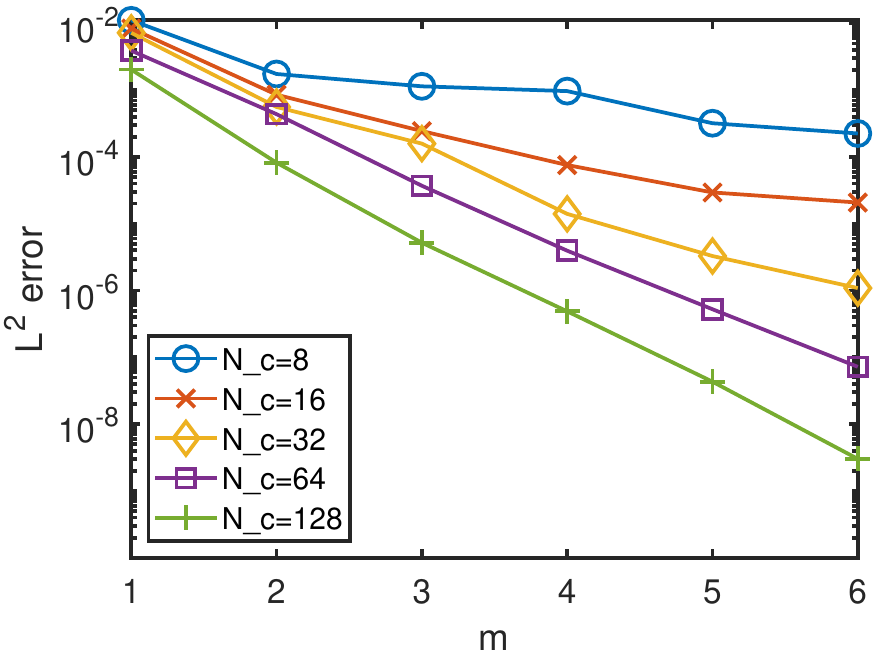}
    \caption{Numerical results for the periodic example. Left: $e_{\mathcal{H}}$ versus $m$; right: $e_{L^2}$ versus $m$.}
    \label{fig:eg1}\index{figures}
\end{figure}
We can see an exponential decay of errors for every coarse mesh size $H$. For smaller $H$, the convergence is faster. This can be understood as a finite-resolution effect. For example, when $H=1/128$, there are only $H/h-1=7$ total degrees of freedom on each edge, so of course, $m=6$ basis per edge would result in a very accurate solution.

\subsection{An example with high contrast channels}
\label{sec: exp high contrast}
In the second example, we consider an elliptic problem ($V=0$) with high contrast channels. Let 
\[X:=\{(x_1,x_2) \in [0,1]^2, x_1,x_2 \in \{0.2,0.3,...,0.8\}\} \subset [0,1]^2 \, , \]
and the coefficient is defined as
\begin{equation*}
   A(x)=\left\{ 
    \begin{aligned} 1, \quad &\text{if} \ \operatorname{dist}(x,X)\geq 0.015\\
    M, \quad &\text{else}\, .
    \end{aligned}
    \right.
\end{equation*}
Here, $M$ is a parameter controlling the contrast. We visualize $\log_{10} A$ in the left plot of  Figure $\ref{fig:contour_A}$ for $M=10^6$.
\begin{figure}[ht]
    \centering
    \includegraphics[width=6cm]{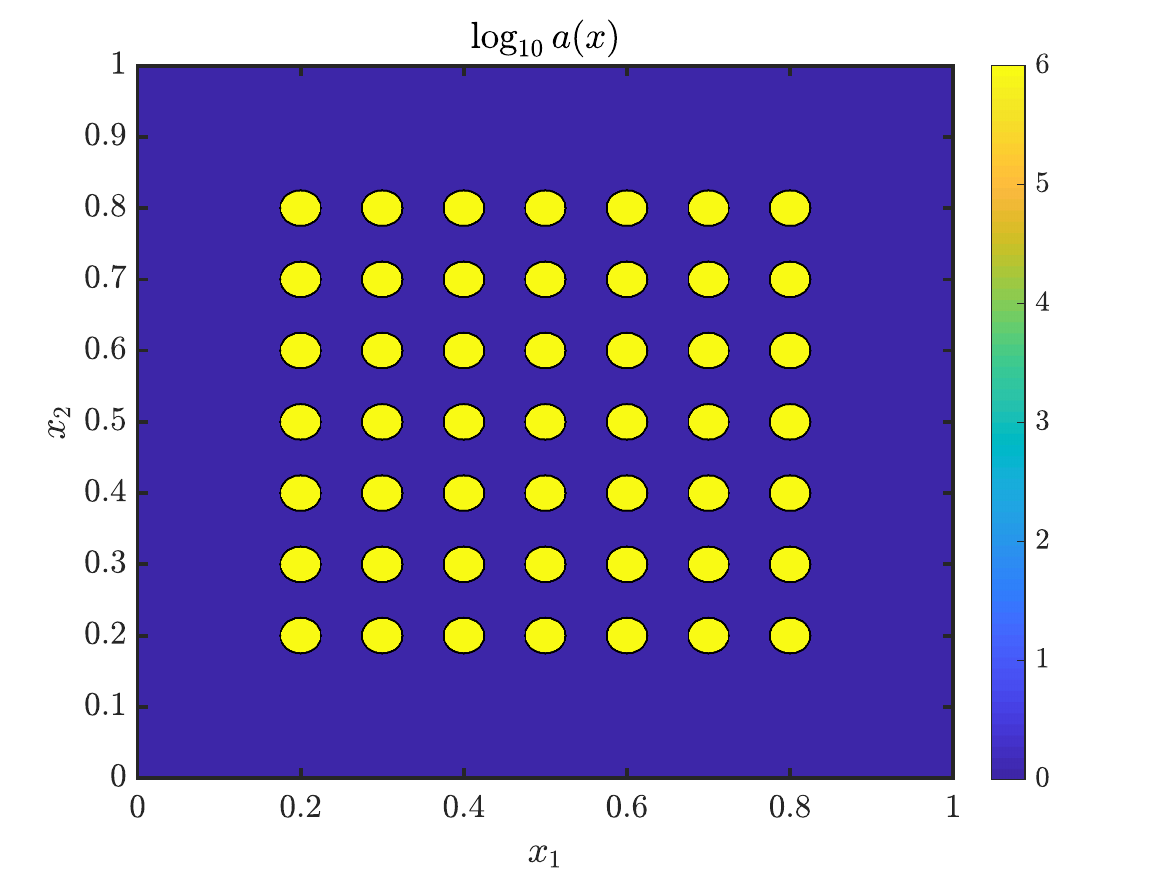}
    \includegraphics[width=6cm]{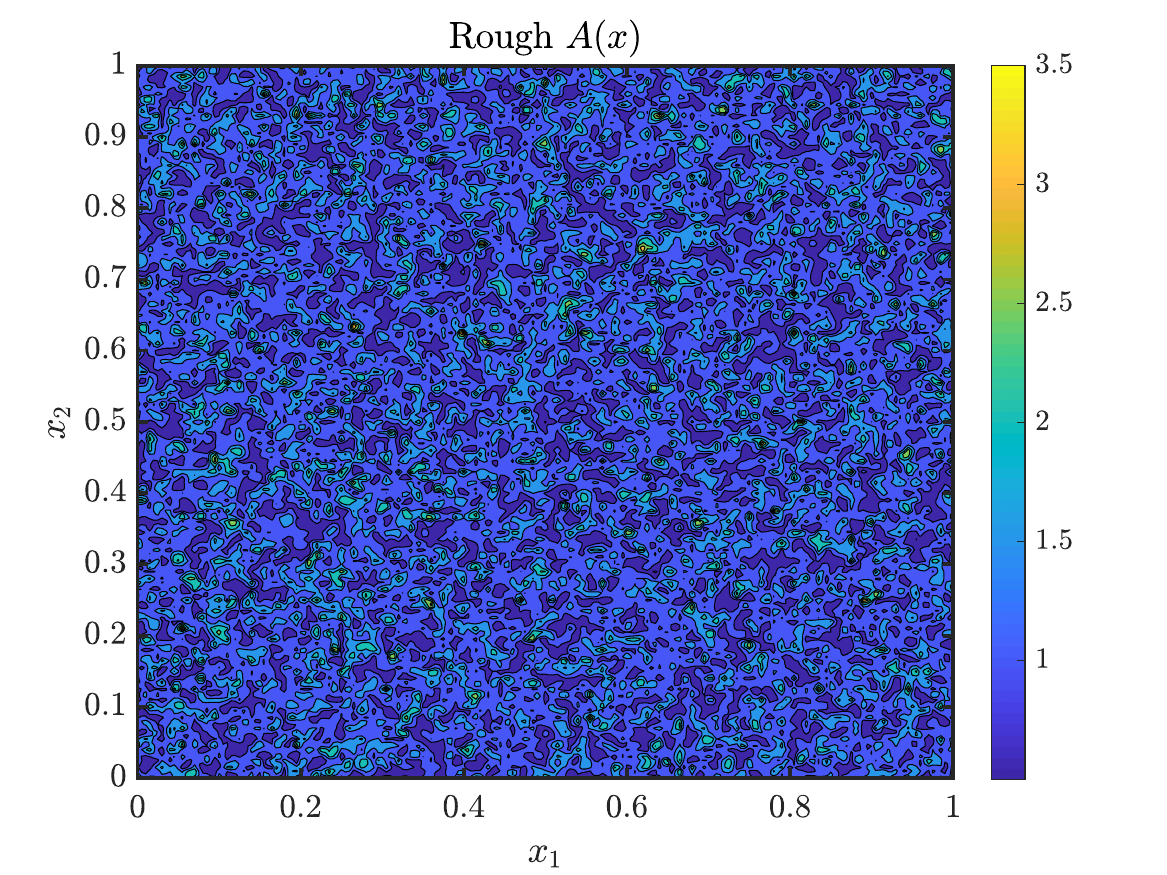}
    \caption{Left: the contour of $\log_{10} A$ for the high contrast example; right: the contour of $A$ for the rough media example.}\index{figures}
    \label{fig:contour_A}
\end{figure}
Again, we choose homogeneous Dirichlet boundary conditions, i.e., $\Gamma_2=\emptyset$, with a non-constant right-hand side $f(x)=x_1^4-x_2^3+1$. 

In this example, we illustrate the convergence rate w.r.t the contrast $M$. We take different $M$ using the coarse mesh size $H=2^{-5}$ and  $m=1,2,...,7$. The numerical results are shown in Figure \ref{fig:eg2}. 
\begin{figure}[ht]
    \centering
    \includegraphics[width=6cm]{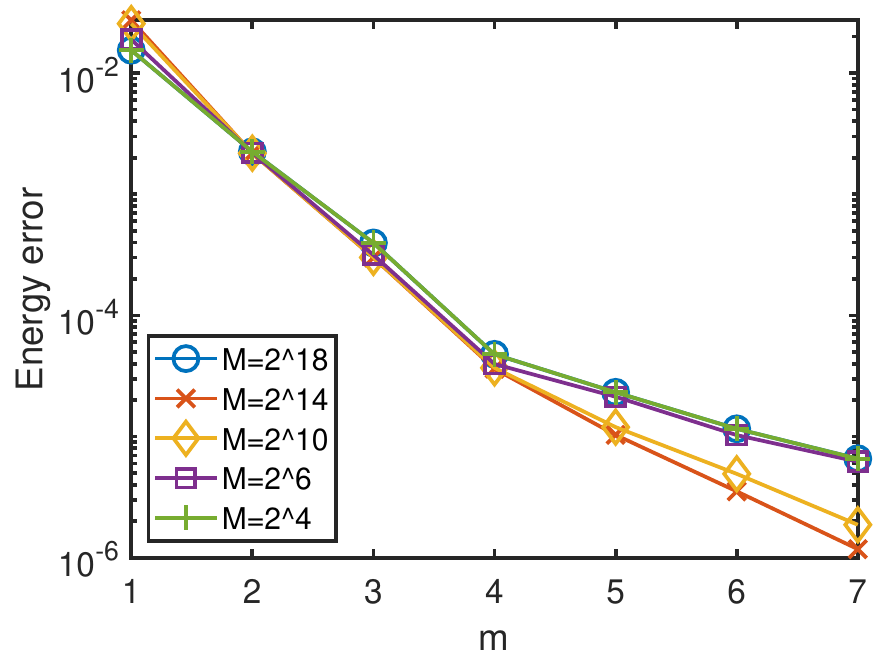}
    \includegraphics[width=6cm]{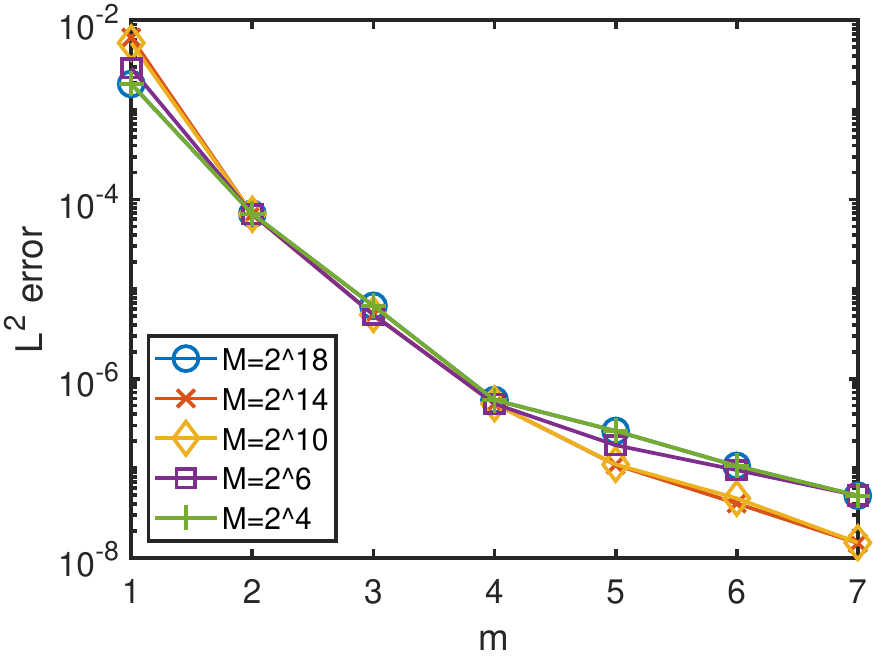}
    \caption{Numerical results for the high contrast example. Left: $e_{\mathcal{H}}$ versus $m$; right: $e_{L^2}$ versus $m$.}
    \label{fig:eg2}\index{figures}
\end{figure}
We observe a consistently exponential error decay independent of the contrast. Thus, our method demonstrates robustness with respect to the contrast $A(x)$. An intuitive explanation for this robustness could be that every step in ExpMsFEM is adaptive to $A(x)$. For example, the singular value decay of the operator $Q_{E_H}R_e$ would have some robustness regarding high contrasts in $A(x)$ because both of the norms in the domain and image of the operator are $A(x)$-weighted. We leave the theoretical analysis of deriving an $A(x)$-adapted estimate for future study. 

Also, we would like to mention that the size $h=1/1024$ of the fine mesh can actually resolve contrasts $M=2^4$ and $2^6$ only; for higher contrasts, a posterior error analysis shows the reference solution on the fine mesh is not very accurate. However, we consistently observe a small error in our solution compared to the fine mesh solution, even in the regime where the fine mesh solution itself is not accurate. This implies that ExpMsFEM admits a very accurate dimension reduction of the equation on the fine mesh.

\subsection{An example of Helmholtz equation with rough field and mixed boundary}
\label{sec: exp Helmholtz}
In the last example, we consider the Helmholtz equation. This example is the same as Example 3 in \cite{chen2023exponentially}. We present it here to demonstrate that our methods are effective for complicated coefficients and mixed boundary conditions.

We impose the
homogeneous Dirichlet boundary condition on $(x_1,0), x_1 \in [0,1]$, the homogeneous Neumann boundary condition on $(x_1,1), x_1 \in [0,1]$, and the homogeneous Robin boundary condition on the other two parts of $\partial \Omega$. 
We choose $A(x)$ to be a realization of some random field; more precisely, we set
\begin{equation}
A(x)=|\xi(x)|+0.5\, ,
\end{equation}
        where the field $\xi(x)$ satisfies \[\xi(x)=a_{11}\xi_{i,j}+a_{21}\xi_{i+1,j}+a_{12}\xi_{i,j+1}+a_{22}\xi_{i+1,j+1}, \ \text{if}\ x \in [\frac{i}{2^{7}},\frac{i+1}{2^{7}})\times [\frac{j}{2^{7}},\frac{j+1}{2^{7}})\, .\]
        Here, $\{\xi_{i,j}, 0\leq i,j \leq 2^7 \}$ are i.i.d. standard Gaussian random variables. In addition, $a_{11}=(i+1-2^7x_1)(j+1-2^7x_2)$, $a_{21}=(2^7x_1-i)(j+1-2^7x_2)$, $a_{12}=(i+1-2^7x_1)(2^7x_2-j)$, $a_{22}=(2^7x_1-i)(2^7x_2-j)$ are interpolating coefficients to make $\xi(x)$ piecewise linear. A sample from this field is displayed in the right plot of Figure \ref{fig:contour_A}.

Moreover, we also take $V/k^2$ and $\beta/ik$ as independent samples drawn from this random field. We choose the wavenumber $k=2^5$, the right-hand side 
{$f(x_1,x_2)=x_1^4-x_2^3+1$}, and the coarse mesh $H=2^{-5}$. Again, we take $m=1,2,...,7$ and present the numerical results in Figure \ref{fig:eg4}. 
\begin{figure}[ht]
    \centering
    \includegraphics[width=6cm]{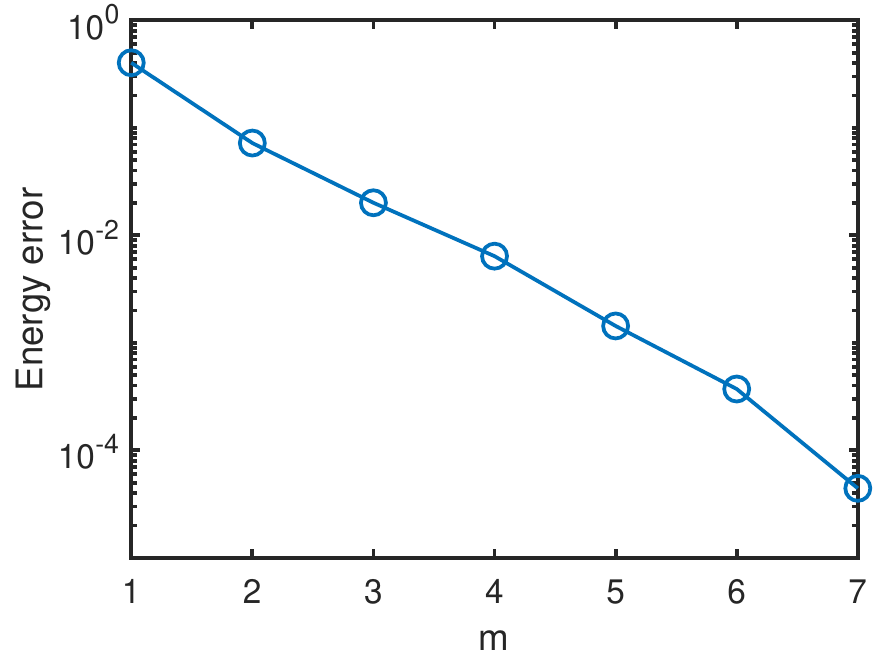}
    \includegraphics[width=6cm]{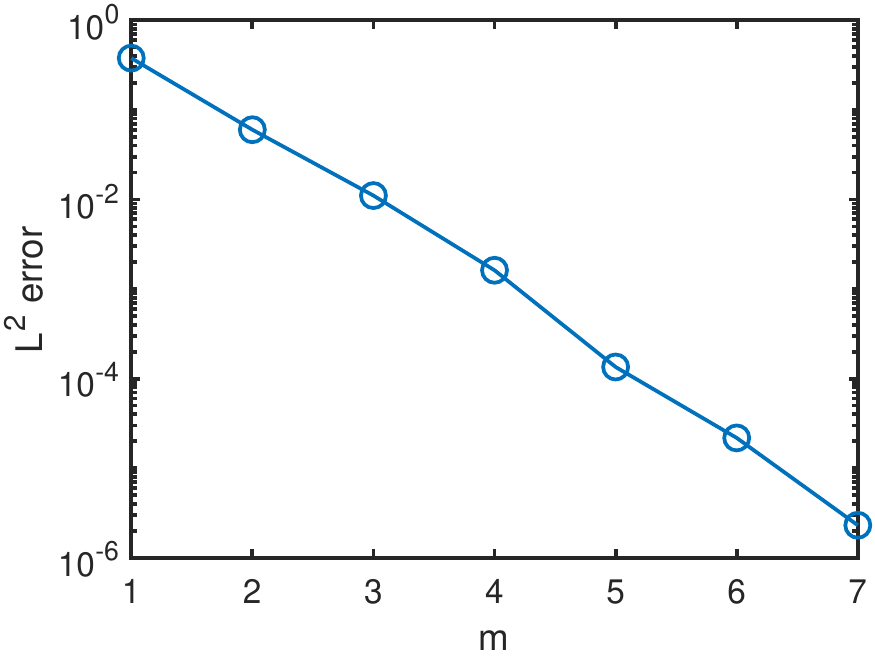}
    \caption{Numerical results for the mixed boundary and rough field example. Left: $e_{\mathcal{H}}$ versus $m$; right: $e_{L^2}$ versus $m$.}\index{figures}
    \label{fig:eg4}
\end{figure}
Clearly, a nearly exponential rate of convergence is still observed for this challenging example.

\section{Discussions}
\label{sec: Discussions}
In this section, we discuss related multiscale methods in the literature; for a more specific review under the context of the elliptic and Helmholtz equations, see \cite{chen2021exponential,chen2023exponentially}. We also outline future possibilities and open questions about ExpMsFEM at the end of this section.
\subsection{Related literature}
There is a vast amount of literature on multiscale methods and numerical homogenization. 

Earlier work mainly focuses on structured $A(x)$ such as in periodic media and with scale separation; some examples include the generalized finite element methods (GFEM) \cite{babuvska1983generalized}, the multiscale finite element method (MsFEM) \cite{hou_multiscale_1997, hou1999convergence, efendiev2000convergence}, the variational multiscale methods (VMS) \cite{hughes_variational_1998}, and the heterogeneous multiscale method (HMM) \cite{abdulle2012heterogeneous}. 

Later on, people are interested in multiscale methods that can address more general rough coefficients that lie in $L^{\infty}(\Omega)$ only; see, for example, the work of optimal basis using partition of unity functions \cite{babuska2011optimal,babuvska2020multiscale, ma2021error, ma2021novel}, harmonic coordinates \cite{owhadi2007metric}, local orthogonal decomposition (LOD) \cite{malqvist_localization_2014,henning2013oversampling,kornhuber2018analysis, hauck2021super,maier2021high}, Gamblets related approaches \cite{owhadi2011localized,owhadi2014polyharmonic,owhadi2015bayesian,owhadi_multigrid_2017,hou_sparse_2017,owhadi2019operator, chen2020multiscale}, and generalizations of MsFEM \cite{hou2015optimal,chung2018constraint,li2019convergence,fu2019edge}. Different methods differ in how to find an accurate function representation. In deriving the function representation in ExpMsFEM, the solution is first decomposed into a harmonic part and a bubble part. For elliptic equations, this decomposition is the same as the orthogonal decomposition in previous work of MsFEM \cite{hou2015optimal} and approximate component mode synthesis \cite{hetmaniuk2010special,hetmaniuk2014error}.

To the best of our knowledge, among all the previous work, the optimal basis framework using partition of unity functions (and its variant) is the only one that achieves nearly exponential accuracy regarding the number of basis functions. Our ExpMsFEM \cite{chen2021exponential,chen2023exponentially} is motivated by the argument of Caccioppoli's inequality used in the optimal basis framework. ExpMsFEM is the first framework that achieves exponential accuracy without using partition of unity functions and is a direct generalization of MsFEM.

We comment in more detail on the differences and similarities between the optimal basis framework and ExpMsFEM. In the optimal basis framework, the exponentially accurate representation is obtained through the partition of unity functions rather than the edge localization and coupling in ExpMsFEM. More precisely, one can write 
\begin{equation}
\label{eqn: representation PUM}
    u = \sum_{i} \eta_i u = \sum_{i} \eta_i u^\mathsf{h}_{\omega_i} + \sum_{i} \eta_i u^\mathsf{b}_{\omega_i}\, ,
\end{equation}
where $\{\eta_i\}_i$ are partition of unity functions subordinate to an overlapped domain decomposition $\{\omega_i\}_i$ and $u_{\omega_i}^\mathsf{h}, u_{\omega_i}^\mathsf{b}$ are obtained by the harmonic-bubble splitting in $\omega_i$. The part $\eta_i u^\mathsf{h}_{\omega_i}$ can be seen as a ``restriction'' of harmonic-type functions. Thus, the argument using Caccioppoli's inequality implies that this part can be approximated by basis functions with a nearly exponential convergence rate.

Compared to \eqref{eqn: exp representation}, the representation \eqref{eqn: representation PUM} admits better geometric flexibility since by using partition of unity functions, such representation can work for problems in general dimensions. The representation \eqref{eqn: exp representation} produced by ExpMsFEM is tied to the mesh structure. When $d=2$, we have nodal and edge basis functions in the representation \eqref{eqn: exp representation}. When $d \geq 3$, we need facial basis functions and so on to represent the solution; for details see section 7 in \cite{chen2023exponentially}. In this sense, ExpMsFEM removes the partition of unity functions in the overlapped domain decomposition but pays the design cost of using a more complicated geometric structure in the non-overlapped domain decomposition. Nevertheless, the benefit of non-overlapped domain decomposition is that the basis functions are more localized since the local domain is smaller. Also, ExpMsFEM does not have the additional parameter of the partition of unity functions. Some basic numerical comparisons between ExpMsFEM and optimal basis using partition of unity functions are presented in \cite{chen2023exponentially}. We need a more in-depth comparison between the two approaches to identify their trade-offs more clearly.

\subsection{Future directions} To now, ExpMsFEM has been successfully applied to solve elliptic and Helmholtz equations. Moving forward, one can extend this idea to advection-dominated diffusion problems, time-dependent problems such as Schr\"odinger's equations, and many other linear equations. Extension to nonlinear equations appears to be nontrivial since the decomposition used in ExpMsFEM requires linearity of the equation. It could be interesting to explore the combination of ExpMsFEM and linearization to provide nonlinear homogenization of these equations.

For the current ExpMsFEM framework, we observe its robustness regarding the high contrast in the media numerically (subsection \ref{sec: exp high contrast}), but a rigorous understanding of such robustness is still lacking. Moreover, a discrete-level analysis of ExpMsFEM could be helpful for its practical use.

In essence, both ExpMsFEM and optimal basis using partition of unity functions take advantage of the low approximation complexity structures of the restriction operator on harmonic-type functions. Finding other novel low complexity structures is crucial to advance multiscale computation and model reduction. 

ExpMsFEM and optimal basis using partition of unity functions imply that nonlinear model reduction can break the Kolmogorov barrier and achieve remarkable exponential convergence. Embedding this idea to data-driven model reduction or operator learning also represents an exciting avenue for future work.

\printbibliography[heading=bibintoc]

\appendix

\appendix
\chapter{Blowup Analysis for a Quasi-exact 1D Model of 3D Euler and Navier-Stokes}
\label{append:hou-li}
We study the singularity formation of a quasi-exact 1D model proposed by Hou-Li in \cite{hou2008dynamic}. This model is based on an approximation of the axisymmetric Navier-Stokes equations in the $r$ direction. The solution of the 1D model can be used to construct an exact solution of the original 3D Euler and Navier-Stokes equations if the initial angular velocity, angular vorticity, and angular stream function are linear in $r$. This model shares many intrinsic properties similar to those of the 3D Euler and Navier-Stokes equations. It captures the competition between advection and vortex stretching as in the 1D De Gregorio \cite{de1990one, de1996partial} model. We show that the inviscid model with weakened advection and smooth initial data or the original 1D model with H\"older continuous data develops a self-similar blowup. We also show that the viscous model with weakened advection and smooth initial data develops a finite time blowup. %Inspired by a recent work by Chen \cite{chen2021regularity}, we perform linear stability analysis around an equilibrium solution of the 1D model by using the dynamic rescaling formulation. 
To obtain sharp estimates for the nonlocal terms, we perform an exact computation  for the low-frequency Fourier modes and  {extract damping in leading order estimates}  for the high-frequency modes using singularly weighted norms {in the energy estimates}. The analysis for the viscous case is more subtle since the viscous terms produce some instability if we just use singular weights. We establish the blowup analysis for the viscous model by carefully designing an energy norm that combines a singularly weighted energy norm and a {sum of} high-order Sobolev norms.

\section{Introduction}

Whether the 3D incompressible Euler and Navier-Stokes equations can develop a finite time singularity from smooth initial data is one of the most outstanding open questions in nonlinear partial differential equations. An essential difficulty is that the vortex stretching term has a quadratic nonlinearity in terms of vorticity. A simplified 1D model was proposed by the Constantin-Lax-Majda model (CLM model for short) \cite{constantin1985simple} to capture the effect of nonlocal vortex stretching. The CLM model can be solved explicitly and can develop a finite time singularity from smooth initial data. Later on, De Gregorio incorporated the advection term into the CLM model to study the competition between advection and vortex stretching \cite{de1990one,de1996partial}, see  {\cite{constantin1986} for singularity formation in the distorted Euler equations with transport neglected and also   \cite{hou2008dynamic,lei2009stabilizing} for a related study on the stabilizing effect of advection for the 3D Euler and Navier-Stokes equations. There have been recent studies on the effect of advection and vortex stretching in other related models; see \cite{sarria2013blow} for the generalized inviscid Proudman-Johnson equation, \cite{elgindi2022strong} with a Riesz transform added to the vorticity formulation of 2D Euler equation, and \cite{miller2023finite} with advection term dropped in the vorticity formulation of 3D Euler equation.}  In \cite{okamoto2008generalization},
Okamoto, Sakajo, and Wunsch further introduced a parameter for the advection term to measure the relative strength of the advection in the De Gregorio model. These simplified 1D models have inspired  many subsequent studies. Interested readers may consult the excellent surveys \cite{constantin2007euler,gibbon2008three,kiselev2018,majda2002vorticity} and the references therein. {Very recently, the authors in \cite{huang2023self} established self-similar blowup for the whole family of gCLM models with $a\leq1$ using a fixed-point argument.}
On the other hand, these 1D scalar models are phenomenological in nature and cannot be used to recover the solution of the original 3D Euler equations. 

{For the line of research on the singularity formation for the 3D Euler equations,}  Luo-Hou \cite{luo2013potentially-2} presented in 2014 convincing numerical evidence that the 3D axisymmetric Euler equations with smooth initial data and boundary develop a potential finite time singularity. %In the same paper \cite{luo2013potentially-2}, the authors proposed the Hou-Luo model along the boundary $r=1$. This model captures many essential features observed in the Hou-Luo blowup scenario for the axisymmetric Euler equations. In \cite{choi2014on}, the authors proved the blowup of the Hou-Luo model. In \cite{chen2021HL}, Chen-Hou-Huang proved the asymptotically self-similar blowup of the Hou-Luo model by extending the method of analysis for the finite time blowup of the De Gregorio model by the same authors in \cite{chen2021finite}. 
Inspired by Elgindi's recent breakthrough for finite time singularity of the axisymmetric Euler with no swirl and $C^{1,\alpha}$ velocity \cite{ elgindi2021finite}, Chen and Hou proved the finite time blowup of the 2D Boussinesq and 3D Euler equations with $C^{1,\alpha}$ initial velocity and boundary \cite{chen2019finite2}. {For other recent works on singularity formation of 3D Euler with limited regularity, see also \cite{chen2023remarks,cordoba2023finite} for initial data that is smooth except at the origin, \cite{cordoba2023blow} for more smooth data but with a 
 $C^{1/2-\epsilon}$ force, and \cite{elgindi2021incompressible, elgindi2019finite} for settings with nonsmooth boundary.} Very recently, Chen and Hou proved stable and nearly self-similar blowup of the 2D Boussinesq and 3D Euler with smooth initial data and boundary using computer assistance \cite{chen2022stable}. 

In 2008, Hou and Li \cite{hou2008dynamic} proposed a new 1D model for the 3D axisymmetric Euler and Navier-Stokes equations. This model approximates the 3D axisymmetric Euler and Navier-Stokes equations along the symmetry axis based on an approximation in the $r$ direction. The solution of the 1D model can be used to construct an exact solution of the original 3D Navier-Stokes equations if the initial angular velocity, angular vorticity, and angular stream function are linear in $r$. This model shares many intrinsic properties similar to those of the 3D Navier-Stokes equations. Thus, it captures some essential nonlinear features of the 3D Euler and Navier-Stokes equations.
%and is very different from the Hou-Luo model, which is defined along the boundary $r=1$, instead of the symmetry axis $r=0$. 
In the same paper  \cite{hou2008dynamic}, the authors proved the global regularity of the Hou-Li model by deriving a new Lyapunov functional, which captures the exact cancellation between advection and vortex stretching. 

The purpose of this chapter is to study the singularity formation of a weak advection version of the Hou-Li model for smooth data. We introduce a parameter $a$ to characterize the relative strength between advection and vortex stretching, just like the gCLM model. Both inviscid and viscous cases are considered. We also prove the finite time singularity formation of the original inviscid Hou-Li model ($a=1$ and $\nu=0$) with $C^\alpha$ initial data. Inspired by the recent work of Chen \cite{chen2021regularity} for the De Gregorio model, we consider the case of $a < 1$ and treat $1-a$ as a small parameter. For the $C^\alpha$ initial data, we consider the original Hou-Li model with $a=1$ and $1-\alpha$ small. 
By using the dynamic rescaling formulation and analyzing the stability of the linearized operator around an approximate steady state of the original Hou-Li model ($a=1$), we prove finite time self-similar blowup. 

We follow a general strategy that we have established in our previous works \cite{chen2021finite,chen2019finite2}.
Establishing linear stability of the approximate steady state is the most crucial step in our blowup analysis. To obtain sharp estimates for the nonlocal terms, we carry out an exact computation for the low-frequency Fourier modes and {extract damping in leading order estimates}  for the high-frequency modes using singularly weighted norms {in the energy estimates}.  The blowup analysis for the viscous model is more subtle since the viscous terms do not provide damping and produce some bad terms if we use a singularly weighted norm. We establish the blowup analysis for the viscous model by carefully designing an energy norm that combines a singularly weighted energy norm and a {sum of} high-order Sobolev norms.

\subsection{Problem setting}
In \cite{hou2008dynamic},
Hou-Li introduced the following reformulation of the axisymmetric Navier-Stokes equation:
\begin{eqnarray}
\label{3d}
&&u_{1, t}+u^{r} u_{1, r}+u^{z} u_{1, z} =2 u_{1} \psi_{1, z}+\nu\Delta u_1\,, \\
&&\omega_{1, t}+u^{r} \omega_{1, r}+u^{z} \omega_{1, z} =\left(u_{1}^{2}\right)_{z}+\nu\Delta \omega_1\,, \\
&&-\left[\partial_{r}^{2}+(3 / r) \partial_{r}+\partial_{z}^{2}\right] \psi_{1} =\omega_{1}\,,
%&& u^{r} =-r \psi_{1, z}\,, \quad u^{z}=2 \psi_{1}+r \psi_{1, r}\,.
\end{eqnarray}
where $u_1 = u^\theta/r,\; \omega_1 = \omega^\theta,\; \psi_1 = \psi^\theta/r, $ and $u^\theta$, $\omega^\theta$, and $\psi^\theta$ are the angular velocity, angular vorticity, and angular stream function, respectively. 
%In \cite{liuwang2006}, Liu and Wang showed that  $u^\theta$, $\omega^\theta$ and $\psi^\theta$ must be an odd function of $r$ if the solution of the axisymmetric Euler or Navier-Stokes equations is smooth. Based on this observation, $u^{r} =-r \psi_{1, z}$, $u^{z}=2 \psi_{1}+r \psi_{1, r} $, and we can recover the Euler equation if $\nu=0$. 
By the well-known Caffarelli-Kohn-Nirenberg partial regularity result \cite{caffarelli1982partial}, the axisymmetric Navier-Stokes equations can develop a finite time singularity only along the symmetry axis $r=0$. To study the potential singularity or global regularity of the axisymmetric Navier-Stokes equations,
Hou-Li \cite{hou2008dynamic} proposed the following 1D model along the symmetry axis $r=0$:
\begin{equation}
\label{1d}
\begin{aligned}
u_{1, t}+2 \psi_1 u_{1, z} &=2 \psi_{1, z} u_1 +\nu u_{1,zz}\,, \\
\omega_{1, t}+2 \psi_1 \omega_{1, z} &=\left(u_1^{2}\right)_{z}+\nu \omega_{1,zz}\,, \\
- \psi_{1,zz} &=\omega_1\,.
\end{aligned}
\end{equation}
Such a reduction is exact in the sense that if ($\omega_1$, $u_1$, $\psi_1$) is an exact solution of the 1D model, we can obtain an exact solution of the 3D Navier-Stokes equations by using a constant extension in $r$. This corresponds to the case when the physical quantities $u^{\theta}=ru_1$, $\omega^{\theta}=r\omega_1$ are linear in $r$.
We assume that the solutions are periodic in $z$ on $[0,2\pi]$.
We already know from the original Hou-Li paper that this system is well-posed for $C^m$ initial data with $m \geq 1$.  In \cite{hou2008dynamic}, the authors also used the well-posedness of the Hou-Li model to construct globally smooth solutions to the 3D equations with large dynamic growth.

In two recent papers by Hou \cite{hou2022potential, hou2022potentially}, the author presented new numerical evidence that the 3D axisymmetric Euler and Navier-Stokes equations develop potential singular solutions at the origin. This new blowup scenario is very different from the Hou-Luo blowup scenario, which occurs on the boundary. In this computation, the author observed that the axial velocity $u^{z}=2 \psi_{1}+r \psi_{1, r}$ near the maximal point of $u_1$ is significantly weaker than $2\psi_1$. This is due to the fact that $\psi_1$ reaches the maximum at a position $r=r_\psi$ that is smaller than the position $r=r_u$ in which  $u_1$ achieves its maximum, i.e. $r_\psi < r_u$. Therefore $\psi_{1, r}$ is negative near the maximal position of $u_1$. Thus the axial velocity $u^z$ is actually weaker than $2 \psi_1$, which corresponds to $u^z |_{r=0}$. Thus, the original Hou-Li model along $r=0$ does not capture this subtle phenomenon, which is three-dimensional in nature. To gain some understanding of this potentially singular behavior, we introduce the following 1D weak advection model.
\begin{equation}
\label{1dwp}
\begin{aligned}
u_{ t}+2 a\psi u_{ z} &=2 u \psi_{ z}+\nu u_{zz}\,, \\
\omega_{ t}+2 a\psi\omega_{ z} &=\left(u^{2}\right)_{z}+\nu \omega_{zz}\,, \\
- \psi_{zz} &=\omega\,,
\end{aligned}
\end{equation}
where $a$ is a parameter that measures the relative strength of advection in the Hou-Li model.
\begin{remark}
For simplicity, we drop the subscript $1$ in the above weak advection model.
The proposed model \eqref{1dwp} in the inviscid case $\nu=0$  resembles the generalized Constantin-Lax-Majda model (gCLM) \cite{okamoto2008generalization} 
$$\omega_t+au\omega_x=u_x\omega\,,\qquad u_x=H\omega\,,$$
        where
     $$
H \omega(x)=\frac{1}{\pi} p.v. \int_{\mathbb{R}} \frac{\omega(y)}{x-y} \mathrm{~d} y
$$
is the Hilbert transform. They share similar structures of competition between advection and vortex stretching. The case when $a=1$ corresponds to the De Gregorio (DG) model.
We obtain an explicit steady-state to the inviscid Hou-Li model \eqref{1d} $(\omega,u,\psi)=(\sin x,\sin x, \sin x)$, similar to the steady state $(\omega,u)=(-\sin x,\sin x)$ of the DG model on $S^1$. Many of the results we present in this chapter have analogies for the gCLM model; see in particular \cite{chen2021regularity,chen2021slightly}.
\end{remark}

\subsection{Main results}
We summarize the main results of the chapter below and devote the subsequent sections to proving these results.
Our first result is on the finite-time blowup of the weak inviscid advection model; for its proof see Section \ref{linearest}  and \ref{nonlinearest}. 
\begin{theorem}
\label{t1}
For the weak advection model \eqref{1dwp} in the inviscid case $\nu=0$, there exists a constant $\delta>0$ such that for $a\in(1-\delta,1)$, the weak advection model \eqref{1dwp} develops a finite time singularity for some $C^{\infty}$ initial data. Moreover, there exists a self-similar profile $({\omega}_{\infty},{\omega}_{\infty},{\omega}_{\infty})$ corresponding to a blowup that is neither expanding nor focusing. More precisely, the blowup solution to \eqref{1dwp} has the form
$$\omega(x,t)=\frac{1}{1+c_{u,\infty}t}{\omega}_{\infty}\,,u(x,t)=\frac{1}{1+c_{u,\infty}t}u_{\infty}\,,\psi(x,t)=\frac{1}{1+c_{u,\infty}t}{\psi}_{\infty}\,,$$
for some negative constant $c_{u,\infty}$ with a blowup time given by $T= \frac{-1}{c_{u,\infty}}$.
\end{theorem}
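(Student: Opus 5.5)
The plan follows the dynamic rescaling strategy of Chen \cite{chen2021regularity} for the De Gregorio model, treating $1-a$ as the small parameter. Because the domain is periodic and the blowup is expected to be neither expanding nor focusing, I rescale only the amplitude. I set $\tilde u=C_u(\tau)u$, $\tilde\omega=C_u\omega$, $\tilde\psi=C_u\psi$, with $C_u=\exp(\int_0^\tau c_u)$ and $dt/d\tau=C_u$. This gives
\[
\tilde u_\tau+2a\tilde\psi\tilde u_z=c_u\tilde u+2\tilde u\tilde\psi_z,\qquad
\tilde\omega_\tau+2a\tilde\psi\tilde\omega_z=c_u\tilde\omega+(\tilde u^2)_z,\qquad
-\tilde\psi_{zz}=\tilde\omega .
\]
The approximate steady state is the exact steady state of the $a=1$ problem, $(\bar\omega,\bar u,\bar\psi)=(\sin z,\sin z,\sin z)$ with $\bar c_u=0$. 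For $a<1$ its residual is only the advection defect $2(a-1)\bar\psi\,\partial_z(\bar u,\bar\omega)=O(1-a)$. I restrict to odd $2\pi$-periodic data, which the equations preserve. I then impose one normalization condition fixing $c_u$, for example that the perturbation $u$ satisfies $u_z(0,\tau)=0$. This yields $c_u$ as an explicit functional of the perturbation plus an $O(1-a)$ forcing, whose sign I expect to make $c_u$ negative.

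The key step is linear stability of the operator $\mathcal L$ obtained by linearizing around $(\sin z,\sin z,\sin z)$ with the constraint built in. At $a=1$ the Hou--Li Lyapunov functional suggests that the linearization is at best neutrally stable. So I would split the perturbation into low Fourier modes ($\sin z,\sin 2z$, and the modes coupled to them through $\psi=(-\partial_{zz})^{-1}\omega$) and the high-frequency remainder.

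On the low modes I would compute the action of $\mathcal L$ exactly, since it is a finite tridiagonal-type system. I would check that the normalization removes the neutral directions generated by the scaling and translation symmetries, and I would track how the $O(1-a)$ terms perturb the remaining eigenvalues.

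On the high modes I would use a weighted $L^2$ energy with a singular weight such as $\sin^{-2}(z/2)$ or $|\cot(z/2)|^{\gamma}$ near $z=0$, which is chosen to be compatible with the vanishing order enforced by the normalization. After integrating by parts against the transport term $2\bar\psi\partial_z$, the weight produces a damping term $\frac{(\bar\psi\rho)_z}{\rho}$. The stretching terms $2\bar u\psi_z$ and $(2\bar u u)_z$ are nonlocal, and I would bound them with sharp Hilbert-transform and Hardy-type inequalities. The target is a coercivity estimate
\[
(\mathcal L f,f)_E\le -c\,\|f\|_E^2 ,
\]
possibly with $c$ depending on $1-a$, as in Chen's analysis of the DG and gCLM models.

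This linear step is the main obstacle. The danger is that the damping is only of order $1-a$, so the constants from the nonlocal terms have to be sharp; this is why the exact low-mode computation is needed.

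Given the coercivity, the nonlinear part is routine. I would add an $H^k$ energy so that $\|\cdot\|_{L^\infty}$ and $\|\partial_z\cdot\|_{L^\infty}$ are controlled, estimate the nonlinearity by $C\|f\|_E^3$ and the residual by $C(1-a)$, and close a bootstrap keeping $\|f\|_E\lesssim 1-a$ for all $\tau$. The same estimates applied to time differences give exponential convergence in $\tau$ to a steady state $(\omega_\infty,u_\infty,\psi_\infty)$, with $c_u\to c_{u,\infty}$. Extracting the leading-order sign from the normalization formula gives $c_{u,\infty}=-\kappa(1-a)+O((1-a)^2)$ with $\kappa>0$.

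Finally, for the exact self-similar solution I start from the limiting steady state itself. Then $c_u\equiv c_{u,\infty}$, so $C_u=e^{c_{u,\infty}\tau}$ and $t=(1-e^{c_{u,\infty}\tau})/(-c_{u,\infty})$, hence $C_u^{-1}=(1+c_{u,\infty}t)^{-1}$. This gives the stated form and the blowup time $T=-1/c_{u,\infty}$. Smoothness of the data follows from propagating $H^k$ bounds for every $k$ in the bootstrap.
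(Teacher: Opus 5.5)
Your outline has the same architecture as the paper's proof. The shared elements are: amplitude-only rescaling; the $a=1$ steady state $\sin x$; the normalization keeping $u_x(0,\tau)=0$, which forces total scaling $2(a-1)(1+\psi_x(0))$; a weight comparable to $\sin^{-2}(x/2)$; and a bootstrap at level $1-a$. The gap is in the linear step you flag as the main obstacle. You leave it open, and the heuristic you use for it would break your own bootstrap.

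With an $O(1-a)$ residual, the bound $E\lesssim 1-a$ can only close if the damping rate is independent of $a$. If the rate were $c(1-a)$, as you fear, then $E'\le -c(1-a)E+C(1-a)+CE^2$ has no small invariant level. Balancing damping against forcing already requires $E\sim C/c=O(1)$.

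The $a=1$ linearization is not neutral. Take $\rho=\frac{1}{2\pi(1-\cos x)}$ and $E^2=\frac12(\|u_x\|_\rho^2+\|\omega\|_\rho^2)$. The paper proves $((L_1)_x,u_x\rho)+(L_2,\omega\rho)\le -0.16(\|u_x\|_\rho^2+\|\omega\|_\rho^2)$. The $(a-1)L'$ terms contribute only $O(|a-1|)E^2$, so $E'\le -(0.16-C|a-1|)E+C|a-1|+CE^2$.

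A bookkeeping point: the $2(a-1)\bar u$ part of $c_u\bar u$ must be grouped with the advection defect, giving residual $2(a-1)\sin x(1-\cos x)$. The bare defect $2(1-a)\sin x\cos x$ has nonzero $x$-derivative at $0$, so its derivative is not admissible in the $u_x$ energy.

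What is missing is the mechanism behind this uniform damping.

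(i) The energy must be built on $(u_x,\omega)$, not $(u,\omega)$. Then the top-order stretching terms $2\sin x\,\psi_{xx}$ in $(L_1)_x$ and $2\sin x\,u_x$ in $L_2$ cancel exactly after pairing, because $-\psi_{xx}=\omega$. This is the same $u_x$--$\omega$ cancellation present in the unweighted Hou--Li energy. Note that $\psi=(-\partial_{xx})^{-1}\omega$ gains two derivatives, so no Hilbert-transform bounds are involved.

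(ii) Since $(\sin x\,\rho)_x=-\rho$, the identity $(\sin x f_x,f\rho)=\frac12(f^2,\rho)$ holds on the whole circle. It turns the local terms into $-(\|u_x\|_\rho^2+\|\omega\|_\rho^2+\|u\|_\rho^2)$.

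(iii) The leftover terms $2[-(\cos x\,\psi,\omega\rho)+(\sin x\,\psi,u_x\rho)+(u\cos x,\omega\rho)]$ are $O(1)$ and consume most of that damping, so they must be computed exactly. The paper expands in the $L^2(\rho)$-orthonormal bases $o^k=\sin kx-\sin(k-1)x$ and $e^k=\cos kx-\cos(k+1)x$. The coefficients of $u$ are recovered from those of $u_x$ using $u_x(0)=0$. Positivity of the resulting infinite quadratic form follows from an interval-arithmetic eigenvalue bound on the $N=200$ truncation plus an $O(1/N)$ tail estimate.

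Your low/high split in the standard Fourier basis does not fit this weight. One has $(\sin jx,\sin kx)_\rho=\min(j,k)$, so low and high pieces are far from orthogonal in $L^2(\rho)$. The basis $o^k$, with $\sin kx=\sum_{m\le k}o^m$, is what diagonalizes the weight.

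Your higher-order norm also needs changing. A plain $H^k$ energy is not damped near $x=\pi$, where the flow $2a\sin x$ compresses. There, transport and its commutator leave a top-order growth coefficient $(2k-1)|\cos x|>0$. The paper instead uses $K^2=\|D_xu_x\|_\rho^2+\|D_x\omega\|_\rho^2$ with $D_x=\sin x\,\partial_x$. This operator commutes with the transport and keeps the damping identity.

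This higher norm is needed only for the time-derivative (convergence) estimate. The $E$ bootstrap closes on its own, because $\|\psi_x\|_\infty,\|\psi/\sin x\|_\infty,\|u\|_\infty\lesssim E$ and the transport terms can be integrated by parts.
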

\begin{remark}
    Such self-similar blowup that is neither expanding nor focusing is observed numerically for $a\in[0.6,0.9]$. See also a similar phenomenon observed for the gCLM model in \cite{lushnikov2021collapse} for $a\in[0.68,0.95]$. The blowup result for the gCLM model has been proved in \cite{chen2021slightly} for $a$ sufficiently close to $1$. We remark that for $a$ very close to $1$, since {we can show that $c_{u,\infty}=2(a-1)+o(a-1)$}, the blowup time becomes very large due to the very small coefficient {$1-a$} in the vortex stretching term {which slightly dominates the advection term}. It would be extremely difficult to compute such singularity numerically since it takes an extremely long time for the singularity to develop. For $a$ below a critical value $a_0$, i.e. $a < a_0$, we observe that the weak advection Hou-Li model develops a focusing singularity.
\end{remark}
The second result is on the blowup of the original Hou-Li model with $C^\alpha$ initial data; for its proof see Section \ref{sec hol}. In \cite{elgindi2020effects}, the authors made an important observation that advection can be weakened by $C^\alpha$ data. Intuitively if $u =O(x^\alpha)$ in the origin, since $\psi$ is $C^2$, we have that $\psi u_x\approx \alpha\psi_x u$ near the origin, the vortex stretching term is stronger than the advection term if $\alpha < 1$. See \cite{chen2021finite,chen2021regularity} on results of  blowup of the DG model with H\"older continuous data.
{
\begin{theorem}
\label{t2}
Consider the Hou-Li model \eqref{1d} in the inviscid case $\nu=0$. There exists a constant $\delta_0>0$ such that for $\alpha\in(1-\delta_0,1)$, \eqref{1d} develops a finite time singularity for some $C^{\alpha}$ initial data. Moreover, there exists a $C^{\alpha}$ self-similar profile corresponding to a blowup that is neither expanding nor focusing, similar to the setting in Theorem \ref{t1}.
\end{theorem}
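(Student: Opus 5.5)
We plan to prove Theorem \ref{t2} by the same dynamic rescaling strategy used for Theorem \ref{t1}. The small parameter is now $1-\alpha$ rather than $1-a$, and the weakening of advection comes from the H\"older cusp at the origin instead of an explicit coefficient.

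\textbf{Step 1: dynamic rescaling and approximate profile.} We introduce
\[
\tilde u(x,\tau)=C_u(\tau)\,u(x,t(\tau)),\qquad \tilde\omega(x,\tau)=C_u(\tau)\,\omega(x,t(\tau)),
\]
with no spatial rescaling, since the blowup is neither expanding nor focusing. This gives $\tilde u_\tau=c_u\tilde u-2\tilde\psi\tilde u_z+2\tilde u\tilde\psi_z$, and the analogous equation for $\tilde\omega$.

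As approximate steady state we take an $\alpha$-H\"older modification of the explicit steady state $(\sin x,\sin x,\sin x)$ of \eqref{1d}. Concretely, we use $\bar u=\bar\omega=\mathrm{sgn}(\sin x)|\sin x|^\alpha$ on $[-\pi,\pi]$, odd and $2\pi$-periodic, with $\bar\psi$ obtained from $-\bar\psi_{zz}=\bar\omega$, and we take $\bar c_u=O(1-\alpha)$ fixed by a normalization condition. Near $x=0$ we have $\bar u\sim x^\alpha$, so that $\bar\psi\,\bar u_z\approx\alpha\,\bar\psi_z\,\bar u$. The advection is therefore effectively weakened by the factor $\alpha$, exactly as $a$ does in \eqref{1dwp}.

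We then check that the residual of the profile equations is $O((1-\alpha)^{1+\epsilon})$ in the weighted norm. This is the analogue of the residual estimate in the $a<1$ case, where $\alpha$ plays the role of $a$. We choose the normalization (for instance $c_u$ fixed by $\tilde u_x$-type conditions at $0$, or by $\tilde u$ matching $\bar u$ to leading order $x^\alpha$) so that the perturbation vanishes to higher order at $0$.

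\textbf{Step 2: linear stability.} Writing $\tilde u=\bar u+u$ and so on, the linearized operator is an $O(1-\alpha)$ perturbation of the operator linearized around $(\sin x,\sin x,\sin x)$, conjugated to the H\"older setting. We will reuse the linear estimates proved for Theorem \ref{t1} (Section \ref{linearest}): exact computation of the low Fourier modes together with singularly weighted norms, with weights of the form $|\sin x|^{-\beta}\cot$-type, for the high modes.

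The new ingredient is that perturbations must be measured in spaces adapted to $x^\alpha$. We plan to work with $u/|x|^{\alpha}$-type weighted $L^2$ norms, with weights whose singular exponent depends continuously on $\alpha$. As $\alpha\to1$ these norms converge to the norms used for Theorem \ref{t1}, so the damping constant $-c\,(1-\alpha)$ obtained there persists. The $\alpha$-dependent commutator terms must be shown to be $o(1-\alpha)$ or to carry a favorable sign.

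\textbf{Step 3: nonlinear closure and blowup.} With the linear damping rate of order $(1-\alpha)$, the nonlinear terms are quadratic and controlled by an $H^1$-type weighted norm. The residual is $O((1-\alpha)^{1+\epsilon})$. Together these close a bootstrap in a ball of radius $K(1-\alpha)^{1+\epsilon}$, exactly as in Section \ref{nonlinearest}.

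Convergence to a steady state then follows by estimating $\partial_\tau$ of the solution in the same norm, which gives exponential decay. Hence $c_u(\tau)\to c_{u,\infty}=-\kappa(1-\alpha)+o(1-\alpha)<0$. Undoing the rescaling gives the self-similar form with blowup time $T=-1/c_{u,\infty}$. The $C^\alpha$ regularity of the limiting profile is preserved because the perturbation remains of higher vanishing order.

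\textbf{Main obstacle.} The main difficulty is Step 2. The H\"older cusp makes the nonlocal terms $\psi$ and $\psi_z$ interact with singular weights in ways that are not small perturbations of the smooth case. One must quantify that the weakening of advection, $(1-\alpha)\bar\psi_z\bar u$, produces an honest damping term that dominates the $O(1-\alpha)$ errors from the modified Biot–Savart computations.

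Our first attempt will be to exploit the explicit Fourier structure of $-\partial_{zz}^{-1}$ on odd functions. We will redo the exact low-mode computation with $\alpha$ kept as a parameter and expand to first order in $1-\alpha$, checking that the first-order coefficient has the correct sign.
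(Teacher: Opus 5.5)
Your overall architecture matches the paper: rescaling without spatial scaling, a H\"older deformation of $(\sin x,\sin x,\sin x)$, a weighted energy bootstrap, and convergence via the time derivative. The quantitative core of Steps~2--3 is wrong, however, and as written the bootstrap does not close.

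\textbf{The damping is of order one, not of order $1-\alpha$.} You assert that the linear damping is of size $c(1-\alpha)$ and that it comes from the weakened advection. The damping used for Theorem~\ref{t1} is of order one: Proposition~\ref{prop d} gives $-0.16$ for the $a=1$ operators $L_1,L_2$ in $L^2(\rho)$, with the fixed weight $\rho=1/(2\pi(1-\cos x))$. The advection/stretching imbalance produces only $\bar c_u<0$, i.e.\ the blowup itself.

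Your residual claim cannot rescue your bookkeeping. For these explicit profiles the residual is genuinely of order $1-\alpha$, not $O((1-\alpha)^{1+\epsilon})$; it enters already through $\psi_{\mathrm{res}}/\sin x$ in $F_{2,\alpha}$. Even granting your bound, damping $c(1-\alpha)$ against forcing $(1-\alpha)^{1+\epsilon}$ gives a radius $(1-\alpha)^{\epsilon}$, not $K(1-\alpha)^{1+\epsilon}$. At that radius $CE^2\gg(1-\alpha)E$ unless $\epsilon\ge 1$.

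The correct inequality is $\frac{d}{d\tau}E\le-(0.16-C|\alpha-1|^{1/2})E+CE^2+C|\alpha-1|$, closed in a ball of radius $C|\alpha-1|$. Consequently, your requirement that the $\alpha$-dependent terms be $o(1-\alpha)$ is both unnecessary and false. Redoing the low-mode Fourier computation with $\alpha$ as a parameter is also misdirected: it is needed only once, at $\alpha=1$.

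\textbf{Your profile is not balanced at the origin.} With $\bar u=\bar\omega=|\sin x|^\alpha$, the stretching term $(\bar u^2)_x\sim 2\alpha x^{2\alpha-1}$ is more singular than $c_u\bar\omega$ and $2\bar\psi\bar\omega_x=O(x^\alpha)$, so nothing balances it. The flow therefore creates an $x^{2\alpha-1}$ component in $\omega$. The perturbation does not stay of higher vanishing order, and the limit profile is only $C^{2\alpha-1}$, contradicting your Step~3 claim.

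The balanced pair, matching $2\psi\omega_x\approx 2\alpha\psi_x(0)\omega$ against $(u^2)_x$, is $\bar\omega_\alpha=\mathrm{sgn}(x)|\sin x|^\alpha$ and $\bar u_\alpha=\mathrm{sgn}(x)|\sin x|^{(1+\alpha)/2}$. The normalization $c_u=(\alpha-1)\psi_x(0)$ then conserves the coefficient of $x^{(1+\alpha)/2}$ in $\tilde u$. The perturbation thus stays in the same space $L^2(\rho)$, and no $\alpha$-dependent weights are needed.

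\textbf{The missing cancellation.} What must be shown is that the discrepancy terms $R_{1,\alpha},R_{2,\alpha}$ are $O(|\alpha-1|^{1/2}E^2)$, and this needs a cancellation you do not identify. The term $R_{1,\alpha}$ contains $-2u_{\mathrm{res},xx}\psi$, with $u_{\mathrm{res},xx}\sim\frac{1+\alpha}{2}\cdot\frac{\alpha-1}{2}\,x^{(\alpha-3)/2}$ and $\psi\sim\psi_x(0)x$. This term is not even in $L^2(\rho)$.

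It is controlled only because, under the above normalization, it cancels to leading order against $c_u\bar u_{\alpha,x}$. What remains is:
\begin{itemize}
\item $2u_{\mathrm{res},xx}(\sin x\,\psi_x(0)-\psi)$, bounded via $\|\omega/x\|_{L^2}\lesssim E$;
\item a remainder of size $|\alpha-1|^{1/2}|x||\sin x|^{\alpha-1}$, by Lemma~\ref{residue-lemma}(2).
\end{itemize}
Hardy's inequality then handles $2uu_{\mathrm{res},x}$ in $R_{2,\alpha}$.

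\textbf{Convergence step.} Convergence via $\partial_\tau$ requires first closing the higher-order norm $K$ built with $D_x=\sin x\,\partial_x$. The terms $\psi_\tau u_{xx}$ and $\psi_\tau\omega_x$ lose a derivative, so estimating $\partial_\tau$ in the same norm is not enough.
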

\begin{remark}
    This theorem establishes blowups of type $C^{\alpha}$ for any $\alpha$ close to 1, which of course implies blowups in less regular classes since $C^{\alpha}\subset C^{\alpha_1}$ for $\alpha_1<\alpha$. The regularity of the profile determines the speed of the blowup since our constructed $C^{\alpha}$ profile has blowup time $T=O(-1/(2(\alpha-1)))$. We remark that however, we do not have blowup for data intrinsically in a low regularity class $C^{\epsilon}$ for $\epsilon$ close to $0$; that is, data that is $C^{\epsilon}$ but not in any higher $C^\alpha$ classes. We conjecture that such blowup might be focusing, which is beyond the scope of this chapter.
\end{remark}
\begin{remark}
    The above two theorems imply that the result of the wellposedness in \cite{hou2008dynamic} of the Hou-Li model for $C^1$ initial data is sharp. As long as the advection is weakened or slightly less smooth data is allowed, we would have a self-similar blowup.
\end{remark}}
The third result is on the finite-time blowup of the weak advection model with viscosity. The dynamic rescaling formulation implies that the viscous terms are asymptotically small. Thus, we can build on Theorem \ref{t1} to establish Theorem \ref{t3}. {We remark that there is no exact self-similar profile due to the viscous term.}
We will provide more details of the blowup analysis for the viscous case in Section \ref{sec5}.
\begin{theorem}
\label{t3}
Consider the weak advection model \eqref{1dwp} with viscosity. There exists a constant $\delta_1>0$ such that for $a\in(1-\delta_1,1)$, the weak advection model \eqref{1dwp} develops a finite time singularity for some $C^{\infty}$ initial data.
\end{theorem}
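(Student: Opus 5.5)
The plan is to work in the dynamic rescaling formulation and reduce the viscous blowup to nonlinear stability of the inviscid approximate steady state from Theorem \ref{t1}. The viscous terms enter the rescaled equations with a prefactor that decays exponentially in rescaled time, so they can be treated as a perturbation.

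\textbf{Rescaling.} Set
\[
\tilde\omega(x,\tau)=C_\omega(\tau)\,\omega(C_l(\tau)x,t(\tau)),
\]
with analogous definitions of $\tilde u,\tilde\psi$ and normalization constants $c_\omega,c_l$. Following the viscous computation for the semilinear heat equation in Chapter \ref{chap:2}, the rescaled system picks up a term $\nu\lambda(\tau)\tilde\omega_{xx}$, where $\lambda=C_\omega/C_l^2$ and $\lambda(0)=C_\omega(0)$ is a free parameter. Since Theorem \ref{t1} gives a blowup that is neither expanding nor focusing, I will take $c_l=0$ (so $C_l\equiv 1$) and keep the periodic domain. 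Then $\lambda_\tau=c_\omega\lambda$, with $c_\omega\approx 2(a-1)<0$ near the profile. Hence $\lambda(\tau)\le \lambda(0)e^{-c\tau}$ with $c\sim 1-a$, which is integrable in $\tau$.

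\textbf{Linear and nonlinear estimates.} Linearize around the approximate steady state $(\bar\omega,\bar u,\bar\psi)$ used for Theorem \ref{t1} (a perturbation of $(\sin x,\sin x,\sin x)$). I will fix the normalization $c_\omega$ by a vanishing condition on the perturbation, for example on its first derivative at $0$. I will then reuse the inviscid linear damping estimate
\[
(\mathcal L w,w)_E\le -c_1\|w\|_E^2,
\]
obtained from exact computation of the low Fourier modes together with singular weights for the high modes. The new ingredient is the viscous contribution $\nu\lambda(w_{xx}+\bar\omega_{xx},w\rho)$. The singular weight $\rho$ (for instance $\sim x^{-4}$ or $\cot$-type near $0$) makes the integration by parts produce $(w,w\rho_{xx})$, which is not controlled by $\|w\|_\rho$ alone. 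The remedy is the one the introduction indicates. I will define the energy
\[
E^2=\|w\|_\rho^2+\sum_{j\le k}\mu_j\|\partial_x^j w\|_{L^2}^2.
\]
At each Sobolev level the viscous term is dissipative, $-\nu\lambda\|\partial^{j+1}w\|^2$, up to terms bounded by $C\lambda E^2$. The bad weighted term is handled by a Hardy-type bound, $\|w/x\|_\rho\lesssim\|w\|_\rho+\|w\|_{H^k}$, which is valid because the vanishing conditions make $w=O(x^2)$. The profile residual from the viscous term is $O(\lambda)$ in $E$.

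\textbf{Closing the bootstrap.} Collecting the pieces gives
\[
\tfrac{d}{d\tau}E\le -\tfrac{c_1}{2}E+C\lambda(1+E)+CE^2+\varepsilon_a,
\]
where $\varepsilon_a$ is the inviscid residual, small when $a$ is close to $1$. I will bootstrap the two conditions $E\le K\varepsilon_a$ and $\lambda\le\lambda(0)$, choosing $\lambda(0)\le\varepsilon_a$. Exponential decay of $\lambda$ keeps the forcing small for all time, so $c_\omega$ stays close to $2(a-1)$. Then $C_\omega$ decays like $e^{c_\omega\tau}$, and
\[
t(\tau)=\int_0^\tau C_\omega<\infty,
\]
while $\|\omega\|_\infty=\|\tilde\omega\|_\infty/C_\omega\to\infty$. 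This gives finite time blowup.

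\textbf{Main obstacle.} I expect the hard step to be making the viscous terms compatible with the singular weight while keeping constants uniform as $1-a\to0$. The damping $c_1$ and the decay rate of $\lambda$ both degenerate like $1-a$. So the weights $\mu_j$ and the smallness of $\lambda(0)$ must be chosen after $a$ is fixed, and the lower-order commutators have to be absorbed carefully.
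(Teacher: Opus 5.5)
\textbf{There is a genuine gap in the higher-order part of your energy.} You take
\[
E^2=\|w\|_\rho^2+\sum_{j\le k}\mu_j\|\partial_x^jw\|_{L^2}^2,
\]
with \emph{unweighted} derivative norms, and you assume the inviscid damping $(\mathcal L w,w)_E\le -c_1\|w\|_E^2$ carries over to this $E$. It does not, and it cannot. The transport field $2a\bar\psi\approx 2\sin x$ has a compressive stagnation point at $x=\pi$, and there the linearized flow amplifies derivatives.

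Concretely, write the top-order part of $L_1^{(k+1)}$ (resp.\ $L_2^{(k)}$) as $-2\sin x\,f_x-2k\cos x\,f$, with $f=u^{(k+1)}$ (resp.\ $\omega^{(k)}$). The cross terms cancel via $\psi^{(k+2)}=-\omega^{(k)}$. An unweighted integration by parts then gives
\[
(-2\sin x\,f_x-2k\cos x\,f,\,f)=-(2k-1)(\cos x\,f,f),
\]
whose coefficient is $+(2k-1)$ near $x=\pi$. This growth sits at the top order of each level. So no choice of $\mu_j$ lets lower levels absorb it, and the viscous dissipation $\nu\lambda\|\partial_x^{j+1}w\|^2$ cannot absorb it either, because $\lambda\to0$.

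This is a real effect, not a loss in the estimate. A perturbation carried into $x=\pi$ is compressed at rate about $2$, so its unweighted derivatives grow exponentially in $\tau$. Hence the bootstrap $E\le K\varepsilon_a$ fails. This is also why the inviscid damping is only available in the $\rho$-norm and in the $\sin x\,\partial_x$-weighted norm, which degenerates at $\pi$.

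\textbf{The missing idea is to weight the higher levels so that they degenerate at $x=\pi$.} The paper uses
\[
E_k^2=(u^{(k+1)},u^{(k+1)}\rho_k)+(\omega^{(k)},\omega^{(k)}\rho_k),\qquad \rho_k=(1+\cos x)^k.
\]
For these weights the same computation yields the coefficient $(1+\cos x)^k[-k-(k-1)\cos x]\le-\rho_k$. In the viscous term at level $k$, two integrations by parts leave $C(k)(u^{(k+1)},u^{(k+1)}\rho_{k-1})$. This is exactly the viscous dissipation available at level $k-1$. So the levels are combined as $\sum_k\mu^kE_k^2$, with $\mu$ small and independent of $a$.

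Since $\rho_k\ge1$ on $I=[-\pi/2,\pi/2]$, Gagliardo--Nirenberg on $I$ still bounds
\[
\|\omega_{xxx}\|_{L^\infty(I)}+\|u_{xxx}\|_{L^\infty(I)}+|\omega_x(0)|
\]
by the combined energy. This controls the normalization $c_u=2(a-1)\psi_x(0)-\nu C_u u_{xxx}(0)$ and the positive terms produced by the singular weight.

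\textbf{Two smaller corrections.}
\begin{itemize}
\item With one normalization parameter and $c_l=0$ you can only enforce $u_x(0)=0$, so $\omega$ is merely $O(x)$. The weight must therefore behave like $x^{-2}$, not $x^{-4}$, and your Hardy bound must be applied to $\omega/x-\omega_x(0)$, with $|\omega_x(0)|$ carried separately.
\item The linear damping constant does not degenerate as $a\to1$: it is the constant of the $a=1$ operator. Only the decay rate of $\lambda$ degenerates, which is why $C_u(0)=|a-1|^2$ suffices.
\end{itemize}
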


We use the framework of the dynamic rescaling formulation to establish the blowups. This formulation was first introduced by McLaughlin, Papanicolaou, and co-workers in their study of self-similar blowup of the nonlinear Schr\"odinger equation  \cite{mclaughlin1986focusing,landman1988rate}.  This formulation was later developed into an effective modulation technique, which has been applied to analyze the singularity formation for the nonlinear Schr\"odinger equation \cite{kenig2006global,merle2005blow}, compressible Euler equations \cite{buckmaster2019formation},  the nonlinear heat equation \cite{merle1997stability}, the generalized KdV equation \cite{martel2014blow}, and other dispersive problems. Recently this approach has been applied to prove singularity in various gCLM models \cite{chen2021finite,chen2021regularity,chen2021slightly} and in Euler equations \cite{ elgindi2021finite, chen2019finite2, chen2022stable}. Our blowup analysis consists of several steps.  First, we use the dynamic rescaling formulation to link a self-similar singularity to the (stable) steady state of the dynamic rescaling formulation. Secondly, we identify, either analytically or numerically, an approximate steady state to the dynamic rescaling formulation. Thirdly, we perform energy estimates using a singularly weighted norm to establish linear and nonlinear stability of the approximate steady state. Finally, we establish exponential convergence to the steady state in the rescaled time. 

The crucial ingredient of the framework is the linear stability of the approximate steady state, and we usually adopt a singularly weighted $L^2$-based estimate. {To avoid an overestimate in the linear stability analysis, we expand the perturbation in terms of the orthonormal basis with respect to the weight $L^2$ norm and reduce the linear stability estimate into an estimate of a quadratic form for the Fourier coefficients. We further extract the damping effect of the linearized operator by establishing a lower bound on the eigenvalues of an infinite-dimensional symmetric matrix. We prove the positive-definiteness of this quadratic form by performing an exact computation of the eigenvalues of a small number of Fourier modes with rigorous computer-assisted bounds}, and treat the high-frequency Fourier modes as a small perturbation by using the asymptotic decay of {the quadratic form in} the high-frequency Fourier coefficients.

\subsection{Organization of the chapter and notations}
In Section \ref{linearest}, we introduce our dynamic rescaling formulation and link the blowup of the physical equation to the steady state of the dynamic rescaling formulation. The linear stability of the approximate steady state is established.
In Section \ref{nonlinearest}, we establish the nonlinear stability of the approximate steady state and the exponential convergence to the steady state, which proves Theorem \ref{t1} and the blowup for the weak advection model. In Section \ref{sec hol}, we prove Theorem \ref{t2} and establish blowup for the original model with H\"older continuous data. In Section \ref{sec5}, we prove Theorem \ref{t3} by designing a special energy norm to estimate the viscous terms. We provide the crucial linear damping estimates in  the Appendix using computer assistance.

Throughout the article, we use $(\cdot,\cdot)$ to denote the inner product on $S^1$: $(f,g)=\int_{-\pi}^{\pi}fg$. We use $C$ to denote absolute constants, which may vary from line to line, and we use $C(k)$  to denote some constant that may depend on specific parameters $k$  we choose. We use $A\lesssim B$ for positive $B$ to denote that there exists an absolute constant $C>0$ such that $A\leq CB$.
\section{Dynamic Rescaling Formulation and Linear Estimates}
\label{linearest}
\subsection{Dynamic rescaling formulation}
\label{drf sec}
We will establish the singularity formation of the weak advection model by using the dynamic rescaling formulation. We first consider the inviscid case with $\nu=0$. For solutions to the system \eqref{1dwp}, we introduce $$\tilde{u}(x, \tau)=C_{u}(\tau)  u( x, t(\tau))\,,\quad \tilde{\omega}(x, \tau)=C_{u}(\tau)  \omega( x, t(\tau))\,,\quad \tilde{\psi}(x, \tau)=C_{u}(\tau)  \psi(x, t(\tau))\,,$$ where $$
C_{u}(\tau)=\exp \left(\int_{0}^{\tau} c_{u}(s) d s\right)\,, \quad t(\tau)=\int_{0}^{\tau} C_{u}(s) d s\,.$$
We can show that the rescaled variables solve the following dynamic rescaling equation \begin{equation}
\label{1drf}
\begin{aligned}
\tilde{u}_{\tau}+2a \tilde{\psi}\tilde{u}_{ x} &=2 \tilde{u} \tilde{\psi}_{ x}+c_u \tilde{u}\,, \\
\tilde{\omega}_{ \tau}+2a \tilde{\psi}\tilde{\omega}_{ x} &=\left(\tilde{u}^{2}\right)_{x}+c_{u}\tilde{\omega}\,, \\
-\tilde{\psi}_{xx} &=\tilde{\omega}\,.
\end{aligned}
\end{equation}

\begin{remark}
We do not rescale the spatial variable $x$, since we are interested in a blowup solution that is neither focusing nor expanding within a fixed period. The scaling factors for $u$, $\omega$, $\psi$ are thus the same.
\end{remark}
When we establish a self-similar blowup, it suffices to show the dynamic stability of equation \eqref{1drf} close to an approximate steady state  with scaling parameter $c_u<-\epsilon<0$ uniformly in time for a small constant $\epsilon$; see also \cite{chen2021finite}. In fact, it's easy to see that if $(\tilde{u},\tilde{\omega},\tilde{\psi},c_u)$ converges to a steady-state $({u}_{\infty},{\omega}_{\infty},{\psi}_{\infty},c_{u,\infty})$ of \eqref{1drf}, then $$\omega(x,t)=\frac{1}{1+c_{u,\infty}t}{\omega}_{\infty}\,,u(x,t)=\frac{1}{1+c_{u,\infty}t}u_{\infty}\,,\psi(x,t)=\frac{1}{1+c_{u,\infty}t}{\psi}_{\infty}\,,$$
is a self-similar solution of \eqref{1dwp}. %Therefore we can identify the steady-state solution of \eqref{1drf} with t
    
From now on, we will primarily work in the dynamic rescaling formulation and use the notations that $\tilde{u} =\bar{u}+\hat{u}$, where $\bar{u}$ is the approximate steady state that we perturb around and $\hat{u}$ is the perturbation. Notations for variables $\tilde{\omega}$ and $\tilde{\psi}$ are similar. 
\subsection{Equations governing the perturbation}
We use the steady state corresponding to the case of $a=1$ to construct an approximate steady state for \eqref{1drf}.
$$\bar{\omega}=\sin x\,,\quad\bar{u}=\sin x\,,\quad\bar{\psi}=\sin x\,,\quad \bar{c}_u=2(a-1)\bar{\psi}_x(0)=2(a-1)\,.$$
We consider odd perturbations $\hat{u}$, $\hat{\omega}$, $\hat{\psi}$. The parities are preserved in time by equation \eqref{1drf}.  We use the normalization condition as $c_u=2(a-1)\hat{\psi}_x(0)$. This normalization ensures that $\bar{u}_x(0)+\hat{u}_x(0)$ is conserved in time. 

To simplify our presentation, we will drop the $\hat{}$ in the perturbation $\hat{u}$ and use $u$ for $\hat{u}$, $\omega$ for $\hat{\omega}$, $\psi$ for $\hat{\psi}$.   Now the perturbations satisfy the following system
\begin{equation}
\label{1drfr}
\begin{aligned}
{u}_{ \tau} &=-2a \sin x u_x-2a \cos x \psi+ 2 u \cos x+2\sin x \psi_x+\bar{c}_u u+ c_u\bar{u}+N_1+F_1\,, \\
{\omega}_{ \tau}&=-2a \sin x \omega_x-2a \cos x\psi +2 u \cos x+2\sin x u_x+\bar{c}_u \omega+ c_u\bar{\omega}+N_2+F_2\,, \\
-{\psi}_{xx} &={\omega}\,,
\end{aligned}
\end{equation}
where $N_1, \; N_2$ and $F_1, \; F_2$ are the nonlinear terms and error terms defined below:
$$N_1=(c_u+2\psi_x)u-2a\psi u_x\,,\quad N_2=c_u\omega+2uu_x-2a\psi \omega_x\,,$$
$$F_1=(\bar{c}_u+2\bar{\psi}_x)\bar{u}-2a\bar{\psi} \bar{u}_x=2(a-1)\sin x(1-\cos x)\,,\quad F_2=\bar{c}_u\bar{\omega}+2\bar{u}\bar{u}_x-2a\bar{\psi} \bar{\omega}_x=F_1\,.$$
We further organize the system \eqref{1drfr} into the main linearized term and a smaller term containing a factor of $a-1$:
\begin{equation}
\label{1drc}
\begin{aligned}
{u}_{ \tau} &=L_1+(a-1)L'_1+N_1+F_1\,, \\
{\omega}_{ \tau}&=L_2+(a-1)L'_2+N_2+F_2\,, \\
-{\psi}_{xx} &={\omega}\,.
\end{aligned}
\end{equation}
where
$$L_1=-2 \sin x u_x-2 \cos x \psi+ 2 u \cos x+2\sin x \psi_x\,,$$ $$ L'_1=-2\sin x u_x-2 \cos x \psi+2u+2\psi_x(0)\sin x\,,$$ $$L_2=-2 \sin x \omega_x-2 \cos x\psi +2 u \cos{x}+2\sin x u_x\,,$$ $$ L'_2=-2 \sin x \omega_x-2 \cos x\psi+2\omega+2\psi_x(0)\sin x \,.$$

To show that the dynamic rescaling equation is stable and converges to a steady state, we will perform a weighted-$L^2$ estimate with a singular weight $\rho$ and a weighted $L^2$ norm$$\rho=\frac{1}{2\pi(1-\cos x)}\,,\quad \|f\|_{\rho}=(f^2,\rho)^{1/2}\,.$$ For initial perturbation with $u_x(0,0)=0$, we have $u_x(0,\tau)=0$ for all time and 
$$E^2(\tau)=\frac{1}{2}((u_x^2,\rho)+(\omega^2,\rho))\; ,$$ 
is well-defined. We will first show that the dominant parts $L_1$ and $L_2$ provide damping.  The following lemma is crucial and motivates the choice of $\rho$.
\begin{lemma}
\label{dampl}We have the following identity
$$(\sin x f_x,f\rho)=\frac{1}{2}(f^2,\rho)\,,$$
which can be verified directly by using integration by parts.
\end{lemma}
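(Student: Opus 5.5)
The identity $(\sin x\, f_x, f\rho)=\tfrac12 (f^2,\rho)$ follows from one integration by parts on $S^1=[-\pi,\pi]$. We rewrite the integrand as a derivative and move it onto the weight. Since $\rho = \frac{1}{2\pi(1-\cos x)}$, set
\[
g(x) := \sin x\,\rho(x) = \frac{\sin x}{2\pi(1-\cos x)} = \frac{1}{2\pi}\cot\frac{x}{2}.
\]
Then $(\sin x\, f_x, f\rho) = \int_{-\pi}^{\pi} g\,\tfrac12 (f^2)_x\,dx$. Integrating by parts on the circle, with the periodic boundary terms cancelling, gives $-\tfrac12\int (f^2) g_x\,dx$. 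A direct computation yields
\[
g_x = -\frac{1}{4\pi}\csc^2\frac{x}{2} = -\frac{1}{4\pi}\cdot\frac{2}{1-\cos x} = -\rho,
\]
so the integral becomes $\tfrac12\int f^2 \rho$, which is the claim.

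The only real point is justifying the integration by parts near $x=0$, where $g\sim \frac{1}{\pi x}$ and $\rho\sim \frac{1}{\pi x^2}$ are singular. The identity is applied to $f\in\{u_x,\omega\}$ for odd perturbations with $u_x(0)=0$, so $f(0)=0$ and $f=O(x)$ near $0$ for smooth $f$.

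To handle the singularity, we integrate over $\{\epsilon\le |x|\le\pi\}$ and track the boundary terms. These are $\tfrac12\big[f^2 g\big]$ evaluated at $\pm\epsilon$ (and at $\pm\pi$). Periodicity, together with $g(\pm\pi)=0$, kills the terms at $\pm\pi$. At $\pm\epsilon$, the terms are $O(f(\pm\epsilon)^2/\epsilon)=O(\epsilon)\to0$. Both integrals converge absolutely when $(f^2,\rho)<\infty$: the right-hand side is finite by assumption, and on the left $|\sin x\, f_x f\rho|\lesssim |f_x||f|/|x|$, which is integrable by Cauchy–Schwarz since $f_x\in L^2$.

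For general $f$ with merely $(f^2,\rho)<\infty$, we would add a density argument as in the footnote to the NLH chapter. We approximate by smooth functions vanishing near $0$, and check that $\liminf_{\epsilon\to0} f(\epsilon)^2/\epsilon=0$, which holds because $f^2/x^2$ is integrable. This limiting step is the only mild obstacle; the algebra itself is routine.
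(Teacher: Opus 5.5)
Your proof is correct and is the same direct integration by parts the paper indicates. The key fact is $(\sin x\,\rho)_x=-\rho$, which you verify cleanly through $\sin x\,\rho=\frac{1}{2\pi}\cot\frac{x}{2}$. Your handling of the boundary terms at $x=\pm\epsilon$ supplies the justification near the singular point that the paper leaves implicit: they vanish since $f(0)=0$, or along a sequence $\epsilon_n\to 0$ because $f^2/x^2$ is integrable.
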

\subsection{Stability of the main parts in the linearized equation}
 \label{stab main}

In order to extract the maximal amount of damping, we will expand the perturbed solution in the Fourier series and perform exact calculations. We first explore the orthonormal basis in $L^2(\rho)$.
\begin{lemma}
For the space of odd periodic functions on $[0,2\pi]$, we describe a complete set of orthonormal basis $\{o^k\}$ in $L^2(\rho)$  $$o^k=\sin(kx)-\sin((k-1)x)\,,\quad k=1,2,\cdots\,.$$
Similarly, for the space of even periodic functions that lie in $L^2(\rho)$, we describe a complete set of orthonormal basis $\{e^k\}$
$$e^k=\cos(kx)-\cos((k+1)x)\,,\quad k=0,1,\cdots\,.$$
\end{lemma}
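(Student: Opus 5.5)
The plan is to factor out the singular part of the weight with trigonometric product formulas. This reduces both claims to the classical orthogonality and completeness of half-integer trigonometric systems on $[0,\pi]$.

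Write $1-\cos x = 2\sin^2(x/2)$. Then
$$o^k=\sin(kx)-\sin((k-1)x)=2\cos\big(\tfrac{(2k-1)x}{2}\big)\sin\tfrac{x}{2},\qquad e^k=\cos(kx)-\cos((k+1)x)=2\sin\big(\tfrac{(2k+1)x}{2}\big)\sin\tfrac{x}{2}.$$
Consequently
$$o^jo^k\rho=\frac{1}{\pi}\cos\big(\tfrac{(2j-1)x}{2}\big)\cos\big(\tfrac{(2k-1)x}{2}\big),\qquad e^je^k\rho=\frac1\pi\sin\big(\tfrac{(2j+1)x}{2}\big)\sin\big(\tfrac{(2k+1)x}{2}\big).$$
Both products are bounded, so the singularity of $\rho$ at $x=0$ cancels and every basis element lies in $L^2(\rho)$.

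\textbf{Orthonormality.} Apply the product-to-sum formulas. In the $o$ case the frequencies are $(2j-1)/2$ and $(2k-1)/2$. Their half-difference $j-k$ and half-sum $j+k-1\ge 1$ are integers. Integrating over $[-\pi,\pi]$ therefore gives
$$(o^j,o^k\rho)=\frac{1}{2\pi}\int_{-\pi}^{\pi}\big[\cos((j-k)x)+\cos((j+k-1)x)\big]\,dx=\delta_{jk}.$$
The $e$ case is identical, with a minus sign in front of the sum-frequency term; that term integrates to zero because $j+k+1\ge1$. This gives $(e^j,e^k\rho)=\delta_{jk}$.

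\textbf{Completeness.} Consider the map $f\mapsto g=f/(2\sin(x/2))$. Since $f^2\rho=g^2/\pi$, it is a unitary map from $L^2(\rho)$ onto $L^2([-\pi,\pi],dx/\pi)$; its inverse is multiplication by $2\sin(x/2)$. It reverses parity: odd $f\in L^2(\rho)$ correspond exactly to even $g$, and even $f$ correspond to odd $g$. Under this map $o^k$ goes to $\cos((k-\tfrac12)x)$ and $e^k$ goes to $\sin((k+\tfrac12)x)$.

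It therefore suffices to show the following. First, $\{\cos((k-\tfrac12)x)\}_{k\ge1}$ is complete in $L^2(0,\pi)$, which covers even $g$. Second, $\{\sin((k+\tfrac12)x)\}_{k\ge0}$ is complete in $L^2(0,\pi)$, which covers odd $g$.

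Both families are the full sets of eigenfunctions of the regular self-adjoint Sturm--Liouville problem $-g''=\lambda g$ on $[0,\pi]$. The first has boundary conditions $g'(0)=0$, $g(\pi)=0$; the second has $g(0)=0$, $g'(\pi)=0$. Completeness then follows from the spectral theorem for compact self-adjoint operators, applied to the inverse of this operator, which is compact.

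Alternatively, one can argue directly. Extend $g$ from $[0,\pi]$ to $[0,2\pi]$ by the appropriate reflection across $x=\pi$. Expand the extension in the complete system $\{\cos(nx/2),\sin(nx/2)\}$ on $[0,2\pi]$, and observe that only the odd-$n$ modes of the required type survive.

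\textbf{Main obstacle.} The only point needing care is completeness, specifically identifying the correct parity-reversing isometry and the boundary conditions. Orthonormality is a direct computation.
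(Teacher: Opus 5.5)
Your proof is correct. The paper states this lemma without proof and goes straight to using the expansions in the linear damping estimate, so there is no argument of the paper's to compare with; your proposal supplies the missing justification.

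The key mechanism is your factorization
$o^k=2\sin\tfrac{x}{2}\cos\big((k-\tfrac12)x\big)$ and $e^k=2\sin\tfrac{x}{2}\sin\big((k+\tfrac12)x\big)$, together with $\rho=\big(4\pi\sin^2\tfrac{x}{2}\big)^{-1}$. It gives both orthonormality and completeness. It also explains facts the paper uses without comment, e.g.\ $e^0\rho=\tfrac{1}{2\pi}$, which is why $c_0=\tfrac{1}{2\pi}\int u_x=0$. The Sturm--Liouville completeness argument you give first is rigorous as stated.

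Two small corrections. First, $j-k$ and $j+k-1$ are the \emph{difference} and \emph{sum} of the frequencies $(2j-1)/2$ and $(2k-1)/2$, not their half-difference and half-sum; the integral you write down is nonetheless right.

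Second, your ``alternative'' completeness argument is not correct as phrased. The system $\{\cos(nx/2),\sin(nx/2)\}$ is orthogonal on an interval of length $4\pi$, not on $[0,2\pi]$, so ``only odd-$n$ modes survive'' has no meaning there. There are two clean fixes:
\begin{enumerate}
\item Work on $[0,2\pi]$ with the half-range cosine system $\{\cos(nx/2)\}_{n\ge0}$ and the reflection $G(2\pi-x)=-G(x)$. For the other family, use the sine system $\{\sin(nx/2)\}_{n\ge1}$ and $G(2\pi-x)=G(x)$. By symmetry about $x=\pi$, the even-$n$ coefficients vanish.
\item Use the observation you almost made. Since $f$ is $2\pi$-periodic and $\sin(x/2)$ changes sign under $x\mapsto x+2\pi$, the function $g=f/(2\sin\tfrac{x}{2})$ is $2\pi$-antiperiodic. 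Its Fourier series over $[-2\pi,2\pi]$ therefore contains only the half-integer frequencies $\pm(k-\tfrac12)$. Parity then leaves exactly the cosines when $f$ is odd, or the sines when $f$ is even.
\end{enumerate}
The second version is a one-line replacement for the Sturm--Liouville step.
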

Now we are now ready to establish linear stability. 
\begin{proposition}\label{prop d} The following energy estimate holds for the leading linearized operators
$$dE_1\coloneqq((L_1)_x,u_x\rho)+(L_2,\omega\rho)\leq -0.16[(u_x,u_x\rho)+(\omega,\omega\rho)]\,.$$
\end{proposition}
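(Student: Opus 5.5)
The plan is to turn the estimate into a quadratic form in Fourier coefficients with respect to the orthonormal bases of the preceding lemma, and then show that this form is negative definite with margin $0.16$.

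\textbf{Coordinates.} Since $u,\omega,\psi$ are odd, $u_x$ is even and $\omega$ is odd. The normalization gives $u_x(0)=0$, and since $\omega$ is odd we also have $\omega(0)=0$. Hence both functions lie in $L^2(\rho)$, and we expand
\[
u_x=\sum_{k\ge0} a_k e^k,\qquad \omega=\sum_{k\ge1} b_k o^k .
\]
By orthonormality, $(u_x,u_x\rho)=\sum a_k^2$ and $(\omega,\omega\rho)=\sum b_k^2$. The remaining quantities are then expressed through these coefficients:
\begin{itemize}
\item $u$ is recovered by integrating $u_x$ from $0$.
\item $\psi$ is obtained from $-\psi_{xx}=\omega$ with odd periodic boundary conditions, so $\psi_x$ is given by dividing the $\sin(kx)$ coefficients by $k$.
\end{itemize}
Concretely, we write $\sin(kx)=\sum_{j\le k}o^j$, and similarly for the cosines, to pass between the standard trigonometric basis and the weighted bases.

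\textbf{Splitting $dE_1$.} We split $dE_1$ into local and nonlocal parts.

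The local transport terms are handled exactly by Lemma \ref{dampl}:
\begin{itemize}
\item $(-2\sin x\,\omega_x,\omega\rho)=-(\omega,\omega\rho)$.
\item After differentiating $L_1$, we get $(-2\sin x\,u_{xx},u_x\rho)=-(u_x,u_x\rho)$, plus a term $-2\cos x\,u_x$.
\end{itemize}
Collecting with the other terms, the $u_x$ equation reads
\[
(L_1)_x=-2\sin x\,u_{xx}+2\sin x\,\psi+2\cos x\,\psi_{xx}-2\sin x\,u+2\sin x\,\psi_x+2\cos x\,(\ldots).
\]
The $\cos x\,u_x$ pieces should cancel partially. Each remaining term is a multiplication by $\sin x$ or $\cos x$ applied to $u_x$, $\omega$, or a nonlocal expression in them. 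Multiplication by $\cos x$ or $\sin x$ acts tridiagonally on the trigonometric basis, so in the $e^k,o^k$ basis it becomes a banded (or, through the summation map, structured) infinite matrix. The nonlocal maps $u_x\mapsto u$ and $\omega\mapsto\psi,\psi_x$ introduce factors $1/k$. Altogether we obtain
\[
dE_1=-\langle c, M c\rangle,\qquad c=(a,b),
\]
for an explicit infinite symmetric matrix $M$ whose entries are computable in closed form.

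\textbf{Proving $M\ge 0.16\,I$.} We split $c$ into low modes $k\le K$ (with $K$ around $20$–$50$) and high modes.
\begin{itemize}
\item \emph{High modes.} The nonlocal contributions carry factors $O(1/k)$. The dominant local part, coming from Lemma \ref{dampl} together with the $\cos x$ multiplier terms, should therefore give a diagonal close to some constant exceeding $0.16$ plus a bounded symmetric perturbation. The off-diagonal couplings between high and low blocks are bounded by $C/K$ via Cauchy--Schwarz on the tails.
\item \emph{Low modes.} The finite $2K\times2K$ block is handled by computing its smallest eigenvalue with rigorous interval arithmetic, or by an exact rational $LDL^T$ factorization of $M_K-0.16I$ after subtracting a small margin to absorb the cross terms. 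This is consistent with the computer-assisted bounds the paper mentions being deferred to the Appendix.
\end{itemize}
Combining the two estimates via a Schur-complement or AM-GM argument gives the constant $0.16$.

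\textbf{Main obstacle.} The hard step is the high-frequency tail. The local terms involving $\cos x$ do not diagonalize in the $e^k,o^k$ basis, so we must verify that their symmetric parts give a uniformly positive contribution for large $k$. My first attempt would be to compute the asymptotic symbol of $M$ as $k\to\infty$, expecting a Toeplitz-like limit. I would then show that this limit is at least $0.16$ plus a margin, by bounding the symbol of the limiting Toeplitz operator pointwise, and treat the deviations as $O(1/k)$ perturbations. If that margin is too thin, I would raise $K$.
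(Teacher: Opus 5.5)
Your architecture is the paper's: expand in $o^k,e^k$, reduce $dE_1$ to an explicit quadratic form, bound a truncation rigorously, and control the tail crudely. The gap is that you miscompute $(L_1)_x$, and the obstacle you anticipate is an artifact of that error.

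\textbf{The missing cancellation.} Differentiating $L_1$, \emph{all} $\cos x$ terms cancel exactly, giving
$(L_1)_x=-2\sin x\,u_{xx}-2\sin x\,u+2\sin x\,\psi+2\sin x\,\psi_{xx}$.
The coefficient of $\psi_{xx}$ is $\sin x$, not $\cos x$, and there is no $\sin x\,\psi_x$ term. Through $-\psi_{xx}=\omega$, the term $2(\sin x\,\psi_{xx},u_x\rho)=-2(\sin x\,\omega,u_x\rho)$ then exactly kills the term $2(\sin x\,u_x,\omega\rho)$ coming from $L_2$.

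This cancellation is indispensable. In your coordinates $\sin x\,e^k=\tfrac12(o^k-o^{k+2})$, so $2(\sin x\,u_x,\omega\rho)=\sum_k a_k(b_k-b_{k+2})$. This is a banded coupling with $O(1)$ entries, and it can come arbitrarily close to $\sum a_k^2+\sum b_k^2$: take $a_k=b_k$ with $b_{k+2}=-b_k$ on a long block of high modes. It therefore saturates the unit damping from Lemma~\ref{dampl}. With the formula you wrote, $2(\cos x\,\psi_{xx},u_x\rho)$ vanishes by parity and cancels nothing. Your proposed Toeplitz-symbol analysis would then return margin $0$, not $0.16$, for every cutoff $K$.

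\textbf{What the correct computation gives.} With the cancellation,
$dE_1=-\big[\|u_x\|_\rho^2+\|\omega\|_\rho^2+\|u\|_\rho^2\big]+2\big[-(\cos x\,\psi,\omega\rho)+(\sin x\,\psi,u_x\rho)+(u\cos x,\omega\rho)\big]$,
which is exactly the paper's expression. Every surviving coupling involves $u$ or $\psi$. The coefficients of $u$ are essentially those of $u_x$ divided by $k$, and those of $\psi$ carry factors $1/j^2$. So all off-diagonal entries are $O(1/k)$, and after subtracting $0.16$ the tail is $0.84\,I+O(1/k)$. There the crude bound $2ab\ge-(a^2+b^2)$ suffices, and no symbol analysis is needed.

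\textbf{Where the difficulty actually is.} It sits in the low modes. For example, the diagonal weight of the second $\omega$-coefficient is only $0.84+\tfrac14-1=0.09$. The paper truncates at $N=200$, certifies $\lambda_{\min}>0.01$ with interval arithmetic, and loses exactly $2/N=0.01$ in the tail. You should therefore expect to need a truncation of that size, or a sharper tail estimate, rather than $K\approx 20$--$50$.
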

\begin{proof}
Consider the expansion of $\omega$, $u_x$, and $u$ in the orthonormal basis
$$\omega=\sum_{k\geq 1} a_k o^k\,, \quad u=\sum_{k\geq 1} b_k o^k\,, \quad u_x=\sum_{k\geq1} c_k e^k\,.$$
Note the summation index for $u_x$ satisfies $k \geq 1$ since we can easily see that $$ (u_x,e^0\rho)=\frac{1}{2\pi}\int_0^{2\pi} u_x=0\,.$$We first express $b_k$ in terms of $c_k$. If we insert the expression of the basis into $u$, take derivative and compare the coefficients with the expansion of $u_x$, we get $$c_i=\sum_{k=1}^i b_k-ib_{i+1}\,.$$
Therefore we can solve $$b_{i+1}=b_1-\sum_{k=1}^{i-1}\frac{c_k}{k(k+1)}-\frac{c_{i}}{i}\,.$$
Moreover, we have the compatibility condition $u_x(0)=\sum_{k \geq 0} b_k=0$. Therefore we can solve $b_1$ and obtain 
\begin{equation}\label{bc}
    b_{i}=\sum_{k\geq i}\frac{c_k}{k(k+1)}-\frac{c_{i-1}}{i}\,,
\end{equation}
where we define $c_0=0$.

Now we write out the terms explicitly using the expansions
\begin{equation*}
\begin{aligned}
dE_1&=2(-u\sin x-u_{xx}\sin x,u_x\rho)+2(\sin x\psi+\sin x\psi_{xx},u_x\rho)\\&+2(- \sin x \omega_x- \cos x\psi,\omega \rho) +2 (u \cos{x}+\sin x u_x,\omega \rho)\\&=-[(u_x,u_x\rho)+(\omega,\omega\rho)+(u,u\rho)]+2[-( \cos x\psi,\omega \rho)+(\sin x\psi,u_x\rho)+ (u \cos{x},\omega \rho)]\,.    
\end{aligned}    
\end{equation*}

Here we use the crucial Lemma \ref{dampl} to extract damping on the local terms and the Biot-Savart law $-\psi_{xx}=\omega$ to cancel the effect of the nonlocal terms {$\sin x\psi_{xx}$ in $(L_1)_x$ and $\sin xu_{x}$ in $L_2$}. Next, we calculate the remaining nonlocal terms explicitly.
$$-2( \cos x\psi,\omega \rho)=(2(1-\cos x)\psi,\omega \rho)-2(\psi,\omega \rho)=\frac{1}{\pi}(\psi,\omega)-2(\psi,\omega \rho)\,.$$
We express $\omega$ and $\psi$ both in terms of orthonormal basis $o^k$ corresponding to the weighted norm and the canonical basis $\sin(kx)$ corresponding to the (normalized by $\frac{1}{\pi}$) $L^2$ norm.
$$\omega=\sum_{k \geq 1} (a_k-a_{k+1}) \sin(kx)\,,$$
where we denote $a_0=0$. Therefore 
$$\psi=\sum_{k \geq 1} \frac{a_k-a_{k+1}}{k^2} \sin(kx)\,.$$
Furthermore, we collect
$$\psi=\sum_{k\geq 1} \sum_{j\geq k}\frac{a_j-a_{j+1}}{j^2} o^k\,.$$
Therefore we can compute explicitly that 
$$-2( \cos x\psi,\omega \rho)=\sum_{k \geq 1} \frac{(a_k-a_{k+1})^2}{k^2}-2\sum_{k\geq 1} a_k\sum_{j\geq k}\frac{a_j-a_{j+1}}{j^2}\,.$$

We use integration by parts similar to Lemma \ref{dampl} to obtain 
$$2[(\sin x\psi,u_x\rho)+ (u \cos{x},\omega \rho)]=2(\cos{x}\omega+\psi-\sin x\psi_x,u\rho)\coloneqq2(T_u,u\rho)\,.$$
We further have $$\begin{aligned}
T_u&=\sum_{k \geq 1} (a_k-a_{k+1})[\frac{k+1}{2k}\sin((k-1)x)+\frac{k-1}{2k}\sin((k+1)x)+\frac{1}{k^2}\sin(kx)]\\&=\sum_{k \geq 1} \sin(kx) [\frac{k+2}{2(k+1)}(a_{k+1}-a_{k+2})+\frac{a_{k}-a_{k+1}}{k^2}+\frac{k-2}{2(k-1)}(a_{k-1}-a_{k})]\\&=\sum_{k\geq 1}[\frac{k-2}{2(k-1)}a_{k-1}+(\frac{1}{2k(k-1)}+\frac{1}{k^2})a_{k}+(\frac{k+1}{2k}+\frac{1}{(k+1)^2}-\frac{1}{k^2})a_{k+1}\\&+\sum_{j>k+1}(\frac{1}{j^2}-\frac{1}{(j-1)^2})a_j]o^k\,,  
\end{aligned}$$
where the terms involving $\frac{1}{k-1}$ in the summand is regarded as $0$ for $k=1$. Therefore we collect explicitly that 
$$\begin{aligned}
dE_1&=\sum_{k\geq 1}\{ -(a_k^2+b_k^2+c_k^2)+ \frac{(a_k-a_{k+1})^2}{k^2}-2 a_k\sum_{j\geq k}\frac{a_j-a_{j+1}}{j^2}+2b_k[\frac{k-2}{2(k-1)}a_{k-1}\\&+(\frac{1}{2k(k-1)}+\frac{1}{k^2})a_{k}+(\frac{k+1}{2k}+\frac{1}{(k+1)^2}-\frac{1}{k^2})a_{k+1}+\sum_{j>k+1}(\frac{1}{j^2}-\frac{1}{(j-1)^2})a_j]\}\,.\end{aligned}$$
Substituting \eqref{bc} into the above and we can simplify
$$
\begin{aligned}
dE_1&=-\sum_{k \geq 1}\{ a_k^2(1+\frac{1}{k^2}-\frac{1}{(k-1)^2})+c_k^2(1+\frac{1}{k(k+1)})+ 2a_k a_{k+1}\frac{1}{(k+1)^2}\\&+2 a_k \sum_{j>k+1}a_j(\frac{1}{j^2}-\frac{1}{(j-1)^2})+2a_kc_k\frac{1+2k-k^2}{2k^2(k+1)}+2a_{k+1}c_k\frac{k^2-k-1}{2k^2(k+1)^2}\\&-2a_{k+2}c_k\frac{k+2}{2(k+1)^2}+\sum_{j>k}2a_k c_j\frac{1}{j(j+1)}\}\,.\end{aligned}$$
After this explicit computation, we notice that the damping estimate in Proposition \ref{prop d} can be cast into an estimate of a quadratic form; see \eqref{dample}, which is equivalent to a lower bound on the eigenvalues of an infinite-dimensional symmetric matrix. 
\begin{equation}
    \label{dample}\begin{aligned}
 F(a,c)&\coloneqq \sum_{k\geq1}\{ a_k^2(0.84+\frac{1}{k^2}-\frac{1}{(k-1)^2})+c_k^2(0.84+\frac{1}{k(k+1)})+ 2a_k a_{k+1}\frac{1}{(k+1)^2}\\&+2 a_k \sum_{j>k+1}a_j(\frac{1}{j^2}-\frac{1}{(j-1)^2})+2a_kc_k\frac{1+2k-k^2}{2k^2(k+1)}+2a_{k+1}c_k\frac{k^2-k-1}{2k^2(k+1)^2}\\&-2a_{k+2}c_k\frac{k+2}{2(k+1)^2}+\sum_{j>k}2a_k c_j\frac{1}{j(j+1)}\}\geq0\,.\end{aligned}\end{equation}

We notice that the entries decay fast. {Therefore the strategy to prove \eqref{dample} is to combine a computer-assisted estimate of the eigenvalues of its finite truncation with a decay estimate of the remaining part. We will defer the proof of \eqref{dample} to the Appendix, see Lemma \ref{cap} and the proof}. Thereby we conclude the linear estimate.
\end{proof}

\section{Nonlinear Estimates and Convergence to Self-similar Profile}
\label{nonlinearest}
\subsection{Nonlinear stability}
By Proposition \ref{prop d} and equation \eqref{1drc}, we have \begin{equation}
\label{nonlinear}
    \begin{aligned}\frac{1}{2}\frac{d}{d\tau}E^2(\tau)&\leq -0.16E^2(\tau)+(a-1)[((L'_1)_x,u_x\rho)+(L'_2,\omega\rho)]\\&+((N_1)_x,u_x\rho)+(N_2,\omega\rho)+((F_1)_x,u_x\rho)+(F_2,\omega\rho)\,.\end{aligned}
\end{equation}
We first provide some estimates about the weighted $L^2$ norm and $L^{\infty}$ norm of some lower-order terms.
\begin{lemma}
\label{lotb}
The following estimates hold
\begin{enumerate}
    \item Weighted $L^2$ norm: $$\|\psi\|_{\rho},\|\psi_x-\psi_x(0)\|_{\rho},\|u\|_{\rho}\lesssim E\,.$$
    \item $L^{\infty}$ norm: $$\|\psi_x\|_{\infty},\|\frac{\psi}{\sin x}\|_{\infty},\|u\|_{\infty}\lesssim E\,.$$
\end{enumerate}
   
\end{lemma}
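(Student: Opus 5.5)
The approach is to work entirely with the explicit weight $\rho=\frac{1}{2\pi(1-\cos x)}$, which behaves like $\frac{1}{\pi x^2}$ near $x=0$ and is bounded below by $\frac{1}{4\pi}$ on $[-\pi,\pi]$. We use the normalization $u_x(0,\tau)=0$, the oddness of $u,\omega,\psi$, and the Biot--Savart relation $-\psi_{xx}=\omega$. The plan is to bound everything by the two quantities $\|u_x\|_\rho$ and $\|\omega\|_\rho$, whose squares sum to $2E^2$, via Hardy-type inequalities adapted to the weight $(1-\cos x)^{-1}$.

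\textbf{Weighted $L^2$ bounds.} Start with $\|u\|_\rho$. Since $u$ is odd we have $u(0)=0$, and $u_x(0)=0$ is enforced by the normalization, so $u=O(x^2)$ at the origin. A direct route is the Fourier computation already set up in the proof of Proposition \ref{prop d}. Expanding $u=\sum b_k o^k$ and $u_x=\sum c_k e^k$, formula \eqref{bc} gives $b_i=\sum_{k\ge i}\frac{c_k}{k(k+1)}-\frac{c_{i-1}}{i}$. By Cauchy--Schwarz and a discrete Hardy/Schur bound, $\sum b_i^2\lesssim \sum c_k^2$, that is, $\|u\|_\rho\lesssim\|u_x\|_\rho$.

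For $\psi$, use the expansion $\psi=\sum_k\big(\sum_{j\ge k}\frac{a_j-a_{j+1}}{j^2}\big)o^k$. The coefficient $\sum_{j\ge k}\frac{a_j-a_{j+1}}{j^2}$ is again a Hardy-type average of the $a_j$. Summing by parts and applying Schur's test gives $\|\psi\|_\rho\lesssim\|\omega\|_\rho$.

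For $\psi_x-\psi_x(0)$, which is even and vanishes at $0$, write $\psi_x-\psi_x(0)=-\int_0^x\omega$. The weighted Hardy inequality $\int\frac{1}{x^2}\big(\int_0^x\omega\big)^2\lesssim\int\omega^2\lesssim\|\omega\|_\rho^2$ near $0$ then handles the local part. Away from $0$, the weight is bounded and $|\int_0^x\omega|\le\sqrt{2\pi}\,\|\omega\|_{L^2}\lesssim\|\omega\|_\rho$ gives the bound directly.

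\textbf{$L^\infty$ bounds.} For $u$, use $u(x)=\int_0^x u_x$ with Cauchy--Schwarz against the weight: $|u(x)|\le\|u_x\|_\rho\big(\int_0^x\rho^{-1}\big)^{1/2}\lesssim E$. For $\psi_x$, the periodic primitive has mean zero, so $\psi_x(0)$ is expressible via $\psi_x(0)=\frac{1}{2\pi}\int_{-\pi}^{\pi} x\,\omega\,dx$ up to sign, and hence $|\psi_x(0)|\lesssim\|\omega\|_{L^2}\lesssim E$. Then $|\psi_x(x)-\psi_x(0)|\le\int|\omega|\lesssim E$. Finally, $\frac{\psi}{\sin x}$ is bounded by $\|\psi_x\|_\infty$ near $x=0$ by the mean value theorem, using $\psi(0)=0$. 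Near $x=\pm\pi$ we also have $\psi(\pm\pi)=0$ by oddness and periodicity, and the same mean value argument applies there. In between, $\sin x$ is bounded away from zero and $|\psi|\le\pi\|\psi_x\|_\infty$.

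\textbf{Main obstacle.} The only genuinely delicate step is the Hardy-type bound $\|u\|_\rho\lesssim\|u_x\|_\rho$ with this singular weight, together with the analogous bound for $\psi$. Here the double vanishing of $u$ at $0$ must be used: the weight $\rho\sim\frac{1}{\pi x^2}$ is critical for Hardy's inequality between $u$ and $u_x$ without a weight on $u_x$, but here $u_x$ itself carries the weight. I would first try the continuous route. Near $0$, apply the one-dimensional inequality $\int x^{-2}\big|\int_0^x f\big|^2\le 4\int f^2$ to $f=u_x$, then use $\int u_x^2\lesssim\int u_x^2\rho$. This avoids the Fourier bookkeeping, which I would fall back on if the endpoint $x=\pm\pi$ causes trouble. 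There $\rho$ is regular, so only global $L^2$ bounds are needed.
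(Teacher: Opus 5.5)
Your proposal is correct. It takes partly the paper's route and partly a different one.

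\textbf{How the paper argues.} The paper stays entirely inside the orthonormal bases $\{o^k\},\{e^k\}$ of $L^2(\rho)$ built for Proposition~\ref{prop d}. It writes $\|\psi_x-\psi_x(0)\|_\rho$ and $\|\psi\|_\rho$ as explicit coefficient sums and closes with Cauchy--Schwarz. It gets $\psi_x(0)=\sum_j\frac{a_j-a_{j+1}}{j}$ from the Fourier series. For $u$ it uses an exact identity, $\|u\|_\rho^2=\sum_j b_j^2=\sum_j\frac{c_j^2}{j(j+1)}\le\|u_x\|_\rho^2$. This holds because $b\mapsto\bigl(c_j/\sqrt{j(j+1)}\bigr)_j$ is a Helmert-type isometry on sequences with $\sum_j b_j=0$, which is exactly where $u_x(0)=0$ enters.

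\textbf{How you argue differently.} You lean on the continuous Hardy inequality near $x=0$, together with $\rho\ge\frac1{4\pi}$ and boundedness of $\rho$ away from $0$. You get $\psi_x(0)$ from the mean-zero property of $\psi_x$. Your $L^\infty$ steps (fundamental theorem of calculus plus $L^1$ bounds, and the mean value theorem at $0$ and $\pm\pi$ for $\psi/\sin x$) coincide with the paper's.

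\textbf{What each buys.} The paper's route recycles machinery already in place and gives sharp constants, e.g.\ $\|u\|_\rho\le\|u_x\|_\rho/\sqrt2$. Yours needs no explicit orthonormal basis and uses only $\rho\simeq x^{-2}$ near the origin. It is in fact the tool the paper itself switches to in Section~\ref{sec hol}.

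\textbf{Two corrections.} First, the step you flag as the main obstacle is not delicate. The inequality $\int_0^\delta x^{-2}u^2\le 4\int_0^\delta u_x^2$ holds at exactly this criticality using only $u(0)=0$, and the weight on $u_x$ only helps since $\rho$ is bounded below. The vanishing of $u_x(0)$ is needed only to make $\|u_x\|_\rho$ finite.

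Second, your explicit formula for $\psi_x(0)$ is wrong, and no fixed sign repairs it. Integration by parts gives $\frac1{2\pi}\int_{-\pi}^{\pi}x\,\omega\,dx=-\psi_x(\pi)$. For $\omega=\sin x$ this returns $+1=\psi_x(0)$, but for $\omega=\sin 2x$ it returns $-\tfrac12=-\psi_x(0)$. The correct identity is over $[0,2\pi]$:
\[
\psi_x(0)=-\frac1{2\pi}\int_0^{2\pi}x\,\omega\,dx=\frac1\pi\int_0^\pi(\pi-x)\,\omega\,dx,
\]
as in Lemma~\ref{alpha est}. Your underlying reasoning, $\psi_x(0)=\frac1{2\pi}\int_{-\pi}^{\pi}\int_0^x\omega\,dy\,dx$ from the mean-zero property, already gives $|\psi_x(0)|\lesssim\|\omega\|_{L^2}\lesssim E$. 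So the conclusion is unaffected.
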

\begin{proof}
    For (1), we use the setting of the Fourier series approach as in the proof of Proposition \ref{prop d} and pick up the notation there.
    $$\begin{aligned}
    \|\psi_x-\psi_x(0)\|_{\rho}^2&=\sum_{k \geq 1} (\sum_{j\geq k}\frac{a_j-a_{j+1}}{j})^2\leq\sum_{k \geq 1} \sum_{j\geq k}a^2_j(\frac{1}{k^2}+\sum_{j>k}(\frac{1}{j}-\frac{1}{j+1})^2)\\&\lesssim\sum_{j \geq 1} a_j^2\sum_{k \geq 1} \frac{1}{k^2}\lesssim\|\omega\|_{\rho}^2\,,\end{aligned}$$
    where we have used the Cauchy-Schwarz inequality. We can similarly estimate $\|\psi\|_{\rho}$. Then we get
    $$\|u\|_{\rho}^2=\sum_{j \geq 1} b_j^2=\sum_{j \geq 1} \frac{1}{j(j+1)}c_j^2\leq\sum_{j \geq 1} c_j^2=\|u_x\|_{\rho}^2\,.$$
    For (2), we first compute using Fourier series similar to (1) $$\psi_x(0)=\sum_{j \geq 1}\frac{a_j-a_{j+1}}{j}\lesssim\|\omega\|_{\rho}\,.$$
    Next, we estimate $$\|\psi_x-\psi_x(0)\|_{\infty}\lesssim\|\psi_{xx}\|_{1}\lesssim\|\omega\|_{\rho}\,.$$
Similarly, we obtain the estimate for $\|u\|_{\infty}$.
    For $\|\frac{\psi}{\sin x}\|_{\infty}$, since $\psi$ is odd and periodic, we have $\psi(\pi)=\psi(0)=0$ and only need to estimate this norm in $[0,\pi]$. Since $\sin x\geq \frac{2}{\pi}\min\{x,\pi-x\}$ in $[0,\pi]$,
    we have $\|\frac{\psi}{\sin x}\|_{\infty}\lesssim\|\psi_x\|_{\infty}$ by Lagrange's mean value theorem.
\end{proof}
Combined with the damping in Lemma \ref{dampl}, we further obtain 
$$\begin{aligned}((L'_1)_x,u_x\rho)&=2(-\cos xu_x-\sin xu_{xx}+\sin x\psi-\cos x\psi_x+u_x+\psi_x(0)\cos x,u_x\rho)\\&\lesssim E^2+(\|\psi\|_{\rho}+\|\psi_x-\psi_x(0)\|_{\rho})E\lesssim E^2\,,\end{aligned}$$
$$(L'_2,\omega\rho)=2(-\sin x\omega_x-\cos x\psi+\omega+\psi_x(0)\sin x,\omega\rho)\lesssim E^2+(\|\psi\|_{\rho}+|\psi_x(0)|)E\lesssim E^2\,.$$
$$((F_1)_x,u_x\rho)\lesssim |a-1|E\,,\quad(F_2,\omega\rho)\lesssim |a-1|E\,,$$
$$\begin{aligned}((N_1)_x,u_x\rho)&=2(-\omega u+(1-a)(\psi_x-\psi_x(0))u_{x}-a\psi u_{xx},u_x\rho)\\&\lesssim E^2(\|\psi_x\|_{\infty}+\|u\|_{\infty})+|(u_x^2,(\psi\rho)_x)|\\&\lesssim E^2(\|\psi_x\|_{\infty}+\|u\|_{\infty}+\|\frac{\psi}{\sin x}\|_{\infty})\lesssim E^3\,,\end{aligned}$$
$$(N_2,\omega\rho)=2((a-1)\psi_x(0)\omega+uu_x-a\psi\omega_x,\omega\rho)\lesssim E^2(\|\psi_x\|_{\infty}+\|u\|_{\infty}+\|\frac{\psi}{\sin x}\|_{\infty})\lesssim E^3\,.$$
Therefore we have \begin{equation}
\label{nonl-collected}
    \frac{d}{d\tau}E(\tau)\leq -(0.16-C|a-1|)E+C|a-1|+CE^2\,.
\end{equation}
We can perform the standard bootstrap argument to show that there exist absolute constants $\delta,C>0$ such that if $|a-1|<\delta$ and $E(0)<C|a-1|$, then we have $E(\tau)<C|a-1|$ for all time. In particular $c_u=O(|a-1|^2)$ and $c_u+\bar{c}_u<0$. Therefore we prove that the solution blows up in finite time.
\subsection{Estimates using a higher-order Sobolev norm}
\label{h2sec}
In order to establish convergence of the solution to a steady state, we need to estimate weighted norms of $u_t$ and $\omega_t$. As was pointed out in \cite{chen2021slightly}, we need to provide stability estimates of the equation in higher-order Sobolev norms to close the estimate. In particular, we choose $$K^2(\tau)=\|D_xu_x\|_{\rho}^2+\|D_x\omega\|_{\rho}^2\,,$$
where we denote $D_x$ to be the operator $\sin x\partial x$.

\begin{remark}
This choice of weighted  norms is again motivated by the local linear damping estimates. We recall that the leading order terms of the local terms in the linearized operators $(L_1)_x$, $L_2$ are $-2D_xu_x$ and $-2D_xw$, and we have $2(D_x f,f\rho)=(f,f\rho)$. Therefore in this new weighted norm, the combined terms would again give damping \begin{equation}\label{highD}
    (-2D_x D_x u_x,D_x u_x\rho)+(-2D_xD_x w,D_x w\rho)=-K^2\,.
\end{equation}
\end{remark}
We now obtain
$$\begin{aligned}\frac{1}{2}\frac{d}{d\tau}K^2(\tau)&\leq (D_x(L_1)_x,D_xu_x\rho)+(D_xL_2,D_x\omega\rho)+(D_x(N_1)_x,D_xu_x\rho)\\&+(D_xN_2,D_x\omega\rho)+(a-1)[(D_x(L'_1)_x,D_xu_x\rho)+(D_xL'_2,D_x\omega\rho)]\\&+(D_x(F_1)_x,D_xu_x\rho)+(D_xF_2,D_x\omega\rho)\,.\end{aligned}$$
We will denote the terms that have $\|\cdot\|_{\rho}$ norm bounded by $E$ as $\textit{l.o.t.}$. The bound
$$\|D_x[fg]\|_{\rho}\lesssim (\|f_x\|_{2}+\|f\|_{2})\quad \text{for}\, g=1,\cos x,\sin x$$
combined with the oddness of $\psi$ and $u$ would imply that $D_x[fg]$ is $\textit{l.o.t.}$ for $f=\psi,\psi_x,u$ and $g=\sin x,\cos x,1$.
Therefore combined with \eqref{highD}, we have the following estimate for the main term $$dK_1\coloneqq(D_x(L_1)_x,D_xu_x\rho)+(D_xL_2,D_x\omega\rho)\,,$$ $$\begin{aligned}dK_1&\leq -K^2-2(D_x[\sin x \omega],D_xu_x\rho)+2(D_xD_xu,D_x\omega\rho)+CEK\\&=-K^2-(\sin 2x\omega,D_xu_x\rho)+(\sin 2x u_x,D_x\omega\rho)+CEK\\&\leq-K^2+CEK\,,\end{aligned}$$
where we have again used a crucial cancellation in the equality, similar to that of $dE_1$ in Subsection \ref{stab main}. We estimate the rest of the terms similar to the nonlinear stability estimates in \eqref{nonlinear}.
$$(D_x(L'_1)_x,D_xu_x\rho)+(D_xL'_2,D_x\omega\rho)\lesssim K^2+EK\,,$$
$$(D_x(F_1)_x,D_xu_x\rho)+(D_xF_2,D_x\omega\rho)\lesssim|a-1|K\,,$$
$$\begin{aligned}(D_x(N_1)_x,D_xu_x\rho)&\lesssim EK^2+|(-2wD_xu+2(1-a)D_x\psi_x u_x-2a\psi D_xu_{xx},D_xu_x\rho)|\\&\lesssim EK^2+\|\sin xu_x\|_{\infty}EK+|(u^2_{xx},(\psi\sin^2x\rho)_x)|\\&\lesssim EK(K+E)+K^2\|\frac{\psi}{\sin x}\|_{\infty}\lesssim EK(K+E)\,,\end{aligned}$$
$$(D_xN_2,D_x\omega\rho)\lesssim EK^2+|(2D_xu u_x-2a\psi D_x\omega_{x},D_x\omega\rho)|\lesssim EK(K+E)\,,$$
where we have used integration by parts and the estimate $$\|\sin xu_x\|_{\infty}\lesssim\|\sin xu_{xx}\|_{1}+\|\cos xu_{x}\|_{1}\lesssim\|D_xu_{x}\|_{\rho}+\|u_{x}\|_{\rho}\lesssim E+K\,.$$

We can finally prove that $$\frac{d}{d\tau}K(\tau)\leq -(1-C|a-1|)K+CE+C|a-1|+CE(E+K)\,.$$
Therefore combined with \eqref{nonl-collected}, we can find an absolute constant $\mu>1$ such that $$\frac{d}{d\tau}(K+\mu E)\leq-(0.1-C|a-1|)(K+\mu E)+C|a-1|+C(K+\mu E)^2\,.$$
By using a standard bootstrap argument, there exist absolute constants $\delta_0<\delta,C>0$, if $|a-1|<\delta_0$ and $K(0)+\mu E(0)<C|a-1|$, then $K(\tau)+\mu E(\tau)<C|a-1|$ for all time.
\subsection{Convergence to the steady state}
\label{sec ct}
We estimate the weighted norm of $\omega_\tau$ and $u_{\tau,x}$ and then use the standard convergence in time argument as in \cite{chen2021finite,chen2021slightly}.
$$J^2(\tau)=\frac{1}{2}((u_{\tau,x}^2,\rho)+(\omega_{\tau}^2,\rho))\,.$$
Applying the estimates of $\frac{d}{d\tau}E$ to $\frac{d}{d\tau}J$, we can get damping for the linear parts, and the small error terms corresponding to $\bar{\omega}$ and $\bar{u}$ vanishes. Therefore we yield
$$\frac{1}{2}\frac{d}{d\tau}J^2\leq-(0.16-C|a-1|)J^2+((N_1)_{\tau,x},u_{\tau,x}\rho)+((N_2)_\tau,\omega_t\rho)\,.$$ 
 Using estimates similar to Lemma \ref{lotb} and nonlinear estimates in \eqref{nonlinear}, we get
$$((N_1)_{\tau,x},u_{\tau,x}\rho)\lesssim EJ^2+(\psi_\tau u_{xx},u_{\tau,x}\rho)\lesssim EJ^2+J^2\|u_{xx}\sin x\|_{\rho}\leq(E+K)J^2\,.$$
$$((N_2)_{\tau},\omega_\tau\rho)\lesssim EJ^2+(\psi_\tau \omega_{x},\omega_{\tau}\rho)\lesssim EJ^2+J^2\|\omega_{x}\sin x\|_{\rho}\leq(E+K)J^2\,.$$
Combined with the a priori estimates on $E+K$, we can establish exponential convergence of $J$ to zero. Then we can use the same argument as in \cite{chen2021finite,chen2021slightly} to establish exponential convergence to the steady state and conclude the proof of Theorem \ref{t1}.
\section{Blowup of the Original Model with H\"older Continuous Data}
\label{sec hol}
In this section, we follow the strategy of the linear and nonlinear estimates of the weak advection model in Sections \ref{linearest} and \ref{nonlinearest}, and establish blowup of $C^{\alpha}$ data for the original model \eqref{1dwp} with $a=1$. Here $\alpha<1$ is close to $1$. Many of the ideas are drawn from the paper \cite{chen2021regularity} and we only outline the most important steps. Intuitively, $C^{\alpha}$ regularity of the profile weakens the advection and therefore contributes to a blowup in finite time.
\subsection{Dynamic rescaling formulation around the approximate steady state}
Before we start, we will solve the Biot-Savart law of recovering $\psi$ from $\omega$ with odd symmetry.
\begin{lemma}
\label{alpha est}
    Suppose that $\omega$, $\psi$ are odd and periodic on $[-\pi,\pi]$, with $-\psi_{xx}=\omega$. Then we solve $\psi_x(0)=-\frac{1}{2\pi}\int_{0}^{2\pi}y{\omega}(y)$ and obtain \begin{equation}\label{bs-lawsolved}
        \psi=\int_{0}^{x}(y-x){\omega}(y)dy+x\psi_x(0)\,.
    \end{equation}
\end{lemma}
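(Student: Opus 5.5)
The proof is elementary: integrate the Biot--Savart law $-\psi_{xx}=\omega$ twice from the origin, using oddness to fix the constants, and then use periodicity to determine $\psi_x(0)$.

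\textbf{Step 1: the general formula.} Since $\psi$ is odd, $\psi(0)=0$. Integrating $-\psi_{xx}=\omega$ from $0$ to $s$ gives $\psi_x(s)=\psi_x(0)-\int_0^s\omega(y)\,dy$. Integrating again from $0$ to $x$ and exchanging the order of integration (Fubini on the triangle $0\le y\le s\le x$) gives
$$\psi(x)=x\psi_x(0)-\int_0^x(x-y)\omega(y)\,dy.$$
This is exactly \eqref{bs-lawsolved}, once we know the value of $\psi_x(0)$.

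\textbf{Step 2: determining $\psi_x(0)$.} Use periodicity, $\psi(2\pi)=\psi(0)=0$. Evaluating the formula at $x=2\pi$ gives
$$0=2\pi\psi_x(0)-2\pi\int_0^{2\pi}\omega\,dy+\int_0^{2\pi}y\,\omega(y)\,dy.$$
Since $\omega$ is odd and $2\pi$-periodic, $\int_0^{2\pi}\omega=\int_{-\pi}^{\pi}\omega=0$. Hence
$$\psi_x(0)=-\frac{1}{2\pi}\int_0^{2\pi}y\,\omega(y)\,dy,$$
as claimed.

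\textbf{Consistency check.} The vanishing of $\int_0^{2\pi}\omega$ is also the solvability condition for the periodic Poisson problem. It guarantees that the $\psi$ constructed above is genuinely periodic: $\psi_x(2\pi)=\psi_x(0)$ holds automatically. Oddness of $\psi$ follows from uniqueness, since the solution is unique up to an additive constant and that constant is fixed by $\psi(0)=0$.

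\textbf{Regularity.} No step here is a real obstacle. The one point needing care is that the lemma will be applied to $C^\alpha$ data, so $\omega$ may be merely H\"older continuous, or even only integrable. The computation stays valid under these assumptions, because all integrals are absolutely convergent and $\psi\in W^{2,1}$, so the fundamental theorem of calculus applies to $\psi_x$.
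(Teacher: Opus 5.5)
Your proposal is correct and is the argument the paper intends; the paper only remarks that the lemma follows by integration in $x$. Your two integrations from the origin (using $\psi(0)=0$), followed by evaluating at $x=2\pi$ with $\psi(2\pi)=0$ and $\int_0^{2\pi}\omega=0$ to determine $\psi_x(0)$, supply exactly those omitted details.
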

The proof of this lemma is straightforward by integration in $x$.

We construct the following approximate steady state with $C^{\alpha}$ regularity for \eqref{1drf}.
$$\bar{\omega}_{\alpha}=sgn(x)|\sin x|^{\alpha}\,,\quad\bar{u}_{\alpha}=sgn(x)|\sin x|^{\frac{1+\alpha}{2}}\,,\quad \bar{c}_{u, {\alpha}}=(\alpha-1)\bar{\psi}_{{\alpha},x}(0)\,,$$
where $\bar{\psi}_{\alpha}$ is related to $\bar{\omega}_{\alpha}$ via \eqref{bs-lawsolved}.
We consider odd perturbations $u$, $\omega$, $\psi$. The odd symmetry of the solution is preserved in time by equation \eqref{1drf}.  We will use the normalization condition as $c_u=(\alpha-1){\psi}_x(0)$, which ensures that $u$ vanishes to a higher order at all times so that we can use the same singular weight $\rho$. In fact we compute using \eqref{1drf} and the normalization conditions that $$\lim_{x\to 0}\frac{\bar{u}_{{\alpha}}(x)+u(x,\tau)}{x^{\frac{1+\alpha}{2}}}=\lim_{x\to 0}\frac{\bar{u}_{{\alpha}}(x)+u(x,0)}{x^{\frac{1+\alpha}{2}}}\,.$$ 
Therefore if we make the initial perturbation $u(x,0)$ vanish to order $1+\alpha$ around the origin,  $u(x,\tau)$ will also vanish to order $1+\alpha$ for all time. 

Now similar to what we obtain in \eqref{1drfr}, the perturbations satisfy the following system
\begin{equation}
\label{1drfr-alpha}
\begin{aligned}
{u}_{ \tau} &=L_1+R_{1,\alpha}+N_{1,\alpha}+F_{1,\alpha}\,, \\
{\omega}_{ \tau}&=L_2+R_{2,\alpha}+N_{2,\alpha}+F_{2,\alpha}\,, \\
-{\psi}_{xx} &={\omega}\,,
\end{aligned}
\end{equation}
where we extract the same leading order linear parts  as in \eqref{1drc}, while the nonlinear and error terms change and the residual error terms $R_{1,\alpha}$, $R_{2,\alpha}$ model the discrepancy between our approximate profile with $C^{\alpha}$ regularity and the steady state profile $\bar{\omega}=\bar{u}=\bar{\psi}=\sin x$. Define $\psi_{\text{res}}=\bar{\psi}_{\alpha}-\bar{\psi},\omega_{\text{res}}=\bar{\omega}_{\alpha}-\bar{\omega},u_{\text{res}}=\bar{u}_{\alpha}-\bar{u}$. We can express $R_{i,\alpha}$ and $F_{i,\alpha}$ as follows
$$R_{1,\alpha}=-2 \psi_{\text{res}} u_x-2 u_{\text{res},x} \psi+ 2 u \psi_{\text{res},x}+2u_{\text{res}} \psi_x+\bar{c}_{u,\alpha} u+ c_u\bar{u}_\alpha
\,.\,$$ 
$$R_{2,\alpha}=-2\psi_{\text{res}} \omega_x-2\omega_{\text{res},x}\psi +2 u u_{\text{res},x}+2u_{\text{res}} u_x+\bar{c}_{u,\alpha} \omega+c_u\bar{\omega}_{\alpha}\,,$$
$$N_{1,\alpha}=(c_u+2\psi_x)u-2\psi u_x\,,\quad N_{2,\alpha}=c_u\omega+2uu_x-2\psi \omega_x\,,$$
$$F_{1,\alpha}=(\bar{c}_{u,\alpha}+2\bar{\psi}_{\alpha,x})\bar{u}_{\alpha}-2\bar{\psi}_{\alpha} \bar{u}_{\alpha,x}\,,\quad F_{2,\alpha}=\bar{c}_{u,\alpha}\bar{\omega}_{\alpha}+2\bar{u}_{\alpha}\bar{u}_{\alpha,x}-2\bar{\psi}_{\alpha} \bar{\omega}_{\alpha,x}\,.$$
Before we perform our energy estimates, we will obtain some basic estimates of the residues.
\begin{lemma}
\label{residue-lemma}
The following estimates hold for $\kappa=\frac{7}{8}<\frac{9}{10}<\alpha<1$.
\begin{enumerate}
    \item Pointwise estimates of the residues:
    $$|\partial_x^i\omega_{\text{res}}|+|\partial_x^i u_{\text{res}}|\lesssim |\alpha-1||\sin x|^{\kappa-i}\,,\quad i=0,1,2,3,$$ $$\|\psi_{\text{res}}\|_{\infty}+\|\psi_{\text{res},x}\|_{\infty}\lesssim |\alpha-1|\,.$$
    \item Refined estimates using cancellations:
    $$|\frac{\alpha-1}{2}\bar{u}_{\alpha,x}-\sin xu_{\text{res},xx}|+|\sin x\partial x[\frac{\alpha-1}{2}\bar{u}_{\alpha,x}-\sin xu_{\text{res},xx}]|\lesssim |\alpha-1|^{1/2}|x||\sin x|^{\alpha-1}\,.$$
\end{enumerate}
\end{lemma}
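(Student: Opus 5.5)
Both parts of Lemma \ref{residue-lemma} reduce to explicit one-variable estimates for the functions $s^\beta$, where $s=|\sin x|$ and $\beta\in\{\alpha,\tfrac{1+\alpha}{2}\}$. By oddness and periodicity it suffices to work on $(0,\pi)$, where $\bar\omega_\alpha=s^\alpha$ and $\bar u_\alpha=s^{\beta}$ with $\beta=\tfrac{1+\alpha}{2}$. Since $1-\beta=\tfrac{1-\alpha}{2}$, every exponent satisfies $|\beta-1|\lesssim|\alpha-1|$ and $\beta-\kappa\ge \alpha-\tfrac78>\tfrac1{40}$. The single elementary inequality I will use throughout is, for $0<s\le1$,
\[
|s^{\beta-1}-1|\le |1-\beta|\,|\log s|\,s^{\beta-1},
\]
which follows from the mean value theorem applied to $\gamma\mapsto s^{\gamma}$. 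Its companion is the uniform bound $|\log s|\,s^{1/40}\lesssim1$.

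\textbf{Part (1).} For $i=0$ I write $\omega_{\text{res}}=s(s^{\alpha-1}-1)$. The inequality above gives $|\omega_{\text{res}}|\lesssim|\alpha-1|\,s^\kappa\cdot s^{\alpha-\kappa}|\log s|\lesssim|\alpha-1|s^\kappa$, and the same argument applies to $u_{\text{res}}$. For $1\le i\le3$ I expand $\partial_x^i(\sin^\beta x)$ as a finite sum of terms $P(\beta)\sin^{\beta-2j}x\cos^{m}x$, and $\partial_x^i\sin x$ as the same sum evaluated at $\beta=1$. I then compare the two expansions term by term. A coefficient difference $|P(\beta)-P(1)|\lesssim|\beta-1|$ multiplies $s^{\beta-i}\le s^{\kappa-i}$. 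A power difference $s^{1-2j}\,|s^{\beta-1}-1|$ is handled exactly as in the case $i=0$. Terms whose coefficient $P(1)$ vanishes, such as $\beta(\beta-1)s^{\beta-2}\cos^2x$, already carry a factor $|\beta-1|$. Near $x=\pi$ the function $s$ behaves like $\pi-x$, so nothing changes there.

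For $\psi_{\text{res}}$ I use Lemma \ref{alpha est} applied to $\omega_{\text{res}}$. It gives $|\psi_{\text{res},x}(0)|\le\frac1{2\pi}\int_0^{2\pi}|y\,\omega_{\text{res}}|\lesssim|\alpha-1|$ and $\psi_{\text{res},x}=\psi_{\text{res},x}(0)-\int_0^x\omega_{\text{res}}$. Both terms are $O(|\alpha-1|)$, because $s^\kappa$ is integrable. Integrating once more bounds $\|\psi_{\text{res}}\|_\infty$.

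\textbf{Part (2).} I compute the combination explicitly on $(0,\pi)$. There $\bar u_{\alpha,x}=\beta s^{\beta-1}\cos x$ and $\sin x\,u_{\text{res},xx}=\beta(\beta-1)s^{\beta-1}\cos^2x-\beta s^{\beta+1}+s^2$. Since $\tfrac{\alpha-1}{2}=\beta-1$, subtracting gives
\[
\tfrac{\alpha-1}{2}\bar u_{\alpha,x}-\sin x\,u_{\text{res},xx}=\beta(\beta-1)s^{\beta-1}\cos x(1-\cos x)\;-\;s^2\bigl(1-\beta s^{\beta-1}\bigr).
\]
The first term is the cancellation the lemma refers to. Near $x=0$ I use $1-\cos x\lesssim x^2$. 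Near $x=\pi$ I use $|x|\sim1$. In both cases $s^{\beta-1}\le s^{\alpha-1}$, so this term is $\lesssim|\alpha-1|\,|x|\,s^{\alpha-1}$. For the second term I write $|1-\beta s^{\beta-1}|\le|1-\beta|+\beta|1-s^{\beta-1}|\lesssim|\alpha-1|(1+|\log s|)s^{\beta-1}$. The extra factor $s^2\le|x|\,s$ then absorbs the logarithm through $s|\log s|\lesssim1$.

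Next I apply $D_x=\sin x\,\partial_x$. It maps $s^\gamma\cos^mx$ to a combination of $s^\gamma\cos^{m\pm1}x$, $s^{\gamma+2}\cos^{m-1}x$ and similar terms, so the same bookkeeping applies. The only care needed is that $D_x(1-\cos x)=\sin^2x$ keeps the quadratic vanishing at $0$. This already yields the stated bound with $|\alpha-1|$ in place of $|\alpha-1|^{1/2}$, which is stronger since $|\alpha-1|<1$.

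The main obstacle is Part (2): isolating the exact algebraic cancellation of the leading $|\alpha-1|s^{\beta-1}$ terms near $x=0$, and keeping the logarithms from $s^{\beta-1}-1$ under control near both endpoints. If a logarithm survives without an $s$ factor after applying $D_x$, I would fall back on splitting at $s=|\alpha-1|$, using $|\log s|\le|\log|\alpha-1||$ on the outer region and the extra $s$ powers on the inner one. This fallback is the source of the weaker $|\alpha-1|^{1/2}$ margin in the statement.
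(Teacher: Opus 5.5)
Your proposal is correct and follows the same route as the paper. The paper obtains the residue bounds in (1) and (2) by direct calculation, deferring the details to Lemma 6.1 of \cite{chen2021regularity}, and gets the bounds on $\psi_{\text{res}}$ from the Biot--Savart formula of Lemma \ref{alpha est} applied to $\omega_{\text{res}}$, exactly as you do.

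One harmless slip: the exponents in the expansion of $\partial_x^i\sin^\beta x$ are $\beta-i+2j$, not $\beta-2j$. Your explicit bookkeeping actually closes part (2) with the factor $|\alpha-1|$ instead of $|\alpha-1|^{1/2}$. After applying $\sin x\,\partial_x$, every logarithm still carries a factor $s^2$, so the splitting fallback is never needed.
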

\begin{proof}
The first part of (1) and (2) can be proved by using direct calculations, and we refer to Lemma 6.1 in \cite{chen2021regularity} for details. There seems to be a typo in (6.11) in \cite{chen2021regularity} where $\bar{\omega}_{\alpha}$ should have been $\bar{\omega}_{\alpha,x}$.
    Furthermore, by the expression in Lemma  \ref{alpha est} we get the second part of (1).
\end{proof}
Similar to the weak advection case, we define the energy $E^2(\tau)=\frac{1}{2}((u_x^2,\rho)+(\omega^2,\rho))\,.$ We will estimate the growth of $E(\tau)$.
The leading order linear estimates $L_1, L_2$ can be obtained in Proposition \ref{prop d}.
The estimates for the nonlinear terms $N_{1,\alpha}, N_{2,\alpha}$ follow almost exactly the same as the weak advection case by using Lemma \ref{lotb}.

\subsection{Nonlinear stability}
By the computation in the previous subsection, we get 
\begin{equation*}
\label{nonlinearalpha}
    \frac{1}{2}\frac{d}{d\tau}E^2(\tau)\leq -0.16E^2+CE^3+((R_{1,\alpha})_x,u_x\rho)+(R_{2,\alpha},\omega\rho)+((F_{1,\alpha})_x,u_x\rho)+(F_{2,\alpha},\omega\rho)\,.
\end{equation*}
Further, we get
$$\begin{aligned}
    ((R_{1,\alpha})_x,u_x\rho)&\leq (-2\psi_{\text{res}}u_{xx},u_x\rho)+(c_u\bar{u}_{\alpha,x}-2u_{\text{res},xx}\psi,u_x\rho)\\&+(\|\omega_{\text{res}}\|_{\infty}+\|u_{\text{res}}\|_{\infty}+C|\alpha-1|)E^2\,.
\end{aligned}$$
For the first term, we can use integration by parts and Lemma  \ref{lotb} to obtain
$$CE^2(\|\psi_{\text{res},x}\|_{\infty}+\|\frac{\psi_{\text{res}}}{\sin x}\|_{\infty})\lesssim \|\psi_{\text{res},x}\|_{\infty}E^2\,.$$
For the second term, we compute $$c_u\bar{u}_{\alpha,x}-2u_{\text{res},xx}\psi=\psi_x(0)[(\alpha-1)\bar{u}_{\alpha,x}-2\sin x u_{\text{res},xx}]+2u_{\text{res},xx}(\sin x\psi_x(0)-\psi)\,.$$
Thus by Lemmas  \ref{lotb}  and \ref{residue-lemma}, we have \begin{equation}
    \label{important}
\|c_u\bar{u}_{\alpha,x}-2u_{\text{res},xx}\psi\|_{\rho}\lesssim E|\alpha-1|^{1/2} +|\alpha-1|\|\frac{\sin x\psi_x(0)-\psi}{|\sin x|x}\|_{\infty}\,.\end{equation}
Finally, for $|x|\geq \pi/2$, we have $$|\frac{\sin x\psi_x(0)-\psi}{|\sin x|x}|\lesssim |\psi_x(0)|+\|\frac{\psi}{\sin x}\|_{\infty}\lesssim E\,.$$
For $|x|< \pi/2$, we use Lemma \ref{alpha est} and $|\sin x|\geq 2/\pi |x|$ to obtain
$$|\frac{\sin x\psi_x(0)-\psi}{|\sin x|x}|\lesssim |\frac{\sin x\psi_x(0)-\psi}{x^2}|\leq |\frac{(\sin x-x)\psi_x(0)}{x^2}|+|\frac{\int_{0}^x(y-x)\omega (y)}{x^2}|\lesssim E\,,$$
where we have used the bound $\|\omega/x\|_{1}\lesssim\|\omega/x\|_{2}\lesssim\|\omega\|_{\rho}$ in the last inequality.
Thus we yield
$$((R_{1,\alpha})_x,u_x\rho)\lesssim |\alpha-1|^{1/2}E^2\,.$$
 Similarly we get $$(R_{2,\alpha},\omega\rho)\leq (-2\frac{\psi}{\sin x}\omega_{\text{res},x}\sin x,\omega\rho)+(c_u\bar{\omega}_{\alpha},\omega\rho)+(2uu_{\text{res},x},\omega\rho)+C|\alpha-1|E^2\,.$$
 For the first two terms, we can estimate them using Lemmas  \ref{lotb}  and \ref{residue-lemma}. For the third term, we use Hardy's inequality to derive
 $$\|uu_{\text{res},x}\|_{\rho}/|\alpha-1|\lesssim\|u/x/\sin x\|_{2}\lesssim \|u/x^2\|_{2}+\|u/(\pi-x)\|_{2}\lesssim\|u_x/x\|_{2}+\|u_x\|_{2}\lesssim E\,.$$
 Therefore we have 
 $$(R_{2,\alpha},\omega\rho)\lesssim|\alpha-1|E^2\,.$$
 
 For the error terms, we can just perform standard norm estimates.  We focus on the pointwise estimates for $x\geq0$ and the case $x<0$ follows by using the odd symmetry of the solution.
 $$
\begin{aligned}
    F_{2,\alpha}&=(\alpha-1)\bar{\psi}_{\alpha,x}(0)\sin^{\alpha} x+(\alpha+1)\cos x\sin^{\alpha} x-2\alpha(\psi_\text{res}+\sin x)\cos x\sin^{\alpha-1} x\\&=(\alpha-1)(\bar{\psi}_{\alpha,x}(0)-\cos x)\sin^{\alpha} x-2\alpha\frac{\psi_{\text{res}}}{\sin x}\cos x\sin^{\alpha} x\,.
\end{aligned}
$$
Therefore, we obtain $$\|F_{2,\alpha}\|_{\rho}\lesssim |\alpha-1|+\|\frac{\psi_{res}}{\sin x}\|_{\infty}\lesssim |\alpha-1|\,.$$
Similarly, we have $$
\begin{aligned}
    (F_{1,\alpha})_x&=\frac{\alpha^2-1}{2}\sin^{\frac{\alpha-1}{2}} x\cos x[\bar{\psi}_{\alpha,x}(0)-\bar{\psi}_{\alpha}\frac{\cos x}{\sin x}]+\sin^{\frac{\alpha+1}{2}}x[(\alpha+1)\bar{\psi}_{\alpha}-2\sin^{\alpha}x]\,.
\end{aligned}
$$
Further, we obtain the following estimate: 
$$\|(\alpha+1)\bar{\psi}_{\alpha}-2\sin^{\alpha}x\|_{\infty}\leq |\alpha-1|\|\bar{\psi}_{\alpha}\|_{\infty}+2\|\psi_\text{res}\|_{\infty}+2\|\sin x-\sin^{\alpha}x\|_{\infty}\lesssim|\alpha-1|\,,$$
$$\begin{aligned}\bar{\psi}_{\alpha,x}(0)-\bar{\psi}_{\alpha}\frac{\cos x}{\sin x}&=(1-\cos x)+\psi_{\text{res},x}(0)-\psi_\text{res}\frac{\cos x}{\sin x}\\&=(1-\cos x)(1+\frac{\psi_\text{res}}{\sin x})+\psi_{\text{res},x}(0)-\frac{\psi_\text{res}}{\sin x}\,.\end{aligned}$$
Combined with the estimate similar to that in \eqref{important}, we get 
$$\|[\bar{\psi}_{\alpha,x}(0)-\bar{\psi}_{\alpha}\frac{\cos x}{\sin x}]\rho^{1/2}\|_{\infty}\lesssim\|\frac{1-\cos x}{x}(1+\frac{\psi_\text{res}}{\sin x})\|_{\infty}+\|[\psi_{\text{res},x}(0)-\frac{\psi_\text{res}}{\sin x}]/x\|_{\infty}\lesssim 1\,.$$
Therefore we yield $$\|(F_{1,\alpha})_x\|_{\rho}\lesssim |\alpha-1|\,.$$
Collecting all the estimates of the residues and the error terms, we arrive at \begin{equation*}
    \frac{d}{d\tau}E(\tau)\leq -(0.16-C|\alpha-1|^{1/2}|)E+CE^2+C|\alpha-1|\,.
\end{equation*}
Similar to Subsection \ref{nonlinearest}, we can perform the bootstrap argument to conclude finite-time blowup.
 \subsection{Estimates in higher-order Sobolev norms and convergence to steady state}
 Following the ideas in Subsection \ref{h2sec}, we can perform estimates in higher-order Sobolev norms and then close the estimates to establish  convergence to a steady state. We use the same energy $K$ and only sketch the main steps here. We first have from Subsection \ref{h2sec} the estimates of $L_i$ and $N_i$ and obtain
$$\begin{aligned}
\frac{1}{2}\frac{d}{d\tau}K^2(\tau)&\leq -K^2+CEK+CEK^2+((R_{1,\alpha})_{xx},\sin^2x u_{xx}\rho)\\&+((R_{2,\alpha})_x,\sin^2x\omega_x \rho)+((F_{1,\alpha})_{xx},\sin^2xu_{xx}\rho)+((F_{2,\alpha})_x,\sin^2x\omega_x\rho)\,.
\end{aligned}$$

By Lemma \ref{residue-lemma}, $\sin x\partial x u_{\text{res}}, \sin x\partial x \omega_{\text{res}}$ shares the same pointwise estimates as $\ u_{\text{res}},  \omega_{\text{res}}$. We can obtain, similar to the estimates in $E$, the estimates $$((R_{1,\alpha})_{xx},\sin^2x u_{xx}\rho)\lesssim|\alpha-1|(E+K)K+K\|[u_{\text{res},xx}(\psi-\sin x\psi_x(0))]_x\sin x\|_{\rho}\,.$$
We further obtain the following estimate $$\begin{aligned}
    \|[u_{\text{res},xx}(\psi-\sin x\psi_x(0))]_x\sin x\|_{\rho}&\lesssim |\alpha-1|E+\|u_{\text{res},xx}(\psi_x-\cos x\psi_x(0))\sin x\|_{\rho}\\&\lesssim |\alpha-1|E+|\alpha-1|\|(\psi_x-\psi_x(0))/x\|_{\infty}\,.
\end{aligned}$$
Finally by the Cauchy-Schwarz inequality,
we have 
\begin{equation}
    \label{cs-al}|(\psi_x-\psi_x(0))/x|\leq\int_{0}^x|\omega(y)/y|dy\lesssim\|\omega/x\|_{2}\lesssim\|\omega\|_{\rho}\lesssim E\,,
\end{equation}
and therefore we conclude $$((R_{1,\alpha})_{xx},\sin^2x u_{xx}\rho)\lesssim|\alpha-1|(E+K)K\,.$$
By similar computations, we have 
$$((R_{2,\alpha})_{x},\sin^2x \omega_{x}\rho)\lesssim|\alpha-1|(E+K)K\,.$$

And for the residue terms, we use similar estimates to obtain
$$\|(F_{2,\alpha})_x\sin x\|_{\rho}\lesssim|\alpha-1|\,,$$
$$  \|(F_{1,\alpha})_{xx}\sin x\|_{\rho}\lesssim|\alpha-1|+|\alpha-1|\|(\frac{\psi_{\text{res}}}{\sin x})_x\rho^{1/2}\sin x\|_{\infty}\;.$$
We can use the triangular inequality to estimate $$|(\frac{\psi_{\text{res}}}{\sin x})_x\rho^{1/2}\sin x|\leq |[\psi_{\text{res},x}(0)-\frac{\psi_\text{res}}{\sin x}]/x|+|(\psi_{\text{res},x}-\psi_{\text{res},x}(0))/x|+|\frac{1-\cos x}{x}\frac{\psi_\text{res}}{\sin x}|\,.$$
Combined with the estimate similar to that in \eqref{important} and \eqref{cs-al}, we conclude $$  \|(F_{1,\alpha})_{xx}\sin x\|_{\rho}\lesssim|\alpha-1|\,.$$

We can finally obtain the same estimate as in Subsection \ref{h2sec} 
$$\frac{d}{d\tau}K(\tau)\leq -(1-C|a-1|)K+CE+C|a-1|+CEK\,.$$
We can again find an absolute constant $\mu_\alpha>1$ such that $$\frac{d}{d\tau}(K+\mu_\alpha E)\leq-(0.1-C|a-1|)(K+\mu_\alpha E)+C|a-1|+C(K+\mu_\alpha E)^2\,.$$
And we can use the bootstrap argument on this higher-order energy $K+\mu_\alpha E$ to conclude a priori estimate in this norm. Now we can perform weighted estimates in time using the energy $J$ as in Subsection \ref{sec ct}, where the linear estimates follow from the linear estimates of $E$, the nonlinear estimates follow from Subsection \ref{sec ct}, and the error term vanishes under time-differentiation. We can obtain the same estimates of $J$ and establish exponential convergence of $J$ to zero. Then we use the argument in \cite{chen2021finite,chen2021slightly} to establish exponential convergence to the steady state and conclude the proof of Theorem \ref{t2}.
 \section{Blowup of the Viscous Model with Weak Advection}\label{sec5}
In this section, we follow the strategy of the linear and nonlinear estimates of the weak advection model and establish blowup  for the weak advection viscous model with $a<1$.   Intuitively, the viscosity term is small in the dynamic rescaling formulation and nonlinear stability can be closed using a higher-order norm, where the viscosity term has a stability effect. Therefore we expect that the viscous weak advection model develops a finite time singularity as well. 
\subsection{Dynamic rescaling formulation}
We recall the weak advection  model with viscosity.
\begin{equation}
\label{1dwpv}
\begin{aligned}
u_{ t}+2 a\psi u_{ z} &=2 u \psi_{ z}+\nu u_{zz}\,, \\
\omega_{ t}+2 a\psi\omega_{ z} &=\left(u^{2}\right)_{z}+\nu \omega_{zz}\,, \\
- \psi_{zz} &=\omega\,.
\end{aligned}
\end{equation}
We use  the rescaled variables $$\tilde{u}(x, \tau)=C_{u}(\tau)  u( x, t(\tau))\,,\quad \tilde{\omega}(x, \tau)=C_{u}(\tau)  \omega( x, t(\tau))\,,\quad \tilde{\psi}(x, \tau)=C_{u}(\tau)  \psi(x, t(\tau))\,,$$ where $$
C_{u}(\tau)=C_{u}(0)\exp \left(\int_{0}^{\tau} c_{u}(s) d s\right)\,, \quad t(\tau)=\int_{0}^{\tau} C_{u}(s) d s\,.$$ For solutions to \eqref{1dwpv}, the rescaled variables satisfy the dynamic rescaling equation 
\begin{equation}
\label{1drfv}
\begin{aligned}
\tilde{u}_{\tau}+2a \tilde{\psi}\tilde{u}_{ x} &=2 \tilde{u} \tilde{\psi}_{ x}+c_u \tilde{u}+\nu C_u(\tau)u_{xx}\,, \\
\tilde{\omega}_{ \tau}+2a \tilde{\psi}\tilde{\omega}_{ x} &=\left(\tilde{u}^{2}\right)_{x}+c_{u}\tilde{\omega}+\nu C_u(\tau) \omega_{xx}\,, \\
-\tilde{\psi}_{xx} &=\tilde{\omega}\,.
\end{aligned}
\end{equation}
\begin{remark}
    Different from the rescaling in the inviscid case, we introduce an extra degree of freedom: the constant $C_u(0)$. We will choose it later to ensure that the viscous term has a relatively small scaling compared to the main terms.
\end{remark}
In order to establish a finite time blowup, it suffices to prove the dynamic stability of \eqref{1drfv} with scaling parameter $c_u<-\epsilon<0$ for all time; see also \cite{chen2021finite}.
As before, we will primarily work in the dynamic rescaling formulation and use the notations $\tilde{u}=\bar{u} +u$, where $\bar{u}$ is an approximation steady state and $u$ is the perturbation that we will control in time.  

We consider the following approximate steady state.
$$\bar{\omega}=\bar{u}=\bar{\psi}=\sin x\,,\quad \bar{c}_u(\tau)=2(a-1)\bar{\psi}_x(0)-\nu C_u(\tau)\bar{u}_{xxx}(0)/\bar{u}_{x}(0)=2(a-1)+\nu C_u(\tau)\,.$$
We consider odd perturbations $u$, $\omega$, $\psi$. The odd symmetry of the solution is preserved in time by equation \eqref{1drfv}.  We use the following normalization condition: $c_u=2(a-1){\psi}_x(0)-\nu C_u(\tau)u_{xxx}(0)$. This normalization ensures that $u_x(0)$ remains $0$ if the initial perturbation satisfies $u_x(0,0)=0$. In fact, if  $u_x(\tau,0)=0$, then we obtain
$$\begin{aligned}
    \frac{d}{d\tau}u_x(\tau,0)&=\frac{d}{d\tau}(u_x(\tau,0)+\bar{u}_x(\tau,0))=(2-2a)(u_x(\tau,0)+\bar{u}_x(\tau,0))(\psi_x(\tau,0)+\bar{\psi}_x(\tau,0))\\&+(c_u+\bar{c}_u)(u_x(\tau,0)+\bar{u}_x(\tau,0))+\nu C_u(\tau)(\bar{u}_{xxx}(0)+{u}_{xxx}(0))=0\,.\end{aligned}$$

This particular choice of the approximate steady state and the normalization conditions ensure that $u_x(0,\tau)=0$ for all time provided that the initial perturbation satisfies $u_x(0,0)=0$. We will perform the same weighted norm estimate in the singular weight $\rho$ and the weighted norm $E$ as in the inviscid case. 
 
       \subsection{Estimates of the viscous terms}
    Now the perturbation satisfies 
     \begin{equation}
\label{1drcv}
\begin{aligned}
{u}_{ \tau} &=L_1+(a-1)L'_1+N_1+F_1+\nu C_u(\tau)V^u\,, \\
{\omega}_{ \tau}&=L_2+(a-1)L'_2+N_2+F_2+\nu C_u(\tau)V^\omega\,, \\
-{\psi}_{xx} &={\omega}\,.
\end{aligned}
\end{equation}
Here the terms $V^u$ and $V^\omega$ correspond  to all of the terms containing the effect of the viscosity and we factor out explicitly the small factor $\nu C_u(\tau)$ for a fixed $\nu$. $$V^u=u_{xx}+\bar{u}_{xx}+(1-u_{xxx}(0))(u+\bar{u})=u_{xx}-u_{xxx}(0)\sin x +(1-u_{xxx}(0))u\,,$$$$V^\omega=\omega_{xx}-\omega_{xxx}(0)\sin x +(1-\omega_{xxx}(0))\omega\,.$$
    We invoke the  nonlinear estimates in the inviscid case and obtain for the viscous model:
\begin{equation}
\label{l2-unvis}
    \frac{1}{2}\frac{d}{d\tau}E^2(\tau)\leq -(0.16-C|a-1|)E^2+C|a-1|E+CE^3+\nu C_u(\tau)[((V^u)_x,u_x\rho)+(V^\omega,\omega\rho)]\,.
\end{equation}
    We estimate the viscous terms carefully since they involve singular weights. $$
   ((V^u)_x,u_x\rho)=(u_{xxx}-u_{xxx}(0),u_x{\rho})+u_{xxx}(0)(1-\cos x,u_x{\rho})+(1-u_{xxx}(0))\|u_x\|^2_{\rho}\,.
    $$
    Notice that $$|\rho_x|=\rho \left |\frac{-\sin x}{1-\cos x}\right | \lesssim \rho |\frac{1}{x}|\,, \quad |\rho_{xx}|=\rho \left |\frac{-\cos x}{1-\cos x}+\frac{2\sin^2 x}{(1-\cos x)^2} \right |\lesssim \rho |x^{-2}|$$ are singular near the origin and are smooth elsewhere. We can
use integration by parts twice to compute $$\begin{aligned}(u_{xxx}-u_{xxx}(0),u_x{\rho}&)=-(u_{xx}-xu_{xxx}(0),u_{xx}{\rho})-(u_{xx}-xu_{xxx}(0),\frac{x^2}{2}u_{xxx}(0){\rho_x})\\&-(u_{xx}-xu_{xxx}(0),(u_{x}-\frac{x^2}{2}u_{xxx}(0)){\rho_x})\\&\leq -\|u_{xx}\|^2_{\rho}+C[|u_{xxx}(0)|\|u_{xx}\|_{\rho}+|u_{xxx}(0)|^2+\|\frac{u_x}{x}-\frac{x}{2}u_{xxx}(0)\|_{\rho}^2]\\&\leq -\frac{1}{2}\|u_{xx}\|^2_{\rho}+C|u_{xxx}(0)|^2+C\|\frac{u_x}{x}\|_{\rho}^2\,,\end{aligned}$$
where for  the last inequality we use the weighted AM-GM inequality $ab\leq \epsilon a^2+\frac{1}{4\epsilon}b^2$ for a very small constant $\epsilon$.
Therefore we get \begin{equation}\label{visu-l2}
    ((V^u)_x,u_x\rho)\leq -\frac{1}{2}\|u_{xx}\|^2_{\rho}+C[|u_{xxx}(0)|^2+\|\frac{u_x}{x}\|_{\rho}^2 +(1+|u_{xxx}(0)|)E^2]\,.
\end{equation}
Similarly, we estimate via integration by parts$$\begin{aligned}(\omega_{xx},\omega{\rho})&=-(\omega_{x}-\omega_{x}(0),(\omega_{x}-\omega_{x}(0)){\rho})-(\omega_{x}-\omega_{x}(0),\omega_{x}(0){\rho})\\&-(\omega_{x}-\omega_{x}(0),x\omega_{x}(0){\rho_x})-(\omega_{x}-\omega_{x}(0),(\omega-x\omega_{x}(0)){\rho_x})\\&\leq-\|\omega_{x}-\omega_{x}(0)\|^2_{\rho}+C[|\omega_{x}(0)|\|\frac{\omega_{x}-\omega_{x}(0)}{x}\|_{\rho}+\|\frac{\omega}{x}-\omega_{x}(0)\|_{\rho}^2]\,.\end{aligned}$$
And we obtain
\begin{equation}\label{visw-l2}\begin{aligned}(V^\omega,\omega\rho)
   &\leq -\|\omega_{x}-\omega_{x}(0)\|^2_{\rho}+C[|\omega_{x}(0)|\|\frac{\omega_{x}-\omega_{x}(0)}{x}\|_{\rho}\\&+\|\frac{\omega}{x}-\omega_{x}(0)\|_{\rho}^2+|\omega_{xxx}(0)|^2 +(1+|\omega_{xxx}(0)|)E^2]\,.
   \end{aligned}
\end{equation}
    
    %We collect the linear estimate
    %\begin{equation}
     %   \label{l2-vis}
    %\frac{d}{dt}E(t)\leq -(0.16-C|a-1|)E+C|a-1|+CE^2+C C_u(t)(E^V+E+EE^V)\,.
  %  \end{equation}

   The essential difficulty for the viscous terms is that after integration by parts, the singular weight produces various positive terms, on top of the damping terms $-\|u_{xx}\|_{\rho}$; see \eqref{visu-l2}, \eqref{visw-l2}. Fortunately, the positive terms contribute only to higher-order terms near the origin.
   
   Consider the interval $I=[-\pi/2,\pi/2]$. $\rho$ and $|1/x|$ are upper bounded by a positive constant outside of the interval and we have $$\|\frac{u_x}{x}\|_{\rho}^2\lesssim \|{u_x}\|_{\rho}^2+\|\frac{u_x}{x^2}\|_{L^2(I)}^2\lesssim E^2+\|u_{xxx}\|_{L^\infty(I)}^2\,,$$
   $$\|\frac{\omega}{x}-\omega_{x}(0)\|_{\rho}^2\lesssim \|\omega-x\omega_{x}(0)\|_{\rho}^2+\|\frac{\omega}{x^2}-\omega_{x}(0)/x\|_{L^2(I)}^2\lesssim E^2+|\omega_{x}(0)|^2+\|\omega_{xx}\|_{L^\infty(I)}^2\,,$$
   $$\|\frac{\omega_{x}-\omega_{x}(0)}{x}\|_{\rho}\lesssim\|{\omega_{x}-\omega_{x}(0)}\|_{\rho}+\|\frac{\omega_{x}-\omega_{x}(0)}{x^2}\|_{L^2(I)}\lesssim\|{\omega_{x}-\omega_{x}(0)}\|_{\rho}+\|\omega_{xxx}\|_{L^\infty(I)}\,.$$
   Plugging these estimates into the \eqref{visu-l2}, \eqref{visw-l2} and using again the weighted AM-GM inequality, we can yield 
   \begin{equation}\label{visuw-l2}(V^\omega,\omega\rho)+((V^u)_x,u\rho)
   \leq -\frac{1}{2}(\|\omega_{x}-\omega_{x}(0)\|^2_{\rho}+\|u_{xx}\|^2_{\rho})+C[E_V^2 +(1+E_V)E^2]\,, 
\end{equation}
where $$E_V=\|\omega_{xxx}\|_{L^\infty(I)}+\|u_{xxx}\|_{L^\infty(I)}+|\omega_{x}(0)|\,.$$

   \subsection{Estimates in a higher-order norm}
    We will use a weighted higher-order norm to close the estimates. To have a good estimate in this higher-order norm, it needs to satisfy three criteria. First, we need to extract damping in the leading order linear term. Secondly, we need to bound the terms like $\omega_{xxx}(0)$ using interpolation between the lower and the higher-order norms via the Gagliardo-Nirenberg inequality; therefore it needs to be at least as strong as a regular higher-order norm near the origin. Thirdly, we need damping for the diffusion terms to close the estimates. This motivates us to choose a combination of the $k-$th order weighted norms for $k\geq1$:
    $$E_{k}^2(\tau)=(u^{(k+1)},u^{(k+1)}\rho_k)+(\omega^{(k)},\omega^{(k)}\rho_k)\,, \quad \rho_k=(1+\cos x)^k\;,$$
    where we use the notation that $f^{(k)}=\partial_x^k f$. We denote $E_0=E$ and $\rho_0=\rho$.
    \begin{remark}
    \label{remark rm}
        This weighted norm immediately satisfies criterion 2 and we will verify in the linear estimates that it satisfies criterion 1. Finally, a clever combination of the weighted norms can produce damping for the viscous terms and we make the damping terms in the estimates of the $(k-1)$-th order norms greater than the positive terms in the estimates of the $k$-th order norms. We will elaborate on those points and establish the nonlinear estimates.
    \end{remark}

    Now we can estimate $\frac{d}{d\tau}E_{k}(\tau)$ for $k>0$ as follows
    $$\begin{aligned}
    &\frac{1}{2}\frac{d}{d\tau}E_{k}^2(\tau)=(L_2^{(k)}+(a-1)(L_2')^{(k)}+N_2^{(k)}+F_2^{(k)}+\nu C_u(t)(V^\omega)^{(k)},\omega^{(k)}\rho_k)\\&+(L_1^{(k+1)}+(a-1)(L_1')^{(k+1)}+N_1^{(k+1)}+F_1^{(k+1)}+\nu C_u(t)(V^u)^{(k+1)},u^{(k+1)}\rho_k)\,,
    \end{aligned}$$ 
    where the parts $L_i,L_i',F_i,N_i$ are defined exactly the same as in the inviscid case. 

    We first look at the viscous terms.
     We have for example
    $$((V^u)^{(k+1)},u^{(k+1)}(1+\cos x)^k)\leq (u^{(k+3)},u^{(k+1)}(1+\cos x)^k)+CE_VE_k+(1+E_V)E_k^2\,.$$
    We use integration by parts twice to obtain
    $$\begin{aligned}
        &(u^{(k+3)},u^{(k+1)}(1+\cos x)^k)=-(u^{(k+2)},u^{(k+2)}(1+\cos x)^k-u^{(k+1)}k(1+\cos x)^{k-1}\sin x)\\&=-(u^{(k+2)},u^{(k+2)}(1+\cos x)^k)+\frac{1}{2}(u^{(k+1)},u^{(k+1)}k(1+\cos x)^{k-1}[(k-1)-k\cos x])\\&\leq -(u^{(k+2)},u^{(k+2)}(1+\cos x)^k)+C(k)(u^{(k+1)},u^{(k+1)}(1+\cos x)^{k-1})\,.
    \end{aligned}$$
    We can also get a similar bound for $V^\omega$. Therefore combined with the leading order estimate \eqref{visuw-l2} and using the idea in Remark \ref{remark rm}, we conclude that for small enough constants $0<\mu<\mu_0({k_0})<1$, we have the following viscous estimate \begin{equation}\label{viscous-final}
        \sum_{k=0}^{k_0}\mu^k[((V^u)^{(k+1)},u^{(k+1)}\rho_k)+((V^\omega)^{(k)},\omega^{(k)}\rho_k)]\leq C(k)[E_V^2+(1+E_V)\sum_{k=0}^{k_0}\mu^kE_k^2]\,. 
    \end{equation}
    Here $\mu_0(k_0)$ is a generic constant depending on $k_0$. We can choose $k_0$ large enough later so that $E_V$ can be bounded using the interpolation inequalities.
    
    Now we look at the linear terms and extract damping. We denote the terms as lower order terms (l.o.t. for short) if their $\rho_k$-weighted $L^2$-norms are bounded by $\sum_{i=0}^{k-1}E_i$. For the terms of intermediate order, since  $\rho_k\leq C(k)\rho_i$ for $i<k$, combined with the classical elliptic estimate,  we can show that $u^j$, $\psi^i$ for $0\leq j<k+1$ and $0\leq i<k+2$ are l.o.t.
    Using the l.o.t. notation, we keep track only of the higher-order terms $$(L_1^{(k+1)},u^{(k+1)}\rho_k)=(-2\sin x u^{(k+2)}-2k\cos x u^{(k+1)}+2\sin x \psi^{(k+2)}+\text{l.o.t.},u^{(k+1)}\rho_k)\,,$$
    $$(L_2^{(k)},\omega^{(k)}\rho_k)=(-2\sin x \omega^{(k+1)}-2k\cos x \omega^{(k)}+2\sin x u^{(k+1)}+\text{l.o.t.},\omega^{(k)}\rho_k)\,.$$
    Again we have a crucial cancellation of the cross terms and for the leading order terms we use integration by parts to obtain for example $$\begin{aligned}(-2\sin x u^{(k+2)}-2k\cos x u^{(k+1)},u^{(k+1)}\rho_k)&=(u^{(k+1)},u^{(k+1)}(-k-(k-1)\cos x)\rho_k)\\&\leq -(u^{(k+1)},u^{(k+1)}\rho_k)\,.\end{aligned}$$
    Therefore we derive the following estimate $$(L_1^{(k+1)},u^{(k+1)}\rho_k)+(L_2^{(k)},\omega^{(k)}\rho_k)\leq -E_k^2+C(k)\sum_{i=0}^{k-1}E_i E_k\leq -\frac{1}{2}E_k^2+C(k)\sum_{i=0}^{k-1}E_i^2\,.$$
    
    Similarly we have $$(L_1'^{(k+1)},u^{(k+1)}\rho_k)+(L_2'^{(k)},\omega^{(k)}\rho_k)\leq 2E_k^2+C(k)\sum_{i=0}^{k-1}E_i^2\,.$$
    
    We have the trivial bound for the error term 
    $$(F_1^{(k+1)},u^{(k+1)}\rho_k)+(F_2^{(k)},\omega^{(k)}\rho_k)\leq C(k) (a-1)E_k\,.$$
    
    The nonlinear terms are more subtle. We will show that \begin{equation}\label{nonlinea-f}
        (N_1^{(k+1)},u^{(k+1)}\rho_k)\leq  C(k)\sum_{i=0}^{k}E_i^2E_k\,,
    \end{equation}
    and we can have the same bound for $\omega$.
    In fact, for a canonical term in $N_1^{(k+1)}$ and $N_2^{(k)}$, it is of the form $\psi^{(i)}u^{(k+2-i)}$ or $\psi^{(i)}\psi^{(k+3-i)}$ or $u^{(i)}u^{(k+1-i)}.$ For the terms $\psi u^{(k+2)}$ and $\psi \omega^{(k+1)}$, we can use integration by parts and Lemma \ref{lotb} to show that  $$(\psi u^{(k+2)},u^{(k+1)}(1+\cos x)^k)\leq C(k)(\|\psi_x\|_{\infty}+\|\frac{\psi}{\sin x}\|_{\infty})E_k^2\leq C(k)E E_k^2\,.$$
    The terms associated with $\psi_x u^{(k+1)}$, $\psi_x \omega^{(k)}$, $u u^{(k+1)}$, and $u \omega^{(k)}$  have the same bound trivially. We can then focus on controlling the weighted norms of $\omega^{(i)}u^{(k-i)}$, $\omega^{(i)}\omega^{(k-1-i)}$, $u^{(i+1)}u^{(k-i)}$ for indices $0<i<k$ to establish the bound \eqref{nonlinea-f}. For example, we get
    $$\|\omega^{(i)}u^{(k-i)}(1+\cos x)^{k/2}\|_{2}\leq \|\omega^{(i)}(1+\cos x)^{(i+1)/2}\|_{\infty}E_{k-1-i}\,.$$
    Finally, by the fundamental theorem of calculus, we can bound the $L^{\infty}$-norm by  $$\begin{aligned}
        & C(K)[\|\omega^{(i+1)}(1+\cos x)^{(i+1)/2}\|_{1}+\|\omega^{(i)}(1+\cos x)^{(i-1)/2}\sin x\|_{1}]\\&\leq C(K)[E_{i+1}+\|\omega^{(i)}(1+\cos x)^{(i)/2}\|_{1}]\leq C(k)\sum_{i=0}^{k}E_i\,.
    \end{aligned}
        $$
        Therefore we conclude that \eqref{nonlinea-f} holds.
\subsection{Collection of norms and finite time blowup}
We collect the bounds \eqref{viscous-final} \eqref{nonlinea-f} for viscous and nonlinear terms, along with the linear bounds and the leading order estimate \eqref{l2-unvis}. For any fixed ${k_0}$, there exists a small enough constant $0<\mu_1({k_0})<\mu_0({k_0})$, such that the following estimate holds
$$\frac{d}{d\tau}I_{k_0}^2\leq -(0.1-C|a-1|)I_{k_0}^2+C|a-1|I_{k_0}+CI_{k_0}^3+C\nu C_u(\tau)[E_V^2+(1+E_V)I_{k_0}^2]\,,$$
where the energy is defined as $$I_{k_0}^2=\sum_{k=0}^{k_0}\mu_1^k({k_0})E_k^2\,.$$
Here the constants depend on $k_0$ and $\mu$ but once we first prescribe $k_0$ then $\mu=\mu_1({k_0})$, they become just constants. We will later make our $C_u(\tau)$ and $|a-1|$ small to close the argument.

    Finally, by the Gagliardo-Nirenberg inequality, for $k=1,3$, we have $$\|\omega^{(k)}\|_{L^{\infty}(I)}\lesssim \|\omega^{(4)}\|^{\theta}_{L^{2}(I)}\|\omega\|^{1-\theta}_{L^{2}(I)}\,, \quad \theta=\frac{k+1/2}{4}\,.$$
    This is the classical Gagliardo-Nirenberg inequality applied to a bounded domain, and we can just use the extension technique to prove it; see for example \cite{li2021note}. We get similar bounds involving $u$ and  conclude that $E_V\lesssim I_{k_0}$, for any fixed $k_0\geq 4$. For example, we just take $k_0=4$ and obtain
    $$\frac{d}{d\tau}I_{4}\leq -(0.1-C|a-1|)I_{4}+C|a-1|+CI_{4}^2+C\nu C_u(\tau)(1+I_{4})I_{4}\,.$$

Now we choose $C_u(0)=|a-1|^2$ for $|a-1|<\delta$ with a small enough $\delta>0$. It is easy to check that the bootstrap argument for $I_{4}\leq C|a-1|$ and $C_u(\tau)\leq C_u(0)\exp((a-1)t)\leq C_u(0)$ will hold for all time provided that it holds initially. We again use the estimate for the normalization constants $$c_u+\bar{c}_u=2(a-1)+\nu C_u(t)(1-u_{xxx}(0))+2(a-1)\psi_x(0)<(a-1)<0\,.$$
    Thus we can obtain a blowup in finite time in the physical variables. 
\section{Appendix}
\label{sec app}
\begin{lemma}
\label{cap}
    Assume $\sum_{k\geq 1} a_k^2+c_k^2<\infty$, then we have the following inequality for \eqref{dample}
    $$\begin{aligned}
 F(a,c)&\coloneqq \sum_{k\geq1}\{ a_k^2(0.84+\frac{1}{k^2}-\frac{1}{(k-1)^2})+c_k^2(0.84+\frac{1}{k(k+1)})+ 2a_k a_{k+1}\frac{1}{(k+1)^2}\\&+2 a_k \sum_{j>k+1}a_j(\frac{1}{j^2}-\frac{1}{(j-1)^2})+2a_kc_k\frac{1+2k-k^2}{2k^2(k+1)}+2a_{k+1}c_k\frac{k^2-k-1}{2k^2(k+1)^2}\\&-2a_{k+2}c_k\frac{k+2}{2(k+1)^2}+\sum_{j>k}2a_k c_j\frac{1}{j(j+1)}\}\geq0\,.\end{aligned}$$
\end{lemma}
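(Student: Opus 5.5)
We view $F(a,c)$ as a quadratic form $x^T M x$ in the interleaved vector $x=(a_1,c_1,a_2,c_2,\dots)\in\ell^2$, with $M$ an infinite symmetric matrix whose entries we read off from the summand. The claim $F\ge 0$ is then equivalent to $M\succeq 0$ as a bounded operator. The term with $k=1$ involving $1/(k-1)^2$ is set to zero, following the convention stated after $T_u$.

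\textbf{Step 1: the diagonal part dominates.} The diagonal entries are $0.84+1/k^2-1/(k-1)^2$ for $a_k$ and $0.84+1/(k(k+1))$ for $c_k$. For large $k$ these tend to $0.84$. Every off-diagonal entry decays in $k$ and $j$:
\begin{itemize}
\item the $a_ka_{k+1}$ entry is $O(k^{-2})$;
\item the $a_ka_j$ entries for $j>k+1$ are $O(j^{-3})$;
\item the $a_kc_k$ and $a_{k+1}c_k$ entries are $O(k^{-1})$;
\item the $a_{k+2}c_k$ entry is $-\frac{k+2}{2(k+1)^2}=O(k^{-1})$;
\item the $a_kc_j$ entries for $j>k$ are $O(j^{-2})$.
\end{itemize}

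\textbf{Step 2: split off the tail.} Fix a truncation level $N$, say a few hundred, and write $x=x_{\le N}+x_{>N}$. This splits $F=F_{\le N}+2B+F_{>N}$, where $B$ is the cross form between the two blocks.

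For the tail block $F_{>N}$, use Schur's test, i.e.\ bound each row sum of absolute values of the off-diagonal entries. A row indexed by $k>N$ contributes about $\frac{1}{k}+\frac{1}{k^2}+\sum_{j>k}\bigl(j^{-2}+j^{-3}\bigr)=O(1/k)$. The delicate column is the one for $c_j$: summing $|a_kc_j|/(j(j+1))$ over all $k<j$ gives at most $j\cdot j^{-2}=1/j$, which is still small. So $F_{>N}\ge (0.84-C/N)\|x_{>N}\|^2$.

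For the cross term, use Cauchy--Schwarz with Hilbert--Schmidt bounds, estimating the entries carefully:
\begin{itemize}
\item near the cut, the entries $a_{k+1}c_k$ and $a_{k+2}c_k$ with $k\approx N$ are $O(1/N)$;
\item the long-range couplings $a_kc_j$ with $k\le N<j$ have total squared mass $\sum_{k\le N}\sum_{j>N}j^{-4}\lesssim N\cdot N^{-3}=N^{-2}$;
\item the couplings $a_ka_j$ are similar.
\end{itemize}
This gives $|B|\le (C/N)\|x_{\le N}\|\|x_{>N}\|$ with an explicit $C$.

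\textbf{Step 3: computer-assisted bound on the head.} For the finite block $F_{\le N}$, compute the smallest eigenvalue $\lambda_N$ of the $2N\times 2N$ symmetric matrix $M_N$ with interval arithmetic. A clean way to make this rigorous is to exhibit an approximate Cholesky factorization $M_N-\lambda I=LL^T+E$ and bound $\|E\|$ by Gershgorin. Alternatively, verify via Sylvester's criterion/LDL$^T$ with interval pivots that $M_N-\eta I\succeq 0$ for some explicit $\eta>0$.

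Note the hidden subtlety in the head: $M_N$ also contains the coupling of $a_k$ ($k\le N$) to all $c_j$ ($j\le N$). The rows for small $k$ have genuinely $O(1)$ off-diagonal mass, which is why an exact finite computation is needed there rather than a diagonal-dominance argument.

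\textbf{Step 4: close with a $2\times2$ form.} Combining the three bounds,
\[
F\ge \eta\|x_{\le N}\|^2-\frac{2C}{N}\|x_{\le N}\|\|x_{>N}\|+\Bigl(0.84-\frac{C}{N}\Bigr)\|x_{>N}\|^2,
\]
which is nonnegative once $C^2/N^2<\eta\,(0.84-C/N)$.

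\textbf{Main obstacle.} I expect the hard step to be obtaining a certified positive lower bound $\eta$ for the truncated matrix $M_N$, robust enough to absorb the tail coupling. The constant $0.84$ was presumably chosen so that the true infimum of the spectrum is barely nonnegative, so $\eta$ may be small. That would force a large $N$ and careful interval-arithmetic bookkeeping. If $\eta$ turns out too small, I would first sharpen the cross-term estimate using the exact entries for $k$ near $N$, rather than the crude $C/N$ bound.
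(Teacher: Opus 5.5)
Your proposal is correct and follows essentially the same route as the paper. Both truncate at $N$ (the paper takes $N=200$), certify by interval arithmetic that the $2N\times 2N$ head matrix has smallest eigenvalue above an explicit $\eta$, and control the remainder analytically using the decay of the entries. The paper obtains $\eta=0.01$ from an approximate eigendecomposition $F^{(N)}-0.011I\approx VDV^T$ with an entrywise-bounded residual. The only difference is in the closing step: the paper absorbs all head--tail couplings termwise via $2ab\ge-(a^2+b^2)$, which costs $2/N$ on the head diagonal and so needs $\eta\ge 2/N$, just met at $N=200$. Your Cauchy--Schwarz bound with the $2\times 2$ discriminant condition only requires $\eta\gtrsim C^2/N^2$, so it leaves more room.
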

\begin{proof}
    Denote the summation of terms in  $F(a,c)$ that only involve $a_i$, $c_j$ for $i,j\leq N$ as $F_N(a,c)$. Here $N=200$. This quadratic form $F_N(a,c)$ can be expressed as $a^{(N),T}F^{(N)}c^{(N)}$, where $a^{(N)}, c^{(N)}$ are two vectors with entries $a_i$,$c_j$ respectively and $F^{(N)}$ is a symmetric matrix. We numerically verify using interval arithmetic in Matlab that the smallest eigenvalue of $F^{(N)}$ is greater than $0.01$; see remarks after the proof for details. Therefore we have 
    $$F_N(a,c)\geq0.01\sum_{k=1 }^N (a_k^2+c_k^2)\,.$$

For the remainder $F(a,c)-F_N(a,c)$, we estimate it term by term via the trivial bound $2ab\geq-(a^2+b^2)$ and obtain
    $$\begin{aligned}&F(a,c)-F_N(a,c)\geq\sum_{k>N}[ a_k^2(0.84+\frac{1}{k^2}-\frac{1}{(k-1)^2})+c_k^2(0.84+\frac{1}{k(k+1)})] \\&+\sum_{k=1}^{ N} a_k^2(-\frac{2}{N^2}-\frac{1}{N+1}) +\sum_{k=N-1}^{ N} c_k^2(-\frac{N^2-N-1}{2N^2(N+1)^2}-\frac{N+1}{2N^2}) \\&+\sum_{k> N} a_k^2(-\frac{3}{N^2}-N(\frac{1}{(N)^2}-\frac{1}{(N+1)^2})-\frac{N^2-N-1}{2N^2(N+1)^2}-\frac{N+1}{2N^2}-\frac{N^2-2N-1}{2N^2(N+1)}\\&-\frac{1}{N+2})+ \sum_{k> N} c_k^2(-\frac{N^2-N-1}{2N^2(N+1)^2}-\frac{N+1}{2N^2}-\frac{N^2-2N-1}{2N^2(N+1)}-\frac{1}{N+2})\,. \end{aligned}$$
For $N=200$, we estimate all of the coefficients by a lower bound and obtain
    $$F(a,c)-F_N(a,c)\geq -\frac{2}{N}\sum_{k=1}^{ N} (a_k^2+c_k^2)+(0.84-\frac{3}{N})\sum_{k> N} (a_k^2+c_k^2)\geq-\frac{2}{N}\sum_{k=1}^{ N} (a_k^2+c_k^2)\,.$$
    Therefore we conclude $F(a,c)\geq 0$.
\end{proof}
\begin{remark}
    We now explain how to verify that the smallest eigenvalue of the symmetric matrix $F^{(200)}$ is greater than $0.01$. We proceed in three steps.
    \begin{enumerate}
        \item We first use Matlab to perform an (approximate) SVD decomposition of $$F^{(200)}-0.011I\approx VDV'\,.$$
        Here $D$ is the diagonal matrix consisting of (approximate) eigenvalues of $F^{(200)}-0.011I$, and $V$ is the unitary matrix consisting of (approximate) eigenvectors of $F^{(200)}-0.011I$.
        \item We use interval arithmetic to verify that the maximal absolute value of entries of $F^{(200)}-0.011I- VDV'$ is at most $10^{-10}$. Therefore the spectral norm of $F^{(200)}-0.011I- VDV'$, which is bounded by its $1$-norm, is rigorously bounded from above by $200 \times 10^{-10}$.
        \item Since $D$ has positive entries, we know that $VDV'$ is positive definite. We  conclude that  $$F^{(200)}-0.01I=VDV'+0.001I+F^{(200)}-0.011I- VDV'$$ is positive definite.
    \end{enumerate}
\end{remark}

\chapter{High Precision PINNs in Unbounded Domains: Application to singularity formation in PDEs}
\label{append:pinns}
We investigate the high-precision training of Physics-Informed Neural Networks (PINNs) in unbounded domains, 
with a special focus on applications to singularity formation in PDEs. 
We propose a modularized approach and study the choices of neural network ansatz, sampling strategy, and optimization algorithm.
When combined with rigorous computer-assisted proofs and PDE analysis,
the numerical solutions identified by PINNs, provided they are of high precision, 
can serve as a powerful tool for studying singularities in PDEs.
For 1D Burgers equation, our framework can lead to a solution with very high precision,
 and for the 2D Boussinesq equation, which is directly related 
 to the singularity formation in 3D Euler and Navier-Stokes equations, 
 we obtain a solution whose loss is $4$ digits smaller than that obtained in \cite{wang2023asymptotic} with fewer training steps. 
  We also discuss potential directions for pushing towards machine precision for higher-dimensional problems.
  \section{Introduction}
singularity formation is one of the key challenges in the study of partial differential equations (PDEs).
Unlike well-posed equations, where one can apply classical existence and uniqueness theorems,
singularities often occur in certain solutions of nonlinear PDEs, where we only have guarantees 
of existence for a short time, but the solution may blow up in finite time.
The study of singularities often involves a case-by-case approach and is related to 
some of the most intriguing mathematics and physical properties, such as the onset of turbulence in the Navier-Stokes equations.
The singularity of the Navier-Stokes equations is one of the seven Millennium Prize Problems \cite{fefferman2006existence}:
widely regarded as the most fundamental and challenging problem in analysis, and is still open.
One of the key difficulties in the study of singularities is the lack of 
understanding of the singularity pattern and its mechanism. The computation of the 
singularity itself, or infinity, is intractable numerically.
A general roadmap is thus to first propose a plausible singularity ansatz that renders
the computation feasible,
then find candidates of such blowup by numerical simulations, and
finally verify the stability of such an ansatz by PDE analysis.

We are often interested in a special structure of singularity: self-similar singularity.
Self-similarity relates to the invariance of the solution under scaling transformations 
and reduces singularity to the existence of a self-similar profile. To be precise, for the quantity of interest $u(x,t)$,
we put the ansatz $u(x,t) = (T-t)^{-\alpha} U(x(T-t)^{-\beta})$, where $U$ is the profile function
independent of time, $T$ is the blowup time, and $\alpha>0, \beta$ are the scaling exponents to be determined.
Now we can reduce the computation of an infinite $u$ to the computation of a finite, smooth profile $U$, 
along with scaling exponents $\alpha, \beta$ to be inferred. Physics-Informed Neural Networks (PINNs) \cite{raissi2019physics}
serve as a powerful tool to find such profiles, with the scaling parameters jointly inferred as inverse problems. It was first introduced in the context of identifying singularity profiles in \cite{wang2023asymptotic} and has seen success 
even in problems that are unstable for traditional numerical methods. While PINNs offer a powerful tool to search
for candidate profiles, solutions identified by PINNs are often of limited accuracy and far from applicable to
rigorous PDE analysis, to the best of the authors' knowledge.

In this work, we aim to systematically study the high-precision training of PINNs,
with a special focus on applications  to solve profile equations governing singularity formations
in PDEs, in fluid dynamics and beyond. We will take a modularized perspective without diving 
into sophisticated tricks and investigate the following aspects:
a good neural network ansatz representing the profile function, a good sampling strategy to tackle
the infinite domain with a special focus on imposing boundary conditions,
and a good optimization algorithm to train the neural network. 
We apply our findings to the 1D Burgers equation and the 2D Boussinesq equation, 
obtaining a precision amenable to rigorous PDE analysis for the 1D Burgers equation 
and $4$ digits better than \cite{wang2023asymptotic} with fewer training steps for the 2D Boussinesq equation.
%We also discuss potential directions of pushing towards machine precision for higher-dimensional problems.
The 2D Boussinesq equation shares many similarities with the 3D axisymmetric Euler equation for the ideal
fluid without viscosity; see the pioneering works of \cite{chen2021finite,chen2022stable,chen2025stable} for the connection between the two equations,
where the authors used the connection to establish singularity formation for the 3D axisymmetric Euler with boundary.

\section{Related Works}
\subsection{PINNs}
Neural networks have witnessed success in solving PDEs and surrogate modeling in math and science.
PINNs in particular have been widely used due to their flexibility and applicability to a wide range of problems \cite{cai2021physics, raissi2019physics, karniadakis2021physics}.
The key idea of PINNs is to enforce the PDE constraints at a set of collocation points and to minimize the residual
of the PDEs as a loss function. By posing the solution of PDEs as an optimization problem, PINNs are especially suited
to solve inverse problems \cite{raissi2020hidden, yuan2022pinn, lu2021physics, yu2022gradient}, where the solution of the PDE and the underlying parameters can be jointly inferred.
In \cite{wang2023asymptotic}, the authors used PINNs to study the blowup of the 1D Burgers equation, 
the 1D family of generalized Constantin-Lax Majda equations, and the 2D Boussinesq equation.

Another line of work, operator learning \cite{kovachki2023neural}, focuses on learning the solution operator instead of learning 
a single instance of solution, where Fourier Neural Operators (FNOs) \cite{li2020fourier, li2023fourier, li2023geometry, li2024scale} and DeepONets \cite{lu2021learning} are two families of representative works in this direction.
Once a solution operator is learned, it can be evaluated in a resolution-free manner at any point in the domain.
Data are often augmented to enhance the solution accuracy, while the loss function  can also incorporate
the PDE constraints, termed Physics Informed Neural Operator (PINO) \cite{li2021physics}. In \cite{maust2022fourier}, the authors used PINO with Fourier continuation 
to study the blowup of the 1D Burgers equation.
\subsection{Self-similar singularity and computer assisted proofs}
Self-similar singularity of the ansatz $u(x,t) = (T-t)^{-\alpha} U(x(T-t)^{-\beta})$
is generic in the study of singularity formation in PDEs, where 
one uses the scaling invariance of the PDE and
 can reduce the computation of an infinite $u$ to the computation of a finite, smooth (approximate) profile $U$.
Such structures exist even in the simple Riccati ODE $u_t=u^2$ with an exact solution $u=(T-t)^{-1}$.

The approximate profile can be identified via explicit construction or numerical computation. 
Working in the rescaled, self-similar variables and performing 
stability analysis around the profile $U$ provides a powerful tool to establish the singularity formation,
for nonlinear Schrodinger equations \cite{mclaughlin1986focusing, merle2005blow}, incompressible fluids \cite{elgindi2019finite,chen2019finite2,chen2021finite,chen2022stable,hou2024blowup}, compressible fluids \cite{MRRSam22a, MRRSam22b}, and beyond. 
Until recently, most of the works relied on an explicit profile and spectral information 
of the associated linearized operator to establish linear and nonlinear stability. In
\cite{chen2022stable,chen2025stable}, the authors used computer-assisted proofs with a sophisticated numerical profile obtained by solving the dynamic rescaling equations in time to obtain an approximate steady state. By analyzing the stability of the approximate profile, they established the singularity formation for the 2D Boussinesq equation
and the 3D axisymmetric Euler equation with boundary.
And in \cite{hou20242,chen2024stability}, the authors provided a framework using only local information for stability analysis,
bypassing spectral information and allowing for numerical profiles with computer-assisted proofs,
for problems beyond self-similarity.

\subsection{Towards high precision training}
Various methods have been proposed in the literature to improve the accuracy of PINNs. One line of work focuses on a 
better representation of the solution. In \cite{michaud2023the, wang2024multi}, the boosting technique was proposed, where a sum of a sequence of neural networks
with decreasing magnitude was used to learn the solution; at each stage, a new neural network is trained to learn the residual.
To overcome the spectral bias \cite{rahaman2019spectral} of multilayer perceptrons (MLPs), or the favor of learning low-frequency modes \cite{xu2019training,xu2019frequency} in the solution, 
one can use Fourier feature encoding \cite{sitzmann2020implicit, tancik2020fourier,ng2024spectrum}, or different activation functions \cite{jagtap2020adaptive, jagtap2020locally,hong2022activation, zhang2023shallow,kanbias}. In particular, Kolmogorov-Arnold Networks (KANs) \cite{liu2024kan,kan1} that leverage nonlinear learnable activation functions and the Kolmogorov-Arnold representation theorem were proposed and further investigated in the PINN setting \cite{wang2024kolmogorov,shukla2024comprehensive,toscano2025pinns}.
Another line of work improves the optimization landscape during the training of PINNs.
Various optimizers, which we will detail in Subsection \ref{subsec:optimizer}, have been proposed to improve the convergence rate.
Adaptive design of points sampling \cite{anagnostopoulos2024residual,wu2023comprehensive,rigas2024adaptive} and adaptive weighting of different terms \cite{wang2021understanding,xiang2022self,mcclenny2023self} in the loss function
were also proposed to improve the accuracy of PINNs.

We will only focus on applying hard constraints and choosing a good optimizer in this work and leave 
the exploration of more sophisticated tricks for future work.
\section{Methodology} 
We outline our methodology of high-precision training for PINNs on the whole space in this section.
We work under the general formulation of the profile equation $$L(U,\lambda)=0,$$ where 
$U(y)$ is the profile function, $\lambda$ is a set of scaling parameters to be determined,
and $L$ is the nonlinear differential operator. For our problems of interest, $U$=0 will be a trivial 
solution satisfying the equation.
\subsection{Infinite domain}\label{subsec:inf}
The key challenges we are facing here are sampling and learning on an infinite domain.
For a given budget of a finite number of sampling points, we need to sample the domain in a way that
the resulting solution is accurate and generalizes well throughout the domain. In the meantime,
We want the neural network to be able to represent the profile function and the initialization of parameters
to favor learning of such representations. To this end, we adopt an exponential "mesh" in our sampling strategy:
Consider an auxiliary  variable $z$ such that $y = \sinh(z)=\frac{e^z-e^{-z}}{2}$, 
and sample $z$ uniformly in a finite region. Here we choose the $\sinh$ transformation as in \cite{wang2023asymptotic}
to respect the parity of the functions, detailed in the subsequent subsection. Such a transformation
maps roughly $z\in[-30,30]$ to $y\in[-5\times 10^{12},5\times 10^{12}]$.

Boundary conditions are another important aspect when learning on the whole space.
For our problems of interest, $U$ by itself will not have sufficient decay at infinity,
and one approach adopted in \cite{wang2023asymptotic} is to impose Neumann boundary conditions at infinity, or numerically 
on the boundary of the domain of the $z$ variables.
To rule out the trivial solution $U=0$, we need to enforce a nondegeneracy condition,
often posed at the origin. We will discuss the enforcement of these conditions in the following subsection.
We refer to this formulation as boundary conditions using \textbf{weak asymptotics}.

On the other hand, we can enforce stronger information on the boundary. 
If we know the exact asymptotic behavior of the solution at infinity as $g$, for example a power law,
we can introduce a smooth cutoff function $\chi$ with $\chi(0)=0,\chi(\infty)=1$
 and the ansatz $U=\tilde{U}+\chi g$. We can then enforce Dirchlet boundary conditions at infinity for $\tilde{U}$
 represented by the neural network. We refer to this formulation as boundary conditions using \textbf{exact asymptotics}. We will demonstrate for the 1D example that PINNs using exact asymptotics will outperform those using weak asymptotics by a large margin.

A priori, the exact asymptotics information is not available, and one can first train a neural network $U_w$ with 
boundary conditions using weak information, and distill the information of asymptotics $g$ from $U_w$.
We refer to this formulation as boundary conditions using \textbf{hybrid asymptotics}. For example,
for the 2D Boussinesq equation, borrowing ideas from \cite{chen2022stable},
one can use function fitting and symbolic regression to extract asymptotics $g$ from $U_w$, filtering out the noisy residues, such that $g$ is a symbolic function approximating $U_w$ at infinity. We will leave this approach to future work.

\subsection{Hard constraint}\label{subsec:hard}
Hard constraints are important concepts in the parametrization of the solution space for PINNs.
When enforced properly, they will guarantee physical properties of the solution \cite{lu2021physics, richter2022neural,mohan2023embedding,duruisseaux2024towards}, and
can impose the solution to be in the correct manifold. While most of the previous works focus on hard constraints of boundary conditions,
we emphasize the enforcement of hard constraints in the following senses: the parity of the learned function
and the nondegeneracy conditions. Empirically we observe a better convergence rate and a more stable solution
when enforcing hard constraints.

\paragraph{Parity.} For a function $f(y_i,\hat{y}_i)$ even/odd in the variable $y_i$, we train a neural network 
with the following ansatz $f=(f_{nn}(y_i,\hat{y}_i)\pm f_{nn}(-y_i,\hat{y}_i))/2$. 

\paragraph{Nondegeneracy conditions.} As discussed in the previous subsection, we need to enforce 
nondegeneracy conditions to rule out the trivial solution $U=0$ when using weak asymptotics.
For example, for the 1D Burgers equation, we know that $U$ is odd and necessarily $U'(0)=-1$; 
we can enforce $U'''(0)=6$. We will enforce a hard constraint
via Taylor expansion at the origin as $U=-z+z^3+z^4U_1$, for an odd function $U_1$. 
Similarly for the 2D Boussinesq equation, we enforce $\partial_{1}\Omega(0,0)=-1$ and $\Omega$
is odd in $z_1$ via a Taylor expansion as $\Omega=-z_1+z_1z_2\Omega_1+z_1^2\Omega_2$, 
where $\Omega_1,\Omega_2$ are even and odd functions in $z_1$ respectively.

\subsection{Optimizer: Self-Scaled BFGS methods}\label{subsec:optimizer}
A common practice of training PINNs is to use the Adam optimizer.
As a stochastic first-order method, Adam is known to be robust and efficient in training deep neural networks
and can empirically escape local minima. To further improve convergence to the minimizer, one 
can apply second-order methods with a higher convergence rate, like L-BFGS, after training with Adam for 
a few epochs. While this seems to be a gold standard in the training of PINNs \cite{rathore2024challenges}, various optimizers have been investigated,
 including variants of 
second-order quasi-Newton methods \cite{rathore2024challenges,wang2025gradient}, and optimizers using natural gradients \cite{muller2023achieving,jnini2024gauss,chen2024teng}.
We highlight and use the self-scaled BFGS methods proposed in \cite{al1998numerical,al2014broyden} and introduced to the PINNs context in \cite{urban2025unveiling,kiyani2025optimizer}.
BFGS methods use an approximation of the inverse of the Hessian matrix to precondition the gradient for the update direction. To be precise, consider the parameters $\Theta_k$ and learning rate $\alpha_k$ at step $k$, with loss function $\mathcal{J}(\Theta)$, then the update rule for $\Theta$ is $$\Theta_{k+1}=\Theta_k-\alpha_k H_k\nabla\mathcal{J}(\Theta_k).$$ Different choices of updating the approximate inverse Hessian $H_k$ lead to different optimizers, and L-BFGS in particular is a memory-efficient way for the updates by storing only vectors instead of the whole matrix.
The self-scaled BFGS methods use a scaling compared to the standard BFGS update of the inverse Hessian. More precisely, for the auxiliary variables $$\begin{aligned}
    s_k&=\Theta_{k+1}-\Theta_k,\quad y_k=\mathcal{J}(\Theta_{k+1})-\mathcal{J}(\Theta_k),\\{v}_k&=\sqrt{{y}_k \cdot H_k {y}_k}\left[\frac{{s}_k}{{y}_k \cdot {s}_k}-\frac{H_k {y}_k}{{y}_k \cdot H_k {y}_k}\right],\end{aligned}$$
    we have  for the scalers $\tau_k$ and $\phi_k$: $$H_{k+1}=\frac{1}{\tau_k}\left[H_k-\frac{H_k {y}_k \otimes H_k {y}_k}{{y}_k \cdot H_k {y}_k}+\phi_k {v}_k \otimes {v}_k\right]+\frac{{s}_k \otimes {s}_k}{{y}_k \cdot {s}_k},$$
   where the original BFGS corresponds to the choices $\tau_k=\phi_k=1$.
While this is only a simple modification of the original BFGS, the authors in \cite{urban2025unveiling} demonstrated a much improved convergence rate
across a variety of benchmarks, including the  Helmholtz equation, the nonlinear Poisson equation, the nonlinear Schrödinger equation, the Korteweg-De Vries equation, the viscous Burgers equation, the Allen-Cahn equation, 3D Navier-Stokes: Beltrami flow, and the lid-driven cavity.
We use the self-scaled Broyden methods proposed in \cite{urban2025unveiling}; see equations (13)-(23) therein for details on the choices of  $\tau_k$ and $\phi_k$.

\paragraph{On the role of minibatch training or random resampling.}
    One of the common practices when training PINNs is to use random resampling of the collocation points. This can enhance the performance of SGD-based methods like Adam empirically. However, full-batch second-order methods like BFGS with supposedly higher-order accuracy do not adapt well to random resampling since they rely on past trajectories for Hessian updates. 
 One empirical observation, as proposed in \cite{wang2024multi}, is that when one uses an optimizer with fixed resolution like BFGS, it will be able to generalize in the regions where sampling points are sufficient.
 However, in the undersampled regions, the learned solution generalizes poorly. In an abstract form, there exists a critical batchsize $N_c$, such that when $N>N_c$, fixed sampling will be preferred, while for  $N<N_c$, fixed sampling will have very bad generalization. %Random resampling almost does not deteriorate even when $N$ becomes smaller. 
 $N_c$ would depend on both the equation and the scale of the neural network.
     Empirically, we observe that roughly 10k points are sufficient for generalization with fixed training points. We resample every 1000 epochs to further reduce overfitting. See details on the choices of the batch size in the experiments section.
    \section{Experiments}
    In this section, we describe our numerical experiments on the blowup profiles for 1D Burgers equation and 2D Boussinesq Equation. The codes are available at \url{https://github.com/RoyWangyx/High-precision-PINNs-unbounded-domains-/tree/main}. When training both equations, we denote the PDE by $L(U(y))=0$ and the boundary condition by $B(U)=0$. We use auxiliary variables $z=\sinh^{-1}y$ as in Subsection \ref{subsec:inf} and consider the following combination of interior, boundary, and smoothness losses as in \cite{wang2023asymptotic}
    \begin{equation}\label{eqn_loss}
 \begin{aligned}loss&=
        0.1(L_i+L_s)+L_b\\&= 0.1(\frac{1}{N_i}\sum_{j=1}^{N_i} [\hat{L}(U_{nn}(z_j))]^2+\frac{1}{N_s}\sum_{j=1}^{N_s} |\nabla_{z_j} \hat{L}(U_{nn}(z_j))|^2)+\frac{1}{N_b}\sum_{j=1}^{N_b} [\hat{B}(U_{nn})]^2,
    \end{aligned}       
    \end{equation}
    where $U_{nn}(z)$ is supposed to approximate $\hat{U}(z)=U(y)$ in the $z$-variables, and $\hat{L}$, $\hat{B}$ denotes the PDE and the boundary condition transformed in the $z$-variables; see \cite{wang2023asymptotic} for a concrete formula for the 2D Boussinesq equation.
\subsection{Burgers equation}
For the 1D Burgers equation \begin{equation}
    u_t+uu_x=0,
\end{equation}
consider the self-similar ansatz that respects the scaling symmetry 
\begin{equation}
    u(x,t)=(1-t)^\lambda U(y),\quad y={x}{(1-t)^{-1-\lambda}}.
\end{equation}
The profile equation for $U$ used for the PDE loss in \eqref{eqn_loss} is
\begin{equation}\label{burgers_eqn}
    -\lambda U+((1+\lambda)y+U)U_y=0.
\end{equation}
We impose an odd symmetry on $U$, and the profile equation has implicit solutions  \begin{equation}
    y+ U+CU^{1+1/\lambda}=0.
\end{equation}
for any constant $C$, as in the setting of \cite{wang2023asymptotic}, where we know that the most stable solutions correspond to $\lambda =0.5$ and there are nonsmooth solutions at e.g. $\lambda =0.4$.

In this example, we assume that we first train the neural network on a bounded domain and infer the correct $\lambda$ already, for example via the method in \cite{wang2023asymptotic}. Now we focus on fixing $\lambda$ and learn $U$ on the unbounded domain. Using an MLP with activation function $\tanh$, $4$ layers and $20$ neurons per layer and a hard constraint on parity, we use the optimizer SSBroyden1 as in \cite{urban2025unveiling} with $20000$ epochs and resampling every $1000$ epochs. $z$ is sampled uniformly on $[0,30]$ with a batchsize $10000$ for both the interior and smoothness losses, corresponding to a domain $[0,5\times 10^{12}]$ in the $y$ variables.

For the formulation using weak asymptotics as in Subsection \ref{subsec:inf}, we use the Neumann boundary condition $U_y=0$ and enforce hard constraint of nondegeneracy conditions as in Subsection \ref{subsec:hard}. For the formulation using exact asymptotics as in Subsection \ref{subsec:inf}, we use Dirichlet boundary condition $\tilde{U}=0$ and the cutoff function $\chi=(\frac{y}{1+y})^{15}$, since the far field is captured by the exact asymptotics $g=-y^{\frac{\lambda}{1+\lambda}}$.

We present the following results of $\lambda=0.4, 0.5$ using weak and exact asymptotics: see Figure \ref{fig:burgers_final} for the equation residue of the solution at the final stage and Figure \ref{fig:burgers_loss} for the evolution of the losses. We are able to achieve high accuracy over a large domain, but using exact asymptotics is preferred for both the smooth and nonsmooth case of $\lambda$.
\begin{figure}[htbp]
    \centering
    \begin{minipage}[b]{0.48\textwidth}
        \centering
        \includegraphics[width=\textwidth]{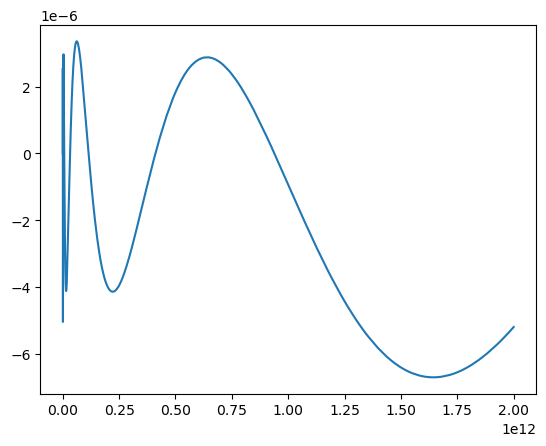}
    
    \end{minipage}
    \hfill
    \begin{minipage}[b]{0.48\textwidth}
        \centering
        \includegraphics[width=\textwidth]{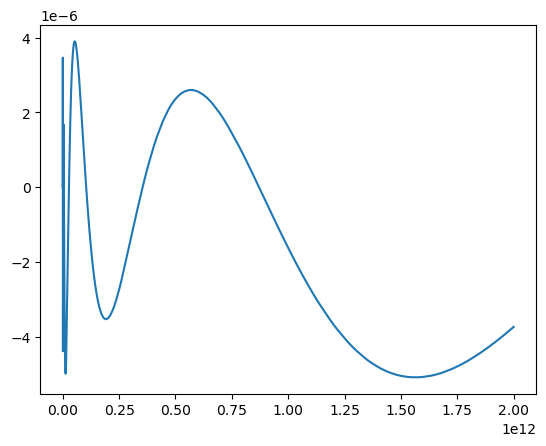}
    \end{minipage}

    \vspace{1em}

    \begin{minipage}[b]{0.48\textwidth}
        \centering
        \includegraphics[width=\textwidth]{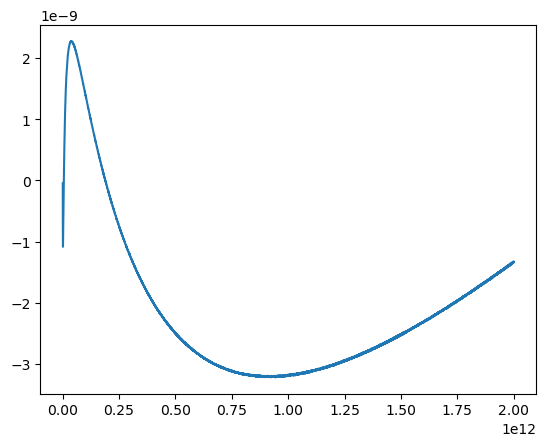}
    \end{minipage}
    \hfill
    \begin{minipage}[b]{0.48\textwidth}
        \centering
        \includegraphics[width=\textwidth]{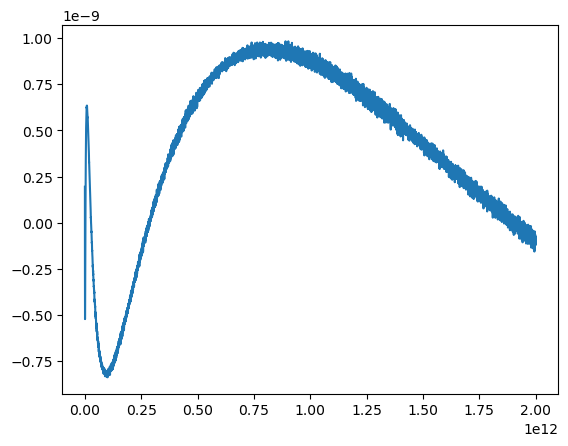}
    
    \end{minipage}
    
    \caption{Final residue in a large domain for 1D Burgers. Upper: weak asymptotics; down: exact asymptotics; left: $\lambda=0.4$; right: $\lambda=0.5$.}
    \label{fig:burgers_final}
\end{figure}

\begin{figure}[htbp]
    \centering
    \begin{minipage}[b]{0.48\textwidth}
        \centering
        \includegraphics[width=\textwidth]{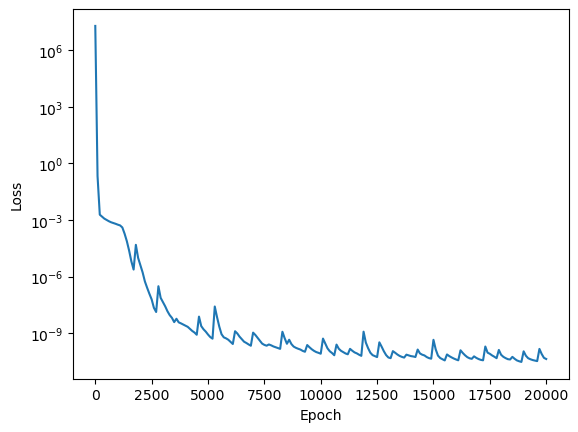}
    
    \end{minipage}
    \hfill
    \begin{minipage}[b]{0.48\textwidth}
        \centering
        \includegraphics[width=\textwidth]{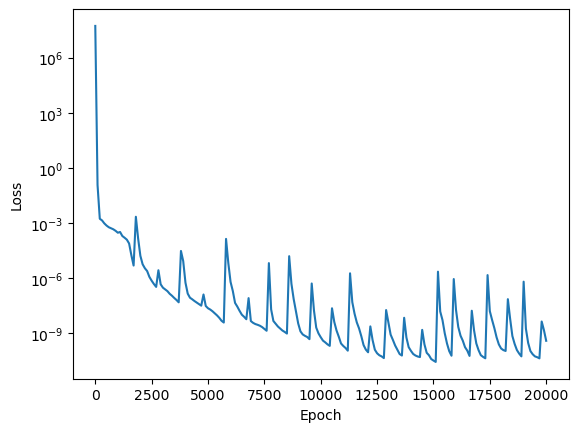}
    \end{minipage}

    \vspace{1em}

    \begin{minipage}[b]{0.48\textwidth}
        \centering
        \includegraphics[width=\textwidth]{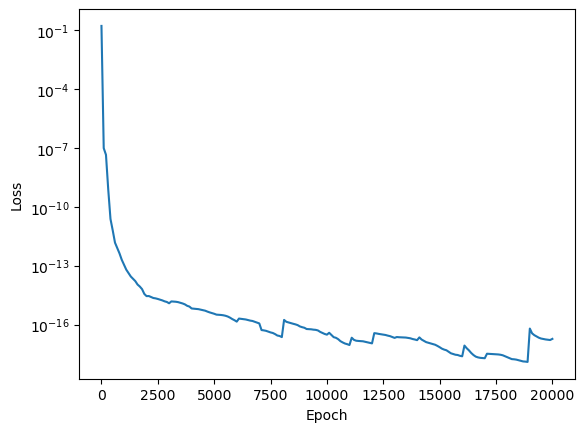}
    \end{minipage}
    \hfill
    \begin{minipage}[b]{0.48\textwidth}
        \centering
        \includegraphics[width=\textwidth]{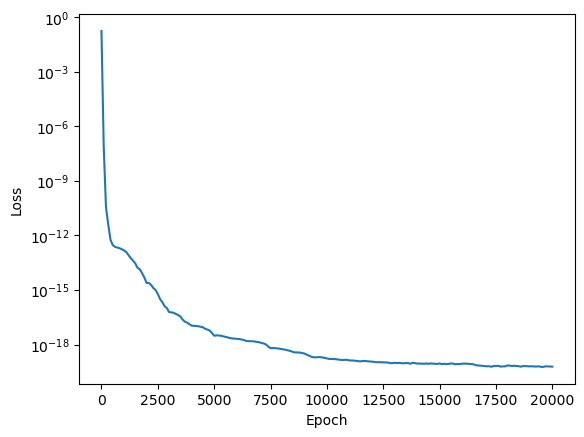}
    
    \end{minipage}
    
    \caption{Trajectory of losses for 1D Burgers. Upper: weak asymptotics; down: exact asymptotics; left: $\lambda=0.4$; right: $\lambda=0.5$.}
    \label{fig:burgers_loss}
\end{figure}
\subsection{Boussinesq equation}
For the 2D Boussinesq equation on the half plane, in vorticity form with the self-similar ansatz, we get the following profile equations for $( \Omega,U_1,U_2,\Phi,\Psi)$ as in \cite{wang2023asymptotic}:\begin{align*}
    \Omega+((1+\lambda)(y_1,y_2)^T+(U_1,U_2)^T)\cdot\nabla\Omega&=\Phi,\\
    (2+\partial_{y_1}U_1)\Phi+((1+\lambda)(y_1,y_2)^T+(U_1,U_2)^T)\cdot\nabla\Phi&=-\partial_{y_1}U_2\Psi,\\
    (2+\partial_{y_2}U_2)\Psi+((1+\lambda)(y_1,y_2)^T+(U_1,U_2)^T)\cdot\nabla\Psi&=-\partial_{y_2}U_1\Phi,\\
    \partial_{y_1}U_1+\partial_{y_2}U_2=0, \quad \Omega = \partial_{y_1}U_2-\partial_{y_2}U_1, \quad \partial_{y_1}\Psi &= \partial_{y_2}\Phi,
\end{align*}
where $( \Omega,U_1,\Phi)$ are odd and $(U_2,\Psi)$ are even in $y_1$ and we are in the half plane $y_2\geq0$.

For the boundary conditions, we impose a non-penetration boundary condition $U_2(y_1,0)=0$ along with decaying weak asymptotics at the far field, with Dirichlet boundary conditions $\Phi=\Psi=0$ and Neumann boundary conditions for the velocity field $\nabla(U_1,U_2)^T=0$. For the nondegeneracy condition, we impose $\partial_{y_1}\Omega(0,0)=-1$ and use Taylor expansion to enforce a hard constraint as in Subsection \ref{subsec:hard}. We find that enforcing a hard constraint is much more effective to avoid converging to a trivial solution than enforcing soft constraints.

For each function, we use a $7$-layer MLP with width $30$, hard constraints on parity, and activation function $\mathrm{SiLU}=\frac{x}{1 + e^{-x}}$ to better model the growth at the far field. For sampling, we sample $1000$ points on each boundary of the square $(z_1,z_2)\in[0,30]^2$, and $5000$ points each for the interior and smoothness losses, where we sample  $(z_1,z_2)$ with equal probability uniformly on $[0,30]^2$ and $[0,5]^2$ for the interior loss and with equal probability uniformly on $[0,3]^2$ and $[0,0.5]^2$ for the smoothness loss, ensuring smoothness near the origin. Again, we are computing effectively in a large domain $[0,5\times 10^{12}]^2$ in the $y$ variables.

For optimization, we use Adam for $10000$ epochs with resampling, followed by the optimizer SSBroyden 1 as in \cite{urban2025unveiling} with $40000$ epochs and resampling every 1000 epochs. The learning rate of Adam is set to be $0.001$ for the functions and $0.1$ with $\beta=(0.9,0.9)$ for $\lambda$, with a decay of $0.9$ after $5000$ epochs.

We present the final profiles and the residue of the PDEs near the origin respectively in Figure \ref{fig:bous_prof} and \ref{fig:bous_res}, and the evolution of losses in Figure \ref{fig:traj}.
Compared to \cite{wang2023asymptotic}, we achieve a training loss of $4$ digits smaller and equation residues of $2$ digits smaller. We remark that due to computational constraints and the cost of a full batch optimizer involving the approximation of the Hessian matrix, this is the largest neural network affordable. We use 10 days of CPU time on a MacPro 2019 with 2.5GHz 28-core Intel Xeon W processor.

\begin{figure}[htbp]
    \centering
    \begin{minipage}[b]{0.19\textwidth}
        \centering
        \includegraphics[width=\textwidth]{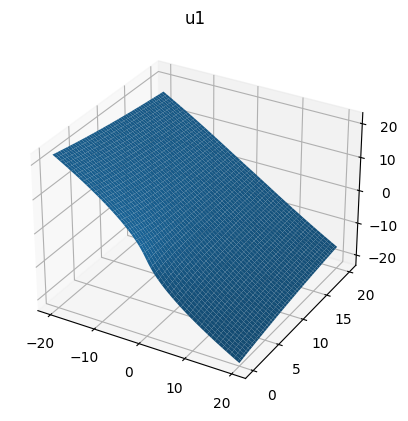}
    
    \end{minipage}
    \hspace{0.1mm}
    \begin{minipage}[b]{0.19\textwidth}
        \centering
        \includegraphics[width=\textwidth]{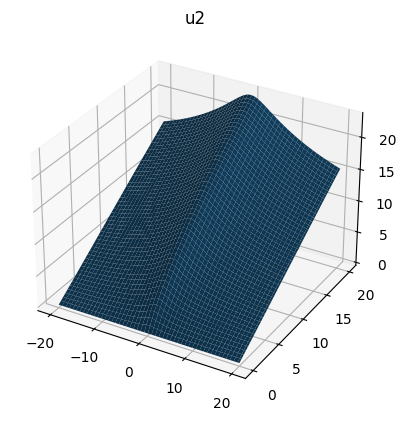}
    \end{minipage}
\hspace{0.1mm}
 \begin{minipage}[b]{0.19\textwidth}
        \centering
        \includegraphics[width=\textwidth]{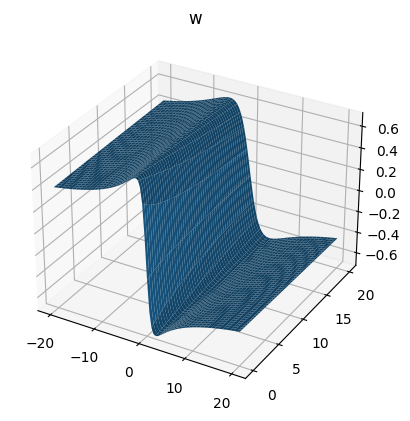}
    
    \end{minipage}
    \hspace{0.1mm}
    \begin{minipage}[b]{0.19\textwidth}
        \centering
        \includegraphics[width=\textwidth]{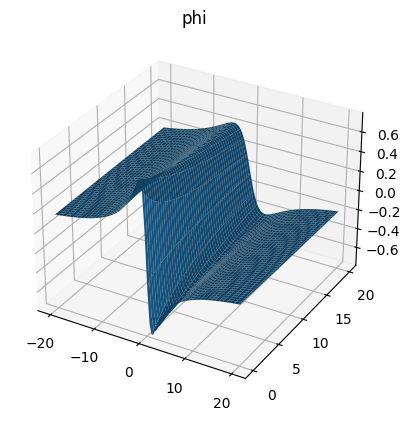}
    
    \end{minipage}
    \hspace{0.1mm}
    \begin{minipage}[b]{0.19\textwidth}
        \centering
        \includegraphics[width=\textwidth]{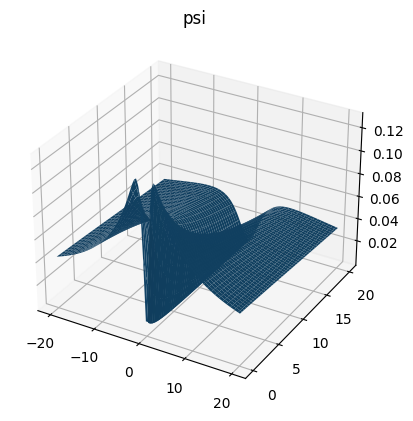}
    \end{minipage}
    
    \caption{Final profiles for 2D Boussinesq}
    \label{fig:bous_prof}
\end{figure}
 
\begin{figure}[htbp]
    \centering
    \begin{minipage}[b]{0.32\textwidth}
        \centering
        \includegraphics[width=\textwidth]{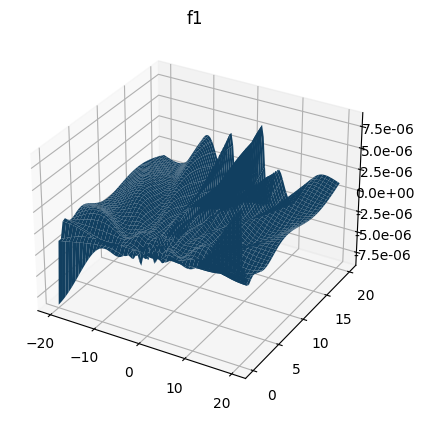}
    
    \end{minipage}
    \hspace{0.1mm}
    \begin{minipage}[b]{0.32\textwidth}
        \centering
        \includegraphics[width=\textwidth]{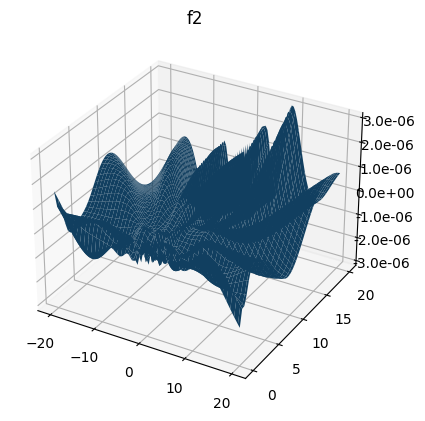}
    \end{minipage}
\hspace{0.1mm}
 \begin{minipage}[b]{0.32\textwidth}
        \centering
        \includegraphics[width=\textwidth]{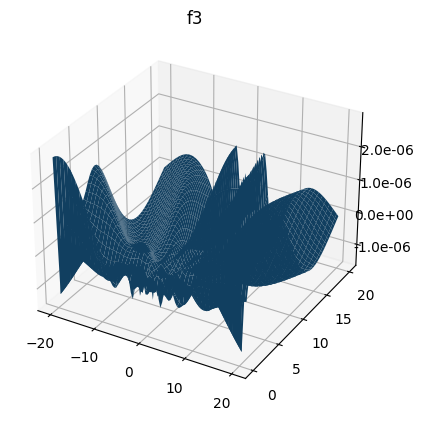}
    
    \end{minipage}
    \vspace{1em}
    \begin{minipage}[b]{0.32\textwidth}
        \centering
        \includegraphics[width=\textwidth]{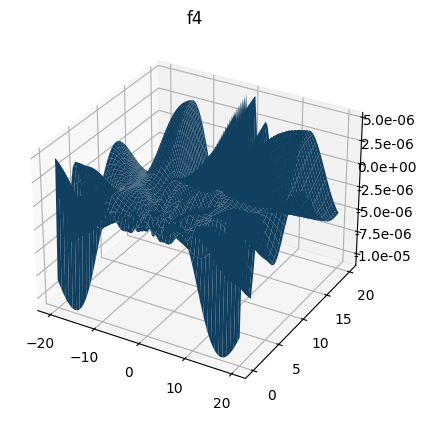}
    
    \end{minipage}
    \hspace{0.1mm}
    \begin{minipage}[b]{0.32\textwidth}
        \centering
        \includegraphics[width=\textwidth]{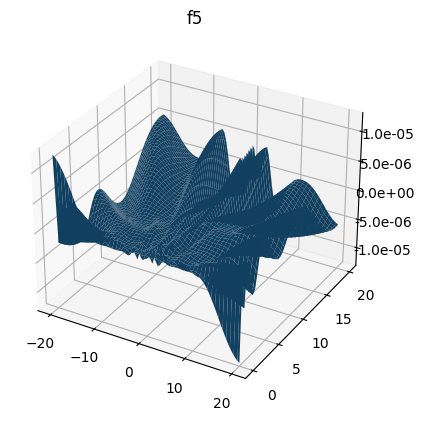}
    
    \end{minipage}
    \hspace{0.1mm}
    \begin{minipage}[b]{0.32\textwidth}
        \centering
        \includegraphics[width=\textwidth]{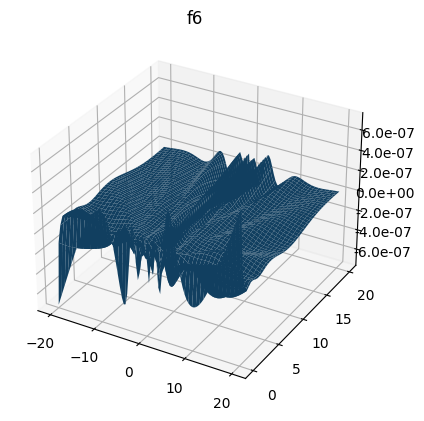}
    \end{minipage}
    
    \caption{Final equation residues for 2D Boussinesq}
    \label{fig:bous_res}
\end{figure}

\begin{figure}
    \centering
\includegraphics[width=0.5\linewidth]{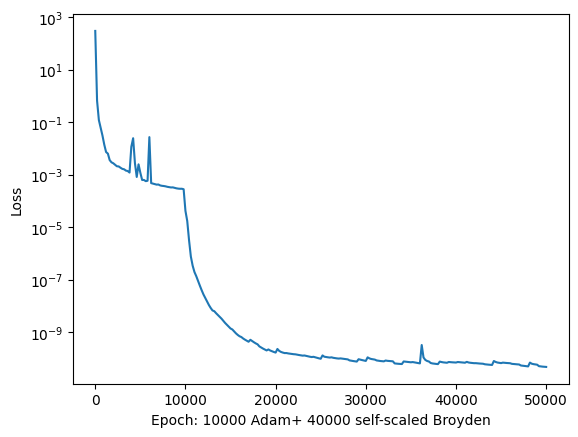}
    \caption{Trajectory of losses for 2D Boussinesq:  
$10000$ Adam iterations followed by $40000$ self-scaled Broyden iterations.}
    \label{fig:traj}
\end{figure}
    \section{Conclusions and Future Work}
    We demonstrate the importance of enforcing appropriate asymptotics, enforcing hard constraints, and adopting a better optimizer for solving PDEs using neural networks on an infinite domain. We achieve better accuracy for problems crucial to the study of singularity formations. As a future direction, we believe a better enforcement of far-field asymptotics, formulated as hybrid asymptotics, might have the potential of driving PDE residues to machine precision, potentially amenable to rigorous computer-assisted proofs using the profiles identified by the neural networks. Another direction is to use PINO, the idea of operator learning on a range of scaling parameters, to learn a collection of profiles with different scalings.

\section{On Weak Convection Model to 3D Euler and Numerical Stability Analysis}

A natural next step of the present program is to study the weak-convection regime, namely the dynamic-rescaling system in which the transport is weakened to encourage blowup; as in \cite{liu2017spatial}. \begin{align}
\label{eq:new}
\partial_t u_{1}
+ {\varepsilon} \ u^r\,\partial_r u_{1}
+ {\varepsilon} \ u^z\,\partial_z u_{1}
&= 2u_{1}\,\partial_z \psi_{1}  \\
\label{eq:transformed_2}
\partial_t \omega_{1}
+ {\varepsilon} \ u^r\,\partial_r \omega_{1}
+ {\varepsilon} \ u^z\,\partial_z \omega_{1}
&= 2u_{1}\,\partial_z(u_{1})  \\
\label{eq:transformed_3}
-\Bigl[\partial_r^2 + \frac{3}{r}\partial_r + \partial_z^2\Bigr]\psi_{1}
&= \omega_{1}.
\end{align} In this setting, the profile equations retain the main hyperbolic scaling structure while the nonlinear convection terms enter with a small prefactor $\varepsilon$, which makes the model both analytically tractable and still rich enough to capture singular behavior. Our computations and formal asymptotics indicate that this weak-convection model admits a nontrivial singular profile: after rescaling, one finds a coherent stationary state $(U,\Omega,\Psi)$ together with scaling parameters $(\lambda,C,c_u,c_\omega)$ for which the original solution concentrates and the relevant amplitude grows according to the self-similar law. In this sense, the model provides clear evidence of singularity formation, while also preserving enough of the geometric structure of the full equation to serve as an informative intermediate problem between purely leading-order toy models and the fully coupled dynamics. We are able to push to a higher $\varepsilon=0.3$ compared to the reported in \cite{liu2017spatial}, using neural networks.

An important direction for future work, in collaboration with Prof. Anima's group, is to complement the existence and numerical construction of such singular profiles with a stability theory. The main difficulty is that the correct coercive functional is not known a priori: because the singular profile is strongly anisotropic and exhibits nontrivial growth/decay across spatial regions, standard polynomial or isotropic Sobolev weights are unlikely to capture the true dissipative mechanism of the linearized operator. To address this, we propose to parametrize a family of singular weights by a neural network and to learn them through a min--max formulation, maximizing the coercivity while minimizing over all admissible functions. 
Such a neural min--max procedure is attractive because it allows the weight to adapt to the hidden geometry of the singularity, rather than imposing an ansatz by hand. If successful, it would provide a data-informed route toward identifying the correct singular weights, proving linear stability in a sharp weighted space, and ultimately opening the door to a nonlinear stability argument for the weak-convection singularity. We also study the linearized spectrum numerically to infer empirically the stability and instability of the profiles.

\chapter{Second Order Ensemble Langevin Method}
\label{append:ehmc}
We propose a sampling method based on an ensemble approximation of second order Langevin dynamics. {The log target density is appended with a quadratic term in an auxiliary momentum variable and 
damped-driven Hamiltonian dynamics, is introduced; the resulting stochastic
    differential equation is invariant to the Gibbs measure, with marginal on
    the position coordinates given by the target. A preconditioner based on 
covariance under the law of {position coordinates under}
the dynamics does not change this invariance property, and is introduced 
to accelerate convergence to the Gibbs measure.}
        The resulting mean-field dynamics may be approximated by an ensemble 
method; this results in a gradient-free and affine-invariant stochastic 
dynamical system {with desirable provably 
uniform convergence properties across
the class of all Gaussian targets.} Numerical results demonstrate the
potential of the method as basis for a numerical sampler in Bayesian inverse 
problems, {beyond the Gaussian setting.}

\section{Introduction}
    \subsection{Set-up}
    Consider sampling the density $$\pi(q)=\frac{1}{Z_q}\exp(-\Phi(q))\,,$$ where $\Phi: \mathbb{R}^{N} \to \mathbb{R}$ is termed the \emph{potential} function and $Z_q$  the \emph{normalization constant.}
    A broad family of problems can be cast into this formulation; the Bayesian approach to inverse problems provides a particular focus for our work \cite{kaipio2006statistical}. 
        The point of departure for the algorithms considered in this chapter is the following mean-field
    Langevin equation:
\begin{equation}
\label{eqol}
\frac{\mathrm{d} q}{\mathrm{d} t}=-\mathcal{C}(\rho) D \Phi(q)+\sqrt{2 \mathcal{C}(\rho)} \frac{\mathrm{d} W}{\mathrm{d} t}\,,
\end{equation}
where $D$ {denotes} the gradient operator, $W$ is an $N$-dimensional standard Brownian motion, $\rho$ is the density associated to the law of $q$, and $\mathcal{C}(\rho)$ is the covariance under this density. This
constitutes a mean-field generalization \cite{garbuno2020interacting} of the standard Langevin
equation \cite{pavliotis2014stochastic}. Applying a particle approximation to the mean-field
model results in an interacting particle system, and coupled Langevin dynamics 
\cite{leimkuhler2018ensemble,garbuno2020interacting,nusken2019note,garbuno2020affine}. 
The benefit of preconditioning using the covariance is that it leads to
mixing rates independent of the problem, provably for quadratic $\Phi$ and
empirically beyond this setting \cite{garbuno2020interacting}, 
{because of the affine invariance \cite{goodman2010ensemble} of the
resulting algorithms \cite{garbuno2020affine}.}

    In order to {further} accelerate mixing and achieve sampling efficiency, 
    we introduce an auxiliary variable  $p\in \mathbb{R}^{N}$ and consider the Hamiltonian \begin{equation}
    \label{ham}
        \mathcal{H}(z)=\frac{1}{2}\langle p,\mathcal{M}^{-1}p\rangle+\Phi(q)\,,
    \end{equation} where the new state variable $z:=(q^\top, p^\top)^\top \in\mathbb{R}^{2N}$. Define a measure {via its density} on $\mathbb{R}^{2N}$ by
     
\begin{equation}
\label{eq:Hneed}
\Pi(z)=\frac{1}{Z_{q,p}}\exp(-\mathcal{H}(z))\,,
\end{equation}
where  $Z_{q,p}$ is the normalization constant. The marginal distribution of $\Pi$ in the $q$ variable gives the desired distribution $\pi$, i.e. $\int \Pi(z) \mathrm{d}p=\pi(q)$.
    We now aim at sampling the joint distribution. To this end, consider the following underdamped Langevin dynamics in $\mathbb{R}^{2N}$:
\begin{equation}
\label{eqsol}
\frac{\mathrm{d} z}{\mathrm{d} t}=\mathcal{J} D {\mathcal{H}}(z)-\mathcal{K} D {\mathcal{H}}(z)+\sqrt{2 \mathcal{K}} \frac{\mathrm{d} W_0}{\mathrm{d} t}\,,
\end{equation}
with the choices 
\begin{equation}
\label{eqsol2}
\mathcal{J}=\begin{pmatrix}
0 & \mathcal{C} \\
-\mathcal{C} & 0
\end{pmatrix}\,,\quad
\mathcal{K}=\begin{pmatrix}
\mathcal{K}_{1} & 0 \\
0 & \mathcal{K}_{2}
\end{pmatrix}\,.
\end{equation}
Here $W_0$ is a  standard Brownian motion in $\mathbb{R}^{2N}$ with components $W', W \in \mathbb{R}^{N}$. Then we have the following, proved in Subsection
\ref{proofprop0}:

\vspace{0.1in}

\begin{proposition}
\label{prop.prop0}
{Assume that $\mathcal{K}_1$, $\mathcal{K}_2$ are symmetric and 
non-negative definite, and that $\mathcal{C}$ is symmetric positive definite. 
Assume further that $\mathcal{C}$, $\mathcal{K}$ and $\mathcal{M}$ depend
on the law of $z$ under the dynamics defined by \eqref{eqsol} and \eqref{eqsol2},
but are are independent of $z$: all derivatives with respect to $z$
are zero. Then the Gibbs measure $\Pi(z)$ is invariant under the dynamics defined
by \eqref{eqsol}, \eqref{eqsol2}.}
\end{proposition}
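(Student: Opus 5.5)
The plan is to write down the Fokker--Planck (forward Kolmogorov) equation for the law $\rho(z,t)$ of the dynamics \eqref{eqsol}, \eqref{eqsol2}, treating $\mathcal{C},\mathcal{K},\mathcal{M}$ as $z$-independent matrices that depend on time only through the law. Then we show that $\rho=\Pi$ makes its right-hand side vanish. For a fixed law the SDE has drift $b(z)=(\mathcal{J}-\mathcal{K})D\mathcal{H}(z)$ and diffusion matrix $\mathcal{K}$. The Fokker--Planck equation is therefore
\[
\partial_t\rho=-\nabla\cdot\big((\mathcal{J}-\mathcal{K})D\mathcal{H}\,\rho\big)+\nabla\cdot\big(\mathcal{K}\nabla\rho\big).
\]
Since $\mathcal{K}$ is constant in $z$, the diffusion term can be written in divergence form with no extra drift correction. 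If we set $\rho=\Pi$, then self-consistency holds: the coefficients are evaluated at the law $\Pi$, and they remain fixed as long as $\rho$ does. So it suffices to check stationarity of $\Pi$ for the linear equation with frozen coefficients.

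Next I would split the operator into its dissipative and conservative parts. Because $\nabla\Pi=-\Pi\,D\mathcal{H}$, the dissipative part combines into
\[
\nabla\cdot\big(\mathcal{K}D\mathcal{H}\,\Pi+\mathcal{K}\nabla\Pi\big)=\nabla\cdot\big(\mathcal{K}(D\mathcal{H}\,\Pi-D\mathcal{H}\,\Pi)\big)=0.
\]
Note that we only need symmetry and nonnegativity of $\mathcal{K}$ for the square root to make sense, not positivity. For the conservative part we need $\nabla\cdot(\mathcal{J}D\mathcal{H}\,\Pi)=0$. Expand it as $\Pi\,\nabla\cdot(\mathcal{J}D\mathcal{H})+\langle\nabla\Pi,\mathcal{J}D\mathcal{H}\rangle$. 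The second term equals $-\Pi\langle D\mathcal{H},\mathcal{J}D\mathcal{H}\rangle$, which is zero because $\mathcal{J}$ is antisymmetric; this uses $\mathcal{C}$ symmetric, since then $\mathcal{J}^\top=-\mathcal{J}$. The first term is $\sum_{ij}\mathcal{J}_{ij}\partial_i\partial_j\mathcal{H}$, with no derivatives of $\mathcal{J}$ because $\mathcal{J}$ does not depend on $z$. This vanishes as the contraction of an antisymmetric matrix with the symmetric Hessian.

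Concretely, one can also verify this directly. We have $D\mathcal{H}=(D\Phi,\mathcal{M}^{-1}p)$, so $\mathcal{J}D\mathcal{H}=(\mathcal{C}\mathcal{M}^{-1}p,\,-\mathcal{C}D\Phi(q))$. The first component depends only on $p$ and is differentiated in $q$, and the second depends only on $q$ and is differentiated in $p$, so the divergence is zero immediately. This direct check is only valid if $\mathcal{M}$ is symmetric, which I will assume as implicit in writing \eqref{ham}. Alternatively, the antisymmetry argument covers it once $\mathcal{M}^{-1}$ is replaced by its symmetric part.

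I expect the main obstacle to be conceptual rather than computational: justifying that a mean-field, law-dependent McKean--Vlasov equation has $\Pi$ as an invariant measure. I would handle this by the frozen-coefficient argument above. If the law is $\Pi$ at time $0$, then the coefficients equal their values at $\Pi$, and the linear Fokker--Planck equation with these coefficients has $\Pi$ as a stationary solution. Hence $\rho\equiv\Pi$ solves the nonlinear equation. Uniqueness of solutions of the nonlinear problem is not addressed, consistent with the proposition's assumptions.
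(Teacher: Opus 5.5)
Your proposal is correct and is essentially the paper's argument. The paper, citing the derivation in Ma et al., writes the Fokker--Planck equation with $z$-independent coefficients as $\partial_t\rho=\nabla\cdot\big((\mathcal{K}-\mathcal{J})(\rho\nabla\mathcal{H}+\nabla\rho)\big)$ and notes that the flux vanishes identically at $\Pi$ because $\nabla\Pi=-\Pi\nabla\mathcal{H}$; it absorbs the conservative term into this flux via $\nabla\cdot(\mathcal{J}\nabla\rho)=0$, whereas you check $\nabla\cdot(\mathcal{J}D\mathcal{H}\,\Pi)=0$ separately, which is the same fact that an antisymmetric $\mathcal{J}$ annihilates a symmetric Hessian, and your frozen-coefficient remark makes explicit the self-consistency point the paper leaves implicit.
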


\vspace{0.1in}

{In practice, to simulate from such a mean-field model, it will
be necessary to consider a particle approximation of the form
\begin{equation}
\label{eqsolp}
\frac{\mathrm{d} z^{(i)}}{\mathrm{d} t}={J(Z) D_{z} H(z^{(i)};Z)-K(Z) D_{z} H(z^{(i)};Z)}+\sqrt{2 K(Z)} \frac{\mathrm{d} W_0^{(i)}}{\mathrm{d} t}\,,
\end{equation}
for the set of $I$ particles $Z=\{ z^{(i)} \}_{i=1}^I,$ and where {$M(Z), K(Z), J(Z)$ are
appropriate empirical approximations of $\mathcal{M}(\rho), \mathcal{K}(\rho), \mathcal{J}(\rho)$ based on replacing $\rho$ by $\rho^I$ where}
$$\rho^I = {\frac{1}{I}}\sum_{i=1}^{I} \delta_{z^{(i)}}\,,$$}
and the Hamiltonian is given by
\begin{equation}
\label{eq:surelyneed}
H(z;Z)=\frac{1}{2}\langle p,M(Z)^{-1}p\rangle+\Phi(q)\,.
\end{equation} 
Thus
$$D_z H(z;Z)=\bigl(D\Phi(q)^\top, (M(Z)^{-1}p)^\top\bigr)^\top.$$
%We use the notation $H_Z$ to emphasize the dependence on the ensemble $Z$, 
%but when the context is clear we also use the abbreviated notation $H$. 
{Note that $H$ is the appropriate finite particle approximation of
$\mathcal{H},$ given the particle approximation $M$ of $\mathcal{M}.$ The 
dependence of $\mathcal{H}$ on the law of $z$ has been replaced by
dependence on the collection of particles $Z$.\footnote{The reader is asked
to note that collection of particles $Z$ is different from normalization
constant $Z_{q,p}$ appearing in \eqref{eq:Hneed}.}}

\vspace{0.1in}

\begin{remark}
\label{rem:12}
{Unlike \eqref{eqsol}, the equation \eqref{eqsolp} is no longer 
a damped-driven Hamiltonian system; this is because of the dependence
of the Hamiltonian on the particle positions $Z$, through the mass
matrix. Furthermore, its marginal on any 
coordinate $q^{(i)}$ does not necessarily
preserve the desired target measure under the dynamics. However we
expect it to do so \emph{approximately} when $I$ is large. This
justifies the use of algorithms based on \eqref{eqsolp}.}
\end{remark}

\vspace{0.1in}

{In this chapter we will concentrate on a specific choice of mean-field operators within
the above general construction, which we now describe.} Let $\mathcal{C}_q(\rho)$ denote the $q$-marginal in the covariance 
under the law of \eqref{eqsol}. We make the choices
{$\mathcal{K}_{1}=0$, $\mathcal{C}=\mathcal{M}=\mathcal{C}_q(\rho)$, and $\mathcal{K}_2=\gamma\mathcal{C}_q(\rho)$, for a scalar damping parameter $\gamma>0$.} Then the underdamped Langevin dynamics yields
\begin{equation}
\label{eqmom}
\begin{aligned}
\frac{\mathrm{d} q}{\mathrm{d} t}&= p\,, \\
\frac{\mathrm{d} p}{\mathrm{d} t}&=-\mathcal{C}_q(\rho)D \Phi(q)- {\gamma p+\sqrt{2\gamma \mathcal{C}_q(\rho)} \frac{\mathrm{d} W}{\mathrm{d} t}}\,.
\end{aligned}
\end{equation}

{To implement a particle approximation of the}
mean-field dynamics \eqref{eqmom} {we introduce  particles in the form} $z^{(i)}(t)=\bigl( (q^{(i)}(t))^\top, (p^{(i)}(t))^\top\bigr)$  and
we use the {ensemble covariance and mean approximations}
\begin{subequations}
\label{eq:ecov}
\begin{align}
    \mathcal{C}_q(\rho)\approx C_q(Z)&\coloneqq\frac{1}{I} \sum_{i=1}^{I}\left(q^{(i)}-\bar{q}\right) \otimes\left(q^{(i)}-\bar{q}\right)\,,\\
    \bar{q}&:=\frac{1}{I} \sum_{i=1}^{I}q^{(i)}\,.
\end{align}
\end{subequations}
 In order to 
{obtain} affine invariance, we take the generalized square root of the ensemble covariance $C_q(Z)$, {similarly} to \cite{garbuno2020affine}.  We introduce the $N\times I$ matrix $$Q \coloneqq\left(q^{(1)}-\bar{q},q^{(2)}-\bar{q},\cdots,q^{(I)}-\bar{q}\right)\,,$$ which allows us to  
define the empirical covariance and generalized (nonsymmetric) square root  via
$${C}_q(Z)=\frac{1}{{I}} QQ^T\,,\quad
\sqrt{{C}_q(Z)}\coloneqq\frac{1}{\sqrt{I}} Q\,.$$ 
Now with $I$ independent standard Brownian motions $\{ W^{(i)} \}_{i=1}^I\in \mathbb{R}^{I}$, a natural
particle approximation of \eqref{eqmom} is, for $i=1, \dots, I$,
\begin{equation}
\label{ensol}
\begin{aligned}
\frac{\mathrm{d} q^{(i)}}{\mathrm{d} t}&= p^{(i)}\,, \\
\frac{\mathrm{d} p^{(i)}}{\mathrm{d} t}&=-C_q(Z)D \Phi(q^{(i)})- {\gamma p^{(i)}+\sqrt{2 \gamma C_q(Z)} \frac{\mathrm{d} W^{(i)}}{\mathrm{d} t}}\,.
\end{aligned}
\end{equation}

{In subsequent sections we will 
employ an ensemble approximation of $C_q(Z)D \Phi(\cdot)$, as in \cite{garbuno2020interacting}, thereby
avoiding the need to compute adjoints of the forward model; we note also that in the linear case this approximation is exact. We will show that the resulting interacting
particle system has the potential to provide accurate derivative-free inference
for {certain classes of} inverse problems.}

{In the remainder of this section we provide a literature review, we
highlight our contributions, and we outline the structure of the chapter.}

%\begin{theorem}
%For the second order Langevin equation \eqref{SOL}, assume that $J^T=-J$, $K^T=K$, $K\geq 0$, and that $(\nabla\cdot K)(z)=0$, then the fokker planck
%\end{theorem}

\subsection{Literature review}
\label{ssec:lit}
The overdamped Langevin equation is the canonical SDE that is invariant with respect to {a given target density}. Many sampling algorithms are built upon this idea, and in particular, it is shown to govern a large class of Monte Carlo Markov Chain (MCMC) methods; see \cite{roberts2001optimal,ottobre2011asymptotic,ma2015complete}.
To enhance mixing and accelerate convergence, a second-order method, Hybrid Monte Carlo (HMC, also referred to as Hamiltonian Monte Carlo) \cite{duane1987hybrid,neal1994improved} has been proposed, leading
to underdamped Langevin dynamics. There have been many attempts to justify the empirically
observed fast convergence speed of second-order methods in comparison with first-order methods \cite{betancourt2017conceptual}. Recently a quantitative convergence rate is established in \cite{cao2019explicit}, showing that the underdamped Langevin dynamics converges faster than the overdamped Langevin dynamics when the log of the target $\pi$ has a small Poincar\'e constant; see also \cite{eberle2019couplings}. 

{The idea of introducing preconditioners within the context of interacting particle systems used for sampling is developed in  \cite{leimkuhler2018ensemble}.} Preconditioning via ensemble covariance {is} shown to boost convergence and numerical performance \cite{garbuno2020interacting}. Other choices of preconditioners in sampling will lead to different forms of SDEs and associated Fokker-Planck equations with different structures, which can result in different sampling methods effective in different scenarios; see for example \cite{lindsey2021ensemble}.
Affine invariance is introduced in \cite{goodman2010ensemble} where it is argued that
this property leads to desirable convergence properties for interacting
particle systems used for sampling: methods that satisfy affine invariance are invariant under an affine change of coordinates, and are thus uniformly effective for problems that can be rendered  well-conditioned under an affine 
transformation; {in particular for the sampling of the class of
all Gaussian measures.} An affine invariant version of the mean-field 
underdamped Langevin dynamics of \cite{garbuno2020interacting} 
is proposed in \cite{nusken2019note,garbuno2020affine}.

     {Kalman methods have shown wide success in state estimation problems since their
     introduction by Evensen; see \cite{evensen2009data} for an overview of the field,
     and the papers \cite{reich2011dynamical,iglesias2013ensemble} for discussion of
     their use in inverse problems. Using the ensemble covariance as a preconditioner  leads to affine invariant \cite{garbuno2020affine} and gradient-free 
     \cite{garbuno2020interacting} approximations of Langevin dynamics; this is desirable in practical computations in view of the intractability of derivatives in many large-scale models arising in science and engineering \cite{cleary2021calibrate,haber2018never,kovachki2019ensemble,huang2021unscented}. 
     See \cite{bergemann2010mollified,schillings2017analysis} for analysis of these methods and, in the context of continuous data-assimilation, see \cite{del2017stability,bergemann2012ensemble,taghvaei2018kalman}. {There are other
     derivative-free methods that can be derived from the mean-field perspective,
     and in particular consensus-based methods show promise for optimization
     \cite{carrillo2018analytical} and have recently been extended to sampling in
     \cite{carrillo2022consensus}.}}

     Recent work has established the convergence of ensemble preconditioning methods to mean field limits; see for example \cite{ding2021ensemble}. For other works on rigorous derivation of mean field limits of interacting particle systems, see \cite{sznitman1991topics,carrillo2010particle,jabin2017mean}. For underpinning theory of Hamiltonian-based sampling, see
     \cite{bou2017randomized,bou2018geometric,livingstone2019geometric,betancourt2017geometric}.

\subsection{Our contributions}
\label{ssec:con}
The following contributions are made in this chapter:
\begin{enumerate}
    \item We introduce an underdamped second order mean field Langevin dynamics, with a covariance-based preconditioner. 
    \item {In the case of {Bayesian inverse problems defined by}
    a linear forward map, we show that 
    that {this mean field model} preserves Gaussian distributions under time evolution and, if initialized at
    a Gaussian, converges to the desired target at {a} rate independent of the linear map.}
    \item {We introduce finite particle approximations of the mean field model, resulting in an affine invariant method.}
    \item For {Bayesian inverse problems}, we introduce a gradient-free approximation of the algorithm, based on ensemble Kalman methodology.
    \item {In the context of {Bayesian inverse problems} we provide numerical examples to demonstrate that the algorithm resulting
    from the previous considerations has desirable sampling properties.}
\end{enumerate}

In Section \ref{2}, we introduce the inverse problems context that motivates us.
In Section \ref{3} we discuss the equilibrium distribution of the mean field
model.
Section \ref{EKA} introduces the ensemble Kalman approximation of the
finite particle system; {and in that section we also
demonstrate affine invariance of the resulting method.} Section \ref{2l} presents analysis of the finite particle system in the case of linear inverse problems, where the ensemble Kalman approximation is exact; we demonstrate
that the relaxation time to equilibrium is independent of the specific linear inverse
problem considered, a consequence of affine invariance. In Section \ref{4} we provide numerical results which demonstrate the efficiency and potential value of our method, 
and in Section \ref{5} we draw conclusions. Proofs of the propositions are given in the appendix, Section \ref{6}.

\section{Inverse Problem}\label{2}

Consider the Bayesian inverse problem of finding $q$ from an observation $y$ determined by the forward model 
$$y=\mathcal{G}(q)+\eta\,.$$
{Here $\mathcal{G}:\mathbb{R}^N \to \mathbb{R}^J$ is a {(in general)} nonlinear forward map. We assume a prior zero-mean Gaussian $\pi_{0}=\mathsf{N}(0, \Gamma_0)$ on unknown $q$ and assume that
{the random variable} $\eta \sim \mathsf{N}(0, \Gamma)$, representing measurement error, is independent of the prior on $q$. 
We also assume that $\Gamma, \Gamma_0$ are positive definite.} Then by Bayes rule, the posterior density that we aim to sample is given by\footnote{{In what follows $\|\cdot\|_{C}=\|C^{-\frac12}\cdot\|$, with analogous notation for the
inducing inner-product, for any positive definite covariance $C$ and for $\|\cdot\|$ the Euclidean norm.}}
{$$\pi(q) \propto \exp \Bigl(-\frac{1}{2}\|y-\mathcal{G}(q)\|_{\Gamma}^{2}\Bigr) \pi_{0}(q) \propto {\exp}(-\Phi(q))\,,$$
where potential function $\Phi(q)$ has the following form:}
\begin{equation}
\label{invf}
\Phi(q)=\frac{1}{2}\|y-\mathcal{G}(q)\|_{\Gamma}^{2}+\frac{1}{2}\|q\|_{\Gamma_0}^{2}\,.
\end{equation}
 
{In the linear case when $\mathcal{G}(q)=Aq$, $\Phi(q)$ is quadratic
 and the gradient $D\Phi(q)$ can be written as a linear function:
\begin{subequations}
\label{eq:quad}
\begin{align}
\Phi(q)&=\frac{1}{2}\|y-Aq\|_{\Gamma}^{2}+\frac{1}{2}\|q\|_{\Gamma_0}^{2}\,,\\
D\Phi(q)&=B^{-1}q-c\,,\\
B&=(A^T\Gamma^{-1} A+\Gamma_0^{-1})^{-1}\,,\quad
c=A^T\Gamma^{-1} y\,.
\end{align}
\end{subequations}
{In this linear setting,} the posterior distribution $\pi(q)$ is the Gaussian $\mathsf{N}(Bc,B)$.}

\section{{Equilibrium Distributions for the Mean Field Fokker-Planck Equation}}
\label{3}
{The mean-field underdamped Langevin equation \eqref{eqsol} has an associated 
nonlinear and nonlocal Fokker-Planck equation giving the evolution of the law of particles
$z(t)$, denoted $\rho(z,t)$. The equation for this law is
{(see proof in subsection \ref{proofprop0}.)}
\begin{equation}
\label{fp}
\partial_t \rho=\nabla \cdot \bigl((\mathcal{K}-J)(\rho\nabla {\mathcal{H}}+\nabla\rho)\bigr)\,.
\end{equation}
{By Proposition \ref{prop.prop0} this Fokker-Planck equation has $\Pi(z)$ as its equilibrium; this follows as for standard linear Fokker-Planck equations \cite{pavliotis2014stochastic} since the dependence of the sample paths on $\rho$ involves only the mean and covariance; see the proof in subsection \ref{proofprop0}
and see also \cite{duncan2017using, graham1977covariant,jiang2004mathematical,risken1996fokker,pavliotis2014stochastic} for discussions on how to derive Fokker-Planck equations with a prescribed stationary distribution. 
In the specific case \eqref{eqmom}, recall that $\mathcal{J}$ and $\mathcal{K}$ are 
given by}}
\begin{equation} 
\mathcal{J}=\begin{pmatrix}
0 & \mathcal{C}_q(\rho) \\
-\mathcal{C}_q(\rho) & 0
\end{pmatrix}\,,\quad
\mathcal{K}=\begin{pmatrix}
0 & 0 \\
0 & {\gamma}\mathcal{C}_q(\rho)
\end{pmatrix}\,.
\end{equation}
{These choices satisfy the assumption of Proposition \ref{prop.prop0}.
We approximate \eqref{eqmom} by the interacting particle system \eqref{ensol}.
In this context, we note Remark \ref{rem:12} to motivate computational methods
based on integrating \eqref{ensol}.}

%\begin{remark}
%    The approximation of \eqref{eqmom} by the interacting particle
%system \eqref{ensol} does not result in an equilibrium distribution
%with marginals on the position coordinates given by independent products of $\pi$ exactly. We believe it to be close to independent products of $\pi$  in the regime where we have a reasonable number of particles. 
%\end{remark}

\section{Ensemble Kalman Approximation}\label{EKA}
  
\subsection{Derivatives via differences}
\label{dvd}

{We now make the ensemble Kalman approximation to approximate the gradient term by differences, as in \cite{garbuno2020interacting}:
\[D \mathcal{G}\left(q^{(i)}\right)\left(q^{(k)}-\bar{q}\right) \approx\left(\mathcal{G}\left(q^{(k)}\right)-\bar{\mathcal{G}}\right)\,,\]
where $\bar{\mathcal{G}}:=\frac{1}{I} \sum_{k=1}^{I} \mathcal{G}\left(q^{(k)}\right)$. 
Invoking this approximation within \eqref{ensol}, using the specific form
\eqref{invf} of $\Phi$,
yields the following system of interacting particles in $\mathbb{R}^N$, for $i=1, \dots, I:$}
\begin{equation}
    \label{encgf}
\begin{aligned}
\dot{q}^{(i)}&= p^{(i)}\,,  \\
\dot{p}^{(i)}&=-C_q(Z)\Gamma_0^{-1} {q}^{(i)}
-\frac{1}{I} \sum_{k=1}^{I}\langle  \mathcal{G}(q^{(k)})-\bar{\mathcal{G}}, \mathcal{G}(q^{(i)})-y\rangle_{\Gamma} q^{(k)}- {\gamma}p^{(i)}+\sqrt{2{\gamma} C_q(Z)} \dot{W}^{(i)}\,.
\end{aligned}
\end{equation}  
{We will use this system as the basis of all our numerical experiments.}

\subsection{Affine invariance}
\label{2a}

In this subsection, we show the affine invariance property \cite{goodman2010ensemble,garbuno2020affine,leimkuhler2018ensemble}  for the Fokker-Planck equations in the mean-field regime \eqref{fp}, for the particle equation in the mean-field regime \eqref{eqmom}, for the ensemble approximation \eqref{ensol}, and the gradient-free approximation  \eqref{encgf}. For simplicity of presentation, we only state the results in the case of ensemble approximation, and the mean-field case is a straightforward analogy upon dropping all of the particle superscripts.

\vspace{0.1in}

\begin{definition}[Affine invariance for particle formulation]
\label{af}
We say a particle formulation is affine invariant, if under all affine 
transformations of the form 
\begin{equation}
\label{eq:aft}
q^{(i)}=A v^{(i)}+b\,,\quad p^{(i)}=A u^{(i)}\,,
\end{equation}
the equations on the transformed particle systems are given by the same equations with $q^{(i)}$, $p^{(i)}$ replaced by $v^{(i)}$, $u^{(i)}$ respectively, and with potential $\Phi$ replaced by $\tilde{\Phi}$ via \[\tilde{\Phi}(v^{(i)})=\Phi(q^{(i)})=\Phi(Av^{(i)}+b)\,.\]
Here $A$ is any invertible matrix and $b$ is a vector.
\end{definition}

\vspace{0.1in}

\begin{definition}[Affine invariance for Fokker-Planck equation]
\label{af1}
We say a Fokker-Planck equation is affine invariant, if under all affine transformations of the form \[q^{(i)}=A v^{(i)}+b\,,\quad p^{(i)}=A u^{(i)}\,,\]
the equations on the pushforward PDF $\tilde{\rho}^I$ are given by the same equation on ${\rho}^I$ with $q^{(i)}$, $p^{(i)}$ replaced by $v^{(i)}$, $u^{(i)}$ respectively, and with Hamiltonian $H$ replaced by $\tilde{H}$ via
\[ \tilde{H}(v^{(i)}, u^{(i)})=H(q^{(i)}, p^{(i)})=H(Av^{(i)}+b,Au^{(i)})\,.\]
Here $A$ is any invertible matrix and $b$ is a vector.
\end{definition}

\vspace{0.1in}

The key dynamical systems introduced in this chapter are affine invariant:

\vspace{0.1in}

\begin{proposition}
\label{aft}
The particle formulations \eqref{eqmom}, \eqref{ensol} and \eqref{encgf} are affine invariant. The Fokker-Planck equation \eqref{fp} is also affine invariant.
\end{proposition}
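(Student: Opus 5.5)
The proof is a direct change of variables. We substitute $q^{(i)}=Av^{(i)}+b$ and $p^{(i)}=Au^{(i)}$ into each system and check that the transformed variables satisfy the same equations with $\Phi$ replaced by $\tilde\Phi(v)=\Phi(Av+b)$. Three identities do all the work.

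\begin{itemize}
\item \textbf{Covariance and square root.} Since $\bar q=A\bar v+b$, the deviation matrix transforms as $Q=AV$. Hence
\[
C_q(Z)=A\,C_v(\tilde Z)\,A^T,\qquad \sqrt{C_q(Z)}=\tfrac{1}{\sqrt I}Q=A\sqrt{C_v(\tilde Z)}.
\]
The second identity holds because we use the nonsymmetric generalized square root, and this is the reason that choice was made. In the mean-field case the same identities hold for $\mathcal C_q(\rho)$, since the law of $q$ is the pushforward of the law of $v$.
\item \textbf{Gradient.} By the chain rule, $D\tilde\Phi(v)=A^T D\Phi(Av+b)$. Therefore $C_q D\Phi(q)=AC_vA^TD\Phi(q)=A\,C_vD\tilde\Phi(v)$.
\item \textbf{Noise.} Since $\sqrt{2\gamma C_q}\,\dot W^{(i)}=A\sqrt{2\gamma C_v}\,\dot W^{(i)}$, the noise term is left-multiplied by $A$ and is driven by the same Brownian motions.
\end{itemize}

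For \eqref{ensol}, the first equation gives $A\dot v^{(i)}=Au^{(i)}$. In the second, every term is $A$ times its transformed counterpart. Multiplying both equations by $A^{-1}$ recovers \eqref{ensol} in $(v,u)$ with $\tilde\Phi$. The mean-field system \eqref{eqmom} is the same computation with particle indices dropped.

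For \eqref{encgf}, we cannot simply quote the previous result, because the gradient has been replaced by ensemble differences. Define $\tilde{\mathcal G}(v)=\mathcal G(Av+b)$, so that $\mathcal G(q^{(k)})=\tilde{\mathcal G}(v^{(k)})$ and the averages $\bar{\mathcal G}$ agree; the $\Gamma$-inner-product weights are then unchanged. The term $\sum_k\langle\cdot,\cdot\rangle_\Gamma q^{(k)}$ should be read as the centered sum $\sum_k\langle\cdot,\cdot\rangle_\Gamma (q^{(k)}-\bar q)$, which is what $C_qD\Phi$ approximates. The two sums in fact coincide, because $\sum_k(\mathcal G(q^{(k)})-\bar{\mathcal G})=0$. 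With this reading the term equals $A\sum_k\langle\cdot,\cdot\rangle_\Gamma(v^{(k)}-\bar v)$, so the shift $b$ drops out.

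The prior term is the main obstacle. We have $C_q\Gamma_0^{-1}q^{(i)}=AC_vA^T\Gamma_0^{-1}(Av^{(i)}+b)$. This equals $A\,C_v D\bigl(\tfrac12\|Av+b\|^2_{\Gamma_0}\bigr)$, i.e. the transformed prior gradient. That transformed prior has covariance $A^{-1}\Gamma_0A^{-T}$ and mean $-A^{-1}b$, so it is no longer zero-mean. Invariance in the sense of Definition~\ref{af} therefore holds only once the potential is read as the general form $\tilde\Phi$ of \eqref{invf} with transformed $\tilde{\mathcal G}$ and transformed Gaussian prior. I will state the proof with this convention: each term is $A$ times the term computed from $\tilde\Phi$, and the prior contribution is exact because it uses the gradient.

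For the Fokker--Planck equation \eqref{fp}, we set $\tilde\rho(v,u)=|\det A|^2\rho(Av+b,Au)$. Write $T=\mathrm{diag}(A,A)$ and $\tilde H=H\circ$(the affine map). Then
\[
\nabla_{(v,u)}=T^T\nabla_{(q,p)},\qquad \mathcal K-\mathcal J=T(\tilde{\mathcal K}-\tilde{\mathcal J})T^T .
\]
The second identity follows blockwise from $\mathcal C_q=A\mathcal C_vA^T$, and for the mass matrix $\mathcal M=\mathcal C_q$ it gives $\langle p,\mathcal M^{-1}p\rangle=\langle u,\mathcal C_v^{-1}u\rangle$. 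Substituting these into $\nabla\cdot\bigl((\mathcal K-\mathcal J)(\rho\nabla\mathcal H+\nabla\rho)\bigr)$ shows that the factors of $T$ cancel against the Jacobian, which yields \eqref{fp} in the new variables.

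Apart from the prior-mean bookkeeping in \eqref{encgf}, every remaining step is a routine chain-rule verification.
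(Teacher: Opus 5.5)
Your proposal is correct and uses the same change-of-variables argument as the paper: the chain rule $D\tilde\Phi(v)=A^TD\Phi(Av+b)$, the covariance identity $C_q=AC_vA^T$, the generalized square root $\sqrt{C_q}=A\sqrt{C_v}$, and the matching scaling of the terms in \eqref{fp}, written out in more detail. Your handling of \eqref{encgf} (centering the ensemble sum, and reading the non-zero-mean prior created by $b\neq 0$ through $\tilde\Phi$) is more careful than the paper's remark that this case is ``more easily seen to be true''; the one point both you and the paper leave implicit is that in the mean-field system \eqref{eqmom} a symmetric square root satisfies $\sqrt{\mathcal{C}_q}=A\sqrt{\mathcal{C}_v}\,R$ only for some orthogonal $R$, which is harmless because $\int R\,dW$ is again a standard Brownian motion.
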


\vspace{0.1in}

{We defer the proof to Subsection \ref{proofaf}. The significance of affine invariance
is that it implies that the rate of convergence is preserved under affine transformations.
The proposed methodology is thus uniformly effective for problems that become well-conditioned under an affine transformation.
Proposition \ref{afli}, which follows in the next section, illustrates this property
in the setting of linear forward map $\mathcal{G}(\cdot).$}

\vspace{0.1in}

\begin{remark}
The affine invariance of the methodology introduced
in \cite{leimkuhler2018ensemble} involves a definition different
from that in Definition \ref{af}. In particular \eqref{eq:aft}
is replaced by 
\begin{equation}
\label{eq:aft2}
q^{(i)}=A v^{(i)}+b\,,\quad p^{(i)}=u^{(i)}\,,
\end{equation} 
\end{remark}
\section{Mean Field Model for Linear Inverse Problems}
\label{2l}

{We consider the mean field SDE \eqref{eqmom}
in the linear inverse problem setting of Section \ref{2} with $\mathcal{G}(q)=Aq;$ thus
\eqref{eq:quad} holds.  We note that $B$ in \eqref{eq:quad}
is both well-defined and symmetric positive definite since
$\Gamma_0, \Gamma$ are assumed to be symmetric positive definite.
{The two Propositions \ref{afli}, \ref{prop3}
demonstrate problem-independent rates of convergence, across the
set of all linear Gaussian inverse problems; this is a consequence} 
of affine invariance which in turn is a consequence
of our choice of preconditioned mean field system.

In the setting of the linear inverse problem, the mean field model \eqref{eqmom} reduces to
\begin{equation}
\label{linearp}
\begin{aligned}
\dot{q}&= p\,, \\
\dot{p}&=-\mathcal{C}_q(\rho)(B^{-1}q-c) - {\gamma}{p}+\sqrt{2 {\gamma}\mathcal{C}_q(\rho)}\dot{W}\,.
\end{aligned}
\end{equation}
We prove the following result about this system in Subsection \ref{proof3.3}:}

\vspace{0.1in}

\begin{proposition} 
\label{afli}
Write the mean $m(\rho)$ and the covariance $\mathcal{C}(\rho)$ of the law $\rho(z)$  of particles in equation \eqref{linearp} in the block form
$$m(\rho)=\begin{pmatrix}
m_q(\rho) \\
m_p(\rho)
\end{pmatrix}\,, \quad \mathcal{C}(\rho)=\begin{pmatrix}
\mathcal{C}_q(\rho) & \mathcal{C}_{q,p}(\rho) \\
\mathcal{C}_{q,p}^T(\rho) & \mathcal{C}_p(\rho)
\end{pmatrix}\,.$$ The evolution of the mean and covariance is prescribed by the following system of ODEs:
\begin{equation}
\label{vare}
\begin{aligned}
\dot{m_q}&= m_p\,, \\
\dot{m_p}&=-\mathcal{C}_q(B^{-1}m_q-c) - {\gamma}m_p\,,\\
\dot{\mathcal{C}_q}&=\mathcal{C}_{q,p}+\mathcal{C}_{q,p}^T\,,\\
\dot{\mathcal{C}_{p}}&=-\mathcal{C}_{q} B^{-1}\mathcal{C}_{q,p}-(\mathcal{C}_{q} B^{-1}\mathcal{C}_{q,p})^T- 2{\gamma}\mathcal{C}_{p}+2{\gamma}\mathcal{C}_{q}\,,\\
\dot{\mathcal{C}}_{q,p}&=- {\gamma}\mathcal{C}_{q,p} -\mathcal{C}_{q} B^{-1}\mathcal{C}_{q} +\mathcal{C}_{p}\,.
\end{aligned}
\end{equation}
{The unique steady solution with positive definite covariance is the Gibbs
measure $m_q=Bc, m_p=0, C_{q,p}=0, C_q=C_p=B;$ the marginal on $q$ gives the
solution of the linear Gaussian Bayesian inverse problem.
All other steady state solutions have degenerate covariance, are unstable, and take the
form $m_q=B(c+m), m_p=0, C_{q,p}=0, C_q=C_p=B^{1/2}XB^{1/2}$ for a projection matrix $X$ 
 and $m$ in the nullspace of $C_q$.
 The equilibrium point with positive definite covariance is hyperbolic
and linearly stable and hence its basin of attraction is an open set, containing
the equilibrium point itself, in the set of all mean vectors and 
positive definite
covariances. Furthermore, the mean and covariance converge to this equilibrium,
from all points in its basin of attraction, with a speed independent of $B$ and $c$.}  
\end{proposition}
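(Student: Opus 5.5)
The plan has three stages: derive the moment ODEs, classify the steady states, and then reduce the dynamics to a $B$-independent system by an affine change of variables.

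\textbf{Moment equations.} Since \eqref{linearp} is linear in $z=(q,p)$ with coefficients depending on $\rho$ only through $\mathcal{C}_q(\rho)$, I take expectations to get the mean equations directly; the noise has zero mean. For the covariance I will apply It\^o's formula to $(z-m)\otimes(z-m)$. The drift matrix is
\[
F=\begin{pmatrix}0 & I\\ -\mathcal{C}_qB^{-1} & -\gamma I\end{pmatrix},
\qquad
\Sigma=\begin{pmatrix}0&0\\0&2\gamma\mathcal{C}_q\end{pmatrix},
\]
so the covariance satisfies the Lyapunov-type ODE $\dot{\mathcal{C}}=F\mathcal{C}+\mathcal{C}F^T+\Sigma$. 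Reading off the blocks gives \eqref{vare}. The system is closed because $\mathcal{C}_q$ is itself one of the unknowns.

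\textbf{Steady states.} Setting the right-hand sides to zero gives $m_p=0$ from the first equation. The third equation then makes $\mathcal{C}_{q,p}$ antisymmetric. The last equation gives $\mathcal{C}_p=\mathcal{C}_qB^{-1}\mathcal{C}_q+\gamma\mathcal{C}_{q,p}$. Symmetry of $\mathcal{C}_p$ together with antisymmetry of $\mathcal{C}_{q,p}$ forces $\mathcal{C}_{q,p}=0$. The $\mathcal{C}_p$ equation then reduces to $\mathcal{C}_p=\mathcal{C}_q$, hence $\mathcal{C}_q=\mathcal{C}_qB^{-1}\mathcal{C}_q$.

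Next I substitute $X=B^{-1/2}\mathcal{C}_qB^{-1/2}$, which turns this into $X=X^2$ with $X$ symmetric, i.e.\ $X$ is an orthogonal projection. The mean equation becomes $\mathcal{C}_q(B^{-1}m_q-c)=0$. If $X=I$ this gives $m_q=Bc$, which is the Gibbs measure. Otherwise $B^{-1}m_q-c$ lies in the nullspace of $\mathcal{C}_q$, which gives the stated degenerate family.

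\textbf{Removing $B$ and $c$.} I will use the affine change $q=B^{1/2}v+Bc$, $p=B^{1/2}u$, in the spirit of Proposition \ref{aft}. In the new variables the moment system \eqref{vare} becomes the same system with $B=I$ and $c=0$. Hence all rates, linearizations, and basins of attraction are independent of $B$ and $c$, and the problem-independence claim follows at once.

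\textbf{Stability (the main obstacle).} It remains to analyze the normalized system at $(m,\mathcal{C}_q,\mathcal{C}_p,\mathcal{C}_{q,p})=(0,I,I,0)$.
\begin{itemize}
\item The mean part linearizes to $\ddot m_q=-m_q-\gamma\dot m_q$, which is damped for $\gamma>0$.
\item For the covariance I perturb $\mathcal{C}_q=I+\delta_q$, $\mathcal{C}_p=I+\delta_p$, $\mathcal{C}_{q,p}=\delta_{qp}$. The linearization is
\[
\dot\delta_q=\delta_{qp}+\delta_{qp}^T,\qquad
\dot\delta_p=-\delta_{qp}-\delta_{qp}^T-2\gamma\delta_p+2\gamma\delta_q,\qquad
\dot\delta_{qp}=-\gamma\delta_{qp}-2\delta_q+\delta_p.
\]
\item Split $\delta_{qp}$ into its symmetric part $s$ and antisymmetric part $a$. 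The antisymmetric part decouples as $\dot a=-\gamma a$.
\item The symmetric part gives a $3\times3$ constant system acting entrywise on $(\delta_q,\delta_p,s)$. I will check that its characteristic polynomial is Hurwitz (Routh--Hurwitz) for every $\gamma>0$.
\end{itemize}
This establishes hyperbolicity and linear stability, and hence an open basin of attraction.

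Instability of the degenerate equilibria comes from the same computation restricted to the directions in the range of $I-X$. There the equation $\dot{\mathcal{C}}_q=\dots$ linearizes around $0$ with the source term $2\gamma\mathcal{C}_q$, which should yield a positive eigenvalue.

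The step I expect to be hardest is the Routh--Hurwitz verification, which must hold uniformly in $\gamma$, together with the careful eigen-analysis at the degenerate projections. For the latter I would work in a basis adapted to $X$ and treat the mixed blocks separately.
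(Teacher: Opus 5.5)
Your proposal is correct and follows the paper's proof essentially step for step. The shared steps are: It\^o moment equations; the normalizing change of variables, where your affine map $q=B^{1/2}v+Bc$, $p=B^{1/2}u$ reproduces exactly the paper's $x_1,x_2,X,Y,Z$; the same $3\times 3$ linearization at $(I,I,0)$ with characteristic polynomial $\lambda^3+3\gamma\lambda^2+(2\gamma^2+6)\lambda+4\gamma$; and the same restriction to directions $vv^T$ with $Xv=0$ at the degenerate equilibria. In those directions the terms $X\delta Z+\delta Z^TX$ and $X\delta X+\delta X X$ vanish, and the cubic becomes $\lambda^3+3\gamma\lambda^2+2\gamma^2\lambda-4\gamma$, which has a positive root since it equals $-4\gamma<0$ at $\lambda=0$. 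One small correction: the driving term $2\gamma\mathcal{C}_q$ sits in the $\dot{\mathcal{C}}_p$ equation, not in $\dot{\mathcal{C}}_q$. Your Routh--Hurwitz check is immediate, since $3\gamma(2\gamma^2+6)>4\gamma$. Together with your explicit handling of the antisymmetric part of $\mathcal{C}_{q,p}$ and of why $\mathcal{C}_{q,p}=0$ at steady state, it is slightly more complete than the paper's argument via the location of the real root.
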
 
\vspace{0.1in}
\begin{remark}
\label{r:5.2}
{Proposition \ref{afli} demonstrates that the convergence speed to the
non-degenerate equilibrium point is independent of the specific linear
inverse problem to be solved; the rate does, however, depend on $\gamma$.
Analysis of the linear stability problem shows that $\gamma\approx1.83$ gives the best local convergence rate; see Remark \ref{optg}. In the case where mean field 
preconditioning is not used, the optimal choice of $\gamma$ for underdamped
Langevin dynamics depends on the linear inverse problem being solved, 
and can be identified explicitly in the scalar setting \cite{pavliotis2014stochastic}; for analysis in the non-Gaussian setting see \cite{chak2021optimal}. Motivated by analogies with the work in
\cite{pavliotis2014stochastic,chak2021optimal} we expect the optimal choice
of $\gamma$ to be problem dependent in the nonlinear case, but motivated
by our analysis in the linear setting we expect to see a good choice
which is not too small or too large and can be identified by straightforward
optimization.}
\end{remark}
\vspace{0.1in}

Now we show that for Gaussian initial data, the solution remains Gaussian and is thus determined by the evolution of the mean and covariance via the ODE \eqref{vare}.
We prove this by investigating the mean field Fokker-Planck equation in the linear case. 
The evolution in time of the law $\rho(z)$ of equation \eqref{linearp} is governed
by the equation
\begin{equation}
\label{linearfp}
\partial_t \rho=\nabla\cdot\left(
\begin{pmatrix}
-p \\
\mathcal{C}_q(\rho)(B^{-1}q-c) + {\gamma}p
\end{pmatrix}\rho+\begin{pmatrix}
0 & -\mathcal{C}_q(\rho) \\
\mathcal{C}_q(\rho) & {\gamma}\mathcal{C}_q(\rho)
\end{pmatrix}\nabla\rho\right)\,.
\end{equation}

{We prove the following result in Subsection \ref{proof3}; by
virtue of Proposition \ref{afli} it establishes
that the eigenvalues that determine the
local stability of the posterior Gaussian density are
the same across all linear Gaussian inverse problems:} 

\vspace{0.1in}

\begin{proposition}
\label{prop3}
{Let $m(t)$, $C(t)$ solve the ODE \eqref{vare} with initial conditions $m_0$ and $C_0$.
Assume that $\rho_0$ is a Gaussian distribution, so that
\[\rho_{0}(z):=\frac{1}{(2 \pi)^{N}}\left(\operatorname{det} {C}_{0}\right)^{-1 / 2} \exp \left(-\frac{1}{2}\left\|z-m_{0}\right\|_{{C}_{0}}^{2}\right)\] 
with mean $m_0$ and covariance $C_0$. Then the Gaussian profile \[\rho(t,z):=\frac{1}{(2 \pi)^{N}}\left(\operatorname{det} {C}({t})\right)^{-1 / 2} \exp \left(-\frac{1}{2}\left\|z-m({t})\right\|_{{C}({t})}^{2}\right)\] solves the Fokker-Planck equation \eqref{linearfp} with initial condition $\rho(0,z)=\rho_{0}(z)$.}
\end{proposition}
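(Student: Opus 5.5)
The plan is to verify directly that the Gaussian ansatz solves \eqref{linearfp}. The key observation is that once $\mathcal{C}_q(\rho)$ is frozen to the $q$-block of $C(t)$, the equation becomes a \emph{linear} Fokker--Planck equation with affine drift and constant diffusion. Such equations are well known to preserve Gaussianity.

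Concretely, set $\mathcal{C}_q(t)$ to be the $q$-block of the solution $C(t)$ of \eqref{vare}. Consider the linear SDE
\[
\dot z = -\mathcal{A}(t) z + b(t) + \Sigma(t)\dot W_0,
\qquad
\mathcal{A}=\begin{pmatrix}0&-I\\ \mathcal{C}_qB^{-1}&\gamma I\end{pmatrix},
\qquad
b=\begin{pmatrix}0\\ \mathcal{C}_q c\end{pmatrix},
\]
with diffusion $\Sigma\Sigma^T=2\,\mathrm{diag}(0,\gamma\mathcal{C}_q)$. Its Fokker--Planck equation is exactly \eqref{linearfp} with $\mathcal{C}_q(\rho)$ replaced by the prescribed $\mathcal{C}_q(t)$. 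The antisymmetric blocks $\pm\mathcal{C}_q$ in the matrix multiplying $\nabla\rho$ contribute nothing, since $\mathcal{C}_q$ is independent of $z$ and symmetric, so the divergence of an antisymmetric constant matrix times a gradient vanishes.

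Next I would substitute $\rho=\mathsf N(m(t),C(t))$ into this linear equation. Using $\nabla\rho=-C^{-1}(z-m)\rho$ and $\nabla^2\rho=\bigl(C^{-1}(z-m)(z-m)^TC^{-1}-C^{-1}\bigr)\rho$, one divides by $\rho$ and matches the constant, linear and quadratic terms in $(z-m)$. This matching reproduces the standard moment equations
\[
\dot m=-\mathcal{A}m+b,
\qquad
\dot C=-\mathcal{A}C-C\mathcal{A}^T+\Sigma\Sigma^T.
\]
Writing these in blocks should recover precisely \eqref{vare}, which is the same computation used in the proof of Proposition \ref{afli}. Since $(m,C)$ solve \eqref{vare}, the Gaussian solves the frozen linear equation. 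Alternatively, and more cleanly, I would note that the linear SDE started from a Gaussian has a Gaussian law whose moments obey these ODEs, which avoids the explicit substitution.

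The remaining step is consistency: the $q$-marginal covariance of this Gaussian is $\mathcal{C}_q(t)$ by construction. Hence $\mathcal{C}_q(\rho(t))$ equals the frozen coefficient, and $\rho$ solves the nonlinear equation \eqref{linearfp} with initial condition $\rho_0$.

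The main obstacle is bookkeeping the block computation of $-\mathcal{A}C-C\mathcal{A}^T+\Sigma\Sigma^T$ against \eqref{vare}, in particular the $\mathcal{C}_{q,p}$ and $\mathcal{C}_p$ entries. I also need $C(t)$ to remain positive definite so that the density is defined; I would argue this from the ODE over the time interval where it holds, taking that interval to be the one on which the solution of \eqref{vare} is assumed to exist with nondegenerate covariance.
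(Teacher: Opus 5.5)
Your proposal is correct and is essentially the paper's argument. The paper substitutes the Gaussian directly, collects terms as $\hat z^T M_1\hat z+M_2$, and shows $\mathcal{C}(\rho)(M_1+M_1^T)\mathcal{C}(\rho)=0$, which is exactly your equation $\dot C=-\mathcal{A}C-C\mathcal{A}^T+\Sigma\Sigma^T$ checked blockwise against \eqref{vare} (the blocks do match once $\mathcal{C}_q$ is identified with the $q$-block of $C$), and then $M_2=0$ follows from the trace. Your frozen-coefficient framing makes explicit the self-consistency step that the paper uses implicitly, and it also settles your positive-definiteness worry: $C(t)=\Phi(t,0)C_0\Phi(t,0)^T+\int_0^t\Phi(t,s)\Sigma(s)\Sigma(s)^T\Phi(t,s)^T\,ds$ with $\Phi$ invertible, so $C(t)\succ 0$ on the whole existence interval.
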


\vspace{0.1in}

\section{Numerical Results}
\label{4}

{We introduce, and study, a numerical method for sampling the Bayesian inverse problem of Section \ref{2}; the method is based on numerical time  stepping of the interacting particle system \eqref{encgf}.}
{In this section, we demonstrate that the proposed sampler, which we refer to as
\emph{ensemble Kalman hybrid Monte Carlo} (EKHMC) can effectively approximate posterior distributions for two widely studied inverse problem test cases. We compare EKHMC with its first-order version EKS \cite{garbuno2020interacting} and a gold standard MCMC \cite{brooks2011handbook}. EKHMC inherits two major advantages of EKS: (1) exact gradients are not required (i.e., derivative-free); (2) the ensemble can faithfully approximate the spread of the posterior distribution, rather than collapse to a single point as happens with the
basic EKI algorithm \cite{schillings2017analysis}.
Furthermore, we show empirically that EKHMC can obtain samples of  similar quality to EKS, and has faster convergence than EKS. We detail our numerical time-stepping
scheme in the first subsection,
before studying two examples (one low dimensional, one a PDE inverse problem for a field) in
the subsequent subsections.}

\subsection{Time integration schemes}

We employ a splitting method to integrate the stochastic {dynamical} system
given by equation~(\ref{encgf}): {the first capturing the finite
particle approximation of the Hamiltonian evolution,} 
and the second capturing an OU process in momentum space.  
{The Hamiltonian evolution follows the equation}:
\begin{equation}\label{exp:Ham}
\begin{aligned}
\dot{q}^{(i)}&= p^{(i)}\,,  \\
\dot{p}^{(i)}&= F_{H}:= -C_q(Z)\Gamma_0^{-1} {q}^{(i)}
-\frac{1}{I} \sum_{k=1}^{I}\langle  \mathcal{G}(q^{(k)})-\bar{\mathcal{G}}, \mathcal{G}(q^{(i)})-y\rangle_{\Gamma} q^{(k)}\,.
\end{aligned}
\end{equation}
The OU process follows the equation:
\begin{equation}\label{exp:OU}
\begin{aligned}
\dot{q}^{(i)}&= 0\,,  \\
\dot{p}^{(i)}&=-\gamma p^{(i)} +\sqrt{2\gamma C_q(Z)} \dot{W}^{(i)}\,.
\end{aligned}
\end{equation}  
{We implement a symplectic Euler  integrator \cite{sanz2018numerical} for 
the part of the particle system arising from approximation of
the Hamiltonian contribution
to the damped-driven mean-field equations. That is, we take a half 
step $\epsilon/2$ of momentum updates, then a full step $\epsilon$ of position updates, and finally a half step $\epsilon/2$ of momentum updates.
With the ensemble approximation the system is only approximately Hamiltonian; 
the splitting used in symplectic Euler is still well-defined, however. And we
also expect it to perform well in the large particle limit because
the mean-field limit is itself Hamiltonian.} {Let $Z_j$ be the collection of
all position and momentum particles at time $j$: $\{q_{j}^{(i)},p_{j}^{(i)}\}_{i=1}^I.$ Starting from time $j$ this
symplectic Euler integration gives map $Z_{j} \mapsto \hat{Z}_{j}.$
We set $q_{j+1}^{(i)}$ to be the $i^{\text{th}}$ position coordinates of $\hat{Z}_{j}$
and then update the momentum coordinates using the OU process {which provides
the damped-driven component of the mean-field limiting process.} 
The damping coefficient $\gamma>0$ is treated as a hyperparameter of EKHMC. 
{Vector $Z$ is set at the value given 
by output of the preceding symplectic Euler integrator, denoted by $\hat{Z}_{j}$.}
The update of the $i^{\text{th}}$ momentum coordinate, {given by 
solving the OU process exactly in law,} is then
\begin{equation}
\begin{aligned}
    & \tilde{p}^{(i)}_j={\text{exp}}(-\gamma\epsilon)\hat{p}^{(i)}_j\,, \\
    & p^{(i)}_{j+1} = \tilde{p}^{(i)}_j+\eta,\quad \eta\sim \mathsf{N}
    (0,(1-{\text{exp}}(-2\gamma\epsilon))C_q(\hat{Z}_{j}))\, ,
\end{aligned}
\end{equation}
{where $\hat{p}^{(i)}_j$ are the momentum coordinates from $\hat{Z}_{j}$.}
Within the OU process the damping coefficient $\gamma>0$ is treated as a hyperparameter of EKHMC.

{It is posible to consider use of a Metropolis-Hastings (Metropolization)
step to correct the dynamics; however because the underlying continuous
time system is not (for finite number of particles) invariant
with respect to the target, doing so would be very complicated
and so we do not pursue this. Aside from invariance,
Metropolization also imparts stability of the integrator and we may address
this in different ways. Indeed, similarly} 
to~\cite{garbuno2020interacting}, and as there 
with the goal of improving stability
and convergence speed towards posterior distribution, we implement an adaptive step size, i.e., the true step size is rescaled by the magnitude of the ``force field" $F_H$ (defined in equation~(\ref{exp:Ham})):
\begin{equation}
    \tilde{\epsilon}=\frac{\epsilon}{a|F_H|+1}\,.
\end{equation}

\subsection{Low dimensional parameter space}

We follow the example presented in Section 4.3 of the paper~\cite{garbuno2020interacting}. We start by defining the forward map which is given by the one-dimensional elliptic boundary value problem
\begin{subequations}
\begin{align}
    &-\frac{d}{dx}\Bigl({\text{exp}}(u_1)\frac{d}{dx}p(x)\Bigr)=1\,,\quad x\in(0,1)\,,\\
&\quad\quad\quad p(0)=0\,, \quad p(1)=u_2\,.
\end{align}
\end{subequations}
The solution is given explicitly by
\begin{equation}
    p(x) = u_2x+{\text{exp}}(-u_1)\Bigl(-\frac{x^2}{2}+\frac{x}{2}\Bigr)\,,
\end{equation}
The forward model operator $\mathcal{G}$ is then defined by
\begin{equation}
    \mathcal{G}(u) = 
    \begin{pmatrix}
    p(x_1)\\
    p(x_2)
    \end{pmatrix}\,.
\end{equation}
Here $u=\bigl(u_1,u_2\bigr)^T$ is a constant vector that we want to find and we assume that we are given noise measurements $y$ of $p(\cdot)$ at locations $x_1=0.25$ and $x_2=0.75$. Parameters are chosen according to~\cite{garbuno2020interacting}, but we summarize them here for completeness:

\begin{itemize}
    \item noise $\eta\sim\mathsf{N}(0,\Gamma)$, $\Gamma=0.1^2I_2$;
    \item prior $\pi_0=\mathsf{N}(0,\Gamma_0)$, $\Gamma_0=\sigma^2I_2$, $\sigma=10;$
    \item measurement $y=(27.5,79.7)^T;$
    \item number of particles $I=10^3;$
    \item initialization: $(u_1,u_2)\sim \mathsf{N}(-3.5,0.1^2)\times \mathsf{U}(70,110).$
\end{itemize}
Here $\mathsf{U}$ is the uniform distribution.

\begin{figure}
    \centering
    \includegraphics[width=1.0\linewidth]{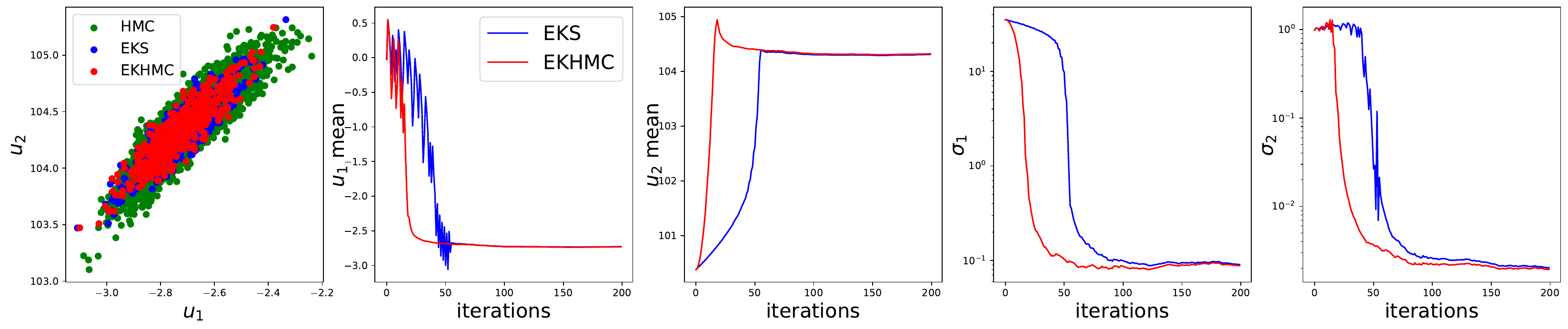}
    \caption{{The low dimensional parameter space example. From left to right: samples; mean $u_1$; mean $u_2$; the first singular value $\sigma_1$; the second singular value $\sigma_2$.}}
    \label{fig:2d}
\end{figure}

\begin{figure}
    \centering
    \includegraphics[width=1\linewidth]{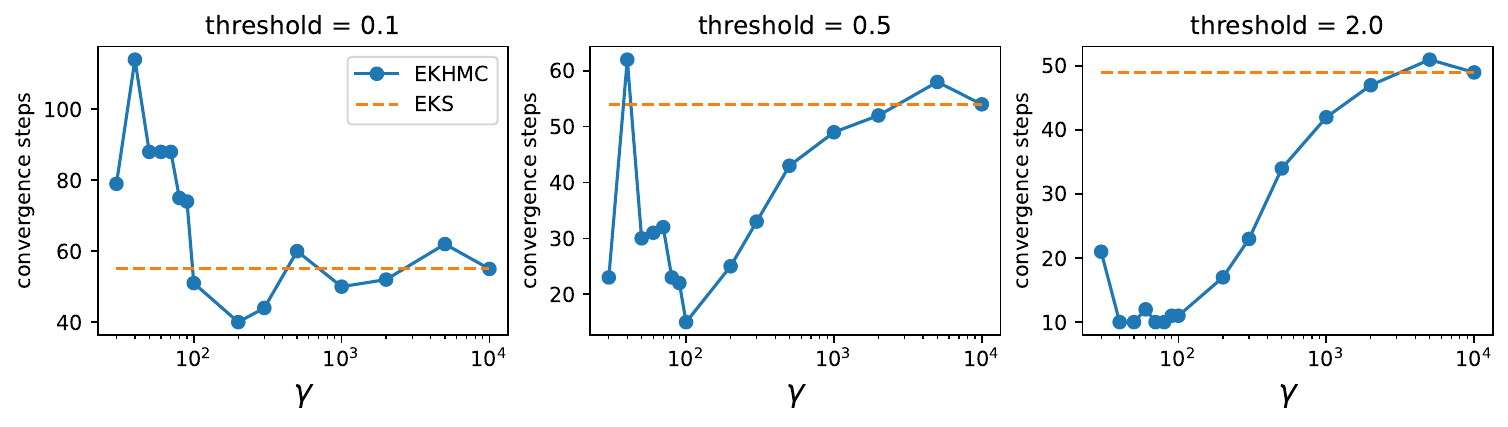}
    \caption{{Convergence time (of $u_2$ mean) as a function of damping coefficient $\gamma$. Left, Middle, Right: thresholds are 0.1, 0.5, 2.0, respectively. The takeaway from these plots is: large damping converges faster eventually (small threshold), while small damping converges faster initially (large threshold).}}
    \label{fig:gamma}
\end{figure}

 We choose $a=0.01$, $\epsilon=0.2$ in both EKS and EKHMC. {We find 
choosing $\gamma=1$ causes overshooting {(the trajectory exhibits 
oscillatory rather than monotonic convergence)}, a phenomenon which
can be ameliorated by increasing to {$\gamma=100$}; indeed this latter
value appears, empirically, to be close to optimal in terms of convergence speed. We conjecture this difference from the linear case, where the
optimal value is $\gamma=1.83$ (see Remark \ref{r:5.2}),
is due to the non-Gaussianity of the desired target: the particles have accumulated considerable momentum when entering the linear convergence regime, and so extra damping is required to counteract this and avoid overshooting. Despite the
desirable problem independence of the optimal $\gamma$ in the linear case,
we expect case-by-case optimization to be needed for nonlinear
problems.} {We also empirically study how $\gamma$ affects the convergence speed for this 2D problem: we sweep $\gamma\in [30,10^4]$, compute the number of steps needed to be close to the convergent point by a threshold, shown in FIG.~\ref{fig:gamma}. When the threshold is small, large $\gamma$ converges faster; when the threshold is large, small $\gamma$ converges faster. The takeaway is that: large $\gamma$ converges faster eventually (small threshold), while small $\gamma$ converges faster initially (large threshold).}

We evolve the ensemble of particles for 200 iterations, and record their positions in the last iteration as the approximation of the posterior distribution, {shown in Figure \ref{fig:2d}.} The EKHMC samples are quite similar to EKS samples, both of which are reasonable approximations of the samples obtained from the gold standard HMC \cite{duane1987hybrid} simulations -- but both approximations of the gold standard miss the full spread of the
true distribution, because of the ensemble Kalman approximation they invoke.
We also compute four {ensemble} quantities to characterize the evolution of EKHMC and EKS: $u_1$ mean, $u_2$ mean, two eigenvalues $\sigma_1$ and $\sigma_2$ of the covariance matrix $C_q(Z)$. EKHMC has faster convergence towards equilibrium than EKS, benefiting from the second-order dynamics.

\subsection{Darcy flow}

\begin{figure}
    \centering
    \includegraphics[width=1.0\linewidth]{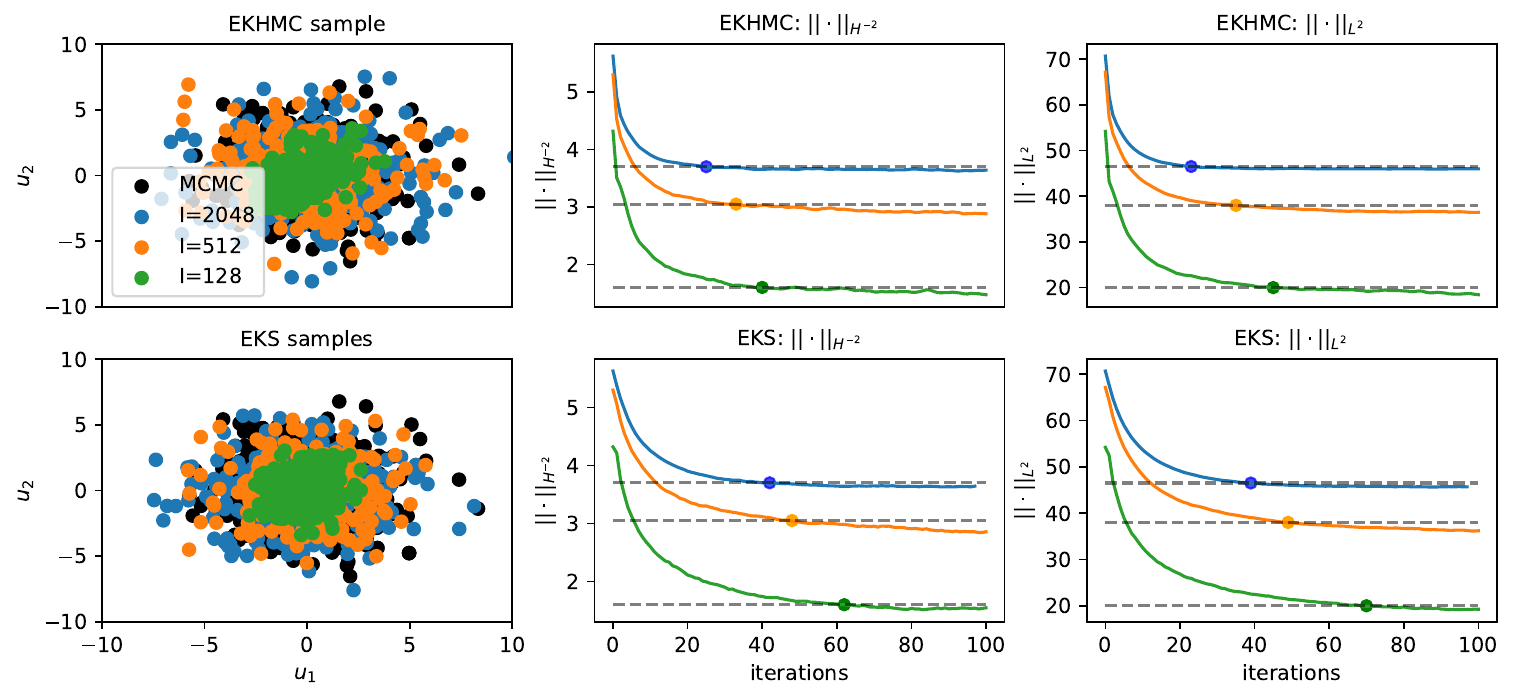}
    \caption{{Darcy flow. Left: samples obtained from EKHMC (top) and EKS (bottom), compared with MCMC. Middle: Evolution of $||u||_{H^{-2}}$ for EKHMC and EKS for different $I=128,512,2048$. Right: The same as the middle, but $||u||_{L^2}$ instead of $||u||_{H^{-2}}$. EKHMC converges faster than EKS.}}
    \label{fig:darcy}
\end{figure}

This example follows the problem setting detailed in~\cite{garbuno2020interacting}. We summarize the essential problem specification here for completeness.{The forward
problem of porous medium flow, defined by permeability $a(\cdot)$ and source term $f(\cdot)$, is to find the pressure field $p(\cdot)$
where $p(\cdot)$ is solution of the following elliptic PDE:}
\begin{equation}
\begin{aligned}
    -\nabla\cdot \Bigl(a(x)\nabla p(x)\Bigr)&=f(x)\,, \quad x\in D=[0,1]^2\,, \\
    p(x)&=0\,, \quad x\in \partial D\,.
\end{aligned}
\end{equation}
We assume that the permeability $a(x)=a(x;u)$ depends on some unknown parameters $u\in\mathbb{R}^d$. 
{The resulting  inverse problem is, given (noisy) pointwise measurements of $p(x)$
on a grid, to infer $a(x;u)$ or $u$.}
We model $a(x;u)$ as a log-Gaussian field. The Gaussian underlying 
this log-Gaussian model has mean zero and has
precision operator defined as $\mathcal{C}^{-1}=(-\triangle+\tau^2\mathcal{I})^\alpha$; here
$\triangle$ is equipped with Neumann boundary conditions on the spatial-mean zero functions. We set $\tau=3$ and $\alpha=2$ in the experiments. Such parametrization yields as Karhunen-Loeve (KL) expansion

\begin{equation}
    {\text{log}} a(x;u)=\sum_{l\in K} u_l\sqrt{\lambda_l}\varphi_l(x)\,,
\end{equation}
where $\varphi_l(x)={\cos}(\pi\langle l,x\rangle)$, $\lambda_l=(\pi^2|\ell|^2+\tau^2)^{-\alpha}$, $K\equiv \mathbb{Z}^2$. {Draws from this
random field are H\"older with any exponent less than one.}
In practice, we truncate $K$ to have dimension $d=2^8$. We generate a truth random field by sampling $u\sim \mathsf{N}(0,I_d)$. We create data $y$ with $\eta\sim \mathsf{N}(0,0.1^2 I_K)$. We choose the prior covariance $\Gamma_0=10^2 I_d$. 

To characterize the evolution of EKHMC and compare it with EKS, we compute two metrics:
\begin{equation}
    d_{H^{-2}}(\cdot)=\sqrt{\frac{1}{I}\sum_{j=1}^I  ||u^{(j)}(t)-\cdot||_{H^{-2}}^2}\,,\quad d_{L^2}(\cdot)=\sqrt{\frac{1}{I}\sum_{j=1}^I  ||u^{(j)}(t)-\cdot||^{2}_{L^2}}\,.
\end{equation}
For these metrics, we use norms 
\begin{equation}
    ||u||_{H^{-2}}=\sqrt{\sum_{l\in K_d}|u_l|^2\lambda_l}\,, \quad ||u||_{L^2}=\sqrt{\sum_{l\in K_d} |u_l|^2}\,,
\end{equation}
where the first is defined in the negative Sobolev space $H^{-2}$, whilst the second in the $L^2$ space.

We set $a=0.01$ and $\epsilon=1.0$ for both EKHMC and EKS. We set $\gamma=1$ in EKHMC; {unlike the previous example, this choice does not lead to over-shooting.}
We simulate the particles for $100$ iterations, and use their positions in the last iteration as the approximation to the posterior distribution. We compare the samples obtained from MCMC (we use the pCN variant on RWMH~\cite{cotter2013mcmc}), EKS ($I=128,512,2048$) and EKHMC ($I=128,512,2048$) in Figure~\ref{fig:darcy}. The evolution of $||u||_{H^{-2}}$ and $||u||_{L^2}$ are plotted to show that convergence is achieved, and that EKHMC converges faster than EKS.

\section{Conclusions}\label{5}

{Gradient-free methods for inverse problems are increasingly important in large-scale
multi-physics science and engineering problems; see \cite{huang2022efficient} and
the references therein. In this chapter
we have provided an initial study of gradient-free methods which leverage the potential
power of Hamiltonian based sampling. Analysis of the resulting method is hard, and
our initial theoretical results leave many avenues open; in particular convergence to
equilibrium {is not established for the underlying nonlinear, nonlocal
Fokker-Planck equation arising from the mean field model,
and optimization of convergence rates over $\gamma$ is not understood,
except in the setting of linear
Gaussian inverse problems.}
The Fokker-Planck equation associated with standard underdamped Langevin dynamics has 
been studied in the context of hypocoercity -- see \cite{bakry1985diffusions,villani2006hypocoercivity} -- and generalization of these methods will be of potential interest. Preconditioned HMC is also proven to converge in \cite{bou2021two}; generalization to ensemble approximations would be of interest. On the computational side there are other approaches to avoiding gradient computations, yet leveraging Hamiltonian structure, which need to be evaluated and compared with what is proposed here \cite{lan2016emulation}.}

\section{Appendix}\label{6}
We present the proofs of {various results from the chapter here.}

\subsection{Proof of Proposition \ref{prop.prop0}}
\label{proofprop0}
\begin{proof}
{The determination of the most general form of diffusion process which
is invariant with respect to a given measure has a long history; see
\cite{duncan2017using}[Theorem 1] for a statement 
and the historical context. Note, however, that this theory
is not developed in the mean-field setting and concerns only the
linear Fokker-Planck equation. However, the dependence of the matrices 
$\mathcal{K}, \mathcal{J}$ and $\mathcal{M}$ on $\rho$ readily allows use
of the approach taken in the linear case. We find it expedient to use
the exposition of this topic in \cite{ma2015complete}. The same
derivation as used to obtain Eq. (5) from \cite{ma2015complete} 
shows\footnote{We use the notational convention concerning divergence
of matrix fields that is standard in continuum mechanics \cite{gonzalez2008first};
this differs from the notational convention adopted in \cite{ma2015complete}.}
that the density $\rho$ associated with the dynamics \eqref{eqsol}, \eqref{eqsol2} satisfies Eq. \eqref{fp}; this follows from the (resp. skew-)
symmetry properties of (resp. $\mathcal{J}$) $\mathcal{K}$ and the fact
that $\mathcal{J}$, $\mathcal{K}$ and  $\mathcal{M}$ are assumed independent
of $z$, despite their dependence on $\rho.$ It is also manifestly the case
that the Gibbs measure is invariant for \eqref{fp} since the
mass term $\mathcal{M}$ appearing in $\mathcal{H}$ is independent of $z$ so that
$$\rho\nabla \mathcal{H}+\nabla\rho=0$$
when $\rho$ is the Gibbs measure
and \eqref{fp} shows that then $\partial_t \rho=0.$}
\end{proof}

\subsection{Proof of Proposition \ref{aft}}
\label{proofaf}
\begin{proof}
In fact, consider the affine transformation 
\[q^{(i)}=A v^{(i)}+b\,,\quad p^{(i)}=A u^{(i)}\,,\] along with  $$\tilde{\Phi}(v^{(i)})=\Phi(q^{(i)})=\Phi(Av^{(i)}+b)\,,\quad\tilde{H}(v^{(i)}, u^{(i)})=H(q^{(i)}, p^{(i)})=H(Av^{(i)}+b,Au^{(i)})\,.$$ Then we have that the gradient term scales like $\nabla_{v^{(i)}}\tilde{\Phi}(v^{(i)})$= $A^T\nabla_{q^{(i)}}\Phi(q^{(i)})$ and the covariance preconditioner scales like $C_v={A^{-1}} C_q {A^{-1}}^T$. The generalized square root scales like $\sqrt{C_v}={A^{-1}} \sqrt{C_q}$. Therefore we can check that affine invariance holds for the particle systems \eqref{ensol} and \eqref{eqmom}. Affine invariance for the gradient free approximation  \eqref{encgf} is more easily seen to be true.  Finally for the Fokker-Planck equation \eqref{fp}, we can check similarly via the scaling of the terms. See a similar argument in the proof of Lemma 4.7 in \cite{garbuno2020affine}.
\end{proof}
\subsection{Proof of Proposition \ref{afli}}
\label{proof3.3}
\begin{proof}
{We can take expectations in \eqref{linearp} to obtain the first two ODEs
in \eqref{vare}. Now define $$\hat{z}=z-m(\rho)\triangleq\begin{pmatrix}
\hat{q} \\
\hat{p}
\end{pmatrix}\,,$$  to obtain the following evolution for $\hat{z}$:
\[\begin{aligned}
\dot{\hat{q}}&= \hat{p}\,, \\
\dot{\hat{p}}&=-\mathcal{C}_q(\rho)B^{-1}\hat{q} - {\gamma}\hat{p}+\sqrt{2 {\gamma}\mathcal{C}_q(\rho)}\dot{W}\,.
\end{aligned}\]
Note that $\mathcal{C}_q=\mathop{\mathbb{E}}[\hat{q}\otimes \hat{q}]$, $\mathcal{C}_p=\mathop{\mathbb{E}}[\hat{p}\otimes \hat{p}]$, and $\mathcal{C}_{q,p}=\mathop{\mathbb{E}}[\hat{q}\otimes \hat{p}]$. We can 
use Ito's formula and the above evolution to derive the last three ODEs in \eqref{vare}. 
The form of the steady state solutions is immediate from setting the right hand side of
\eqref{vare} to zero.}

{Now, we will establish the independence of the essential dynamics of the
mean and covariance on the specific choice of $B,c$. To this end, denote $x_1 = B^{1/2}(B^{-1}m_q-c)$, $x_2 = B^{-1/2}m_p$, $X = B^{-1/2}\mathcal{C}_qB^{-1/2}$, $Y = B^{-1/2}\mathcal{C}_pB^{-1/2}$, $Z = B^{-1/2}\mathcal{C}_{q,p}B^{-1/2}$. Applying this
change of variables we obtain, from \eqref{vare}, the system} 
\begin{equation}
\label{vares}
\begin{aligned}
\dot{x_1}&= x_2\,, \\
\dot{x_2}&=-X x_1 - {\gamma}x_2\,,\\
\dot{X}&=Z+Z^T\,,\\
\dot{Y}&=-X Z-Z^T X- 2{\gamma}Y+2{\gamma}X\,,\\
\dot{Z}&=- {\gamma}Z -X^2 +Y\,.
\end{aligned}
\end{equation}
We notice that the steady state solutions take the form $(x_1,x_2,X,Y,Z)=(w,0,X,X,0)$ for a projection matrix $X^2=X$ and  $w$ such that $Xw=0$.
{The  steady state} with $X=I_N$ and $w=0$ corresponds to the desired posterior mean.
The transformed equation is indeed independent of $B$ and $c$. Therefore the speed of convergence  $(x_1,x_2,X,Y,Z) \to (0,0,I_N,I_N,0)$, within its basin of
attraction, is indeed independent of $B$ and $c.$ The same is true in the original
variables.

{To complete the proof of stability it suffices} to establish the local exponential 
convergence to the
steady solution $(x_1,x_2,X,Y,Z)=(0,0,I_N,I_N,0)$. As a first step, we show that the following system converges to $(X,Y,Z)=(I_N,I_N,0)$:
\begin{equation}
\label{meanco}
\begin{aligned}
\dot{X}&=Z+Z^T\,,\\
\dot{Y}&=-X Z-Z^T X- 2{\gamma}Y+2{\gamma}X\,,\\
\dot{Z}&=-{\gamma}Z -X^2 +Y\,.
\end{aligned}
\end{equation}
{Linearization around $(X,Y,Z)=(I_N,I_N,0)$ {in variables of entries of $(X,Y,Z+Z^T)$} gives the matrix $$\begin{pmatrix}
0 & 0 & 2I_{N^2} \\
2\gamma I_{N^2} & -2\gamma I_{N^2} & -2I_{N^2} \\
-2I_{N^2} & I_{N^2} & -\gamma I_{N^2} 
\end{pmatrix}$$
{whose eigenvalues satisfy $x^3+3\gamma x^2+(2\gamma ^2+6)x+4\gamma=0$ which all have
strictly negative real part for $\gamma>0$, since we can show the unique real eigenvalue is in the interval $(-\gamma,0)$.} Therefore we conclude the local exponential convergence of covariances $X,Y,Z$ in a neighbourhood of $(X,Y,Z)=(I_N,I_N,0)$.
The exponential convergence of means $(x_1,x_2) \to (0,0)$ then follows 
linearizing the first two linear ODEs in the system \eqref{vares} at $X=I_N.$}

{Similarly, for any other steady state $(X,Y,Z)=(X,X,0)$ with a projection matrix $X\neq I_N$, we will show that there is an unstable direction in \eqref{meanco}. In fact, since $X$ is symmetric with only eigenvalues $1$ and $0$, there exists a nonzero vector $v$ such that $Xv=0$. Linearization around $(X,Y,Z)=(X,X,0)$ in the direction $(avv^T,bvv^T,cvv^T)$ gives the following $3\times3$ matrix for scalars $a,b,c$:
$$\begin{pmatrix}
0 & 0 & 2 \\
2\gamma  & -2\gamma  & 0 \\
0 & 1 & -\gamma
\end{pmatrix}$$
{whose eigenvalues satisfy $x^3+3\gamma x^2+2\gamma ^2x-4\gamma=0$. {For
all $\gamma>0$ it may be shown that there exists a real eigenvalue in the 
interval $(0,1)$, {since $f(0)f(1)<0$}; thus there is an unstable direction determined by $(a,b,c)$,} and therefore in the original formulation.}} 
\end{proof}
\begin{remark}
\label{optg}
{We can further investigate the spectral gap around $(X,Y,Z)=(I_N,I_N,0)$, which is the absolute value in the real root of equation $x^3+3\gamma x^2+(2\gamma ^2+6)x+4\gamma=0$.
To be precise, we can show that for $x_0=-\sqrt{12-\sqrt{128}}$ and $\gamma_0=-(4+3x_0^2)/(4x_0)\approx 1.83$, the spectral gap is maximized as $-x_0$ and we expect fastest convergence for our method in the linear setting.}

{To establish this claim we proceed as follows. By the intermediate value theorem, in order to show there always exists a root in $[x_0,0)$ and the spectral gap is at most $-x_0$, we only need to show that $$x_0^3+3\gamma x_0^2+(2\gamma ^2+6)x_0+4\gamma\leq0\,.$$  The claim is true because $x_0<0$ and $2x_0(x_0^3+6x_0)=(4+3x_0^2)^2/4$. By the basic inequality $a^2+b^2\geq 2ab$, we have  $$x_0^3+3\gamma x_0^2+(2\gamma ^2+6)x_0+4\gamma=2x_0\gamma^2+(4+3x_0^2)\gamma+x_0^3+6x_0\leq0\,.$$
The maximal spectral gap is attained if and only if the equality in $a^2+b^2\geq 2ab$ holds, i.e. when $\gamma=\gamma_0$.}

We also plot the spectral gap as a function of $\gamma$ to help visualize the dependence of the rate of convergence on damping for the linear problem; see Figure \ref{fig:damp}. {The clear message from this figure is that there is a natural
optimization problem for $\gamma$ in this linear Gaussian setting; this is used
to motivate searches for optimal parameters in the non-Gaussian setting.}

\begin{figure}
    \centering
    \includegraphics[width=0.6\linewidth]{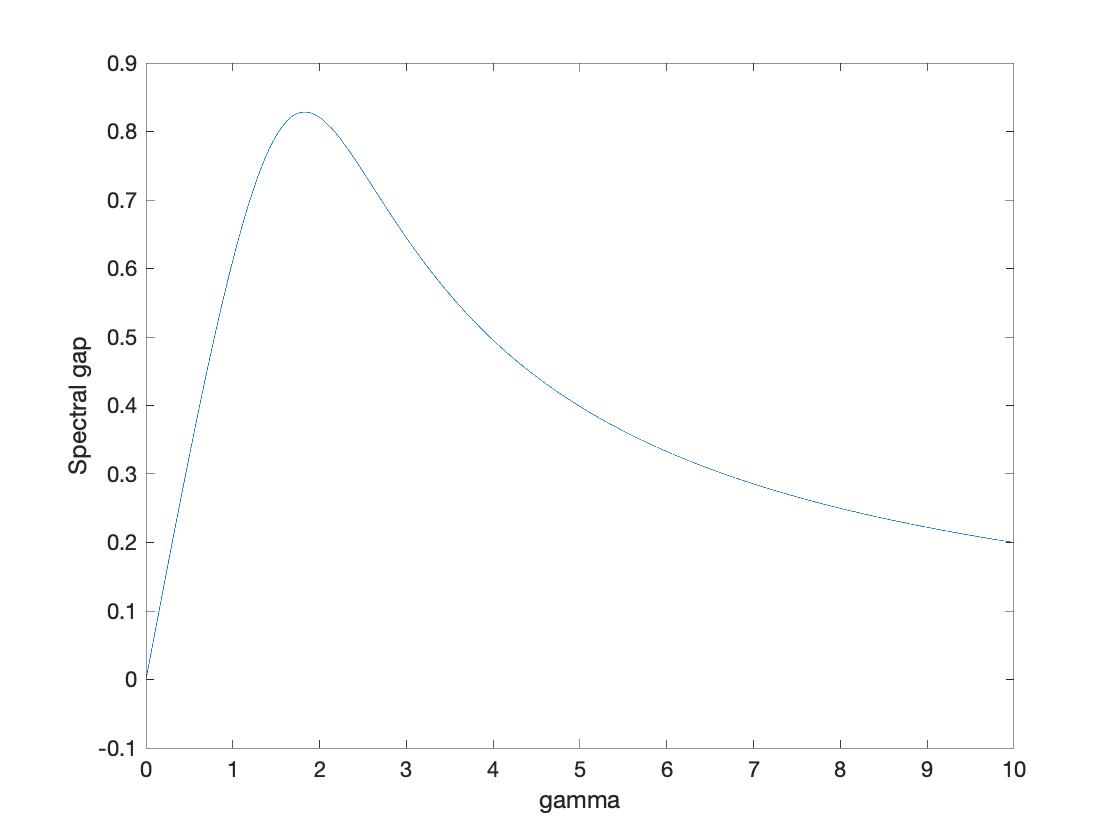}
    \caption{Spectral gap as a function of $\gamma$}
    \label{fig:damp}
\end{figure}
\end{remark}

\subsection{Proof of Proposition \ref{prop3}}
\label{proof3}
\begin{proof}
Note that since $m(\rho)$, $\mathcal{C}(\rho)$ are independent of the particle $z$, we have
\[\nabla \rho=-{\mathcal{C}}(\rho)^{-1}(z-m(\rho)) \rho\,,\]
\[D^2 \rho=\left(-{\mathcal{C}}(\rho)^{-1}+\left({\mathcal{C}}(\rho)^{-1}(z-m(\rho))\right) \otimes\left({\mathcal{C}}(\rho)^{-1}(z-m(\rho))\right)\right) \rho\,.\]
{Using this we can substitute the Gaussian profile into equation \eqref{linearfp}. We compute the first term on the right hand to obtain}
\[\left(-{\mathcal{C}}(\rho)^{-1}(z-m(\rho)) \cdot \begin{pmatrix}
-p \\
\mathcal{C}_q(\rho)(B^{-1}q-c) + {\gamma}p
\end{pmatrix}+{\gamma}N\right) \rho \triangleq I_1\rho\,.\]
The second term on the right hand side can be written as:
\[\begin{pmatrix}
0 & -\mathcal{C}_q(\rho) \\
\mathcal{C}_q(\rho) & {\gamma}\mathcal{C}_q(\rho)
\end{pmatrix}:\left(-{\mathcal{C}}(\rho)^{-1}+\left({\mathcal{C}}(\rho)^{-1}(z-m(\rho))\right) \otimes\left({\mathcal{C}}(\rho)^{-1}(z-m(\rho))\right)\right) \rho\triangleq (I_2+I_3)\rho \,.\]
For the left hand side of \eqref{linearfp}, note that \[\frac{\mathrm{d}}{\mathrm{d} t} \operatorname{det} {\mathcal{C}}(\rho)=\operatorname{Tr}\left[\operatorname{det} {\mathcal{C}}(\rho) {\mathcal{C}}(\rho)^{-1} \frac{\mathrm{d}}{\mathrm{d} t} {\mathcal{C}}(\rho)\right], \,\, \frac{\mathrm{d}}{\mathrm{d} t} {\mathcal{C}}(\rho)^{-1}=-{\mathcal{C}}(\rho)^{-1}\left(\frac{\mathrm{d}}{\mathrm{d} t} {\mathcal{C}}(\rho)\right) {\mathcal{C}}(\rho)^{-1}\,.\]
{Using the evolution of the mean \eqref{vare} we obtain}
\[\begin{aligned}
\frac{\mathrm{d}}{\mathrm{d} t}\|z-m(\rho)\|_{{\mathcal{C}}(\rho)}^{2}=& 2\left\langle\frac{\mathrm{d}}{\mathrm{d} t}(z-m(\rho)), {\mathcal{C}}(\rho)^{-1}(z-m(\rho))\right\rangle \\
&+\left\langle(z-m(\rho)), \frac{\mathrm{d}}{\mathrm{d} t}\left({\mathcal{C}}(\rho)^{-1}\right)(z-m(\rho))\right\rangle \\
=&2\left\langle \begin{pmatrix}
-m_p(\rho) \\
\mathcal{C}_q(\rho)(B^{-1}m_q(\rho)-c) + {\gamma}m_p(\rho)
\end{pmatrix}, \mathcal{C}(\rho)^{-1}(z-m(\rho))\right\rangle\\&-\left\langle{\mathcal{C}}(\rho)^{-1}(z-m(\rho)), \frac{\mathrm{d}}{\mathrm{d} t}\left({\mathcal{C}}(\rho)\right){\mathcal{C}}(\rho)^{-1}(z-m(\rho))\right\rangle\triangleq 2(I_4-I_5)\,.
\end{aligned}\]
Therefore we can compute the left hand side as:\[\begin{aligned}
\partial_{t} \rho &=\left[-\frac{1}{2}(\operatorname{det} {\mathcal{C}}(\rho))^{-1}\left(\frac{\mathrm{d}}{\mathrm{d} t} \operatorname{det} {\mathcal{C}}(\rho)\right)-\frac{1}{2} \frac{\mathrm{d}}{\mathrm{d} t}\left\|z-m(\rho)\right\|_{{\mathcal{C}}(\rho)}^{2}\right] \rho \\
&=\left[-\frac{1}{2}\operatorname{Tr}\left[{\mathcal{C}}(\rho)^{-1} \frac{\mathrm{d}}{\mathrm{d} t} {\mathcal{C}}(\rho)\right]-I_4+I_5\right] \rho\triangleq (I_6-I_4+I_5)\rho\,.
\end{aligned}\]

In order to show that the Gaussian profile is indeed a solution to \eqref{linearfp}, we only need to show that $I_1+I_2+I_3=-I_4+I_5+I_6$. Note that \[\begin{aligned}I_1+I_4&={\gamma}N+{\mathcal{C}}(\rho)^{-1}(z-m(\rho)) \cdot \begin{pmatrix}
p-m_p(\rho) \\
-\mathcal{C}_q(\rho)B^{-1}(q-m_q(\rho)) - {\gamma}(p-m_p(\rho))
\end{pmatrix}\\&={\gamma}N+{\mathcal{C}}(\rho)^{-1}(z-m(\rho)) \cdot\begin{pmatrix}
0 &I_N \\
-\mathcal{C}_q(\rho)B^{-1} &- {\gamma}I_N
\end{pmatrix} (z-m(\rho))\,.\end{aligned}\]
Defining $\hat{z}=z-m(\rho)$, we collect the terms in the to-be-proven identity $I_1+I_2+I_3=-I_4+I_5+I_6$ as:
\[\begin{aligned}
&\hat{z}^T\left[{\mathcal{C}}(\rho)^{-1}\begin{pmatrix}
0 &I_N \\
-\mathcal{C}_q(\rho)B^{-1} &- {\gamma}I_N
\end{pmatrix}+{\mathcal{C}}(\rho)^{-1} \left(\begin{pmatrix}
0 & -\mathcal{C}_q(\rho) \\
\mathcal{C}_q(\rho) & {\gamma}\mathcal{C}_q(\rho)
\end{pmatrix}-\frac{1}{2}\frac{\mathrm{d}}{\mathrm{d} t}\left({\mathcal{C}}(\rho)\right)\right){\mathcal{C}}(\rho)^{-1}\right]\hat{z}\\+&{\gamma}N-\operatorname{Tr}\left[\begin{pmatrix}
0 & -\mathcal{C}_q(\rho) \\
\mathcal{C}_q(\rho) & {\gamma}\mathcal{C}_q(\rho)
\end{pmatrix}{\mathcal{C}}(\rho)^{-1}\right]+\frac{1}{2}\operatorname{Tr}\left[\frac{\mathrm{d}}{\mathrm{d} t} {\mathcal{C}}(\rho){\mathcal{C}}(\rho)^{-1} \right]\triangleq \hat{z}^T M_1\hat{z}+M_2=0\,.
\end{aligned}\]
We only need to show that $M_1+M_1^T=M_2=0$.

In fact, we compute ${\mathcal{C}}(\rho)(M_1+M_1^T){\mathcal{C}}(\rho)$ to be:
\[\begin{pmatrix}
0 &I_N \\
-\mathcal{C}_q(\rho)B^{-1} &- {\gamma}I_N
\end{pmatrix}{\mathcal{C}}(\rho)+{\mathcal{C}}(\rho)\begin{pmatrix}
0 &-(B^{-1})^T\mathcal{C}_q(\rho) \\
I_N&- {\gamma}I_N
\end{pmatrix}+\begin{pmatrix}
0 & 0 \\
0 & 2{\gamma}\mathcal{C}_q(\rho)
\end{pmatrix}-\frac{\mathrm{d}}{\mathrm{d} t}\left({\mathcal{C}}(\rho)\right)\,,\]
which equals zero upon blockwise computation via \eqref{vare}. Thus $M_1+M_1^T=0$.
Moreover, note that $M_2=-\operatorname{Tr}\left[\mathcal{C}(\rho)M_1\right]$: $M_1+M_1^T=0$ implies $\mathcal{C}(\rho)(M_1+M_1^T)=0$ and taking the trace on both sides {leads to the
conclusion} that $M_2=0$.
Therefore the Gaussian profile indeed solves the Fokker-Planck equation \eqref{linearfp}.
\end{proof}

\end{document}